\newcommand{\be}{\begin{equation}}
\newcommand{\ee}{\end{equation}}
\theoremstyle{plain}
\newtheorem{theorem}{Theorem}[section]
\newtheorem{proposition}[theorem]{Proposition}
\newtheorem{corollary}[theorem]{Corollary}
\newtheorem{lemma}[theorem]{Lemma}
\newtheorem{definition}[theorem]{Definition}
\theoremstyle{remark}
\newtheorem{remark}[theorem]{Remark}
\newcommand\ddfrac[2]{\frac{\displaystyle #1}{\displaystyle #2}}
\newcommand{\R}{\mathbb{R}}
\newcommand{\N}{\mathbb{N}}
\newcommand{\eps}{\varepsilon}
\newcommand{\ind}{\mathds{1}}
\newcommand{\HH}{\mathcal{H}}
\newcommand{\loc}{{{\tiny{\mbox{loc}}}}}
\newcommand\intn{- \hspace{-0.42cm} \int}
\newcommand{\meantext}[1]{\,-\hskip-0.88em\int_{#1}} 
\newcommand{\abs}[1]{\lvert {#1} \rvert}
\newcommand{\norm}[2]{\Vert {#1} \Vert_{#2}}
\def\O{\Omega}
\DeclareMathOperator{\dive}{div}
\numberwithin{equation}{section}
\title{A capillarity one-phase Bernoulli free boundary problem}
\author[L. Ferreri]{Lorenzo Ferreri}\thanks{}
\address {Lorenzo Ferreri \newline \indent
	Classe di Scienze, Scuola Normale Superiore \newline \indent
	Piazza dei Cavalieri 7, 56126 Pisa - ITALY}
\email{lorenzo.ferreri@sns.it}
\author[G. Tortone]{Giorgio Tortone}\thanks{}
\address {Giorgio Tortone \newline \indent
	Dipartimento di Matematica, Universit\`a di Pisa \newline \indent
	Largo B. Pontecorvo 5, 56127 Pisa - ITALY}
\email{giorgio.tortone@dm.unipi.it}
\author[B. Velichkov]{Bozhidar Velichkov}\thanks{}
\address {Bozhidar Velichkov \newline \indent
	Dipartimento di Matematica, Universit\`a di Pisa \newline \indent
	Largo B. Pontecorvo 5, 56127 Pisa - ITALY}
\email{bozhidar.velichkov@unipi.it}
\begin{document}
	
	\thanks{{\bf Acknowledgments.}
		The authors are supported by the European Research Council (ERC), through the European Union's Horizon 2020 project ERC VAREG - \it Variational approach to the regularity of the free boundaries \rm (grant agreement No. 853404). L.F. and G.T. are members of INDAM-GNAMPA. B.V. is also supported by PRA\_2022\_14 and PRIN 2022
	}
	\subjclass[2010] {
	}
	\begin{abstract}
		We consider a one-phase Bernoulli free boundary 
		problem in a container $D$ -- a smooth open subset of $\R^d$ --  under the condition that on the fixed boundary $\partial D$ the normal derivative of the solutions is prescribed. We study the regularity of the free boundary (the boundary of the positivity set of the solution) up to $\partial D$ and the structure of the {\it wetting region}, which is the contact set between the free boundary and the ($(d-1)$-dimensional) fixed boundary $\partial D$. In particular, we characterize the contact angle in terms of the {\it permeability} of the porous container and we show that the boundary of the wetting region is a smooth $(d-2)$-dimensional manifold, up to a (possibly empty) closed set of Hausdorff dimension at most $d-5$.
	\end{abstract}
	
	\keywords{Regularity, free boundaries,  one-phase, Alt-Caffarelli, capillarity}
	\subjclass{35R35}

	\maketitle
	
	\tableofcontents
	\section{Introduction}
	Let $B_1$ be the unit ball in $\R^d$. The classical one-phase Bernoulli free boundary problem consists in finding a non-negative function $u:B_1\to\R$ which solves the following boundary value problem in $B_1$:
	\begin{equation}\label{e:bernoulli}\begin{cases}
	\Delta u=0\quad\text{in}\quad B_1\cap\{u>0\},\medskip\\
	|\nabla u|=\sqrt{Q(x)}\quad\text{on}\quad B_1\cap\partial\{u>0\},
	\end{cases}
	\end{equation}
	where $Q$ is a given positive $C^{0,\alpha}$ function bounded away from zero.
	A variational approach to this problem was introduced by Alt and Caffarelli in \cite{altcaf}, where they showed that solutions can be obtained by minimizing the functional
	\begin{equation}\label{e:alt-caf}
	\int_{B_1}|\nabla u|^2+Q(x)\ind_{\{u>0\}}\,dx\,,
	\end{equation}
	among all functions with prescribed trace on $\partial B_1$. The regularity and the structure of the free boundary $\partial\{u>0\}$ of minimizers $u$ of \eqref{e:alt-caf} were studied in \cite{altcaf,caf1,caf2,caf3,cjk-dim3,js,dj,desilva,esv,ee,w} (see also \cite{velectures}). \\
	
	Free boundary problems of the form \eqref{e:bernoulli} arise in Physics and Engineering, in models of flame propagation, fluid dynamics, thermal insulation, and shape optimization. For instance, in the modelization of jet flows, $\mathbf{v}:=\nabla u$ is interpreted as the velocity field of a fluid; the potential $u$ is the associated streamline function and  the system  \eqref{e:bernoulli} is deduced from the Euler equations associated to the motion of an incompressible, inviscid fluid with constant density (see for example \cite{laminar,friedman}).

	\begin{center}
		\begin{figure}[h]
			\tikzset{
				pattern size/.store in=\mcSize, 
				pattern size = 5pt,
				pattern thickness/.store in=\mcThickness, 
				pattern thickness = 0.3pt,
				pattern radius/.store in=\mcRadius, 
				pattern radius = 1pt}
			\makeatletter
			\pgfutil@ifundefined{pgf@pattern@name@_m7re6w4su}{
				\pgfdeclarepatternformonly[\mcThickness,\mcSize]{_m7re6w4su}
				{\pgfqpoint{0pt}{0pt}}
				{\pgfpoint{\mcSize+\mcThickness}{\mcSize+\mcThickness}}
				{\pgfpoint{\mcSize}{\mcSize}}
				{
					\pgfsetcolor{\tikz@pattern@color}
					\pgfsetlinewidth{\mcThickness}
					\pgfpathmoveto{\pgfqpoint{0pt}{0pt}}
					\pgfpathlineto{\pgfpoint{\mcSize+\mcThickness}{\mcSize+\mcThickness}}
					\pgfusepath{stroke}
			}}
			\makeatother
			
			
			\tikzset{
				pattern size/.store in=\mcSize, 
				pattern size = 5pt,
				pattern thickness/.store in=\mcThickness, 
				pattern thickness = 0.3pt,
				pattern radius/.store in=\mcRadius, 
				pattern radius = 1pt}
			\makeatletter
			\pgfutil@ifundefined{pgf@pattern@name@_hdte5rnxa}{
				\pgfdeclarepatternformonly[\mcThickness,\mcSize]{_hdte5rnxa}
				{\pgfqpoint{0pt}{0pt}}
				{\pgfpoint{\mcSize+\mcThickness}{\mcSize+\mcThickness}}
				{\pgfpoint{\mcSize}{\mcSize}}
				{
					\pgfsetcolor{\tikz@pattern@color}
					\pgfsetlinewidth{\mcThickness}
					\pgfpathmoveto{\pgfqpoint{0pt}{0pt}}
					\pgfpathlineto{\pgfpoint{\mcSize+\mcThickness}{\mcSize+\mcThickness}}
					\pgfusepath{stroke}
			}}
			\makeatother
			\tikzset{every picture/.style={line width=0.75pt}} 
			
			\begin{tikzpicture}[x=0.75pt,y=0.75pt,yscale=-1,xscale=1]
			\draw  [draw opacity=0][fill={rgb, 255:red, 155; green, 155; blue, 155 }  ,fill opacity=0.3 ] (200.88,156.15) .. controls (210.69,162.43) and (216.76,172.22) .. (217.78,182.09) -- (184,181) -- cycle ;
			
			\draw  [draw opacity=0][pattern=_m7re6w4su,pattern size=6pt,pattern thickness=0.75pt,pattern radius=0pt, pattern color={rgb, 255:red, 155; green, 155; blue, 155},fill opacity=0.4 ] (17,17.49) -- (384.33,17.49) -- (384.33,63.49) -- (17,63.49) -- cycle ;
			\draw  [draw opacity=0][pattern=_hdte5rnxa,pattern size=6pt,pattern thickness=0.75pt,pattern radius=0pt, pattern color={rgb, 255:red, 155; green, 155; blue, 155},fill opacity=0.4 ] (19,182.49) -- (386.33,182.49) -- (386.33,221.49) -- (19,221.49) -- cycle ;
			\draw [line width=1.5]    (181.33,63.49) -- (384.33,63.49) ;
			\draw [line width=1.5]  [dash pattern={on 5.63pt off 4.5pt}]  (17,63.49) -- (181.33,63.49) ;
			\draw [line width=1.5]    (190.33,181.49) -- (384.33,181.49) ;
			\draw [line width=1.5]  [dash pattern={on 5.63pt off 4.5pt}]  (21,181.49) -- (190.33,181.49) ;
			\draw [line width=1.5]    (181.33,63.49) .. controls (180.33,79.49) and (205.33,68.49) .. (207.33,97.49) .. controls (209.33,126.49) and (235.33,127.49) .. (233.33,141.49) .. controls (231.33,155.49) and (209.33,141.49) .. (184.33,180.49) ;
			\draw    (184.33,180.49) -- (208.33,143.49) ;
			\draw    (371.33,174.49) -- (371.33,127.49) ;
			\draw [shift={(371.33,124.49)}, rotate = 90] [fill={rgb, 255:red, 0; green, 0; blue, 0 }  ][line width=0.09]  [draw opacity=0] (8.93,-4.29) -- (0,0) -- (8.93,4.29) -- cycle    ;
			
			\draw (43,101.4) node [anchor=north west][inner sep=0.75pt]    {$\mathrm{div}\,\mathbf{v} =0\ $};
			\draw (224,92.4) node [anchor=north west][inner sep=0.75pt]    {$|\mathbf{v} |^{2} =1\ \text{on}\ \partial \Omega \cap D$};
			\draw (25,194.4) node [anchor=north west][inner sep=0.75pt]    {$\mathbf{v} \cdot e_{d} =\beta \ \text{on}\ \partial \Omega \cap\partial P $};
			\draw (23,38.89) node [anchor=north west][inner sep=0.75pt]    {$\mathbf{v} \cdot e_{d} =0\ \text{on}\ \partial \Omega \cap \partial S$};
			\draw (376,157.4) node [anchor=north west][inner sep=0.75pt]    {$e_{d}$};
			\draw (41,130.4) node [anchor=north west][inner sep=0.75pt]    {$( D\mathbf{v}) \ \mathbf{v} =-\nabla p\ $};
			\draw (152,111.4) node [anchor=north west][inner sep=0.75pt]    {$\text{in}\ \Omega \ $};
			\draw (209,161.4) node [anchor=north west][inner sep=0.75pt]    {$\theta $};
			\end{tikzpicture}
			\caption{This picture represents a streamline associated with an incompressible, inviscid fluid with constant density in a tube.  In such case, the fluid is underlying an impermeable solid wall $S$ (homogeneous Neumann condition on $\partial S$) and a permeable porous media $P$ (non-homogeneous Neumann condition on $\partial P$).\vspace{-0.3cm}}
			\label{fig2}
		\end{figure}
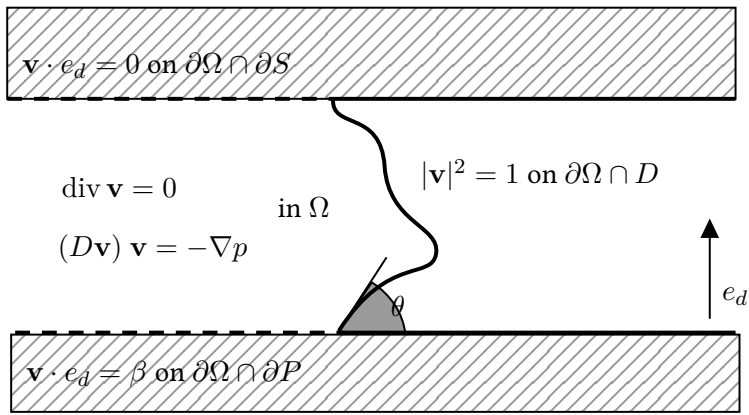
	\end{center}

	When the fluid is confined in a container $D$, the stream function $u$ still satisfies the Bernoulli equations \eqref{e:bernoulli} in $D\cap B_1$, while the interaction between the fluid and the domain wall $\partial D$ is encoded by the boundary condition on $\partial D\cap B_1$. \medskip
	
	One of the possible boundary conditions is the \emph{no-slip} condition:
	$$u=0\quad\text{on}\quad B_1\cap\partial D,$$
	which leads to the following system for $u$ 
	\begin{equation}\label{e:bernoulli-no-slip}\begin{cases}
	\Delta u=0 &\text{in}\quad B_1\cap D\cap\{u>0\},\medskip\\
	|\nabla u|=\sqrt{Q(x)} &\text{on}\quad B_1\cap D\cap\partial\{u>0\},\\
	u=0\quad\text{and}\quad |\nabla u|\ge\sqrt{Q(x)}&\text{on}\quad B_1\cap\partial D.
	\end{cases}
	\end{equation}
	The regularity of the free boundary $\partial\{u>0\}$ and its interaction with $\partial D$ is already well-understood and it is well-known that the variational counterpart of this problem is the classical Alt-Caffarelli problem with an obstacle $\partial D$. Indeed, solutions of this boundary value problem can be obtained by minimizing the functional \eqref{e:alt-caf} among the functions $u$ satisfying the additional constraint $\{u>0\}\subset D$ (or equivalently, $u\equiv 0$ on $B_1\setminus D$). The regularity of the free boundary of the solutions was studied in \cite{cls}; we also refer to \cite{rtv} and \cite{stv} for the case of almost-minimizers and operators with variable coefficients, and to \cite{dsv} for an analysis of the fine structure of the free boundary in dimension two. In particular, in \cite{cls}  it was shown that if $u$ is a solution to \eqref{e:bernoulli-no-slip}, then the free boundary  $\partial\{u>0\}$ approaches $\partial D$ tangentially and can be described by the graph of a $C^{1,\sfrac12}$ function over $\partial D$. \medskip
	
	Other boundary conditions are widely studied in fluid dynamics as they naturally arise in thermal convection problems in industry and geophysics, in particular, in problems with \emph{fluid-porous} interaction (see for instance \cite{ChangChangChen2017:StabilityPoiseuillePorous,HillStraughan2008:PoiseuillePorousNumerical,patne2023stability,phygasdin,generalboundary,generalboundary2} and the references therein). In many cases the permeability of the porous interface is modelled through the following non-homogeneous Neumann condition: 
	$$\partial_{\nu}u=\beta\quad\text{on}\quad B_1\cap\partial D,$$
	where $\nu$ is the exterior normal to $\partial D$. In particular, it is known that (see for instance \cite{HillStraughan2008:PoiseuillePorousNumerical}), in the presence of non-homogeneous fluid-porous interaction, an angle $\theta$ is formed between the streamline and the permeable interface (see Figure \ref{fig2}), which is completely different from the no-slip behavior. Here, in this paper, we develop a free boundary regularity theory for boundary conditions of this type.

	\subsection{The free boundary problem} In the present paper we consider the problem
	\begin{equation}\label{e:bernoulli-D}\begin{cases}
	\Delta u=0 &\text{in}\quad B_1\cap D\cap \{u>0\},\medskip\\
	|\nabla u|=\sqrt{Q(x)} &\text{on}\quad B_1\cap D\cap \partial\{u>0\},\medskip\\
	\partial_\nu u=\beta(x) &\text{on}\quad B_1\cap \partial D\cap \{u>0\},
	\end{cases}
	\end{equation}
	where the container $D$ is a fixed open set with smooth boundary in $\R^d$,  $\nu$ is the exterior normal to $\partial D$ and $\beta:\partial D\to\R$ is a fixed smooth (not necessarily positive) function. In the system \eqref{e:bernoulli-D} the solution $u$ is allowed to be positive on $\partial D$ and the Neumann boundary condition on $\partial D$ corresponds to the case of permeable boundary. This condition gives rise to a new phenomenon with respect to the problem from \cite{cls,dsv}, as it turns out that the free boundaries subjected to this permeability condition do not approach the wall of the container tangentially. Precisely, we will show that the solutions of \eqref{e:bernoulli-D} satisfy a Young-type law, that is, the free boundary $\partial\{u>0\}$
	meets the fixed wall $\partial D$ at an angle depending explicitly on $\beta$ and $Q$.
	We  notice that the non-homogeneous Neumann condition on $\partial D$ in \eqref{e:bernoulli-D} is the free boundary counterpart of the capillarity term in the context of sessile liquid drops \cite{CM1,CM2,CFdrops, dephimaggi}.
	\begin{center}
		\begin{figure}[h]\label{fig:1}
			\begin{tikzpicture}[scale=1.8]
			\draw[looseness=.6,rotate=-3] (-1.95,-1) node[left] {$\partial D$}
			to[bend left] coordinate (msx) (-1.25,0.25)
			to[bend left] coordinate (mp) (2.22,0.28)
			to[bend right] coordinate (mdx) (1.65,-1)
			to[bend right] coordinate (mm) (-1.95,-1)
			-- cycle;
			\filldraw [white] plot [smooth] coordinates {(1,-0.01) (1.22,0.5) (1,1.21) (0.4,1.3) (0,1.8) (-1,1) (-1.13,0.54) (-1,-0.03)};
			\draw[looseness=.6,dashed,rotate=-3] (-1.25,0.25)
			to[bend left] coordinate (mp) (2.22,0.28);
			\filldraw [gray!50, rotate=1] (1,-0.01) arc (0:360:1cm and 0.2cm);
			\filldraw[gray] (0.7,0) -- (1,0) -- (1.14,0.235) --(0.7,0);
			\draw[fill=gray] (0.7,0) arc (180:56.5:0.3cm);
			\draw (0.2,0.75) node {$\Delta u=0$};
			\draw (1.9,0.85) node {$u\equiv0$};
			\node[label={[rotate=80]:\small $|\nabla u|=\sqrt{Q}$}] at (-1.1,0.85) {};
			\draw[black,dashed, rotate=1] (1,-0.01) arc (0:180:1cm and 0.2cm);
			\draw[black, fill=gray!50,rotate=1] (-1,-0.01) arc (180:360:1cm and 0.3cm);
			\draw[-latex, black] (1,-0) -- (1.57,-0.3);
			\draw (1.3,-0.65) node [anchor =south] {$-\nabla u$};
			\draw[-latex, black, dashed] (0,0) -- (0,-0.63);
			\draw (0.45,-0.55) node {$\nu_{D}$};
			\draw[-latex, black] (1,-0) -- (1.6,0);
			\draw (1.6,-0.05) node [anchor =south] {$-\nabla_T u$};
			\draw (0.6,0)--(1.01,0) -- (1.25,0.4); 
			\draw [black] plot [smooth] coordinates {(1,-0.01)  (1.22,0.5) (1.27,0.9)  (1.25,1.3) (1.22, 1.5)};
			\draw [black] plot [smooth] coordinates {(-1,1.4) (-1.1,1.1) (-1.15,0.8) (-1.13,0.54)  (-1,-0.03)};
			\draw [black] plot [smooth] coordinates {(-1.1,1.1) (-1.0,1) (-0.8,0.92) (-0.6,1.1-0.2) (0,1.2-0.2) (0.7,1.05) (1.15,1.2) (1.25,1.3)};
			\draw [black, dashed] plot [smooth] coordinates {(-1.1,1.1) (-1.0,1.25) (-0.6,1.4) (-0.1,1.47) (0.5,1.3+0.2) (1.15,1.4) (1.25,1.3)};
			\end{tikzpicture}
			\caption{This picture represents a  solution to \eqref{e:bernoulli-D} with smooth free boundary.}
			\label{fig1}
		\end{figure}
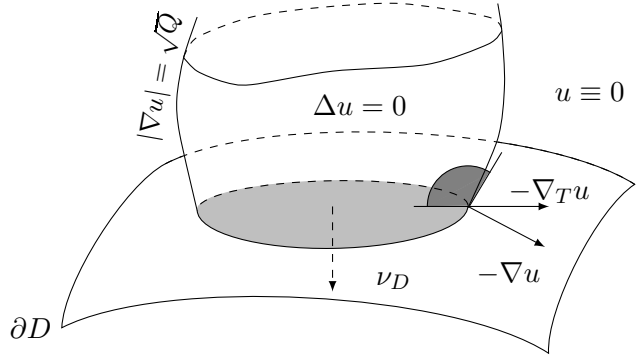
	\end{center}

	\subsection{Variational formulation} The variational problem associated to \eqref{e:bernoulli-D} is the following:
	\be\label{min.toy}
	\!\!\!\min\left\{\int_{D}{|\nabla u|^2 + Q\ind_{\{u>0\}}\mathrm{d}x} + \int_{B_1\cap\partial D}2\beta u\,\mathrm{d}\HH^{d-1}\,:\,u \in H^1(B_1\cap \overline{D}),\, u=\phi \text{ on }\partial B_1 \cap D\right\},
	\ee
	where $\phi \in H^{1/2}(B_1)$ is the boundary datum, and $Q$ and $\beta$ are given H\"{o}lder continuous functions. In order to study the regularity of the minimizers and their free boundary around a point $x_0\in\partial D\cap B_1$ (without loss of generality we can assume that $x_0=0$), we consider a smooth change of coordinates
	$$\Phi:\R^d\to\R^d$$
	such that $\Phi(0)=0$ and in a neighborhood of the origin:
	$$\Phi(D)=\{x_d>0\}\qquad\text{and}\qquad \Phi(\partial D)=\{x_d=0\}.$$
	Setting
	$$E:=\Phi(B_1)\ ,\quad E^+:=E\cap\{x_d>0\}\quad\text{and}\quad  E':=E\cap\{x_d=0\},$$
	we get that if $w\in H^1(D\cap B_1)$ is a solution of the variational problem \eqref{min.toy}, then the function
	$$u\in H^1(E^+)\ ,\quad u=w\circ \Phi^{-1},$$
	is a minimizer of the functional
	\be\label{e:def-F}
	\mathcal F(u,E):=\int_{E^+}\Big(\nabla u\cdot A(x)\nabla u+\ind_{\{u>0\}}Q\Big)\,dx+\int_{E'}2\beta\,u\,dx',
	\ee
	in the sense that
	\begin{equation}\label{e:def-minimality-F}
	\mathcal F(u,E)\le \mathcal F(v,E)\quad\text{for every}\quad v\in H^1(E^+)\quad\text{with}\quad \begin{cases}
	v\ge 0\quad\text{on}\quad E^+\\
	v= u\quad\text{on}\quad\partial E\cap\{x_d>0\}\ ,
	\end{cases}
	\end{equation}
	where $A$ is a symmetric elliptic matrix with variable H\"older coefficients, and $Q$ and $\beta$ are  H\"older continuous functions depending on $\Phi$ and on the corresponding functions in \eqref{min.toy} (see \cref{cor:main} and its proof for a concrete choice of $\Phi$ and the precise expressions of $A$, $Q$ and $\beta$).\medskip
	\begin{definition}\label{d:minimizer}
		In what follows, given an open set $E\subset\R^d$ and a non-negative function $u\in H^1(E^+)$, we will say that $u$ "minimizes the functional $\mathcal F$ in $E$" if \eqref{e:def-minimality-F}  holds.
	\end{definition}	
	
	In the rest of the paper we will work directly with minimizers (in the sense of \cref{d:minimizer}) of the functional $\mathcal F$ (defined in \eqref{e:def-F} above) on an open subset $E$ of $\R^d$. Since we are interested in the local behaviour of the minimizers and the free boundaries, we will assume that $E$ is the ball $B_1$.
	
	\subsection{Notations and main assumptions on the functional}\label{sub:intro-assumptions}
	Before we state our main regularity results, we list the main assumptions on the matrix $A$ and the functions $Q$ and $\beta$ from \eqref{e:def-F}:
	\begin{enumerate}
		\item[$\qquad(\mathcal H1)$] $A$ is a symmetric matrix with variable H\"older continuous coeffients defined on $B_1\cap \{x_d\ge 0\}$,
		$$A \in C^{0,\delta_A}\big(B_1\cap \{x_d\ge 0\};\R^{d\times d}\big),$$
		and $A$ is elliptic, that is there are constants $\Lambda_A\ge 1$ such that
		\begin{equation}\label{e:lambda-A-Lambda-A}
		\Lambda_A^{-2} |\xi|^2 \leq \xi\cdot A(x)\xi \leq \Lambda_A^2 |\xi|^2,\quad\text{for every $\xi \in \R^d$ and every $x\in B_1\cap \{x_d\ge 0\}$};
		\end{equation}
		we also define the function
		\begin{equation}\label{e:def-of-a}
		a:B_1\cap \{x_d\ge 0\}\to\R\ ,\qquad a(x):=\sqrt{e_d\cdot A(x)e_d}\ ,
		\end{equation}
		which is clearly a H\"older continuous bounded from below by $\Lambda_A^{-1}$;\medskip
		\item[$\qquad(\mathcal H2)$] $Q:B_1\cap \{x_d\ge 0\}\to\R$ is a positive H\"older continuous function,
		$$Q\in C^{0,\delta_Q}\big(B_1\cap \{x_d\ge 0\}\big),$$ and there is a constant $\lambda_Q>0$ such that
		\begin{equation}\label{e:lambda-Q}
		\lambda_Q\le Q(x)\quad\text{for every}\quad x\in B_1\cap \{x_d\ge 0\}\,;\smallskip
		\end{equation}
		\item[$\qquad(\mathcal H3)$] $\beta:B_1\cap \{x_d= 0\}\to\R$ is H\"older continuous; there is $\delta_\beta>0$ for which
		$$\beta\in C^{0,\delta_\beta}\big(B_1\cap \{x_d= 0\}\big).$$
	\end{enumerate}	
	\begin{remark}
		We notice that the function $\beta$ might change sign. Moreover, the sign of $\beta$ affects the angle between the free boundary $\partial\{u>0\}$ and the fixed boundary $\{x_d=0\}$; for instance, when $\beta=0$, these two boundaries are orthogonal. The angle between $\partial\{u>0\}$ and $\{x_d=0\}$ is determined by all the quantities $a$, $Q$ and $\beta$; for instance, we will show that when $-a\sqrt{Q}<\beta< a\sqrt{Q}$, they always meet transversally (see \eqref{def.theta} for the precise expression of the angle).
	\end{remark}	
	
	\subsection{On the topology of $\{x_d\ge 0\}$ and the definition of the free boundary}\label{sub:topology} In what follows, given a set $\Omega\subset\R^d\cap\{x_d\ge 0\}$, we will say that $\Omega$ is {\it open} if it is {\it relatively open in $\{x_d\ge 0\}$} (for instrance, according to this convention, the set $H:=\{(x_1,x'',x_d)\in\R^d\ :\ x_d\ge0,\ x_1+x_d>0\}$ is open). In particular, this means that if $\Omega\subset\{x_d\ge 0\}$ is open, then also $\Omega\cap\{x_d=0\}$ is open inside $\{x_d=0\}$. Moreover, we will denote by $\partial\Omega$ {\it the boundary of $\Omega$ with respect to the topology of $\{x_d\ge 0\}$}. This means that the boundary of the set $H$, defined above, is $\partial H=\{(x_1,x'',x_d)\in\R^d\ :\ x_d\ge0,\ x_1+x_d=0\}$.
	\subsection{Regularity of the free boundary}
	\noindent The main result of the present paper is the following.
	\begin{theorem}\label{t:main}
		Let $A$, $Q$ and $\beta$ be as in \cref{sub:intro-assumptions}, and suppose that
		\begin{equation}\label{e:beta-small}	
		|\beta(x)| < a(x)\sqrt{Q(x)}\qquad\text{on}\qquad B_1':=B_1\cap\{x_d=0\}.
		\end{equation}
		Let
		$$u:B_1\cap\{x_d\ge 0\}\to\R\,,\quad u\in H^1(B_1^+)\,,\quad u\ge 0\ \,\text{on}\,\ B_1^+\,,$$
		be a minimizer of $\mathcal{F}$ in $B_1$. Then, $u$ is Lipschitz continuous on $B_1\cap\{x_d\ge 0\}$, the set  $\{u>0\}\cap B_1'$ is an open subset of $B_1'$ and the free boundary can be decomposed as
		$$B_1'\cap\partial\{u>0\}=\mathrm{Reg}(u)\cup \mathrm{Sing}(u),$$
		where:
		\begin{enumerate}
			\item[\rm(i)] $\mathrm{Reg}(u)$ is a relatively open subset of $B_1'\cap\partial\{u>0\}$. Moreover, in a $\R^d$-neighborhood of any point $x_0\in \mathrm{Reg}(u)$, the set $$\partial\{u>0\}\cap \{x_d\ge 0\}$$ is a $(d-1)$-dimensional $C^{1,\alpha}$ manifold with boundary; the boundary of this manifold is precisely $\partial\{u>0\}\cap \{x_d=0\}$ and is, as well, a $(d-2)$-dimensional $C^{1,\alpha}$ manifold.
			\vspace{0.2cm}
			\item[\rm(ii)]  $\mathrm{Sing}(u)$ is a closed set and there is a universal critical dimension $d^* \geq 5$ such that:
			\begin{itemize}
				\item if $d < d^*$, then $\ \mathrm{Sing}(u)$ is empty;
				\item if $d = d^*$, then $\ \mathrm{Sing}(u)$ is a locally finite set of isolated points;
				\item if $d > d^*$, then $\ \mathrm{Sing}(u)$ has Hausdorff dimension at most $d - d^*$.
			\end{itemize}\vspace{0.2cm}
		\end{enumerate}
	\end{theorem}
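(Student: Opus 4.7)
I would follow the classical Alt--Caffarelli strategy, adapted to the half-ball geometry and the Neumann boundary term. The first task is to establish the baseline properties: Lipschitz regularity of $u$ on $B_1 \cap \{x_d \ge 0\}$ up to the fixed boundary, and non-degeneracy (both in the interior and at boundary points of the positivity set on $B_1'$). The Lipschitz bound is obtained by the standard comparison with harmonic replacements and a dichotomy at the boundary, with the crucial observation that the strict inequality \eqref{e:beta-small} prevents the competitor $v = 0$ from becoming energetically favorable in a neighborhood of a boundary point with $u > 0$. Non-degeneracy combined with the usual density estimates yields that $\{u > 0\} \cap B_1'$ is an open subset of $B_1'$, that $\partial \{u > 0\}$ has locally finite $\mathcal{H}^{d-1}$ measure, and that the positivity set has positive density at every boundary point.

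The second step is the blow-up analysis. By freezing the coefficients $A(x_0)$, $Q(x_0)$, $\beta(x_0)$ at a point $x_0 \in B_1' \cap \partial\{u > 0\}$, I would introduce a Weiss-type monotonicity formula for the rescaled functional on half-balls, with boundary contribution of order $r^{d-1}$, accounting for the linear boundary integral. The monotonicity then produces, along subsequences, a global minimizing cone $u_0$ of the frozen functional on $\{x_d \ge 0\}$. The heart of the argument is the classification: in dimension $d = 2$, any such cone is a wedge making with $\{x_d = 0\}$ exactly the Young angle $\theta$ determined by the identity
\[
a(x_0)\sqrt{Q(x_0)}\,\cos\theta \;=\; -\beta(x_0),
\]
which is well-defined thanks to \eqref{e:beta-small}; in particular $\theta \in (0, \pi)$. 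A point is declared regular if its blow-up is a (rotated) $2$-dimensional wedge solution (times $\R^{d-2}$), and singular otherwise.

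For regular points, the improvement-of-flatness scheme of De Silva must be adapted to our setting: I would prove that if $u$ is sufficiently close in $L^\infty$ to the model wedge at some scale, then it is closer at a smaller scale in a tilted direction. The partial Harnack principle has to respect two boundary conditions simultaneously (the Bernoulli condition along the free boundary and the Neumann condition on $\{x_d = 0\}$), and the linearised problem becomes a mixed Dirichlet/oblique-Neumann problem on the wedge. Solving it and iterating gives that $\partial\{u > 0\} \cap \{x_d \ge 0\}$ is, locally near regular points, a $C^{1,\alpha}$ manifold with boundary, whose trace on $\{x_d = 0\}$ is the $(d-2)$-dimensional $C^{1,\alpha}$ wetting boundary. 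This handles part (i).

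Part (ii) follows from a Federer--Almgren dimension reduction based on the Weiss formula. Blow-ups at singular points are homogeneous global minimizers that are not wedge solutions; the non-emptiness of the singular stratum forces the existence of a non-trivial singular cone in dimension $d$. Compactness under blow-up/blow-down and the standard reduction argument bring any singular cone down to a one-dimensional cone in some lower dimension, where we already know only wedges occur. Defining $d^*$ as the smallest dimension in which a non-trivial homogeneous singular minimizer of the frozen functional exists produces the stated trichotomy. I expect the main obstacle to be the classification of blow-ups at boundary points and, correspondingly, the linearised problem in the improvement of flatness: the Young angle depends on the point through $a$, $Q$, $\beta$, so the reference cone varies and the oblique boundary condition on $\{x_d = 0\}$ must be handled uniformly in a neighborhood, which requires careful Schauder-type estimates on wedges and a quantitative version of the rigidity of wedge-type cones.
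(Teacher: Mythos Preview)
Your overall strategy matches the paper's closely: Lipschitz continuity, non-degeneracy, Weiss monotonicity and blow-up to homogeneous cones, two-dimensional classification, De Silva improvement of flatness with a linearised problem on the wedge, and Federer dimension reduction. Two points, however, need attention.

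\medskip

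\textbf{The bound $d^\ast\ge 5$ is not established.} Your dimension reduction, as written, feeds only on the two-dimensional classification and therefore yields at best $d^\ast\ge 3$. The theorem asserts $d^\ast\ge 5$, and this requires an additional, substantial ingredient: a Caffarelli--Jerison--Kenig type stability inequality for one-homogeneous global minimizers of the frozen functional $J$ (the paper's \cref{t:stability-inequality-CJK-type}). One computes the second inner variation of $J$ along vector fields tangent to $\{x_d=0\}$, obtaining
\[
\int_{\{u>0\}^+}|\nabla v|^2\,dx\;\ge\;\int_{(\partial\{u>0\})^+}H\,v^2\,d\mathcal H^{d-1}
\]
for all $v\in C^\infty_c(\R^d\setminus\{0\})$, where $H$ is the mean curvature of the free boundary. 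The capillary boundary term is handled by showing that the relevant test functions (built from the Hessian eigenvalues of $u$, as in Jerison--Savin) satisfy a homogeneous Neumann condition on $\{x_d=0\}$, which follows from $\partial_d u\equiv m$ there. This stability inequality then rules out singular cones in dimensions $3$ and $4$ exactly as in \cite{js}. Without this step your argument leaves $d^\ast$ at $3$.

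\medskip

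\textbf{Viscosity condition at corner points.} A subtler issue concerns the epsilon-regularity step. At points of $\partial\{u>0\}\cap\{x_d=0\}$ one needs a viscosity condition that simultaneously encodes the overdetermined condition $|A^{1/2}\nabla_{x'}u|=\sqrt{Q-\beta^2/a^2}$ and the Neumann condition $e_d\cdot A\nabla u=\beta$; the paper formulates this as an ``either/or'' touching condition (\cref{def:solutionnew}, (VS3)). Crucially, for the capillary functional one cannot deduce this from a tangent-ball argument as in the classical one-phase problem: the paper instead proves it by a blow-up plus dimension-reduction argument that ultimately appeals to the two-dimensional classification (see the proof of \cref{t:viscosity}). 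Your outline does not isolate this difficulty; the partial Harnack and the linearised problem (which, incidentally, carries two \emph{Neumann} conditions on the wedge faces, not a Dirichlet/oblique pair) only work once this corner viscosity condition is in hand.
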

	
	In \cref{cor:main} below we obtain the regularity of the minimizers of \eqref{min.toy} as a corollary of \cref{t:main}. In particular, \cref{cor:main} implies that, in dimensions $d=2$, $d=3$ and $d=4$, the minimizers of \eqref{min.toy} are classical solutions of the free boundary system \eqref{e:bernoulli-D}.
	\begin{theorem}\label{cor:main}
		Suppose that $D$ is a $C^{1,\alpha}$-regular domain in $\R^d$; let
		$$Q:D\cap B_1\to\R$$ be a H\"older continuous function bounded from below by a positive constant $\lambda_Q>0$; let
		$$\beta:\partial D\cap B_1\to\R$$
		be a H\"older continuous function such that $|\beta|<\sqrt{Q}$ on $\partial D\cap B_1.$\\Let
		$u:B_1\cap \overline D\to\R$ be a non-negative function minimizing the functional
		$$\mathcal F_{\text{\tiny D}}(u,B_1):=\int_{B_1\cap D}\big(|\nabla u|^2+\ind_{\{u>0\}}Q(x)\big)\,dx+\int_{\partial D\cap B_1}2\beta\,u\,d\HH^{d-1},$$
		in $B_1$. Then, $u$ is Lipschitz continuous in $B_1\cap \overline D$ and the positivity set $\{u>0\}$ is a relatively open subset of
		$\overline D$. Let $\partial\{u>0\}$ be the boundary of $\{u>0\}$ with respect to relative topology of $\overline D$. Then, 
		$$\partial\{u>0\}\cap \partial D$$
		can be decomposed into a disjoint union of
		a (relatively open) regular part $\ \mathrm{Reg}(u)$ and a singular part $\ \mathrm{Sing}(u)$ such that:
		\begin{enumerate}
			\item[\rm (i)]  in a neighborhood of any point $x_0=(x_0',0)\in \mathrm{Reg}(u)$, $\partial\{u>0\}\cap \overline D$ is a $(d-1)$-dimensional $C^{1,\alpha}$ manifold with boundary in $B_1$ that meets the hyperplane $\partial D$ at an angle
			\be\label{def.theta}
			\theta=\arctan\left(\frac{1}{|\beta|}\sqrt{Q-{\beta^2}}\right).
			\ee
			In particular, $\partial\{u>0\}\cap \partial D$ is a $(d-2)$-dimensional $C^{1,\alpha}$ embedded submanifold of $\partial D$.
			\vspace{0.2cm}
			\item[\rm (ii)] there is a universal critical dimension $d^* \geq 5$ such that:
			\begin{itemize}
				\item if $d < d^*$, then for any minimizer $u$, the singular set $\ \mathrm{Sing}(u)$ is empty;
				\item if $d = d^*$, then $\ \mathrm{Sing}(u)$ is a locally finite set of isolated points;
				\item if $d > d^*$, then $\ \mathrm{Sing}(u)$ has Hausdorff dimension at most $d - d^*$.	
			\end{itemize}\vspace{0.2cm}
		\end{enumerate}
	\end{theorem}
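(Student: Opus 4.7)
The plan is to deduce \cref{cor:main} from \cref{t:main} by locally flattening $\partial D$. Fix $x_0\in\partial D\cap B_1$; after a rigid motion we may assume $x_0=0$ and $\nu_D(0)=-e_d$. Since $\partial D$ is $C^{1,\alpha}$, a tubular-neighborhood construction yields a $C^{1,\alpha}$ diffeomorphism $\Phi:U\to E$, defined on a neighborhood $U$ of $0$ with image $E\subset\R^d$, such that $\Phi(0)=0$, $\Phi(D\cap U)=E^+$, $\Phi(\partial D\cap U)=E'$, and, crucially, $D\Phi(0)$ is an orthogonal matrix with $D\Phi(0)\nu_D(0)=-e_d$. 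A concrete choice is $\Phi(x)=(\pi(x),-d_{\partial D}(x))$, with $\pi$ a local orthonormal parametrization of $\partial D$ near $0$ and $d_{\partial D}$ the signed distance to $\partial D$ (positive inside $D$).

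Setting $w:=u\circ\Phi^{-1}$, the change-of-variables formula converts $\mathcal F_{\text{\tiny D}}(u,U)$ into $\mathcal F(w,E)$, with
$$A(y)=|\det D\Phi^{-1}(y)|\,D\Phi(\Phi^{-1}(y))\,D\Phi(\Phi^{-1}(y))^{T},$$
$\tilde Q(y)=Q(\Phi^{-1}(y))|\det D\Phi^{-1}(y)|$, and $\tilde\beta(x')=\beta(\Phi^{-1}(x',0))\,J_\partial(x')$, where $J_\partial$ is the surface Jacobian of $\Phi^{-1}|_{E'}$. The $C^{1,\alpha}$-regularity of $\Phi$ combined with the Hölder assumptions on $Q$ and $\beta$ yield $(\mathcal H1)$-$(\mathcal H3)$ for $A,\tilde Q,\tilde\beta$; moreover $A$ is uniformly elliptic because $D\Phi\,D\Phi^{T}$ is. Since $D\Phi(0)$ is orthogonal, $A(0)=I$ and $a(0)=1$; analogously $J_\partial(0)=|\det D\Phi^{-1}(0)|=1$, so $\tilde Q(0)=Q(0)$ and $\tilde\beta(0)=\beta(0)$. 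The strict hypothesis $|\beta(0)|<\sqrt{Q(0)}$ thus becomes $|\tilde\beta(0)|<a(0)\sqrt{\tilde Q(0)}$; by continuity it persists on $E'$ (shrinking $U$ if needed), which is precisely \eqref{e:beta-small}. Minimality of $u$ for $\mathcal F_{\text{\tiny D}}$ in $U$ transfers by the inverse change of variables to minimality of $w$ for $\mathcal F$ in $E$ in the sense of \cref{d:minimizer}.

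We now apply \cref{t:main} to $w$: this gives Lipschitz continuity and the regular/singular decomposition of $E'\cap\partial\{w>0\}$ with the stated Hausdorff dimension bound. Pulling back via the $C^{1,\alpha}$ diffeomorphism $\Phi^{-1}$ produces the analogous decomposition of $\partial D\cap\partial\{u>0\}$, preserving the $C^{1,\alpha}$ manifold structure. For the contact angle, we blow $w$ up at $0$; since $A(0)=I$, the limit is a half-plane solution $w_0(y)=c\,(\mathbf n\cdot y)_+$ with inward unit normal $\mathbf n$ and $c>0$. The free boundary condition $|\nabla w_0|^2=\tilde Q(0)$ gives $c=\sqrt{Q(0)}$, while the Euler--Lagrange boundary identity $(A\nabla w)\cdot e_d=\tilde\beta$ on the wet part of $E'$ forces $c\,(\mathbf n\cdot e_d)=\beta(0)$. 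The acute angle $\theta$ between the free boundary and the wall coincides with the angle between $\mathbf n$ and $e_d$, whence $\cos\theta=|\mathbf n\cdot e_d|=|\beta|/\sqrt{Q}$ and $\tan\theta=\sqrt{Q-\beta^2}/|\beta|$. Orthogonality of $D\Phi(0)$ preserves this angle under the pullback, giving \eqref{def.theta}.

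The main technical point is the choice of $\Phi$ with $D\Phi(0)$ orthogonal: this ensures $a(0)=1$ and makes the dimensionless strict inequality $|\beta|<\sqrt{Q}$ together with the angle formula transfer verbatim from the flat model to the curved container. The remaining work---routine verification of $(\mathcal H1)$-$(\mathcal H3)$, a continuity argument for the strict inequality, and the explicit Euler--Lagrange computation at a blow-up---presents no essential difficulty.
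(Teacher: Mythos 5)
Your overall strategy is the same as the paper's: fix a boundary point, flatten $\partial D$ locally, push the functional forward to $\mathcal F$ on a half-ball, check $(\mathcal H1)$--$(\mathcal H3)$ and the strict inequality $|\widetilde\beta|<a\sqrt{\widetilde Q}$ near the base point, and invoke \cref{t:main}. The one substantive difference is the choice of flattening map, and that choice is where a gap appears.

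You propose $\Phi(x)=(\pi(x),-d_{\partial D}(x))$, i.e.\ the tubular-neighborhood chart built from the signed distance and the nearest-point projection. For this to define a $C^{1,\alpha}$ diffeomorphism one needs the nearest-point projection to $\partial D$ to be single-valued and $C^{0,\alpha}$ (equivalently, one needs $d_{\partial D}$ to be $C^{1,\alpha}$) in a uniform one-sided neighborhood. This is guaranteed for $C^{1,1}$ boundaries (positive reach), but is generally false for merely $C^{1,\alpha}$ boundaries with $\alpha<1$: such a domain can have reach zero, with interior points at arbitrarily small distance from $\partial D$ admitting several nearest boundary points, so $d_{\partial D}$ is not even $C^1$. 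As stated, then, your $\Phi$ need not exist with the regularity you require, and this is not a routine step to repair within the tubular-neighborhood framework.

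The paper sidesteps the issue with a cruder but fully adequate map. Writing $\partial D$ locally as the graph $x_d=\eta(x')$ with $\eta(0')=0$, $\nabla_{x'}\eta(0')=0$, and $\eta\in C^{1,\alpha}$, it sets $v(x',x_d)=u(x',x_d+\eta(x'))$. This \emph{shear} map has Jacobian determinant identically equal to $1$, so no $|\det|$ factors need to be tracked, and its differential at $0$ is already the identity, so $A(0)=\mathrm{Id}$, $a(0)=1$, $\widetilde Q(0)=Q(0)$, $\widetilde\beta(0)=\beta(0)$ — the very normalizations you rely on — hold automatically, and the resulting $A,\widetilde Q,\widetilde\beta$ are manifestly H\"older under the $C^{1,\alpha}$ hypothesis. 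Everything else in your write-up (persistence of the strict inequality by continuity, the identification of the blow-up with a half-plane solution $h_{q,m,\nu}$ with $q=\sqrt{Q(0)}$, $m=\beta(0)/a(0)=\beta(0)$, and the resulting $\tan\theta=\sqrt{Q-\beta^2}/|\beta|$) is correct and amounts to reading off \cref{t:main} and \cref{def:half-plane}. If you replace the signed-distance chart by the graph shear, your proof is complete.
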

	\begin{proof}
		Let $x_0\in\partial D$ be a point on the free boundary $\partial\{u>0\}\cap\partial D$. Without loss of generality, we may suppose that $x_0=0$ and that there is a $C^{1,\alpha}$ function
		$$\eta:B_r'\to(-r,r)$$
		such that $\eta(0')=|\nabla_{x'}\eta|(0')=0$ and
		$$D\cap\Big\{B_r'\times(-r,r)\Big\}=\Big\{(x',x_d)\in B_r'\times(-r,r)\ :\ x_d>\eta(x')\Big\}.$$
		Then, the function
		$$v(x',x_d):=u\big(x',x_d+\eta(x')\big),$$
		defined in $C:=B_\delta\cap\{x_d\ge 0\}$ for some $\delta$ small enough is a local minimizer of the functional
		\be\label{F}
		\mathcal F(v,B_\delta):=\int_{B_\delta^+}\big(\nabla v\cdot A(x)\nabla v+\ind_{\{v>0\}}\widetilde Q\big)\,dx+\int_{B_\delta'}2\widetilde \beta\,v\,dx',
		\ee
		where
		$$A(x',x_d):=\begin{pmatrix}1&\dots&0&-\partial_1\eta(x')\\
		\vdots&\ddots&\vdots&\vdots\\
		0&\dots&1&-\partial_{d-1}\eta(x')\smallskip\\
		-\partial_1\eta(x')&\dots&-\partial_{d-1}\eta(x')&1+|\nabla_{x'}\eta|^2
		\end{pmatrix}\ ,$$
		$$\widetilde Q(x',x_d)=Q(x',x_d+\eta(x'))\qquad\text{and}\qquad \widetilde \beta(x',0)=\beta(x',\eta(x'))\sqrt{1+|\nabla_{x'}\eta(x')|^2}.$$
		Since by construction
		$$|\widetilde\beta(0)|=|\beta(0)|< \sqrt{Q(0)}=\sqrt{\widetilde Q(0)},$$
		we can choose $\delta$ small enough in such a way that
		$$|\widetilde\beta|<\sqrt{\widetilde Q}\quad\text{on}\quad B_\delta,$$
		so the claim follows from \cref{t:main}.
	\end{proof}	

	\section{Main steps of the proof of \cref{t:main} and further results}
	We recall the notation
	\begin{equation}\label{e:def-E+E'}
	E^+:=E\cap\{x_d>0\}\qquad\text{and}\qquad E':=E\cap\{x_d=0\},
	\end{equation}
	for any set $E\subset \R^d$, thus $B_1\cap \{x_d\ge 0\}=B_1^+\cup B_1'$. From now on we consider a function
	$$u:B_1^+\cup B_1'\to\R\ ,\quad u\in H^1(B_1^+)\ ,\quad u\ge 0\quad \text{on}\quad B_1^+\cup B_1',$$
	which minimizes $\mathcal F$ in $B_1$ (in the sense of \cref{d:minimizer}), where $\mathcal F$ is given by \eqref{e:def-F} with $A$, $Q$ and $\beta$ satisfying the conditions from \cref{sub:intro-assumptions}.
	\subsection{On the regularity of the free boundary away from the hyperplane}\label{rem:above} In the open set $B_1^+$, away from the hyperplane $\{x_d=0\}$, any minimizer $u$ of the functional $\mathcal F$ from \eqref{e:def-F} is a minimizer of the one-phase Bernoulli problem with variable coefficients, that is,
	$$\int_{B_1^+}\big(\nabla u\cdot A(x)\nabla u+\ind_{\{u>0\}}Q\big)\,dx\le \int_{B_1^+}\big(\nabla v\cdot A(x)\nabla v+\ind_{\{v>0\}}Q\big)\,dx,$$
	for any $v\in H^1(B_1^+)$ such that $u-v\in H^1_0(B_1^+)$. Thus, the regularity of $u$ and of the free boundary $\partial\{u>0\}$ in the interior of $B_1^+$ follows from the regularity results for solutions of the one-phase problem with variable coefficients: we refer to \cite{dets} and \cite{trey1,trey2}; and also to \cite{stv} for the $2$-dimensional case and to \cite{rtv} for the special case of operators of the form $A=a(x)\text{\rm Id}$.\smallskip

	Thus, the focus of the present paper is on:
	\begin{itemize}
		\item the regularity of the free boundary $\partial\{u>0\}\cap (B_1^+\cup B_1')$ up to the hyperplane $\{x_d=0\}$; \smallskip
		\item the structure of the contact sets $\{u>0\}\cap B_1'$ and its boundary $\partial\big(\{u>0\}\cap B_1'\big)$ as subsets of the $(d-1)$-dimensional hyperplane $\{x_d=0\}$.
	\end{itemize}

	\subsection{Lipschitz continuity of the minimizers and its consequences}\label{sub:intro-lipschitz}
	In \cref{s:lipschitz} we will prove the following Theorem (we stress that here we do not assume \eqref{e:beta-small}).
	\begin{theorem}[Lipschitz continuity of $u$]\label{t:lipschitz-continuity}
		Let $A$, $Q$ and $\beta$ be as in \cref{sub:intro-assumptions}, and let the function
		$$u:B_1\cap\{x_d\ge 0\}\to\R\,,\ \, u\in H^1(B_1^+)\,,\ \, u\ge 0\ \,\text{on}\,\ B_1^+\,,$$
		be a minimizer of the functional $\mathcal{F}$ (from \eqref{e:def-F}) in $B_1$. Then, $u$ is Lipschitz continuous in  $B_{\sfrac12}\cap\{x_d\ge 0\}$.
	\end{theorem}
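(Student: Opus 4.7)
The plan is to follow the Alt--Caffarelli strategy for Lipschitz regularity, adapted to handle the surface integral $\int_{B_1'} 2\beta u\,d\HH^{d-1}$ in the functional $\mathcal{F}$. The proof splits into three parts: a local $L^\infty$ bound, a ``growth from the free boundary'' estimate $\mean{\partial B_r^+(x_0)\setminus B_r'} u \le Cr$ at free boundary points $x_0$, and boundary elliptic estimates on the positivity set. Since regularity away from $\{x_d=0\}$ reduces to the classical variable-coefficient Alt--Caffarelli problem (covered by \cite{dets,trey1,trey2}), the focus is on points on or near the hyperplane $\{x_d=0\}$.

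First, I would establish $\|u\|_{L^\infty(B_{3/4}^+)}\le C(1+\|u\|_{L^2(B_1^+)})$ via a De Giorgi iteration with truncation competitors of the form $v=u-\eta(u-k)_+$; the additional surface contribution $\int 2\beta(u-k)_+\,dx'$ is controlled via the trace inequality and absorbed into the bulk Dirichlet energy, exploiting $\|\beta\|_{L^\infty}<\infty$. Next---and this is the main step---fix $x_0\in B_{1/2}'\cap\partial\{u>0\}$ and, for small $r$, define the competitor $v\in H^1(B_r^+(x_0))$ as the minimizer of the quadratic capillarity functional
\[
v\mapsto \int_{B_r^+(x_0)} \nabla v\cdot A\nabla v\,dx+\int_{B_r'(x_0)} 2\beta v\,dx'
\]
subject to $v=u$ on the spherical part $\partial B_r^+(x_0)\setminus\{x_d=0\}$. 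Then $v$ solves $\dive(A\nabla v)=0$ in $B_r^+(x_0)$ together with the Neumann condition $A\nabla v\cdot e_d=\beta$ on $B_r'(x_0)$. Testing this Euler--Lagrange equation against $u-v$ (admissible since $u-v$ vanishes on the spherical part) and expanding the quadratic form, the $\beta$-surface integrals cancel exactly in the minimality $\mathcal{F}(u)\le \mathcal{F}(v)$, yielding the classical energy gap
\[
\int_{B_r^+(x_0)} \nabla(u-v)\cdot A\nabla(u-v)\,dx\le Q_{\max}\,|\{u=0\}\cap B_r^+(x_0)|.
\]
From this, a mixed Poincar\'e inequality on $B_r^+(x_0)$ (zero trace of $u-v$ on the spherical part), combined with Harnack for the $A$-harmonic function $v$ and the existence of points in $B_r^+(x_0)$ where $u$ vanishes, gives the desired growth bound. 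Finally, inside $\{u>0\}$ the function $u$ satisfies $\dive(A\nabla u)=0$ with Neumann data $\beta$ on $B_r'$; boundary Schauder estimates then give $|\nabla u|\le C(\|u\|_{L^\infty(B_r)}/r+\|\beta\|_{C^{0,\delta_\beta}})$ at distance $r$ from the free boundary, and combining with the growth bound yields a uniform Lipschitz bound in $B_{1/2}^+\cup B_{1/2}'$.

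The principal technical obstacle is the admissibility of the competitor $v$, which requires $v\ge 0$. When the sign of $\beta$ pushes the capillarity-$A$-harmonic extension below zero near $B_r'(x_0)$, I would replace $v$ by $v_+=\max(v,0)$ and track the error: a barrier/scaling argument shows that the negative part satisfies $\|v_-\|_{L^\infty}\le C\|\beta\|_{L^\infty}r$ in the problematic regime, so the extra volume and surface contributions produced by this truncation are of order $r^{d+1}$, hence subcritical with respect to the $O(r^d)$ leading terms in the energy gap above. Alternatively, one can add to $v$ a small linear-in-$x_d$ barrier to restore non-negativity while preserving scaling. Once this subtlety is handled, the Harnack/Poincar\'e/non-degeneracy mechanism of Alt--Caffarelli goes through unchanged.
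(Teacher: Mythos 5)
Your key observation that the $\beta$-surface integrals cancel exactly when testing the Euler--Lagrange equation of the capillarity-harmonic replacement $v$ against $u-v$ is correct and clean, and the energy gap $\int_{B_r^+}\nabla(u-v)\cdot A\nabla(u-v)\le Q_{\max}|\{u=0\}\cap B_r^+|\le Cr^d$ does follow (and the truncation to $v_+$ only costs another $O(r^d)$, not $O(r^{d+1})$ as you claim, but that is still absorbed). However, the crucial step -- getting the linear growth bound $\meantext{B_r^+(x_0)}u\lesssim r$ at free boundary points from this energy gap -- is not justified by the ingredients you list. The estimate $\|\nabla(u-v)\|_{L^2(B_r^+)}\le Cr^{d/2}$, plus the Poincar\'e inequality, gives $\|u-v\|_{L^2(B_r^+)}\le Cr^{d/2+1}$; but this $L^2$ control does not rule out that $v$ (hence $u$) is of size $Mr$ on most of $B_r^+$ while $u$ dips to zero at a single point $x_0$ with only an $O(r^{d/2+1})$ $L^2$ discrepancy. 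Harnack applied to $v$ tells you $v$ is roughly constant, but to transfer the vanishing of $u$ at $x_0$ to the statement that this constant is $\lesssim r$ you would need pointwise, not just $L^2$, control of $u-v$, which the energy gap alone does not provide. You would also need a modulus of continuity for $u$ (to make the pointwise condition $u(x_0)=0$ meaningful and to initiate any iteration), which your De Giorgi step does not supply.

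The paper closes this gap along a different route: after establishing H\"older continuity (\cref{l:holder0}, via a Morrey--Campanato iteration with a Neumann-homogeneous $A$-harmonic replacement), it proves the generalized Laplacian measure bound $|\dive(A\nabla u)|(B_r^+)\le Cr^{d-1}$ (\cref{l:lap.est1}), and then -- and this is the key technical input -- an almost-monotonicity estimate for the half-ball averages $r^{-d}\int_{B_r^+}u$ (\cref{l:iteration-interior} and \cref{l:iteration-boundary}), obtained by integrating by parts over level sets of $A$-capacitary potentials on annuli $B_{2r}\setminus B_r$; this is the variable-coefficient, boundary-adapted version of the classical identity $\frac{d}{dr}\big(r^{1-d}\int_{\partial B_r}u\big)=r^{1-d}\Delta u(B_r)$. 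Iterating this (\cref{p:mean-value-estimate}) and using continuity to send the mean to $u(x_0)=0$ gives the linear growth, after which gradient Schauder estimates finish the proof. If you want to salvage your energy-gap route, you either need the measure bound on $\dive(A\nabla u)$ to upgrade the $L^2$ estimate on $u-v$ to an $L^\infty$ estimate via a Green's-function bound, or you must replace the Harnack heuristic by a genuine compactness/contradiction argument -- which in turn requires the prior H\"older estimate you skipped.
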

	
	The key steps in the proof of \cref{t:lipschitz-continuity} are the following. First, in \cref{s:change}, we prove that the minimizers of \eqref{e:def-F} satisfy an almost-minimality condition for the corresponding functional with "freezed" coefficients and we use this information in \cref{l:holder0} to prove that $u$ is continuous; then, in \cref{l:lap.est1}, we estimate the total variation of the measure $\text{div}(A\nabla u)$ on balls $B_r$; in these preliminary lemmas, we use methods, which were already known (see for instance \cite{velectures}). \smallskip
	
	\noindent 
	The crux of the proof of \cref{t:lipschitz-continuity} lies in \cref{l:iteration-interior} and \cref{l:iteration-boundary}, where we show that $\meantext{B_r^+}{u}$ controls $\meantext{B_{2r}^+}{u}$ up to a summable error term; the key point in  this estimate is an almost-monotonicity-type formula for the mean value of $u$ over the level sets of $A$-capacitary functions on the ring $B_{2r}\setminus B_r$. We then iterate this estimate (\cref{p:mean-value-estimate}) to prove that $\meantext{B_r^+}{u}\lesssim r$ and we conclude the proof of \cref{t:lipschitz-continuity} by  interior and boundary gradient estimates (\cref{l:gradest-ball} and \cref{l:gradest-half-ball}).

	\begin{remark}[The positivity set is open]
		In particular, \cref{t:lipschitz-continuity} implies that the set
		$$\Omega_u:=\Big\{x\in B_1\cap\{x_d\ge0\}\ :\ u(x)>0\Big\},$$
		is relatively open in the half-ball (with boundary) $B_1\cap\{x_d\ge0\}$; and also that the set
		$$\Omega_u':=\Omega_u\cap\{x_d=0\},$$
		is an open subset of the hyperplane $B_1':=B_1\cap \{x_d=0\}$.
	\end{remark}
	
	As in the classical Alt-Caffarelli's problem, the Lipschitz continuity of $u$ implies the compactness of the blow-up sequences. Precisely, let $x_0\in \partial\Omega_u\cap B_1'$ be fixed and, for every $r>0$ small enough, let
	$$u_{x_0,r}(x):=\frac1ru(x_0+rx),$$
	where we notice that if
	$$\overline B_{\rho}(x_0)\subset B_1\qquad\text{and}\qquad \|\nabla u\|_{L^\infty(B_\rho(x_0))}\le L\,,$$
	for some $L>0$, then $u_{x_0,r}$ is defined and $L$-Lipschitz on $B_{\sfrac{\rho}r}$. Thus, for every sequence $r_k\to0$, there are a subsequence $r_{k_n}\to0$ and a non-negative $L$-Lipschitz function
	$$u_0:\R^d\cap\{x_d\ge0\}\to\R\,$$
	such that $u_{x_0,r_{n_k}}$ converges to $u_0$ locally uniformly in $\R^d\cap\{x_d\ge0\}$. As usual,  we  will say that
	\begin{center}$u_0$ is a {\it blow-up limit} of $u$ at $x_0$. \end{center}

	\subsection{Basic results for the blow-up limits} The analysis of the blow-up limits is essential for the study of the structure of the free boundary $\partial\Omega_u$; in particular, in \cref{s:blow-up} we will prove the following basic facts about the blow-up limits at a free boundary point
	$$x_0\in B_1'\cap\partial\{u>0\}.$$
	As above, we suppose that $A$, $Q$ and $\beta$ satisfy the assumptions from \cref{sub:intro-assumptions}.
	\begin{enumerate}[\rm (B1)]
		\item If $\,\beta(x_0)+ a(x_0)\sqrt{Q(x_0)}>0\,$,
		then every blow-up limit $u_0$ of $u$ at $x_0$ is not identically zero. This follows from the {\it non-degeneracy} estimate on the solution $u$ (\cref{p:non-degeneracy}), which we prove by combining the interior non-degeneracy with a clean-up argument based on viscosity sliding method. Finally, the Lipschitz continuity and the non-degeneracy provide a lower density estimate for $\O$ (\cref{cor:lower.H}).\medskip
		\item For every blow-up limit $u_0$, there is a $d\times d$ real matrix $M_{x_0}$ such that the function
		\begin{equation}\label{e:def-tilde-u-0-intro}
		\widetilde u_0:\R^d\cap\{x_d\ge0\}\to\R\ ,\quad \widetilde u_0(x):=u_{0}(M_{x_0}(x)),
		\end{equation}
		is a {\it global minimizer} of the one-phase functional
		\begin{equation}\label{e:def-F-x-0-intro}
		\mathcal F_{x_0}(v,B_R):=\int_{B_R^+}|\nabla v|^2\,\mathrm{d}x + Q(x_0)\big|\{v>0\}\cap B_R^+\big|+
		2\,\frac{\beta(x_0)}{a(x_0)}\int_{B_R'}\,|v| \,\mathrm{d}\HH^{d-1}\,,
		\end{equation}
		in the sense that, for every ball $B_R\subset\R^d$, we have
		$$\mathcal F_{x_0}(\widetilde u_0,B_R)\le \mathcal F_{x_0}(v,B_R)\quad\text{for every}\quad v\in H^1(B_R^+)\quad\text{with}\quad v=\widetilde u_0\quad\text{on}\quad (\partial B_R)^+.$$
		Notice that this follows essentially from the fact $\widetilde u_0$ is a blow-up of the function
		$$\widetilde u:=u\circ M_{x_0}\quad\text{at the point}\quad\widetilde x_0:=M_{x_0}^{-1}(x_0).\medskip$$
		\item In \cref{p:weiss} we prove a monotonicity formula for $\widetilde u$ in the spirit of \cite{w}, from which we deduce that the blow-up limit $\widetilde u_0$ is one-homogeneous.
		In particular, this allows to reduce the classification of the blow-up limits to the classification of the one-homogeneous global minimizers of $\mathcal F_{x_0}$, which we discuss in the next subsection.
	\end{enumerate}	
	
	\subsection{Analysis of the one-homogeneous global minimizers}\label{sub:intro-one-homogeneous-global-minimizers}
	Given two constants
	$$q>0\quad\text{and}\quad m\in(-q,q),$$
	a set $E\subset\R^d$ and a function $v:\R^d\cap\{x_d\ge0\}\to\R$ in $H^1_{loc}(\R^d)$, we define
	\begin{equation}\label{e:def-J-intro}
	J(v,E):=\int_{E^+}|\nabla v|^2\,\mathrm{d}x + q^2\big|\{v>0\}\cap E^+\big|+
	2\,m\int_{E'}\,|v| \,\mathrm{d}\HH^{d-1}\,,
	\end{equation}
	where $E^+$ and $E'$ are as in \eqref{e:def-E+E'}. We say that a function $u:\R^d\cap\{x_d\ge0\}\to\R$ is a global minimizer of $J$ if $u$ is non-negative and
	$$J(u,B_R)\le J(v,B_R)\quad\text{for every}\quad v\in H^1(B_R^+)\quad\text{with}\quad v= u\quad\text{on}\quad (\partial B_R)^+,$$
	for every ball $B_R\subset\R^d$. Then, the blow-up limit $\widetilde u_0$ defined in \eqref{e:def-tilde-u-0-intro} is a global minimizer of the functional
	\be\label{equivalenteJF}J=\mathcal F_{x_0}\quad\text{with}\quad q=\sqrt{Q(x_0)}\quad\text{and}\quad m=\frac{\beta(x_0)}{a(x_0)}\ .\ee
	We notice that the global minimizers of $J$ are Lipschitz continuous and that solve the PDE
	\begin{equation}\label{e:bernoulli-J}\begin{cases}
	\Delta u=0\quad\text{in}\quad \{x_d>0\}\cap \{u>0\},\medskip\\
	|\nabla u|=q\quad\text{on}\quad \{x_d>0\}\cap \partial\{u>0\},\medskip\\
	e_d\cdot\nabla u=m\quad\text{on}\quad \{x_d=0\}\cap \{u>0\},
	\end{cases}
	\end{equation}
	where the first and the third equations hold in the classical sense, while the second equation is intended in viscosity sense (see for instance \cite{desilva}), which becomes classical where $\{x_d>0\}\cap \partial\{u>0\}$ is smooth. 
	
	\begin{remark}
		We stress that in \eqref{e:bernoulli-J} it is not specified what are the conditions satisfied by $u$ at points  $\partial\{u>0\}\cap \{x_d=0\}$. One consequence of \cref{t:main} is that $\partial\{u>0\}\cap \{x_d=0\}$ has codimension two and zero capacity, but this is not an information that we have a priori for minimizers. Thus, we show that $u$ satisfies a suitable boundary condition in viscosity sense also at these points (see \cref{sub:viscosity} and \cref{s:visco}); this boundary condition will be crucial in the proof of \cref{t:epsilon-regularity}. 
	\end{remark}
	
	\begin{definition}[Half-plane solutions]\label{def:half-plane}
		Let $d\ge 2$. Given the real constants  $q>0$ and $m\in(-q,q)$, and a unit vector $\nu\perp e_d$, we consider the function
		$$h_{q,m,\nu}:\R^d\cap\{x_d\ge0\}\to\R\ ,$$
		defined as
		$$h_{q,m,\nu}(x):=\Big(\sqrt{q^2-m^2}\,(x\cdot \nu) + m\,(x\cdot e_d)\Big)^+\ ,$$
		and we notice that $h_{q,m,\nu}$ satisfies \eqref{e:bernoulli-J}.
	\end{definition}	
	
	The main results concerning the one-homogeneous global minimizers of $J$ are the following.
	
	\begin{theorem}[A Caffarelli-Jerison-Kenig type stability inequality]\label{t:stability-inequality-CJK-type}
		Let $u:\R^d\cap \{x_d\geq 0\}\to\R$ be a non-negative one-homogeneous global minimizer of $J$ with
		$q>0$ and
		$$
		-q<m<q.
		$$
		Suppose that $\R^{d-1}\setminus\{0\}\subset \text{\rm Reg}(u)$. Then $\{u>0\}$ supports the following stability inequality:
		\be\label{e:stability-inequality}
		\int_{\{u>0\}^+}|\nabla v|^2\,dx\ge\int_{(\partial\{u>0\})^+}|H| v ^2\,d\HH^{d-1}\,,
		\ee
		for every $v \in C^\infty_c(\R^d\setminus \{0\})$, where $H$ is the mean curvature of $\partial\{u>0\}$.
	\end{theorem}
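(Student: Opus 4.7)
The proof adapts the Caffarelli-Jerison-Kenig second-variation argument to our capillary setting. First I would exploit the hypothesis to simplify the geometry: since $\R^{d-1}\setminus\{0\}\subset\mathrm{Reg}(u)\subset\partial\{u>0\}$ and $u$ is one-homogeneous and continuous, we get $u\equiv 0$ on $\{x_d=0\}$. Hence the positivity set $\Omega:=\{u>0\}$ lies in $\{x_d>0\}$, the capillary integral $2m\int_{E'}u\,d\HH^{d-1}$ vanishes on $u$ and on any competitor obtained by a flow preserving $\{x_d=0\}$, and the smooth free boundary $\Sigma:=(\partial\{u>0\})^+$ meets $\{x_d=0\}$ transversally along $\R^{d-1}\setminus\{0\}$ at a Young-type angle determined by $m/q$. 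The function $u$ is harmonic in $\Omega$ with $|\nabla u|=q$ on $\Sigma$.

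Given $v\in C^\infty_c(\R^d\setminus\{0\})$, I would construct a smooth vector field $X\in C^\infty_c(\R^d\setminus\{0\};\R^d)$ satisfying
$$X\cdot e_d\equiv 0 \quad\text{on } \{x_d=0\},\qquad X\cdot\nabla u=v \quad\text{on } \Sigma.$$
The natural choice $X=v\,\nabla u/|\nabla u|^2$ is valid in a tubular neighborhood of $\Sigma\cap\{x_d>0\}$ thanks to Lipschitz continuity and nondegeneracy of $u$. It must be corrected near the contact manifold $\Gamma:=\Sigma\cap\{x_d=0\}$ by adding a tangential component, so as to force $X_d=0$ on $\{x_d=0\}$ while preserving $X\cdot\nabla u=v$ on $\Sigma$; the required linear system admits a smooth solution along $\Gamma$ precisely because $|m|<q$ guarantees that the projection of $\nabla u$ onto the tangent directions of $\{x_d=0\}$ is non-zero there.

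Next, let $\Phi_t$ be the flow of $X$; it preserves $\{x_d\geq 0\}$ and is the identity outside a compact set, so that the inner variation $u_t:=u\circ\Phi_t^{-1}$ is an admissible competitor in a sufficiently large ball $B_R$. By minimality $J(u_t,B_R)\geq J(u,B_R)$; the first variation vanishes by the Euler-Lagrange equations (harmonicity, Bernoulli, and Young at $\Gamma$), so $J''(0)\geq 0$. Computing the second variation via the change of variables, using $\Delta u=0$ and integration by parts, the Dirichlet and volume terms combine to
$$\tfrac{1}{2}J''(0)=\int_{\Omega^+}|\nabla(X\cdot\nabla u)|^2\,dx-\int_{\Sigma^+}|H|\,\frac{(X\cdot\nabla u)^2}{|\nabla u|^2}\,d\HH^{d-1},$$
with all boundary contributions at $\Gamma$ vanishing thanks to $X_d\equiv 0$ on $\{x_d=0\}$ combined with Young's law, and with the capillary integral contributing nothing since $u_t\equiv 0$ on $\{x_d=0\}$. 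Substituting $X\cdot\nabla u=v$ on $\Sigma$ and $|\nabla u|^2=q^2$ there yields the stated stability inequality.

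The principal technical obstacle is the construction of $X$ near $\Gamma$ and the verification that the contact-curve terms in the second-variation expansion cancel. Both rely crucially on the strict inequality $|m|<q$ (which rules out a degenerate Young angle) and on the one-homogeneity of $u$, which provides the global structure needed for the expansions to close up. A minor secondary point is that the above construction yields the inequality along a dense subclass of $C_c^\infty(\R^d\setminus\{0\})$, from which the general case follows by approximation together with Dirichlet's principle, since both sides of the inequality depend only on $v|_{\{x_d>0\}}$.
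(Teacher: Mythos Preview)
Your opening simplification is the decisive error. You deduce from $\R^{d-1}\setminus\{0\}\subset\mathrm{Reg}(u)\subset\partial\{u>0\}$ that $u\equiv 0$ on $\{x_d=0\}$ and then discard the capillary term. But this reading is incompatible with the very notion of regular point used in the paper: if $x_0\in\mathrm{Reg}(u)$, the blow-up at $x_0$ is a half-plane solution $h_{q,m,\nu}$, which is strictly positive on $\{x\cdot\nu>0\}\cap\{x_d=0\}$, and the $\eps$-regularity theorem then forces $u>0$ on an open subset of $\{x_d=0\}$ near $x_0$. Hence the hypothesis, read literally, is vacuous. The paper's proof makes the intended meaning explicit: one only assumes that the free boundary is smooth away from the origin, i.e.\ $(\partial\Omega\cap\{x_d=0\})\setminus\{0\}=\mathrm{Reg}(u)$. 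In that situation the wetting region $\Omega'=\Omega\cap\{x_d=0\}$ is a nonempty open cone and the capillary term is genuinely present; your picture of $\Sigma$ meeting $\{x_d=0\}$ ``along $\R^{d-1}\setminus\{0\}$'' is also geometrically incoherent, since the contact set of a regular $(d-1)$-manifold with boundary is $(d-2)$-dimensional.

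Once $\Omega'\neq\emptyset$, the second-variation computation must carry the capillary contribution throughout. The paper does this with harmonic-replacement competitors rather than the pushforward $u\circ\Phi_t^{-1}$ you use: $u_t$ solves $\Delta u_t=0$ in $\Phi_t(\Omega)^+$ with $\partial_d u_t=m$ on $\Phi_t(\Omega)'$. The second-order expansion produces explicit terms $\delta m=m\,\mathrm{div}\,\eta$ and $\delta^2 m$ on $\Omega'$, together with a homogeneous Neumann condition for the linearized state $u'$; a careful sequence of integrations by parts (using $\partial_d u=m$ on $\Omega'$ and $\eta\cdot e_d=0$ there) cancels all of these against matching bulk pieces, leaving
\[
\tfrac12\,J''(0)=\int_{\Omega^+}|\nabla u'|^2\,dx-\int_{(\partial\Omega)^+}(u')^2 H\,d\HH^{d-1},
\]
with $u'$ harmonic, $\partial_d u'=0$ on $\Omega'$, and $u'=q(\eta\cdot\nu)$ on $(\partial\Omega)^+$. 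The final step chooses $\eta=\varphi\,\xi$ with $\xi=-\nabla_{x'}u/|\nabla_{x'}u|^2$: the key lemma that $\nabla_{x'}u\neq 0$ on $\partial\Omega\setminus\{0\}$ (this is precisely where $|m|<q$ is used) makes $\xi$ well defined and automatically tangent to $\{x_d=0\}$, and gives $u'=\varphi$ on the free boundary. Dirichlet's principle (for the mixed Dirichlet--Neumann problem in $\Omega^+$) then yields the stated inequality for $\varphi$. Your construction of $X$ is in the right spirit, but the actual cancellation of the capillary contributions is the heart of the matter and cannot be bypassed.
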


	\begin{theorem}[Non-existence of singular cones in low dimension]\label{t:stability}
		Suppose that $d\in\{2,3,4\}$ and that $q>0$, $m\in\R$ are constants such that
		$$-q<m< q.$$
		Let $ u:\R^d\cap\{x_d\ge0\}\to\R$ be a (non-negative) one-homogeneous global minimizer of the functional $J$ defined in \eqref{e:def-J-intro}. Then, $u$ is necessarily of the form $h_{q,m,\nu}$, for some unit vector $\nu$ orthogonal to $e_d$.
	\end{theorem}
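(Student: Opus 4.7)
The natural strategy is to combine \cref{t:stability-inequality-CJK-type} with a Federer-style dimension reduction, inducting on $d\in\{2,3,4\}$. The base case $d=2$ is explicit: in polar coordinates, one-homogeneity and harmonicity of $u$ force $u(r,\theta)=r(A\cos\theta+B\sin\theta)^+$ on each connected component of positivity; imposing the free boundary slope $|\nabla u|=q$ together with the Neumann condition $e_d\cdot\nabla u=m$ at the appropriate endpoints in $\{\theta=0\}\cup\{\theta=\pi\}$ uniquely classifies $u$ as a half-plane $h_{q,m,\nu}$ (the only obstruction $m=\pm q$ being excluded by assumption).

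For the inductive step with $d\in\{3,4\}$, fix a one-homogeneous global minimizer $u$ and any $x_0\in\partial\{u>0\}\setminus\{0\}$. By \cref{t:lipschitz-continuity} and the non-degeneracy plus Weiss monotonicity (properties (B1)-(B3)), any blow-up $u_\ast$ of $u$ at $x_0$ is a global minimizer of the associated constant-coefficient functional, one-homogeneous at the new origin, and, combining the homogeneity of $u$ at $0$ with that of $u_\ast$ at its origin, translation-invariant along $\R x_0$. If $x_0\in\{x_d>0\}$, the hyperplane $\{x_d=0\}$ escapes to infinity, so $u_\ast$ is a one-homogeneous classical Alt-Caffarelli minimizer in $\R^d$ invariant along $\R x_0$; projecting out this translation yields a one-homogeneous classical minimizer in $\R^{d-1}$ with $d-1\in\{2,3\}$, which by the classical results of Alt-Caffarelli and Caffarelli-Jerison-Kenig must be a half-plane. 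If instead $x_0\in\{x_d=0\}\setminus\{0\}$, the translation is parallel to $\{x_d=0\}$ and the projection of $u_\ast$ is a one-homogeneous global minimizer of $J$ in $\R^{d-1}$ with the \emph{same} parameters $(q,m)$; the inductive hypothesis identifies it with $h_{q,m,\nu}$. In either case $x_0$ is a regular free boundary point, so $\mathrm{Sing}(u)\subset\{0\}$.

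In particular, the hypothesis $\R^{d-1}\setminus\{0\}\subset\mathrm{Reg}(u)$ of \cref{t:stability-inequality-CJK-type} holds for any residual non-half-plane candidate, and I would test \eqref{e:stability-inequality} with the scale-critical function $v(x)=|x|^{-(d-2)/2}\eta(\log|x|)$ for $\eta\in C^\infty_c(\R)$. Using one-homogeneity of $\{u>0\}$ and $\partial\{u>0\}$ (so that $H_{\partial\{u>0\}}(x)=|x|^{-1}H_\Sigma(x/|x|)$ on the link $\Sigma=\partial\{u>0\}\cap S^{d-1}$), both sides reduce to spherical integrals on $\Sigma$ multiplied by explicit one-dimensional integrals in $\log|x|$. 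Letting $\eta$ approximate indicators of ever-longer intervals and normalizing kills the radial contributions, leaving a Poincar\'e-Hardy-type inequality on $\Sigma$ that, for $d\le 4$, is incompatible with $H_\Sigma\not\equiv 0$. Hence $\Sigma$ is totally geodesic in $S^{d-1}_+$, and the prescribed angle along the equator $\Sigma\cap\{x_d=0\}$ coming from the viscosity Neumann condition pins $\Sigma$ down to the link of some half-plane $h_{q,m,\nu}$.

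The main obstacle I foresee is the sub-case $x_0\in\{x_d=0\}\setminus\{0\}$ of the dimension reduction: one must verify that the blow-up $u_\ast$ and its projection through $x_0^\perp$ are genuine global minimizers of $J$ with the \emph{same} constants $(q,m)$ rather than a rescaled functional, and that the translation-invariance direction is genuinely tangent to $\{x_d=0\}$ rather than tilting into the interior. A secondary subtlety is the integration-by-parts in the stability step, where the equatorial flux on $\{x_d=0\}$ must cancel against the viscosity Neumann condition $e_d\cdot\nabla u=m$ established in \cref{sub:viscosity}; this cancellation is what allows the half-plane boundary term to drop out of the stability inequality.
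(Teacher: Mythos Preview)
Your dimension reduction to establish $\mathrm{Sing}(u)\subset\{0\}$ is sound and matches the paper's logic. The two-dimensional base case is also essentially right, though you gloss over the fact that the polar-coordinate analysis produces, in addition to the half-planes $h_{q,m,\pm e_1}$, two further one-homogeneous solutions of the Euler--Lagrange system: the wedge $\big(\sqrt{q^2-m^2}\,|x_1|+mx_2\big)^+$ for $m<0$, and the degenerate $C(x_2)^+$. These satisfy \eqref{e:bernoulli-J} but are not minimizers, and the paper excludes them by explicit competitor constructions (\cref{l:global.deg.instable} and \cref{l:global.deg2}); you would need to do the same.

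The genuine gap is in your stability step for $d\in\{3,4\}$. Testing \eqref{e:stability-inequality} with the purely radial function $v(x)=|x|^{-(d-2)/2}\eta(\log|x|)$ and sending $\eta$ to a long plateau yields only
\[
\Big(\tfrac{d-2}{2}\Big)^2\,\HH^{d-1}\big(\Omega\cap\mathbb S^{d-1}_+\big)\ \ge\ \int_{\partial\Omega\cap\mathbb S^{d-1}_+} H_\Sigma\,d\HH^{d-2},
\]
which is a true inequality for \emph{every} stable cone and carries no angular information about $\Sigma$; it cannot force $H_\Sigma\equiv 0$ when $d\ge 3$. There is no ``Poincar\'e--Hardy inequality on $\Sigma$'' hiding here, because a radial $v$ is constant on each sphere and sees only the measures of $\Omega_\mathbb S$ and $\partial\Omega_\mathbb S$. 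This is exactly analogous to the classical one-phase problem, where radial tests give the Hardy threshold $(d-2)^2/4$ but do not rule out singular cones below dimension $5$.

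What the paper does instead is adapt the Jerison--Savin subsolution method: one shows (\cref{l:stability-hardy}) that stability fails whenever there exists a $(-\alpha)$-homogeneous $\phi>0$ with $\Delta\phi>\gamma|x|^{-2}\phi$ in $\Omega^+$, $\partial_\nu\phi\le H\phi$ on $(\partial\Omega)^+$, $\partial_d\phi\ge 0$ on $\Omega'$, and $\gamma\ge(\tfrac{d}{2}-1-\alpha)^2$. The test functions are then built from the Hessian eigenvalues of $u$, namely $\phi^2=\sum\lambda_k^2$ (with a weighted variant for $d=4$), exactly as in \cite{js}. The only new ingredient over the interior theory is the Neumann check $e_d\cdot\nabla\phi=0$ on $\Omega'$, which follows because $\partial_d(\partial_{ij}u)=0$ there for all $i,j$ (using $\partial_d u\equiv m$ and harmonicity). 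Your radial $v$ bypasses this geometric input entirely, and that is why it cannot close the argument.
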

	
	We will first prove \cref{t:stability} in dimension $d=2$ (see \cref{s:global2}). We will then use \cref{p:classification2D} to show that the minimizers are viscosity solutions (as we will explain in  \cref{sub:viscosity} below). Then,
	using this result, we will prove an epsilon-regularity Theorem (see \cref{t:epsilon-regularity}). At this point, we can apply the classical Federer's dimension reduction principle to reduce the analysis to the cones which are smooth outside the origin (see for instance \cite{velectures}). Finally, in \cref{s:stability.sect}, we will prove the validity of the  stability inequality of \cref{t:stability-inequality-CJK-type} and we complete the proof of \cref{t:stability} for $d=3$ and $d=4$.
	
	\subsection{Decomposition of the free boundary}\label{s:decomposition}
	Let $u$ be a minimizer of $\mathcal F$ with $A$, $Q$ and $\beta$ as in \cref{sub:intro-assumptions}. Let $x_0\in B_1'\cap\partial\{u>0\}$ be a point on the free boundary such that
	$$|\beta(x_0)|<a(x_0)\sqrt{Q(x_0)}.$$
	In view of the analysis carried out in \cref{s:global2} we decompose the free boundary $B_1'\cap\partial \{u>0\}$ as follows. Given $x_0\in B_1'\cap\partial \{u>0\}$, we say that:
	\begin{enumerate}[$\bullet$]
		\item $x_0$ is a regular point ($x_0\in\text{\rm Reg}(u)$) if
		there exists a blow-up limit $u_0$ of $u$ at $x_0$ for which $\widetilde u_0$ is a half-plane solution (as in \cref{def:half-plane}) with 
		$$q=\sqrt{Q(x_0)}\qquad\text{and}\qquad\displaystyle m=\frac{\beta(x_0)}{a(x_0)}.$$
		\item $x_0$ is a singular point ($x_0\in\text{\rm Sing}(u)$), if $x_0\notin \text{\rm Reg}(u).$
		\end{enumerate}

	\subsection{Viscosity solutions and epsilon-regularity}\label{sub:viscosity}
	In what follows, we will write every point $x\in\R^d$ as $x=(x',x_d)$ with $x'\in\R^{d-1}$ and $x_d\in\R$. Moreover, for functions $\varphi:\R^d\to\R$ we will indicate with $\nabla_{x'}\varphi$ the part of the gradient $\nabla\varphi$ tangent to $\{x_d=0\}$, that is,
	$$\nabla_{x'}\varphi(x):=\nabla \varphi(x)-\big(e_d\cdot \nabla \varphi(x)\big)e_d\,.$$
	\begin{definition}
		Given a set $\Omega\subset\R^d$ and two functions $u:\Omega\to\R$ and $v:\Omega\to\R$, we say that
		$u$ touches from below (res. from above) $v$ at $x_0\in\Omega$ if
		$$u(x_0)=v(x_0)\quad\text{and}\quad u\le v\ \big(\text{resp.}\ u\ge v\big)\quad \text{in}\quad \Omega.$$
	\end{definition}	
	\begin{definition}[Viscosity solutions]\label{def:solutionnew}
		Let $A$, $Q$ and $\beta$ be as in \cref{sub:intro-assumptions} and suppose that
		$$|\beta|<a\sqrt{Q}\quad\text{on}\quad B_1'.$$
		Let $u:B_1^+\cup B_1'\to\R$ be a non-negative continuous function. We say that $u$ satisfies
		\be\label{e:interno}
		\text{\rm div}\big(A\nabla u\big)=0\,\mbox{ in }\, B_1^+\cap\{u>0\},\qquad A \nabla u \cdot e_d = \beta\,\mbox{ on }\,B_1'\cap \{u>0\}\\
		\ee
		and
		\begin{align}\label{FB}
		\begin{split}
		|A^{1/2}\nabla u|= \sqrt{Q} &\quad \mbox{on}\quad  B_1^+\cap \partial\{u>0\},\\
		|A^{\sfrac12}\nabla_{x'} u |= \sqrt{Q-\frac{\beta^2}{a^2}} &\quad\mbox{on}\quad B_1'\cap \partial\{u>0\},
		\end{split}
		\end{align}
		in a viscosity sense if the following conditions are fulfilled.
		\begin{enumerate}
			\item[\rm(VS1)] \label{interno} $u$ satisfies \eqref{e:interno} in a weak-$H^1$ sense in $(B_1^+\cup B_1')\cap\{u>0\}$.
			\item[\rm(VS2)]\label{prima} Let $\psi \in C^2$ be defined in a neighborhood $\mathcal N\subset \R^d$ of $x_0 \in B_1^+\cap \partial\{u>0\}$ and let $|\nabla \psi|(x_0) \neq 0$.\\ Suppose that, in $\mathcal N$, $\psi^+$ touches $u$ from below (resp. from above) at $x_0$. Then
			$$
			|A^{\sfrac12}\nabla \psi|(x_0)\leq \sqrt{Q(x_0)}\quad \left(\text{resp. $|A^{\sfrac12}\nabla \psi|(x_0)\geq \sqrt{Q(x_0)}$}\right).
			$$
			\item[\rm(VS3)]\label{terza} Let $\psi \in C^2$ be defined in a neighborhood $\mathcal N\subset \R^d$ of $x_0 \in B_1'\cap \partial\{u>0\}$ and let $|\nabla_{x'} \psi|(x_0) \neq 0$. Suppose that, in $\mathcal N\cap\{x_d\ge 0\}$, $\psi^+$ touches $u$ from below (resp. from above) at $x_0$. Then, at least one of the following conditions holds:
			\begin{itemize}
				\item[(i)] 
				$|A^{\sfrac12}\nabla_{x'} \psi|(x_0)\leq \sqrt{Q(x_0)-\frac{\beta(x_0)^2}{a(x_0)^2}}\qquad \left(\text{resp. $|A^{\sfrac12}\nabla_{x'} \psi|(x_0)\geq \sqrt{Q(x_0)-\frac{\beta(x_0)^2}{a(x_0)^2}}$}\ \right).$
				
				\item[(ii)]$A \nabla \psi(x_0) \cdot e_d \le \beta(x_0) \qquad \left(\text{resp. $A \nabla \psi(x_0) \cdot e_d \ge \beta(x_0)$} \right).$ 
			\end{itemize}
		\end{enumerate}
	\end{definition}
	\begin{remark}
		We notice that these the conditions in {\rm(VS3)} mean that if $\psi$ is a function such that
		$$|A^{\sfrac12}\nabla_{x'} \psi|(x_0)> \sqrt{Q(x_0)-\frac{\beta(x_0)^2}{a(x_0)^2}}\qquad \text{and}\qquad A \nabla \psi(x_0) \cdot e_d > \beta(x_0),$$
		which in particular means that also 
		$$|A^{\sfrac12}\nabla \psi|(x_0)> \sqrt{Q(x_0)},$$
		then $\psi^+$ cannot touch $u$ from below at $x_0$. Conversely, if $\psi$ is such that
		$$|A^{\sfrac12}\nabla_{x'} \psi|(x_0)< \sqrt{Q(x_0)-\frac{\beta(x_0)^2}{a(x_0)^2}}\qquad \text{and}\qquad A \nabla \psi(x_0) \cdot e_d < \beta(x_0),$$
		so that, in particular,
		$$|A^{\sfrac12}\nabla \psi|(x_0)< \sqrt{Q(x_0)},$$
		then $\psi^+$ cannot touch $u$ from above at $x_0$.
	\end{remark}
	In \cref{s:visco} we will show that the minimizers of $\mathcal F$ are viscosity solutions. 
	\begin{theorem}[Minimizers are viscosity solutions]\label{t:viscosity}
		Let $A$, $Q$ and $\beta$ be as in \cref{sub:intro-assumptions} and suppose that
		$$|\beta|<a\sqrt{Q}\quad\text{on}\quad B_1'.$$
		Let $u$ be a local minimizer of $\mathcal{F}$ in $B_1$. Then, $u$ satisfies \eqref{e:interno}-\eqref{FB} in viscosity sense.
	\end{theorem}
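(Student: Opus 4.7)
I would split the proof into the three conditions (VS1), (VS2), (VS3) of \cref{def:solutionnew}, each of which requires a different technique.

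For (VS1), the statement is essentially the Euler--Lagrange equation and follows from a direct first-variation argument. Taking competitors of the form $v=u+\varepsilon\phi$ with $\phi\in C^\infty_c(B_1)$ supported inside $\{u>0\}\cap(B_1^+\cup B_1')$ (so that $v\ge 0$ for $|\varepsilon|$ small, of either sign), differentiating $\mathcal F(v,B_1)$ at $\varepsilon=0$ yields
\[
\int_{B_1^+}\nabla\phi\cdot A\nabla u\,dx+\int_{B_1'}\beta\,\phi\,dx'=0.
\]
Choosing $\phi$ compactly supported in $B_1^+$ recovers $\dive(A\nabla u)=0$ weakly in $\{u>0\}^+$, while testing with $\phi$ not vanishing on $B_1'$ and integrating by parts recovers the conormal condition $A\nabla u\cdot e_d=\beta$ on $B_1'\cap\{u>0\}$.

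For (VS2), the statement is the classical viscosity condition for variable-coefficient one-phase problems and the argument is now standard (cf.\ \cite{dets, velectures}). Assume for contradiction that $\psi^+$ touches $u$ from below at $x_0\in B_1^+\cap\partial\{u>0\}$ with $|A^{\sfrac12}\nabla\psi|(x_0)>\sqrt{Q(x_0)}$. Let $\psi_t(x):=\psi(x-t\nu)$ where $\nu$ is proportional to the conormal direction $A\nabla\psi(x_0)$, and consider the admissible competitor $w:=\max(u,\psi_t^+)$, which differs from $u$ only in the thin set $\{u=0,\psi_t>0\}$ near $x_0$. Expanding $\mathcal F(w)-\mathcal F(u)$ at order $t$ gives a leading contribution $\bigl(|A^{\sfrac12}\nabla\psi|^2(x_0)-Q(x_0)\bigr)\,\bigl|\{u=0,\psi_t>0\}\bigr|$, strictly positive, contradicting minimality. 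The touching-from-above inequality is symmetric and uses $\min(u,\psi_t^+)$.

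The novel and main step is (VS3). Arguing by contradiction, suppose $\psi^+$ touches $u$ from below at $x_0\in B_1'\cap\partial\{u>0\}$ with both strict inequalities
\[
|A^{\sfrac12}\nabla_{x'}\psi|(x_0)>\sqrt{Q(x_0)-\beta(x_0)^2/a(x_0)^2}\qquad\text{and}\qquad A\nabla\psi(x_0)\cdot e_d>\beta(x_0).
\]
The plan is to freeze the coefficients at $x_0$ using the linear change of variables $M_{x_0}$ from (B2) so that, after pulling back, the local behaviour of $u$ is compared to the frozen functional $J$ of \eqref{e:def-J-intro} with $q=\sqrt{Q(x_0)}$ and $m=\beta(x_0)/a(x_0)$. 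The half-plane solutions $h_{q,m,\nu}$ of \cref{def:half-plane} are the global minimizers of $J$ whose Young-type geometry is characterised precisely by $\sqrt{q^2-m^2}$ tangentially and $m$ vertically; the two strict inequalities above say that, after the change of variables, $\tilde\psi=\psi\circ M_{x_0}$ strictly beats the half-plane profile in both directions.

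I would then construct a competitor by translating $\tilde\psi^+$ along a vector $\varepsilon\xi$ with $\xi=\tau+s e_d$, $\tau\perp e_d$ and $s\ge 0$, and setting $w:=\max(u,\tilde\psi_\varepsilon^+)$, which equals $u$ outside $B_r(x_0)$ and creates a thin sliver of new positivity along the free boundary, with a nontrivial trace on $B_1'$. A first-order expansion of $\mathcal F(w)-\mathcal F(u)$ splits into three contributions: the new interior Dirichlet energy $\int|A^{\sfrac12}\nabla\tilde\psi|^2$, the bulk penalty $Q\,\mathbf 1_{\{w>0\}}$ over the sliver, and the boundary term $\int 2\beta\,\tilde\psi_\varepsilon$ along the new piece of $\{w>0\}\cap B_1'$. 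The two strict inequalities allow to choose the direction $(\tau,s)$ so that the tangential and normal contributions conspire to produce a strictly negative leading order for $\mathcal F(w)-\mathcal F(u)$, yielding the desired contradiction with minimality. The principal technical difficulty is the simultaneous balancing of the tangential and normal variations, which must be compatible with the exact Young law encoded by $h_{q,m,\nu}$; the H\"older oscillation of $A$, $Q$, $\beta$ is handled by localising to $B_r(x_0)$ with $r\ll\varepsilon$ and absorbing the resulting errors into the leading term.
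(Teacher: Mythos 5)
Your (VS3) proposal is a direct perturbation argument, which differs essentially from the paper's proof and has a genuine gap. The competitor $w=\max(u,\tilde\psi_\varepsilon^+)$ increases both the Dirichlet energy and the bulk $Q\ind_{\{\cdot>0\}}$ penalty over the sliver of new positivity; your claim that the capillarity term can be tuned, via the translation direction $(\tau,s)$, to dominate both is precisely the technical crux and is not established. Moreover, the first-order expansion requires controlling $\nabla u$ on $\{\tilde\psi_\varepsilon^+>u\}\cap\{u>0\}$, which is unavailable without already knowing regularity of the free boundary at $x_0$, and the geometry of the sliver itself is uncontrolled for a merely Lipschitz minimizer. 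A related issue already appears in your (VS2) sketch: the leading-order term $(|A^{\sfrac12}\nabla\psi|^2-Q)\,|\{u=0,\psi_t>0\}|$ is \emph{positive} when $|A^{\sfrac12}\nabla\psi|>\sqrt Q$, so $\mathcal F(w)\geq\mathcal F(u)$ is consistent with minimality rather than contradicting it.

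The paper's proof of (VS3) proceeds quite differently: it blows up $u$ and $\psi$ at $x_0$ to obtain a one-homogeneous global minimizer $u_0$ of $\mathcal F_{x_0}$ touched from below by the linear $\psi_0=\alpha(x\cdot\nu)$ with both slope inequalities violated; if $\partial\{u_0>0\}$ and $\partial\{\psi_0>0\}$ meet on $\mathbb{S}^{d-1}\cap\{x_d\ge 0\}\setminus\{0\}$, it blows up again at a secondary contact point to reduce dimension, terminating via the two-dimensional classification of \cref{p:classification2D}. Granted strict separation, one builds the linear barrier $h=(\alpha-\eta)(x\cdot\nu+\eta)$ below $u_0$ with slope $\alpha-\eta>q$, proves by slicing to $2$D and a viscous sliding argument that $h_+$ is the \emph{unique} minimizer of an auxiliary functional $\widetilde{\mathcal F}$ with penalty $\Lambda\in(q,\alpha-\eta)$ among competitors $\le h_+$, and concludes with a submodularity comparison using $v_1=u_0\vee h_+$ and $v_2=u_0\wedge h_+$ that forces $\widetilde{\mathcal F}(h_+)=\widetilde{\mathcal F}(v_2)$, contradicting uniqueness. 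The dimension reduction, the uniqueness of $h_+$ for $\widetilde{\mathcal F}$, and the lattice inequality are all absent from your proposal and do not follow from a first-order expansion. As the remark following the statement of \cref{t:viscosity} points out, this detour is necessary because a one-sided tangent plane at $x_0$ is not known to force a half-plane blow-up in the capillarity setting --- precisely the structural fact your balancing argument implicitly assumes.
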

	
	\begin{remark}
		We notice that, due to the capillarity term in the functionals $\mathcal F$ and $J$, it is not known if the presence of a one-sided tangent ball at some point $x_0\in\partial\{u>0\}\cap \{x_d=0\}$ implies that the blow-up is a half-plane solution. Thus, in \cref{s:visco}, we develop a different strategy, which is based on a dimension reduction argument and the classification of the blow-up limits in dimension two. This idea is general and can also find application in other contexts, for instance, in minimal surfaces. 
	\end{remark}

	The following is our main epsilon-regularity result for viscosity solutions, which follows by standard iteration arguments from an improvement-of-flatness result (see \cref{l:IMPF}).
	\begin{theorem}[Epsilon-regularity for viscosity solutions]\label{t:epsilon-regularity}
		Let $A$, $Q$ and $\beta$ be as in \eqref{sub:intro-assumptions}. Suppose that
		$$A(0)=\text{\rm Id}\qquad\text{and}\qquad |\beta|<a\sqrt{Q}\quad\text{on}\quad B_1'.$$
		There are constants $\eps,\sigma>0$ for which the following holds. Let $u:B_1^+\cup B_1'\to\R$ be a non-negative continuous function such that:
		\begin{enumerate}[$\bullet$]
			\item $u$ is a viscosity solution to \eqref{e:interno}-\eqref{FB} in $B_1$ and $0\in \partial\{u>0\}$;
			\item $u$ is $\eps$-flat in $B_1$, that is, there is a unit vector $\nu\in\R^d$ orthogonal to $e_d$ and such that
			$$h_{q,m,\nu}(x-\eps\nu)\le u(x)\le h_{q,m,\nu}(x+\eps\nu)\quad\text{for every}\quad x\in B_1^+\cap B_1',$$
			where $\ \displaystyle q:=\sqrt{Q(0)}\ $ and $\ \displaystyle m:=\frac{\beta(0)}{a(0)}\,.$
		\end{enumerate}
		Suppose that
		$$
		|A(x)-\text{\rm Id}|\leq \eps^\sigma,\qquad |Q(x)-Q(0)|\leq Q(0)\eps^\sigma,\qquad |\beta(x)-\beta(0)|\leq \beta(0)\eps^\sigma
		$$
		then $\partial\{u>0\}\cap\{x_d\ge0\}$ is a  $C^{1,\alpha}$ regular manifold with boundary in $B_{\sfrac12}$.
	\end{theorem}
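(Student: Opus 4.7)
The plan is to obtain \cref{t:epsilon-regularity} by iterating the improvement-of-flatness \cref{l:IMPF} at every free boundary point in $B_{\sfrac12}$, and then patching the interior and boundary iterations into a single $C^{1,\alpha}$ manifold with boundary. Given a free boundary point $x_0$, I shall use the rescaling $u_{x_0,r}(x):=r^{-1}u(x_0+rx)$, which remains a viscosity solution of the analogous problem with coefficients $(A(x_0+r\,\cdot),Q(x_0+r\,\cdot),\beta(x_0+r\,\cdot))$ oscillating by at most $Cr^{\delta}$ around their values at $x_0$, where $\delta:=\min\{\delta_A,\delta_Q,\delta_\beta\}$, thanks to $(\mathcal H1)$--$(\mathcal H3)$.

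At $x_0=0$, the hypotheses $A(0)=\text{\rm Id}$, $|\beta(0)|<a(0)\sqrt{Q(0)}$, and $\eps$-flatness of $u$ with respect to $h_{q,m,\nu_0}$ place us in the scope of \cref{l:IMPF}, which yields a universal $\rho\in(0,\sfrac12)$ and a unit vector $\nu_1\perp e_d$ with $|\nu_1-\nu_0|\le C\eps$ such that $u_{0,\rho}$ is $\eps/2$-flat in $B_1$ in direction $\nu_1$. Repeating the lemma on $u_{0,\rho^k}$ produces a sequence $(\nu_k)$ with $|\nu_{k+1}-\nu_k|\le C\eps\,2^{-k}$ and hence geometric convergence $\nu_k\to\nu_\infty(0)$. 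Performing the same iteration at any $x_0\in\partial\{u>0\}\cap B_{\sfrac12}'$, after transferring the initial flatness from $B_1$ to a smaller ball centered at $x_0$ up to a dimensional constant, gives limit normals $\nu_\infty(x_0)$ with a Hölder modulus in $x_0$ by a standard dyadic-scale comparison, and hence $C^{1,\alpha}$-regularity of $\partial\{u>0\}\cap\{x_d=0\}$ as a $(d-2)$-submanifold of $B_{\sfrac12}'$. At interior free boundary points $x_0$ with $(x_0)_d>0$, the initial half-plane flatness is inherited at scales below $(x_0)_d$ (since $h_{q,m,\nu_0}$ is smooth and non-vanishing away from its own free boundary), and the classical interior one-phase improvement-of-flatness from \cite{dets} produces the $(d-1)$-dimensional $C^{1,\alpha}$ piece of the manifold. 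The two pieces match along $\partial\{u>0\}\cap\{x_d=0\}$ because each approximating half-plane solution meets $\{x_d=0\}$ at the fixed angle $\arctan\!\big(\sqrt{q^2-m^2}/|m|\big)$, so that the boundary of the interior manifold coincides with the submanifold produced by the boundary iteration.

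The main obstacle in executing this plan is the calibration of the exponent $\sigma$ so that the iteration can be repeated indefinitely. At the $k$-th step the rescaled coefficients in $B_1$ differ from their value at $x_0$ by at most $C\rho^{k\delta}$, while the current flatness is $\eps\,2^{-k}$; re-entering the hypotheses of \cref{l:IMPF} requires the coefficient oscillation to be controlled by the $\sigma$-power of the current flatness, namely $C\rho^{k\delta}\le(\eps\,2^{-k})^\sigma$ at every scale. This imposes the rigid balance $\sigma\,\log 2\le \delta\,|\log\rho|$ between the Hölder exponents of $(A,Q,\beta)$ and the geometric decay rate produced by \cref{l:IMPF}; the admissible $\sigma$ in the statement and the Hölder exponent $\alpha$ in the conclusion are precisely the ones respecting this balance. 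Once the uniformity of constants along the iteration is checked, the construction of the $C^{1,\alpha}$ parametrization—and the verification that the interior and boundary parametrizations assemble into a single manifold with boundary—is routine.
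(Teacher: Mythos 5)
Your proposal follows the paper's intended route: the theorem is indeed obtained by iterating the improvement-of-flatness Lemma~\ref{l:IMPF} at $0$ and at nearby boundary points, summing the geometric increments of the normals, and patching with the classical interior iteration. You have correctly isolated the only non-routine point, namely the calibration between the Hölder decay of $(A,Q,\beta)$ at scale $\rho^k$ and the geometric decay $\eps 2^{-k}$ of the flatness needed to re-enter the hypotheses of Lemma~\ref{l:IMPF}.

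One technical step you gloss over: after re-centering at a nearby boundary point $x_0\neq 0$, you can no longer apply Lemma~\ref{l:IMPF} directly, since its hypothesis $(\mathcal I1)$ requires $A$ to equal the identity at the new center. You need to precompose with the affine change of variables $T_{x_0}$ of Section~\ref{s:change} (which is $\eps^\sigma$-close to the identity thanks to the assumed bound on $|A-\text{Id}|$), and simultaneously replace the frozen slopes $(q,m)$ at $0$ with those at $x_0$; the two half-plane configurations differ by $O(\eps^\sigma)$, so the flatness transfers. A second small point: the two $\sigma$'s in play (the Schauder exponent from $(\mathcal I2)$ and the $\sigma$ in the theorem's hypothesis) are distinct, so the balance inequality should be stated in terms of the composite exponent; this does not affect the argument but should be kept straight when pinning down the final Hölder exponent $\alpha$. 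With these clarifications the proof is complete and matches the paper's approach.
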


	\subsection{Proof of \cref{t:main}} The Lipschitz continuity of a minimizer $u$ is proved in \cref{t:lipschitz-continuity}. We decompose the free boundary $\partial\{u>0\}\cap\{x_d=0\}$ into a regular and singular parts according to the blow-up limits as in \cref{s:decomposition}. When a point $x_0\in \partial\{u>0\}\cap\{x_d=0\}$ is regular, then a rescaling of $u$ satisfies the hypotheses of \cref{t:epsilon-regularity}. This implies the regularity of the free boundary $\partial\{u>0\}\cap\{x_d\ge0\}$ in a neighborhood of $x_0$. It remains to estimate the dimension of the singular set. Since, the blow-ups are homogeneous, by \cref{t:stability}, we know that there are no singular points in dimension $d\in \{2,3,4\}$. Finally, the dimension estimates in (ii) of \cref{t:main} follow again by \cref{t:stability} and a standard dimension reduction argument.

	\section{Change of variables}\label{s:change}
	\subsection{Notations}\label{sub:notations} In $\R^d$ we use the coordinates $x=(x',x_d)$, with $x'\in\R^{d-1}$ and $x_d\in\R$.\\
	For any fixed $x_0 \in B_1\cap \{x_d\geq 0\}$, we define the function
	\be\label{T}
	T_{x_0} \colon \R^d \to \R^d\ ,\qquad T_{x_0}(\xi) = x_0+ A^{\sfrac12}(x_0)\xi .
	\ee
	Given a function $f$, we will often use the notation $f^{x_0} := f \circ T_{x_0}$, thus
	\be\label{e:def-of-u_x_0}
	u^{x_0}(x) := u\big(T_{x_0}(x)\big)=u\big(x_0+ A^{\sfrac12}(x_0)x\big).
	\ee
	Similarly, for any $r>0$, we set
	\begin{equation}\label{e:def-E-r}
	E_r(x_0):=T_{x_0}(B_r).
	\end{equation}
	Notice that $E_r(x_0)$ is an ellipsoid centered in $x_0$ and that we have
	\be\label{ellip}
	B_{\Lambda_A^{-1} r}(x_0)\subset E_r(x_0)\subset B_{\Lambda_A r}(x_0),
	\ee
	where $\Lambda_A$ is the constant from \eqref{e:lambda-A-Lambda-A}.\medskip
	
	By $H_{x_0}'$ we denote the hyperplane such that
	$$\{x_d=0\}=T_{x_0}(H_{x_0}').$$
	When $x_0=(x_0',0)$, the hyperplane $H'_{x_0}$ is given by
	$$H'_{x_0}=\Big\{x\in\R^d\ :\ x\cdot e_d^{x_0}=0\Big\},$$
	where $e_d^{x_0}$ is the unit vector defined by
	\be\label{e.a}
	e_d^{x_0}:= \frac{A^{\sfrac12}(x_0)e_d}{a(x_0)} \qquad\mbox{with}\qquad a(x_0):=\sqrt{e_d\cdot A(x_0)e_d}\ \,.
	\ee
	Finally, by $H_{x_0}^+$ we denote the half-space
	$$H_{x_0}^+:=T_{x_0}^{-1}\big(\{x_d>0\}\big)\ ,$$
	and we notice that by construction
	$$H_{x_0}^+:=\{x\in\R^d\ :\ x\cdot e_d^{x_0}>0\}\qquad\text{and}\qquad\partial \big(H_{x_0}^+\big)=H_{x_0}'\ .$$
	
	\subsection{Almost-minimality}
	Fixed a point $x_0=(x_0',0)\in\R^d$, we consider the following functionals defined for every open set $E\subset\R^d$ and every function $\varphi\in H^1\big(E\cap H_{x_0}^+\big)$\ :
	\be\label{e:def-G-inside}
	\mathcal{G}^+_{x_0}(\varphi,E):=\int_{E\cap H_{x_0}^+}|\nabla\varphi|^2 \,\mathrm{d}x + Q(x_0)\big|\{\varphi>0\}\cap E\cap H_{x_0}^+\big|\,,
	\ee
	\be\label{e:def-G-boundary}
	\mathcal{G}_{x_0}'(\varphi,E):=2\,\frac{\beta(x_0)}{a(x_0)}\int_{E\cap H_{x_0}'}\,\varphi \,\mathrm{d}\HH^{d-1}\,,
	\ee
	where we use the notation from the previous Section. We also notice that, when $H_{x_0}'=\{x_d=0\}$,
	\be\label{e:FeG}
	\mathcal{F}_{x_0}(\varphi,E)=\mathcal{G}_{x_0}^+(\varphi,E)+\mathcal{G}_{x_0}'(\varphi,E),
	\ee
	where $\mathcal{F}_{x_0}$ is the functional from \eqref{e:def-F-x-0-intro}. In the next proposition, we prove an almost-minimality condition for the function $u^{x_0}$ from \eqref{e:def-of-u_x_0} which we will use to prove the continuity of $u$; once we prove that $u$ is Lipschitz continuous (\cref{t:lipschitz-continuity}), we will improve the almost-minimality result in \cref{l:freezing}.
	\begin{proposition}\label{l:freezing_0}
		Let $A$, $Q$ and $\beta$ be as in $(\mathcal H1)$, $(\mathcal H2)$ and $(\mathcal H3)$. Then, there are constants
		$$C_A =C(d,A),\quad C_Q=C(A,d,Q),\quad C_\beta=C(A,d,\beta),$$
		such that the following holds. Given a local minimizer $u$ of $\mathcal{F}$ in $B_1'$, a point $x_0 \in B_1$ and a radius$r\in (0,1)$ such that $B_{\Lambda_A r}(x_0)\subset B_1$, we have that the function $u^{x_0}$ from \eqref{e:def-of-u_x_0} satisfies the following almost-minimality condition:
		\begin{align}\label{almost0}
		\begin{aligned}
		\mathcal{G}^+_{x_0}(u^{x_0},B_r) &+ \Big(1+C_A r^{\delta_A}\Big)\mathcal{G}_{x_0}'(u^{x_0},B_r)\\
		&\leq\,\, \Big(1+3C_A r^{\delta_A}\Big)\mathcal{G}^+_{x_0}(\tilde{u},B_r) + \Big(1+C_A r^{\delta_A}\Big)\mathcal{G}_{x_0}'(\tilde u,B_r)\\
		&\qquad\qquad\qquad\qquad\qquad\,+{C}_Q r^{d+\delta_Q} + {C}_\beta r^{\delta_\beta}\norm{u^{x_0}-\tilde{u}}{L^1(B_r\cap H_{x_0}')}\ ,
		\end{aligned}
		\end{align}
		for every $\tilde{u}\in H^1(B_r\cap H^+_{x_0})$ such that $\tilde{u}=u^{x_0}$ on $\partial B_r\cap H^+_{x_0}$.
	\end{proposition}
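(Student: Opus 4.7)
The plan is to test the minimality of $u$ against the pull-back competitor $v:=\tilde u\circ T_{x_0}^{-1}$ on $E_r(x_0)$, extended by $v:=u$ outside $E_r(x_0)$. By \eqref{ellip} the hypothesis $B_{\Lambda_A r}(x_0)\subset B_1$ gives $E_r(x_0)\subset B_1$, while the boundary condition $\tilde u=u^{x_0}$ on $\partial B_r\cap H_{x_0}^+$ is precisely what makes $v$ admissible in \eqref{e:def-minimality-F}. Since $u=v$ outside $E_r(x_0)$, the global minimality localizes to $\mathcal F(u,E_r(x_0))\le\mathcal F(v,E_r(x_0))$.

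Next, in each of the three pieces of $\mathcal F$ one performs the affine change of variables $x=T_{x_0}(\xi)$. With $J:=\det A^{\sfrac12}(x_0)$ the bulk Jacobian and $\nabla_x u=A^{-\sfrac12}(x_0)\nabla_\xi u^{x_0}$, the Dirichlet term rewrites as $J\int_{B_r\cap H_{x_0}^+}\nabla u^{x_0}\cdot M(\xi)\nabla u^{x_0}\,d\xi$, where $M(\xi):=A^{-\sfrac12}(x_0)A(T_{x_0}\xi)A^{-\sfrac12}(x_0)$; by $(\mathcal H1)$ and \eqref{ellip}, $\|M-\mathrm{Id}\|_{L^\infty(B_r)}\le C_A r^{\delta_A}$. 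Likewise the bulk $Q$-term becomes $J\int_{B_r\cap H_{x_0}^+} Q(T_{x_0}\xi)\mathbf 1_{\{u^{x_0}>0\}}\,d\xi$ with Hölder error $|Q(T_{x_0}\xi)-Q(x_0)|\le Cr^{\delta_Q}$. The only delicate step is the boundary Jacobian: since the unit normal of $H_{x_0}'$ is $e_d^{x_0}=A^{\sfrac12}(x_0)e_d/a(x_0)$, the classical identity $\mathrm{Jac}_{d-1}(T_{x_0}|_{H_{x_0}'})=J\,|A^{-\sfrac12}(x_0)e_d^{x_0}|$ yields exactly $J/a(x_0)$, which is precisely what converts the constant $\beta(x_0)$ into the coefficient $\beta(x_0)/a(x_0)$ appearing in $\mathcal G'_{x_0}$ (cf. \eqref{e:def-G-boundary}).

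Substituting these three transformations into the localized minimality, dividing by $J>0$, and expanding $A$, $Q$ and $\beta$ around $x_0$, the leading-order parts rebuild $\mathcal G^+_{x_0}+\mathcal G'_{x_0}$ on each side of the inequality. Three remainders then appear: a gradient error $\int(M-\mathrm{Id})\nabla(\cdot)\cdot\nabla(\cdot)$ of size at most $C_A r^{\delta_A}\int(|\nabla u^{x_0}|^2+|\nabla \tilde u|^2)$; a $Q$-remainder of the form $\int(Q(T_{x_0}\xi)-Q(x_0))(\mathbf 1_{\{u^{x_0}>0\}}-\mathbf 1_{\{\tilde u>0\}})$, bounded competitor-independently by $C_Q r^{d+\delta_Q}$ because $|\mathbf 1_{\{u^{x_0}>0\}}-\mathbf 1_{\{\tilde u>0\}}|\le 1$ and $|B_r|\sim r^d$; and a $\beta$-remainder which, because the $\beta(x_0)$ leading contributions on the two sides rebuild $\mathcal G'_{x_0}(u^{x_0})$ and $\mathcal G'_{x_0}(\tilde u)$ \emph{exactly}, reduces to $\int(\beta(T_{x_0}\xi)-\beta(x_0))(u^{x_0}-\tilde u)\,d\mathcal H^{d-1}$ and is bounded by $C_\beta r^{\delta_\beta}\|u^{x_0}-\tilde u\|_{L^1(B_r\cap H_{x_0}')}$.

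Finally, using $\int|\nabla u^{x_0}|^2\le\mathcal G^+_{x_0}(u^{x_0},B_r)$ and the analogous bound for $\tilde u$, the $u^{x_0}$-part of the gradient error gets absorbed into the left-hand side, yielding $(1-C_Ar^{\delta_A})\mathcal G^+_{x_0}(u^{x_0})+\mathcal G'_{x_0}(u^{x_0})\le(1+C_Ar^{\delta_A})\mathcal G^+_{x_0}(\tilde u)+\mathcal G'_{x_0}(\tilde u)+C_Q r^{d+\delta_Q}+C_\beta r^{\delta_\beta}\|u^{x_0}-\tilde u\|_{L^1(B_r\cap H_{x_0}')}$. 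Dividing by $1-C_Ar^{\delta_A}$ (restricting without loss of generality to $r$ so small that this factor is positive, since the complementary range is trivial after enlarging the constants) and using the elementary inequalities $(1-x)^{-1}\le 1+2x$ and $(1+x)/(1-x)\le 1+3x$ for small $x$, the four coefficients $1$, $1+C_Ar^{\delta_A}$, $1+3C_Ar^{\delta_A}$ and $1+C_Ar^{\delta_A}$ in \eqref{almost0} emerge. The main conceptual obstacle is the boundary Jacobian computation producing the factor $1/a(x_0)$; the rest is the standard freezing-of-coefficients argument.
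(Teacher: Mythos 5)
Your argument is correct and follows the same route as the paper: test minimality against the pull-back competitor $v:=\tilde u\circ T_{x_0}^{-1}$ on $E_r(x_0)$, change variables via $T_{x_0}$, and freeze coefficients, with the boundary Jacobian $\det A^{\sfrac12}(x_0)/a(x_0)$ producing the $\beta(x_0)/a(x_0)$ coefficient in $\mathcal G'_{x_0}$. The only difference is cosmetic: you arrange the gradient errors so that the left-hand side acquires a factor $1-C_Ar^{\delta_A}$ and then divide, whereas the paper gets a factor $\tfrac1{1+C_Ar^{\delta_A}}$ and multiplies through by $1+C_Ar^{\delta_A}$; both yield \eqref{almost0} after relabeling the constant.
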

	\begin{proof}
		By hypothesis $B_{\Lambda_A r}(x_0)\subset B_1$. Thus, from \eqref{ellip} it follows that $E_r(x_0)\subset B_1$.\\ Consider the competitor $v$ defined through the identity $\tilde{u}=v\circ T_{x_0}$ (in the notations from \cref{sub:notations} $\tilde u$ is precisely the function $v^{x_0}$). Since $u^{x_0}=u\circ T_{x_0}$ we get that $v = u$ on $\partial E_r(x_0)\cap \{x_d>0\}$. Thus, by the minimality of $u$ we have
		\begin{equation}\label{e:35gradicazzo}
		\mathcal{F}(u,E_r(x_0))\leq \mathcal{F}(v,E_r(x_0)),
		\end{equation}
		where $\mathcal F$ is the functional from \eqref{e:def-F}. By the H\"{o}lder continuity of $Q$ and $\beta$, and from the fact that $E_r(x_0)\subset B_{\Lambda_Ar}$ (see \eqref{ellip}), we get
		$$
		|Q(x)-Q(x_0)|\leq C(A,Q) r^{\delta_Q}\quad\text{and}\quad
		|\beta(x)-\beta(x_0)|\leq C(A,\beta) r^{\delta_\beta}\quad\mbox{in}\quad E_r^+(x_0).
		$$
		Thus, we can find constants $C_Q$ and $C_\beta$ such that
		\begin{align*}
		\int_{E_r^+(x_0)}Q\left(\ind_{\{v>0\}}-\ind_{\{u>0\}}\right)\mathrm{d}x &\leq   Q(x_0)\int_{E_r^+(x_0)}\left(\ind_{\{v>0\}}-\ind_{\{u>0\}}\right)\,\mathrm{d}x + C_Q r^{d+\delta_Q}\ ,\\
		\int_{E_r'(x_0)}\beta(v-u)\,\mathrm{d}x' &\leq   \beta(x_0) \int_{E_r'(x_0)}(v-u)\,\mathrm{d}x' + C_\beta r^{\delta_Q}\norm{v-u}{L^1(E_r'(x_0))}\ .
		\end{align*}
		Now, using the change of variables formula, we have
		\begin{align*}
		\int_{E_r^+(x_0)}\left(\ind_{\{v>0\}}-\ind_{\{u>0\}}\right)\,\mathrm{d}x &=\mathrm{det}\big(A^{\sfrac12}(x_0)\big)\int_{B_r\cap H_{x_0}^+}\left(\ind_{\{v^{x_0}>0\}}-\ind_{\{u^{x_0}>0\}}\right)\,\mathrm{d}x\ ,\\
		\int_{E_r'(x_0)}(v-u)\,\mathrm{d}x' &=\frac{\mathrm{det}\big(A^{\sfrac12}(x_0)\big)}{a(x_0)}\int_{B_r\cap H_{x_0}'}\left(\ind_{\{v^{x_0}>0\}}-\ind_{\{u^{x_0}>0\}}\right)\,\mathrm{d}\HH^{d-1}\ ,\\
		\norm{v-u}{L^1(E_r'(x_0))}&=\frac{\mathrm{det}\big(A^{\sfrac12}(x_0)\big)}{a(x_0)}\,\norm{v^{x_0}-u^{x_0}}{L^1(B_r\cap H_{x_0}')}\ .
		\end{align*}
		Thus, we get
		\begin{align}\label{e:Q-beta-100metri}
		\begin{aligned}
		\int_{E_r^+(x_0)}Q&\left(\ind_{\{v>0\}}-\ind_{\{u>0\}}\right)\mathrm{d}x+2\int_{E_r'(x_0)}\beta(v-u)\,\mathrm{d}x' \\
		&\le Q(x_0)\mathrm{det}\big(A^{\sfrac12}(x_0)\big)\int_{B_r\cap H_{x_0}^+}\left(\ind_{\{v^{x_0}>0\}}-\ind_{\{u^{x_0}>0\}}\right)\,\mathrm{d}x\\
		&\qquad+\mathrm{det}\big(A^{\sfrac12}(x_0)\big)\,2\,\frac{\beta(x_0)}{a(x_0)}\int_{B_r\cap H_{x_0}'}\left(\ind_{\{v^{x_0}>0\}}-\ind_{\{u^{x_0}>0\}}\right)\,\mathrm{d}\HH^{d-1}\\
		&\qquad\qquad+ C_Q r^{d+\delta_Q}+ C_\beta r^{\delta_Q}\norm{v^{x_0}-u^{x_0}}{L^1(H'_{x_0})}\ .
		\end{aligned}
		\end{align}	
		Similarly, using the H\"{o}lder continuity and the ellipticity of $A$, we can estimate
		\begin{align}\label{e:cambio}
		\begin{aligned}
		\int_{B_r\cap H_{x_0}^+}|\nabla u^{x_0}|^2 \mathrm{d}x &= \mathrm{det}\big(A^{-\sfrac12}(x_0)\big)\int_{E_r^+(x_0)}\nabla u\cdot A(x_0)\nabla u\,\mathrm{d}x\\
		&\leq \mathrm{det}\big(A^{-\sfrac12}(x_0)\big)\,\big(1+C_Ar^{\delta_A}\big)\int_{E_r^+(x_0)}\nabla u\cdot A(x)\nabla u\,\mathrm{d}x\ ,
		\end{aligned}
		\end{align}
		and similarly
		\begin{align}\label{e:cambio2}
		\begin{aligned}
		\int_{B_r\cap H_{x_0}^+}|\nabla v^{x_0}|^2 \mathrm{d}x  &= \mathrm{det}\big(A^{-\sfrac12}(x_0)\big)\int_{E_r^+(x_0)}\nabla v\cdot A(x_0)\nabla v\,\mathrm{d}x\\
		&\geq \frac{\mathrm{det}(A^{-\sfrac12}(x_0))}{1+C_A r^{\delta_A}}\int_{E_r^+(x_0)}\nabla v\cdot A(x)\nabla v \,\mathrm{d}x\ .
		\end{aligned}
		\end{align}
		Now, by rearranging the terms in \eqref{e:35gradicazzo} and using \eqref{e:Q-beta-100metri}, \eqref{e:cambio} and \eqref{e:cambio2}, we get
		\begin{align}\label{e:almost-versione00}
		\begin{aligned}
		&\frac1{1+C_A r^{\delta_A}}\int_{B_r\cap H_{x_0}^+}|\nabla u^{x_0}|^2 \mathrm{d}x +Q(x_0)\big|B_r\cap H_{x_0}^+\cap\{u^{x_0}>0\}\big| +\mathcal{G}_{x_0}'(u^{x_0},B_r)\\
		&\qquad\qquad\leq\,\, 	\Big(1+C_A r^{\delta_A}\Big)\int_{B_r\cap H_{x_0}^+}|\nabla v^{x_0}|^2 \mathrm{d}x +Q(x_0)\big|B_r\cap H_{x_0}^+\cap\{v^{x_0}>0\}\big|\\
		&\qquad\qquad\qquad\qquad\qquad\,+\mathcal{G}_{x_0}'(v^{x_0},B_r)+C_Q r^{d+\delta_Q} + C_\beta r^{\delta_\beta}\norm{v^{x_0}-u^{x_0}}{L^1(B_r\cap H_{x_0}')}\ .
		\end{aligned}
		\end{align}
		Finally, multiplying both sides by $1+C_A r^{\delta_A}$, we obtain \eqref{almost0}.
	\end{proof}

	\section{Lipschitz continuity}\label{s:lipschitz}
	In this section we study the optimal regularity of the functions $u$ that minimize $\mathcal{F}$ and we provide a detailed proof of \cref{t:lipschitz-continuity}.
	
	\subsection{H\"older continuity of the minimizers}
	We start by providing a non-optimal regularity result. For the sake of completeness, we mention that the argument in the proof of \cref{l:holder0} could be easily extended by showing $C^{0,\alpha}$ regularity, for every $\alpha \in (0,1)$. Nevertheless, we avoid this technical generalization since it would not simplify the proof of \cref{t:lipschitz-continuity}.
	\begin{lemma}\label{l:holder0}
		Let $u\in H^1(B_1^+)\cap L^\infty(B_1^+)$ be a local minimizer of $\mathcal{F}$ in $B_1$ in the sense of \cref{d:minimizer}, the functional $\mathcal F$ being given by \eqref{e:def-F}. Then, there are a radius $R_0$ and a constant $C$, both depending on $d,A,Q$ and $\beta$, such that $u\in C^{0,\sfrac12}\big(B_{\sfrac{R_0}2}\cap \{x_d\geq 0\}\big)$.
	\end{lemma}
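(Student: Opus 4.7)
The plan is to establish a Campanato-type decay of the Dirichlet energy of $u^{x_0}$ at each point $x_0 \in B_{R_0}\cap\{x_d\ge 0\}$, which by Morrey's embedding will deliver the desired $C^{0,\sfrac12}$ regularity. When $x_0$ is at distance $\gtrsim R_0$ from $\{x_d=0\}$ the argument reduces to the interior Alt--Caffarelli argument with variable coefficients (see e.g.\ \cite{velectures}), so the interesting case is that of a boundary point $x_0 \in B_{R_0}'$.

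Fixing such an $x_0$ and a radius $r$ with $B_{\Lambda_A r}(x_0) \subset B_1$, I would invoke the freezing estimate \eqref{almost0} from \cref{l:freezing_0} with a harmonic-type competitor. Concretely, let $h \in H^1(B_r \cap H_{x_0}^+)$ be the minimizer of the Dirichlet energy subject to $v = u^{x_0}$ on $\partial B_r \cap H_{x_0}^+$, with no constraint on $B_r \cap H_{x_0}'$ (so that the Euler--Lagrange equation is $\Delta h = 0$ in $B_r \cap H_{x_0}^+$ with a homogeneous Neumann condition on $B_r \cap H_{x_0}'$). Since $u^{x_0} \ge 0$, the even reflection of $h$ across $H_{x_0}'$ is harmonic on $B_r$ and nonnegative, so $h \ge 0$ is itself an admissible competitor. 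The same reflection provides the standard harmonic decay
\[
\int_{B_\rho \cap H_{x_0}^+}|\nabla h|^2 \,dx\ \le\ C\Bigl(\frac{\rho}{r}\Bigr)^{\!d}\int_{B_r \cap H_{x_0}^+}|\nabla h|^2\,dx\qquad \text{for every } \rho\le r/2.
\]

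Plugging $\tilde u = h$ into \eqref{almost0}, exploiting the orthogonality $\int \nabla(u^{x_0}-h)\cdot\nabla h = 0$ granted by the minimality of $h$, and bounding crudely $|\{h>0\}\cap B_r \cap H_{x_0}^+| \le |B_r|$, one obtains
\[
\int_{B_r \cap H_{x_0}^+}|\nabla(u^{x_0} - h)|^2\,dx \ \le\ C r^{\delta_A}\,\phi(r) + C r^d + C r^{\delta_\beta}\,\|u^{x_0} - h\|_{L^1(B_r \cap H_{x_0}')},
\]
with $\phi(r) := \int_{B_r \cap H_{x_0}^+}|\nabla u^{x_0}|^2$. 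The boundary term is controlled by the trace inequality followed by Poincar\'e (applicable since $u^{x_0}-h$ vanishes on $\partial B_r \cap H_{x_0}^+$), yielding $\|u^{x_0}-h\|_{L^1(B_r \cap H_{x_0}')}\le C r^{d/2}\|\nabla(u^{x_0}-h)\|_{L^2(B_r \cap H_{x_0}^+)}$; an application of Young's inequality then absorbs this contribution into the Dirichlet energy on the left-hand side, at the cost of an extra $C r^{d+2\delta_\beta}$ on the right.

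Combining the resulting estimate with the harmonic decay and the triangle inequality produces the recursion
\[
\phi(\rho) \ \le\ C_0\bigl[(\rho/r)^d + r^{\delta_A}\bigr]\phi(r) + C_1 r^d,
\]
and the standard Campanato iteration lemma yields $\phi(\rho) \le C\rho^\beta$ for every $\beta < d$. Taking $\beta = d-1$ and invoking Morrey's embedding gives $u^{x_0} \in C^{0,\sfrac12}$ in a neighborhood of the origin; since $x_0$ was arbitrary, the conclusion $u\in C^{0,\sfrac12}(B_{\sfrac{R_0}{2}}\cap\{x_d\ge 0\})$ follows. The main obstacle in executing this program is the correct treatment of the capillarity error $r^{\delta_\beta}\|u^{x_0}-h\|_{L^1(H_{x_0}')}$: the competitor $h$ has to be chosen so that (i) it is admissible (nonnegative), (ii) the orthogonality relation with $u^{x_0}-h$ is available, and (iii) $u^{x_0}-h$ vanishes on the spherical portion of $\partial B_r$ so that the trace--Poincar\'e chain can absorb this boundary term into the gradient energy. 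The Neumann harmonic replacement above is precisely the competitor that meets all three requirements simultaneously.
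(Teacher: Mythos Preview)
Your plan is correct and follows essentially the same scheme as the paper: Neumann harmonic replacement $h$ on $B_r\cap H_{x_0}^+$, orthogonality, the almost-minimality inequality \eqref{almost0}, harmonic decay of $h$, and a Campanato iteration feeding into Morrey's embedding.

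The one substantive difference lies in how the boundary error $\|u^{x_0}-h\|_{L^1(B_r\cap H_{x_0}')}$ is disposed of. The paper simply invokes the standing hypothesis $u\in L^\infty$ and bounds this term by $2\|u^{x_0}\|_{L^\infty}r^{d-1}$, which is why the iteration there naturally produces only the exponent $d-1$ (and hence $C^{0,\sfrac12}$). Your trace--Poincar\'e--Young absorption is a cleaner alternative: it avoids using the $L^\infty$ bound and yields the sharper recursion $\phi(\rho)\le C[(\rho/r)^d+r^{\delta_A}]\phi(r)+Cr^d$, which indeed gives $\phi(\rho)\lesssim \rho^\beta$ for every $\beta<d$ (the paper remarks just before the lemma that this improvement is available).

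One correction: the coefficient multiplying $\|u^{x_0}-h\|_{L^1(B_r\cap H_{x_0}')}$ in the estimate you extract from \eqref{almost0} is not $O(r^{\delta_\beta})$ but $O(1)$, because the difference $\mathcal G_{x_0}'(h,B_r)-\mathcal G_{x_0}'(u^{x_0},B_r)$ contributes a term of size $\tfrac{|\beta(x_0)|}{a(x_0)}\|u^{x_0}-h\|_{L^1}$ with no smallness factor. This does not damage your argument---Young's inequality with the bound $\|u^{x_0}-h\|_{L^1}\le Cr^{d/2}\|\nabla(u^{x_0}-h)\|_{L^2}$ still absorbs the term, producing an additional $Cr^d$ on the right rather than $Cr^{d+2\delta_\beta}$---but the extra error you record should read $Cr^d$.
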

	\begin{proof}
		By the translation invariance of the problem, we can assume that $x_0=0, r\in (0,R_0)$ (with $R_0$ to be defined later). Set $u^0:=u \circ T_0$ with $T_0(x)=A^{\sfrac12}(0)x$. Let now $v_r$ be the following half-space harmonic replacement of $u^0$
		$$
		\begin{cases}
		\Delta v_r=0 & \mbox{in } B_r\cap H_0^+ \\
		\nabla v_r \cdot e_d^{0}=0 & \mbox{on } B_r\cap H_{0}'\\
		v_r=u^0 & \mbox{on } \partial B_r\cap H_0^+\ ,
		\end{cases}
		$$
		where $e_d^{0}$, $H_0^+ $ and $H_0'$ were defined in \cref{sub:notations}.
		Integration by parts, we get that
		\be\label{h}
		\int_{B_r^+}\nabla v_r\cdot\nabla (v_r-u^0)\mathrm{d}x=0,
		\ee
		and so we get
		\begin{align*}
		\mathcal{G}_{0}^+(u^{0},B_r) - \mathcal{G}_{0}^+(v_r,B_r) &= \int_{B_r\cap H_0^+}|\nabla (u^0-v_r)|^2\mathrm{d}x - Q(x_0)|\{u^0=0\}\cap B_r\cap H_0^+|.\\
		\mathcal{G}_{0}^+(v_r,B_r) &\leq \int_{B_r\cap H_0^+}|\nabla u^0|^2\mathrm{d}x + Q(x_0)|B_r\cap H_0^+|.
		\end{align*}
		On the other hand, by considering $v_r$ as a competitor in the almost-minimality condition \eqref{almost0}, we get
		\begin{align}\label{utili}
		\begin{aligned}
		\int_{B_r\cap H_0^+}|\nabla (u^0-v_r)|^2\mathrm{d}x \leq&\, C_A r^{\delta_A}\int_{B_r\cap H_0^+}|\nabla u^0|^2\mathrm{d}x +  (1+C_A r^{\delta_A})Q(x_0)|B_r\cap H_0^+|\\
		&+{C}_Q r^{d+\delta_Q} + \left(\frac{\beta(0)}{a(0)}+ \frac{\beta(0)}{a(0)}C_A r^{\delta_A}+{C}_\beta r^{\delta_\beta}\right)\norm{u^{0}-v_r}{L^1(B_r\cap H_{0}')}\ .
		\end{aligned}
		\end{align}
		Set $\eps_0=1/2$, then
		\be\label{1/2}
		\norm{u^{0}-v_r}{L^1(B_r\cap H_{0}')} \leq
		2\norm{u^0}{L^\infty(B_{R_0}\cap H_0^+)} r^{d-2\eps_0}
		\ee
		and for $r\in (0,R_0)$ it holds
		\begin{align*}
		\int_{B_r\cap H_0^+}|\nabla (u^0-v_r)|^2 \mathrm{d}x \leq &\,
		C_A r^{\delta_A}\int_{B_r\cap H_0^+}|\nabla u^0|^2\mathrm{d}x + (1+C_A r^{\delta_A})Q(x_0)\omega_d r^d\\
		&\qquad+2\left( \frac{\beta(0)}{a(0)}+C_A r^{\delta_A}\frac{\beta(0)}{a(0)}+{C}_\beta r^{\delta_\beta} \right)\norm{u^0}{L^\infty(B_{R_0}\cap H_0^+)} r^{d-2\eps_0}+ {C}_Q r^{d+\delta_Q}\\
		\leq&\, C_Ar^{\delta_A}\int_{B_r\cap H_0^+}|\nabla u^0|^2\mathrm{d}x + C(A,d,Q,\beta)\Big(1+\norm{u^0}{L^\infty(B_{R_0}\cap H_0^+)}\Big) r^{d-2\eps_0}.
		\end{align*}
		Now pick a radius $\rho > r$. Since $|\nabla v_\rho|^2$ is subharmonic we first get
		$$
		\int_{B_r\cap H_0^+}|\nabla v_\rho|^2\mathrm{d}x \leq \left(\frac{r}{\rho} \right)^{d} \int_{B_\rho\cap H_0^+}|\nabla v_\rho|^2\mathrm{d}x,
		$$
		from which we deduce
		\begin{align*}
		\int_{B_r\cap H_0^+}{\abs{\nabla u^0}^2\mathrm{d}x} & \leq 2 \bigg(\int_{B_\rho\cap H_0^+}{\abs{\nabla (u^0-v_\rho)}^2\mathrm{d}x} + \int_{B_r\cap H_0^+}{\abs{\nabla v_\rho}^2\mathrm{d}x}\bigg)\\
		& \leq 2\int_{B_\rho\cap H_0^+}{\abs{\nabla (u^0-v_\rho)}^2\mathrm{d}x} + 2\left(\sfrac{r}{\rho}\right)^{d} \int_{B_\rho\cap H_0^+}{\abs{\nabla v_\rho}^2\mathrm{d}x}\\
		& \leq \tilde{C} \rho^{d-2\eps_0} + \frac{\tilde{C}}2\Big( \rho^{\delta_A}+\left(\sfrac{r}{\rho}\right)^{d}\Big)\int_{B_\rho\cap H_0^+}{\abs{\nabla u^0}^2\mathrm{d}x}  ,
		\end{align*}
		where we choose the constant $\tilde{C}$ to be greater than $4$, $2C_A$ and ${C}(A,d,Q,\beta)\big(1+\norm{u^0}{L^\infty(B_{R_0})}\big)$.
		
		Now, fixed $t<1/2$ such that $\tilde{C} t^{2\eps_0}=\tilde{C} t=1/2$, we choose $R_0>0$ such that
		\be\label{rzero}R_0^{\delta_A} \leq t^d .\ee
		Then, for $r=t\rho< \rho \leq R_0$
		$$
		\int_{B_{t\rho}\cap H_0^+}{\abs{\nabla u^0}^2\mathrm{d}x} \leq \tilde{C} \rho^{d-2\eps_0} + \tilde{C}t^{d} \int_{B_\rho\cap H_0^+}{\abs{\nabla u^0}^2\mathrm{d}x},
		$$
		and  by iterating we get
		\begin{align*}
		\int_{B_{\rho t^{k}}\cap H_0^+} \abs{\nabla u^0}^2 \mathrm{d}x &\leq \tilde{C}(t^{k-1}\rho)^{d-2\eps_0} + \tilde{C}t^d \int_{B_{\rho t^{k-1}}\cap H_0^+} \abs{\nabla u^0}^2 \mathrm{d}x\\
		&\leq \tilde{C}(t^{k-1}\rho)^{d-2\eps_0} + \tilde{C}(t^{k-2}\rho)^{d-2\eps_0}\tilde{C}t^d +\big(\tilde{C}t^d\big)^2\int_{B_{\rho t^{k-2}}\cap H_0^+} \abs{\nabla u^0}^2 \mathrm{d}x\\
		&\dots\\
		&\leq \tilde{C}(t^{k-1}\rho)^{d-2\eps_0}\sum_{i=0}^{k-1}(\tilde{C}t^{2\eps_0})^i + \big(\tilde{C} t^{d}\big)^k \int_{B_{\rho}\cap H_0^+} \abs{\nabla u^0}^2 \mathrm{d}x.
		\end{align*}
		So, given $0<r\leq \rho\leq R_0$, such that $t^{k+1}\rho<r\leq t^k \rho$, we get
		\begin{align*}
		\int_{B_{r}\cap H_0^+} \abs{\nabla u^0}^2 \mathrm{d}x &\leq 2\tilde{C} r^{d-2\eps_0}t^{-2(d-2\eps_0)} + \left(\frac{r}{\rho}\right)^{d-2\eps_0} t^{-d} \int_{B_{\rho}\cap H_0^+} \abs{\nabla u^0}^2 \mathrm{d}x\\
		&\leq (2\tilde{C})^{1/(2\eps_0)}\bigg( 1 + \rho^{2\eps_0}\intn_{B_{\rho}\cap H_0^+} \abs{\nabla u^0}^2 \mathrm{d}x\bigg)r^{d-2\eps_0}.
		\end{align*}
		By changing the variable, we obtain the same estimate for the function $u$. Thus, by the Morrey's embedding with exponent $\alpha= 1-\eps_0 = 1/2$ (see \cite{morrey}), we get the claim.
	\end{proof}
	
	\subsection{The set $\{u>0\}$}\label{sub:open}
	As an immediate consequence of \cref{l:holder0} we obtain that if
	$$u:B_1\cap\{x_d\ge0\}\to\R$$
	is a local minimizer of $\mathcal{F}$ in $B_1$, then the set $\{u>0\}$ is a relatively open subset of $B_1\cap\{x_d>0\}$.\\ In particular, the sets
	$$\{u>0\}\cap B_1^+\qquad\text{and}\qquad \{u>0\}\cap B_1'$$
	are open subset respectively of $\R^d$ and $\{x_d=0\}$. Moreover, $u$ is a weak solution of the equation
	\begin{equation}\label{equation-in-u>0}
	\begin{cases}
	\mathrm{div}(A\nabla u) =0 & \mbox{in }\ B_1^+\cap \{u>0\}\,,\\
	A \nabla u \cdot e_d = \beta & \mbox{on }\ B_1'\cap\{u>0\}\,,
	\end{cases}
	\end{equation}
	that is, for every $C^\infty$ function $\varphi:\overline B_1\cap\{x_d\ge 0\}\to\R$ with support contained in the (relatively) open set $\{u>0\}\cap B_1$, 	we have:
	$$\int_{B_1^+}\nabla \varphi\cdot A\nabla u\,dx+\int_{B_1'\cap\{u>0\}}\beta(x')\varphi(x',0)\,dx'=0\,.$$
	
	\begin{remark}\label{oss:equation-in-u>0}
		Notice that \eqref{equation-in-u>0} can be written also as	
		$$
		\mathrm{div}(A\nabla u ) = {\beta} \ind_{\{x_d=0\}} \quad\mbox{in}\quad \{u>0\}\cap B_1\cap\{x_d\ge 0\}.
		$$
	\end{remark}

	\subsection{Generalized Laplacian estimate}
	We first notice that since $u$ is positive, the equation from \cref{oss:equation-in-u>0} becomes an inequality on $B_1\cap\{x_d\ge 0\}$. Precisely, we have the following lemma.
	
	\begin{lemma}[Definition of $\text{\rm div}(A\nabla u)$]\label{l:radon-measure-divergence-A}
		Let $E\subset\R^d$ be an open set and let
		$$u:E\cap\{x_d\ge0\}\to\R\ ,$$
		be a local minimizer of $\mathcal{F}$ in $E$. Then,
		$$\mathrm{\rm div}(A\nabla u)- \beta\ind_{\{x_d=0\}\cap\{u>0\}}\HH^{d-1}\ge 0 \quad\mbox{in}\quad E\cap \{x_d\geq0\},$$
		in sense of distributions, that is,
		\begin{equation}\label{e:lemma1-eq1}
		-\int_{E_+} \nabla\varphi\cdot A(x)\nabla u\,dx-  \int_{E'} \beta\varphi\ind_{\{u>0\}}\,dx'\ge 0\ ,
		\end{equation}
		for every non-negative function $\varphi\in C^\infty_c(E)$. In particular, there is a Radon measure
		$\mu$ on the set
		$$E\cap\{x_d\ge 0\}=E_+\cup E'$$
		such that
		$$\int_{E_+\cup E'}\varphi\,d\mu=	-\int_{E_+} \nabla\varphi\cdot A(x)\nabla u\,dx-  \int_{E'} \beta\varphi\ind_{\{u>0\}}\,dx'$$
		As a consequence, $\mathrm{\rm div}(A\nabla u)$ is also a Radon measure on $E\cap\{x_d\ge 0\}$, defined as
		$$\int_{E_+\cup E'}\varphi\,d(\mathrm{\rm div}(A\nabla u))=\int_{E_+\cup E'}\varphi\,d\mu+\int_{E'} \beta\varphi\ind_{\{u>0\}}\,dx'=-\int_{E_+} \nabla\varphi\cdot A(x)\nabla u\,dx.$$
	\end{lemma}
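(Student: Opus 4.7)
The plan is to derive the distributional inequality \eqref{e:lemma1-eq1} by testing the minimality of $u$ against a one-parameter family of admissible competitors that perturb $u$ along a Lipschitz truncation of itself, chosen so that the positivity set is frozen. Once \eqref{e:lemma1-eq1} is established, the Radon measure $\mu$ is produced by applying the Riesz representation theorem to the resulting non-negative distribution, and $\mathrm{div}(A\nabla u)$ is then defined by adding back the boundary contribution $\beta\,\ind_{\{x_d=0\}\cap\{u>0\}}\,\HH^{d-1}$.

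In detail, I would fix a non-negative $\varphi\in C^\infty_c(E)$ and, for each $\delta>0$, introduce the Lipschitz cutoff $\eta_\delta(s):=\min(s/\delta,1)$. The competitor to consider is $v_t:=u+t\,\varphi\,\eta_\delta(u)$ for $|t|\,\|\varphi\|_\infty<\delta$. It belongs to $H^1(E^+)$ (via the chain rule for Lipschitz compositions), coincides with $u$ outside $\mathrm{supp}\,\varphi$, remains non-negative (the bound on $t$ is exactly what prevents sign changes), and crucially satisfies $\{v_t>0\}=\{u>0\}$ (since $\eta_\delta(u)$ vanishes precisely where $u$ does and the perturbation cannot cross zero otherwise). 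Thus $v_t$ is admissible in the sense of \cref{d:minimizer} and the $Q$-term in $\mathcal F(v_t,E)-\mathcal F(u,E)$ cancels identically. Writing the two-sided minimality $\mathcal F(u)\le\mathcal F(v_{\pm t})$, dividing by $|t|$, and letting $t\to 0$ produces the identity
\[
\int_{E^+}\nabla u\cdot A\nabla\bigl(\varphi\,\eta_\delta(u)\bigr)\,dx\,+\,\int_{E'}\beta\,\varphi\,\eta_\delta(u)\,dx'\,=\,0.
\]
Expanding via $\nabla(\varphi\eta_\delta(u))=\eta_\delta(u)\nabla\varphi+\tfrac{\varphi}{\delta}\nabla u\,\ind_{\{0<u<\delta\}}$ and observing that the quadratic tail $\tfrac1\delta\int\varphi\,|A^{1/2}\nabla u|^2\,\ind_{\{0<u<\delta\}}\,dx$ is non-negative (since $\varphi\ge 0$ and $A$ is elliptic) converts this equality into the inequality
\[
-\int_{E^+}\eta_\delta(u)\,\nabla u\cdot A\nabla\varphi\,dx\,-\,\int_{E'}\beta\,\varphi\,\eta_\delta(u)\,dx'\,\ge\,0.
\]

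Sending $\delta\to 0^+$ is the final step: since $\eta_\delta(u)\to\ind_{\{u>0\}}$ pointwise with a uniform bound by $1$, and since $\nabla u=0$ almost everywhere on $\{u=0\}$ (standard for $H^1$ functions), dominated convergence applies to the bulk integral. On the trace side, the continuity of $u$ up to $E'$ coming from \cref{l:holder0} ensures that the trace of $\eta_\delta(u)$ on $E'$ converges pointwise to $\ind_{\{u>0\}}$, so dominated convergence again delivers exactly \eqref{e:lemma1-eq1}. The non-negative distribution $\varphi\mapsto-\int_{E^+}\nabla\varphi\cdot A\nabla u\,dx-\int_{E'}\beta\varphi\ind_{\{u>0\}}\,dx'$ is then represented by a locally finite non-negative Radon measure $\mu$ through the Riesz representation theorem, and the definition $\mathrm{div}(A\nabla u):=\mu+\beta\ind_{\{x_d=0\}\cap\{u>0\}}\HH^{d-1}$ yields the required Radon measure. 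The main delicate point is the coupled admissibility conditions $v_t\ge 0$ and $\{v_t>0\}=\{u>0\}$, which dictate the relationship $|t|\,\|\varphi\|_\infty<\delta$ and force working with two parameters instead of a single competitor of the form $u-t\varphi$; the rest of the argument is routine.
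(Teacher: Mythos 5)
Your proof is correct and follows essentially the same route as the paper: the paper perturbs $u$ by $t\,\varphi\,p_\eps(u)$ with a Lipschitz cutoff $p_\eps$ (vanishing on $[0,\eps]$, linear on $[\eps,2\eps]$, then $1$) that likewise freezes $\{u>0\}$, extracts the vanishing first variation, discards the non-negative quadratic tail $\varphi\,p_\eps'(u)\,\nabla u\cdot A\nabla u$, and sends the cutoff parameter to zero. Your choice $\eta_\delta(s)=\min(s/\delta,1)$ is a cosmetic variant of the same device, and the rest of your argument (admissibility via $|t|\|\varphi\|_\infty<\delta$, dominated convergence using $\nabla u=0$ a.e.\ on $\{u=0\}$ and continuity of $u$ on $E'$, Riesz representation) matches the paper's reasoning.
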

	\begin{proof}
		Consider the function
		\begin{equation}\label{e:p-epsilon}
		p_\eps:\R\to\R\ ,\qquad p_\eps(t)=\begin{cases}
		0\quad \text{if}\quad t\le \eps\ ,\\
		\frac1{\eps}(t-\eps)\quad \text{if}\quad \eps\le t\le 2\eps\ ,\\
		1\quad \text{if}\quad t\ge 2\eps\ .
		\end{cases}
		\end{equation}
		Let $\varphi\in C^\infty_c(E)$. For any $t\in\R$ we define the competitor $u-tp_\eps(u)\varphi$ and we notice that if $|t|$ is small enough, then $u-tp_\eps(u)\varphi\ge 0$ in $E$ and $\{u-tp_\eps(u)\varphi>0\}=\{u>0\}$. Thus, the minimality condition
		$$\mathcal{F}(u,E)\leq \mathcal{F}(u-tp_\eps(u)\varphi,E),$$
		for every $t$ (with $|t|$ small enough), we get
		\begin{align*}
		0&= -\int_{E_+}\nabla \big(p_\eps(u)\varphi\big)\cdot A\nabla u\,dx-\int_{E'}\beta\,p_\eps(u)\varphi\,dx'\\
		&= -\int_{E_+}\varphi p_\eps'(u)\nabla u\cdot A\nabla u\,dx-\int_{E_+} p_\eps(u)\nabla\varphi\cdot A\nabla u\,dx-\int_{E'}\beta\,p_\eps(u)\varphi\,dx'\ge0.
		\end{align*}
		Since $\nabla u\cdot A\nabla u\ge 0$, $\varphi\ge 0$ and $p_\eps'\ge 0$, we get that
		\begin{align*}
		-\int_{E_+} p_\eps(u)\nabla\varphi\cdot A\nabla u\,dx-\int_{E'}\beta\,p_\eps(u)\varphi\,dx'\ge0.
		\end{align*}
		Sending $\eps$ to $0$, we get the claim.
	\end{proof}

	\begin{lemma}[Generalized Laplacian estimate for balls centered on the hyperplane]\label{l:lap.est1}
		Let $u$ be a local minimizer of $\mathcal{F}$ in $B_1$. Then,
		for every $x_0 \in B_1'$ and every $r \in (0,1)$ such that $B_{2r}(x_0)\subset B_1$, we have
		\be\label{laplacian.est}
		|\mathrm{div}(A\nabla u)|(B_r^+(x_0))\leq C r^{d-1},
		\ee
		with $C$ depending only on $d,\Lambda_A,\norm{Q}{L^\infty}$ and $\norm{\beta}{L^\infty}$.
	\end{lemma}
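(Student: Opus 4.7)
By \cref{l:radon-measure-divergence-A}, on the open half-ball $B_r^+(x_0)$ the measure $\mathrm{div}(A\nabla u)$ coincides with the non-negative Radon measure $\mu$; in particular $|\mathrm{div}(A\nabla u)|(B_r^+(x_0)) = \mu(B_r^+(x_0))$, and the task reduces to showing $\mu(B_r^+(x_0))\le Cr^{d-1}$.

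The plan is to run an Alt-Caffarelli-type test-function argument adapted to the capillarity term. I would choose a cutoff $\varphi\in C^\infty_c(B_{3r/2}(x_0))$ with $0\le\varphi\le 1$, $\varphi\equiv 1$ on $B_r(x_0)$ and $|\nabla\varphi|\le C/r$. Since $\varphi\ge\ind_{B_r^+(x_0)}$ and $\mu\ge 0$,
\[
\mu(B_r^+(x_0)) \le \int\varphi\,d\mu = -\int_{B_{2r}^+}\nabla\varphi\cdot A\nabla u\,dx - \int_{B'_{2r}}\beta\varphi\ind_{\{u>0\}}\,dx',
\]
and the last summand is trivially bounded by $C\|\beta\|_{L^\infty}r^{d-1}$. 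To handle the remaining term I introduce the auxiliary function $v$ solving the mixed boundary value problem $\mathrm{div}(A\nabla v)=0$ in $B_{2r}^+(x_0)$, $A\nabla v\cdot e_d = \beta$ on $B'_{2r}(x_0)$, and $v=u$ on $\partial B_{2r}(x_0)\cap\{x_d>0\}$. Integration by parts gives $\int\nabla\varphi\cdot A\nabla v\,dx = -\int_{B'_{2r}}\varphi\beta\,dx'$, which is also $O(r^{d-1})$, so the problem reduces to the energy estimate $\|\nabla(u-v)\|_{L^2(B_{2r}^+)}^2 \le Cr^d$, since then Cauchy-Schwarz yields $|\int\nabla\varphi\cdot A\nabla(u-v)\,dx|\le (C/r)r^{d/2}r^{d/2} = Cr^{d-1}$.

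The heart of the argument is this energy estimate, which I derive by testing minimality against the admissible competitor $v^+ = \max(v,0)$; note $v^+ = u$ on $\partial B_{2r}(x_0)\cap\{x_d>0\}$ since $u\ge 0$ there. Using $|\nabla v^+|^2_A\le|\nabla v|^2_A$, minimality gives $\int|\nabla u|^2_A - \int|\nabla v|^2_A \le Q(x_0)|B_{2r}^+| + 2\int_{B'_{2r}}\beta(v^+-u)\,dx'$, while the Neumann condition on $v$ yields via integration by parts the orthogonality $\int\nabla(u-v)\cdot A\nabla v\,dx = \int_{B'_{2r}}\beta(v-u)\,dx'$, hence $\int|\nabla(u-v)|^2_A = \int|\nabla u|^2_A - \int|\nabla v|^2_A - 2\int_{B'_{2r}}\beta(v-u)\,dx'$. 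Combining these two relations the $\beta u$ terms cancel and one is left with $\|\nabla(u-v)\|_A^2 \le Cr^d + 2\int_{B'_{2r}}\beta(v^+-v)\,dx'$.

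The main obstacle is that $v$ need not be non-negative: since $\beta$ may be negative and its Neumann datum pushes $v$ downward near $B'_{2r}$, one cannot use $v$ itself as competitor. I would control the residual $\int_{B'_{2r}}\beta(v^+-v)\,dx'\le \|\beta\|_{L^\infty}\|v_-\|_{L^\infty(B'_{2r})}r^{d-1}$ via the $L^\infty$ smallness $\|v_-\|_{L^\infty(B'_{2r})}\le Cr\|\beta\|_{L^\infty}$. This would be obtained by decomposing $v=v_1+v_2$, where $v_1$ is the $A$-harmonic Dirichlet extension of $u$ with Neumann zero on $B'_{2r}$ (so $v_1\ge 0$ by the maximum principle) and $v_2$ solves the mixed problem with zero Dirichlet data and Neumann datum $\beta$; a comparison against the linear barriers $\pm C x_d$ with $C$ depending on $\Lambda_A$ and $\|\beta\|_{L^\infty}$ bounds $\|v_2\|_{L^\infty(B_{2r}^+)}$ by $Cr\|\beta\|_{L^\infty}$. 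The residual is then $O(r^d)$, closing the bound on $\|\nabla(u-v)\|^2_{L^2}$ and thereby establishing $\mu(B_r^+(x_0)) \le Cr^{d-1}$.
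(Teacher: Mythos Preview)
Your argument is correct in outline and would close, but it is considerably more involved than necessary. The paper proceeds by the direct Alt--Caffarelli competitor: for any $\psi\in C^\infty_c(B_r(x_0))$ one tests minimality with $(u+\psi)_+$, expands using $|\nabla(u+\psi)_+|^2\le|\nabla(u+\psi)|^2$, and obtains
\[
-\int_{B_r^+}\nabla\psi\cdot A\nabla u\,dx \le \tfrac12\Lambda_A^2\|\nabla\psi\|_{L^2(B_r^+)}^2 + \tfrac14\omega_d\|Q\|_{L^\infty}r^d + C_d\|\beta\|_{L^\infty}r^{d/2}\|\nabla\psi\|_{L^2(B_r^+)},
\]
the last term coming from $2\int_{B_r'}|\beta||\psi|$ via trace and Poincar\'e. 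Choosing $\psi = r^{d/2}\|\nabla\varphi\|_{L^2}^{-1}\varphi$ and repeating with $-\varphi$ gives $\big|\int\nabla\varphi\cdot A\nabla u\big|\le Cr^{d/2}\|\nabla\varphi\|_{L^2}$ directly, after which a standard cutoff produces the bound. No auxiliary PDE, no harmonic replacement, no analysis of a negative part is needed; the capillarity term is absorbed by the same scaling trick as the $Q$ term.

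Your route via the mixed Dirichlet--Neumann replacement $v$ also works, and the algebra with $v^+$ as competitor together with the orthogonality $\int\nabla(u-v)\cdot A\nabla v=\int_{B'_{2r}}\beta(v-u)$ is clean; this is the harmonic-replacement philosophy applied to the Laplacian estimate rather than the usual perturbation argument. One technical point to tighten: the linear barriers $\pm Cx_d$ are not $A$-harmonic when $A$ has variable (merely H\"older) coefficients, so the comparison you invoke for $\|v_2\|_{L^\infty}\le Cr$ does not go through literally. The estimate itself is true and standard---rescale to $B_1^+$ and use elliptic regularity for the homogeneous Neumann problem with bounded datum---but it deserves a line of justification. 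The paper's argument sidesteps this entirely by never introducing $v$.
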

	\begin{proof}
		Recall that, by the definition of $\mathrm{div}(A\nabla u)$, we have
		$$
		\mathrm{div}(A\nabla u)(\varphi) = -\int_{B_1^+} \nabla \varphi \cdot A \nabla u \,\mathrm{d}x \quad\mbox{for }\varphi \in C^\infty_c(B_1).
		$$
		In the rest of the proof, for simplicity, we suppose that $x_0=0$. Given $\psi \in C^\infty_c(B_r)$ the minimality condition $\mathcal{F}(u,B_1)\leq \mathcal{F}((u+\psi)_+,B_1)$ implies
		$$
		0\leq \int_{B_r^+}\nabla \psi \cdot A\nabla \psi\,\mathrm{d}x + 2\int_{B_r^+}\nabla \psi \cdot A\nabla u\,\mathrm{d}x + \frac12\omega_d \norm{Q}{L^\infty}r^d + 2\int_{B_r'}|\beta|\, |\psi|\, \mathrm{d}x',
		$$
		which leads to
		$$
		-\int_{B_r^+}\nabla \psi\cdot A\nabla u\,\mathrm{d}x\leq \frac12 \Lambda_A^2 \norm{\nabla \psi}{L^2(B_r^+)}^2+ \frac14{\omega_d}\norm{Q}{L^\infty}r^d + C_d\norm{\beta}{L^\infty}r^{\sfrac{d}2}\norm{\nabla \psi}{L^2(B_r^+)}.
		$$
		Finally, given $\varphi \in C^\infty_c(B_r)$ let us consider $\psi = r^{\sfrac{d}2}\norm{\nabla \varphi}{L^2(B_r^+)}^{-1}\varphi$, so that $\norm{\nabla\psi}{L^2(B_r^+)}=r^{\sfrac{d}2}$. Then
		$$
		-\int_{B_r^+}\nabla \varphi \cdot A\nabla u\,\mathrm{d}x\leq \left(\frac12 \Lambda_A^2 + \frac14{\omega_d} \norm{Q}{L^\infty} +C_d\norm{\beta}{L^\infty}\right)r^{\sfrac{d}2}\norm{\nabla \varphi}{L^2(B_r^+)},
		$$
		and since we have the same argument with $-\varphi$ in place of $\varphi$, we get that
		$$
		\left|\int_{B_r^+}\nabla \varphi \cdot A\nabla u\,\mathrm{d}x\right|\leq \left(\frac12 \Lambda_A^2 + \frac14{\omega_d} \norm{Q}{L^\infty} +C_d\norm{\beta}{L^\infty}\right)r^{\sfrac{d}2}\norm{\nabla \varphi}{L^2(B_r^+)},
		$$
		Let now $\varphi \in C^\infty_c(B_{2r})$ be such that
		$$
		0\le\varphi \le 1\,\text{ in $B_{2r}$},\quad
		\varphi \equiv 1\,\text{ in $B_{r}$}\quad\mbox{and}\quad\norm{\nabla \varphi}{L^\infty(B_{2r})}\leq \frac{2}{r},
		$$
		and let
		$$\mu:=\mathrm{div}(A\nabla u)+\beta\ind_{\{x_d=0\}}\HH^{d-1}.$$
		Then, the total variation $|\mathrm{div}(A\nabla u)|$ can be estimated as follows:
		\begin{align*}
		|\mathrm{div}(A\nabla u)|=\big|\mu+\beta\ind_{\{x_d=0\}}\HH^{d-1}\big|&\le \mu +|\beta|\ind_{\{x_d=0\}}\HH^{d-1}\\
		&=\mathrm{div}(A\nabla u) +(|\beta|-\beta)\ind_{\{x_d=0\}}\HH^{d-1}\\
		&=\mathrm{div}(A\nabla u) +2\beta_-\ind_{\{x_d=0\}}\HH^{d-1}\\
		&\le\mathrm{div}(A\nabla u) +2\|\beta\|_{L^\infty}\ind_{\{x_d=0\}}\HH^{d-1},
		\end{align*}
		so
		\begin{align*}
		|\mathrm{div}(A\nabla u)|(B_r^+)&\leq
		|\mathrm{div}(A\nabla u)|(\varphi)\\
		&\leq
		\Big(\mathrm{div}(A\nabla u) +2\|\beta\|_{L^\infty}\ind_{\{x_d=0\}}\HH^{d-1}\Big)(\varphi)\\
		&\leq
		\left(\frac12 \Lambda_A^2 + \frac14{\omega_d} \norm{Q}{L^\infty} +C_d\norm{\beta}{L^\infty}\right)(2r)^{\sfrac{d}2}\norm{\nabla \varphi}{L^2(B_{2r}^+)}+2\|\beta\|_{L^\infty}(2r)^{d-1}\\
		&\leq
		C\big(d,\Lambda_A,\norm{Q}{L^\infty},\norm{\beta}{L^\infty}\big) r^{d-1},
		\end{align*}
		as claimed.
	\end{proof}
	
	We next show that the Laplacian estimate holds also for balls centered in $B_1^+$.
	
	\begin{proposition}[Generalized Laplacian estimate]\label{p:laplacian-estimate-general}
		Let $u$ be a local minimizer of $\mathcal{F}$ in $B_1$. Then,
		for every $x_0 \in B_1^+\cup B_1'$ and every $r \in (0,1)$ such that $B_{8r}(x_0)\subset B_1$, we have
		\be\label{e:laplacian-estimate-general}
		|\mathrm{div}(A\nabla u)|(B_r^+(x_0))\leq C r^{d-1},
		\ee
		with $C$ depending only on $d,\Lambda_A,\norm{Q}{L^\infty}$ and $\norm{\beta}{L^\infty}$.
	\end{proposition}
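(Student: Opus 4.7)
The approach is to split according to the distance of $x_0$ from the hyperplane $\{x_d=0\}$: when $x_0$ is close to the hyperplane, reduce to \cref{l:lap.est1} by projecting; when $x_0$ is away from the hyperplane, mimic the proof of \cref{l:lap.est1} with the capillarity term absent.

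\emph{Case 1: $x_{0,d} \le 2r$.} Let $\tilde x_0 := (x_0',0) \in B_1'$ be the orthogonal projection of $x_0$ onto $\{x_d=0\}$. Since $|x_0 - \tilde x_0| = x_{0,d} \le 2r$, we have $B_r^+(x_0) \subset B_{3r}^+(\tilde x_0)$. To apply \cref{l:lap.est1} at $\tilde x_0$ with radius $3r$, we need $B_{6r}(\tilde x_0) \subset B_1$; this follows from the hypothesis $B_{8r}(x_0) \subset B_1$, via the inclusion $B_{6r}(\tilde x_0) \subset B_{6r + x_{0,d}}(x_0) \subset B_{8r}(x_0)$. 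Since $|\mathrm{div}(A\nabla u)|$ is a positive Borel measure on $B_1\cap\{x_d\ge 0\}$ by \cref{l:radon-measure-divergence-A}, monotonicity yields
$$|\mathrm{div}(A\nabla u)|(B_r^+(x_0)) \le |\mathrm{div}(A\nabla u)|(B_{3r}^+(\tilde x_0)) \le C(3r)^{d-1}.$$

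\emph{Case 2: $x_{0,d} > 2r$.} Then $B_{2r}(x_0)\subset\{x_d>0\}$, so $B_r^+(x_0)=B_r(x_0)$ and any test function $\psi\in C^\infty_c(B_{2r}(x_0))$ vanishes on $B_1'$. Using the competitor $(u+\psi)_+$ in the minimality condition, the capillarity term $\int_{B_1'}2\beta u\,d\mathcal{H}^{d-1}$ is unchanged, and the pointwise inequality $\nabla(u+\psi)_+\cdot A\nabla(u+\psi)_+\le (\nabla u+\nabla\psi)\cdot A(\nabla u+\nabla\psi)$ together with the trivial bound $\bigl|\ind_{\{(u+\psi)_+>0\}}-\ind_{\{u>0\}}\bigr|\le 1$ gives
$$0 \le 2\int_{B_{2r}(x_0)}\nabla u\cdot A\nabla\psi + \Lambda_A^2\|\nabla\psi\|_{L^2}^2 + \omega_d\|Q\|_\infty (2r)^d.$$
Applying the same inequality to $-\psi$ and rescaling $\psi$ so that $\|\nabla\psi\|_{L^2}=r^{d/2}$, we deduce the two-sided bound $\bigl|\int \nabla u\cdot A\nabla\psi\bigr|\le Cr^{d/2}\|\nabla\psi\|_{L^2}$ for every $\psi\in C^\infty_c(B_{2r}(x_0))$. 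Taking a standard cutoff with $\psi\ge\ind_{B_r(x_0)}$, $\|\nabla\psi\|_\infty\le C/r$, and hence $\|\nabla\psi\|_{L^2}\le Cr^{(d-2)/2}$, we conclude $|\mathrm{div}(A\nabla u)|(B_r(x_0))\le Cr^{d-1}$.

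\emph{Main obstacle.} There is no deep obstacle: the argument is a case-splitting followed either by a direct invocation of \cref{l:lap.est1} or by a slight simplification of its proof. The only care needed is in the choice of thresholds, so that the hypothesis $B_{8r}(x_0)\subset B_1$ suffices in Case 1 (the factor $8$ arises exactly from combining the projection distance $x_{0,d}\le 2r$ with the factor $2$ in the hypothesis of \cref{l:lap.est1}).
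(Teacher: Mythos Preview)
Your proof is correct and follows essentially the same strategy as the paper: split according to whether $B_{2r}(x_0)$ lies entirely in $\{x_d>0\}$ or not, reduce the near-boundary case to \cref{l:lap.est1} via projection (you use radius $3r$, the paper uses $4r$), and repeat the interior argument of \cref{l:lap.est1} with the capillarity term absent in the other case. The only point worth making explicit in your Case~2 is that on $B_{2r}(x_0)\subset\{x_d>0\}$ the measure $\mathrm{div}(A\nabla u)$ is nonnegative (by \cref{l:radon-measure-divergence-A}, since the boundary term vanishes there), which is what lets you pass from the two-sided weak bound to the total-variation bound by testing against a nonnegative cutoff.
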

	\begin{proof}
		For every $B_{8r}(x_0)\subset B_1$ with $x_0\in B_1^+\cup B_1'$, we have one of the following two possibilities:
		\begin{itemize}
			\item[$\text{Case 1.}$]  $B_{2r}(x_0)\subset B_1^+$;
			\item[$\text{Case 2.}$]  $B_{2r}(x_0)\subset B_{4r}(y_0)$, where $y_0$ is the projection of $x_0$ on $B_1'$.
		\end{itemize}	
		We notice that in the second case $B_{8r}(y_0)\subset B_1$, so by \cref{l:lap.est1} we get
		$$|\mathrm{div}(A\nabla u)|(B_{r}^+(x_0))\le |\mathrm{div}(A\nabla u)|(B_{4r}^+(y_0))\leq C (4r)^{d-1},$$
		while the estimate in the first case follows by the same argument we used in \cref{l:lap.est1} (the detailed proof can be also found in \cite[Section 3.2]{velectures}).
	\end{proof}
	
	\subsection{Growth estimates on the boundary} Before to provide the main regularity result, we need some growth estimate of the solutions in a neighborhood of the free boundary.
	\begin{lemma}[Growth estimate for the mean over balls centered in $B_1^+$]\label{l:iteration-interior}
		Let $u:B_1\cap\{x_d\ge0\}\to\R$ be a local minimizer of $\mathcal{F}$ in $B_1$, where $\mathcal F$ is as in \eqref{e:def-F} and where $A$, $Q$ and $\beta$ satisfy the conditions from \cref{sub:intro-assumptions}. Suppose that $x_0 \in B_{\sfrac12}^+$ is such that
		$$u(x_0)=0\qquad\text{and}\qquad A(x_0)=\text{\rm Id}\,.$$ Then, there are constants $C$ and $R\in(0,1)$ such that
		\begin{equation}\label{e:lip-main-iter-est-inside}
		\frac{1}{(2r)^{d}}\int_{B_{2r}^+(x_0)}{u}\,dx\le \Big(1+Cr^\sigma\Big)\frac1{r^{d}}\int_{B_{r}^+(x_0)}{u}\,dx+Cr,
		\end{equation}
		for every radius $r<\min\{R,4\delta\},$
		where $\delta$ is the distance from $x_0$ to the hyperplane $\{x_d=0\}$.
	\end{lemma}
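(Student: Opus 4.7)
The plan is to exploit an almost-monotonicity of spherical means around $x_0$, which takes a particularly clean form here because the normalization $A(x_0)=\mathrm{Id}$ makes the $A(x_0)$-capacitary function on the ring $B_{2r}(x_0)\setminus B_r(x_0)$ radial, so its level sets are spheres centered at $x_0$.

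First I would apply \cref{l:freezing_0} with base point $x_0$. Since $A(x_0)=\mathrm{Id}$, the map $T_{x_0}$ is just a translation and $u^{x_0}$ a translate of $u$; the almost-minimality \eqref{almost0} then provides a direct comparison of $u$ with any admissible competitor on $B_r(x_0)$ for the constant-coefficient functional $\mathcal{G}^+_{x_0}+\mathcal{G}'_{x_0}$, with multiplicative error $1+Cr^{\delta_A}$ and additive errors of order $r^{d+\delta_Q}$ and $r^{\delta_\beta}\norm{u-\tilde u}{L^1}$. This multiplicative factor is the source of the $(1+Cr^\sigma)$ in the conclusion, with $\sigma:=\min\{\delta_A,\delta_Q,\delta_\beta\}$.

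Next, I would introduce the spherical mean
\[
\psi(\rho)\,:=\,\frac{1}{\HH^{d-1}\!\big(\partial B_\rho(x_0)\cap\{x_d\ge 0\}\big)}\int_{\partial B_\rho(x_0)\cap\{x_d\ge 0\}}\!u\,d\HH^{d-1},
\]
and compute $\psi'(\rho)$ via the divergence theorem on $B_\rho(x_0)\cap\{x_d>0\}$. For $\rho<\delta$ the ball is contained in $B_1^+$ and $\psi'$ reduces essentially to $|\partial B_\rho|^{-1}\int_{B_\rho(x_0)}\mathrm{div}(A\nabla u)$; for $\rho\ge\delta$ an extra contribution from the disk $B_\rho(x_0)\cap\{x_d=0\}$ appears, which by the weak Neumann condition from \cref{sub:open} is a weighted integral of $\beta$ over $\{u>0\}$. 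In both regimes \cref{p:laplacian-estimate-general} packages the interior Laplacian and the hyperplane contributions into a single $O(\rho^{d-1})$ bound; after normalizing by $|\partial B_\rho^+|\asymp\rho^{d-1}$ this yields $|\psi'(\rho)|\le C$, which is exactly the almost-monotonicity on level sets of the $A(x_0)$-capacitary function announced in the outline.

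From $|\psi'|\le C$ one infers $\psi(2r)\le\psi(r)+Cr$. The Fubini-type identity
\[
|B_\rho^+(x_0)|\,M(\rho)\,=\,\int_0^\rho \HH^{d-1}\big(\partial B_s^+(x_0)\big)\,\psi(s)\,ds,\qquad M(\rho):=\mean{B_\rho^+(x_0)}u,
\]
combined with $|\psi'|\le C$, gives $M(2r)\le \psi(2r)$ and $\psi(r)\le M(r)+Cr$, hence $M(2r)\le M(r)+Cr$. The multiplicative correction $(1+Cr^\sigma)$ is then absorbed through the H\"older-in-$A$ error from the first step, by applying the constant-coefficient identity above not to $u$ itself but to the harmonic replacement of $u$ on $B_{2r}^+(x_0)$ and estimating the $L^1$-distance via \eqref{almost0}.

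The principal obstacle is the regime $\delta<r<4\delta$, in which both $B_r^+(x_0)$ and $B_{2r}^+(x_0)$ are proper spherical caps: the computation of $\psi'(\rho)$ then acquires a further contribution from the moving boundary of the cap on $\{x_d=0\}$, and the Fubini identity relating $\psi$ to $M$ must be rewritten in the cap geometry. The key ingredient that makes the argument uniform in $\rho\in(0,4\delta)$ is \cref{p:laplacian-estimate-general}, which compactly combines the interior Laplacian bound and the hyperplane Neumann term into a single $O(\rho^{d-1})$ estimate; without this bundling, the interior and boundary parts of $\psi'$ would require separate treatments incompatible at the transition $\rho=\delta$.
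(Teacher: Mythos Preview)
Your argument has a genuine gap at the step where you assert that $\psi'(\rho)$ ``reduces essentially to $|\partial B_\rho|^{-1}\int_{B_\rho(x_0)}\mathrm{div}(A\nabla u)$''. Differentiating the Euclidean spherical mean and applying the divergence theorem gives
\[
\psi'(\rho)=\frac{1}{\HH^{d-1}(\partial B_\rho)}\int_{\partial B_\rho(x_0)}\nu\cdot\nabla u\,d\HH^{d-1}=\frac{1}{\HH^{d-1}(\partial B_\rho)}\int_{B_\rho(x_0)}\Delta u\,,
\]
which produces $\Delta u$, not $\mathrm{div}(A\nabla u)$. The estimate you invoke (\cref{p:laplacian-estimate-general}) controls only the latter. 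The discrepancy $\int_{B_\rho}\mathrm{div}\big((\text{Id}-A)\nabla u\big)$ integrates by parts to a surface term $\int_{\partial B_\rho}\nu\cdot(\text{Id}-A)\nabla u$, and you have no bound on $\int_{\partial B_\rho}|\nabla u|$: at this stage only \cref{l:holder0} is available, giving $\int_{B_\rho}|\nabla u|^2\lesssim\rho^{d-1}$ but nothing on spheres. Your proposed repair via the $\Delta$-harmonic replacement and \eqref{almost0} does not close either: the almost-minimality yields an $H^1$ bound $\int|\nabla(u-v_r)|^2\lesssim r^{\delta_A}\int|\nabla u|^2+r^d$, which after Poincar\'e and normalisation by $r^{-d}$ is at best $O(r^{(1+\delta_A)/2})$, strictly weaker than the additive $Cr$ you need and not of multiplicative form $(1+Cr^\sigma)M(r)$. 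Note also that \cref{l:freezing_0} plays no role in the paper's proof of this lemma; the $(1+Cr^\sigma)$ does not come from almost-minimality.

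The paper never differentiates a Euclidean mean. It introduces the $A$-capacitary function $h$ on the annulus $B_{2r}(x_0)\setminus\overline B_r(x_0)$ (solving $\mathrm{div}(A\nabla h)=0$ with constant Dirichlet data $r^{2-d}$ and $(2r)^{2-d}$ on the two spheres) and runs a co-area plus double integration-by-parts argument on the level sets $\{h=s\}$. Because the flux appearing is $\int_{\{h=s\}}\frac{-\nabla h}{|\nabla h|}\cdot A\nabla u$, after integration in $s$ one obtains exactly $\int\mathrm{div}(A\nabla u)$, which \emph{is} the quantity bounded by \eqref{laplacian.est}. This yields
\[
\int_{\partial B_{2r}^+(x_0)}u\,\frac{\nabla h\cdot A\nabla h}{|\nabla h|}\,d\HH^{d-1}\le \int_{\partial B_{r}^+(x_0)}u\,\frac{\nabla h\cdot A\nabla h}{|\nabla h|}\,d\HH^{d-1}+Cr\,.
\]
The factor $(1+Cr^\sigma)$ then has a concrete origin: it is the Schauder comparison of \cref{t:app-schauder} between $h$ and the radial Euclidean capacitary function, which quantifies how far the weight $\frac{\nabla h\cdot A\nabla h}{|\nabla h|}$ deviates from the constant $(d-2)\rho^{1-d}$ on $\partial B_\rho$. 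Your concern about the cap regime is secondary; the primary defect is that the quantity you compute is governed by $\Delta u$, for which no total-variation bound is available.
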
	
	\begin{proof}
		We fix a radius $r<\min\{R,2\delta\}$ and set
		$$h:\overline B_{2r}(x_0)\setminus B_r(x_0)\to\R,$$
		to be the solution to the problem
		$$\begin{cases}
		\begin{aligned}\text{\rm div}(A\nabla h)=0&\quad\text{in}\quad B_{2r}(x_0)\setminus \overline B_r(x_0)\,,\\
		h=r^{2-d}&\quad\text{on}\quad \partial B_{r}(x_0)\,,\\
		h=(2r)^{2-d}&\quad\text{on}\quad \partial B_{2r}(x_0)\,.
		\end{aligned}
		\end{cases}$$
		We consider the rescaling
		$$w:\overline B_2\setminus B_1\to\R\ ,\qquad  w(x)=r^{d-2}h(x_0+rx),$$
		and we notice that $w$ solves
		$$\text{\rm div}(A_r\nabla w)=0\quad\text{in}\quad B_{2}\setminus \overline B_1\ ,\qquad w=1\quad\text{on}\quad \partial B_{r}\ ,\qquad w=2^{2-d}\quad\text{on}\quad \partial B_{2}\ ,$$
		where $A_r(x):=A(rx)$. Since $A$ satisfies the hypotheses $(\mathcal H1)$, there are constants $C$ and $\alpha$ such that
		\begin{equation}\label{e:lip-estimate-A-r}
		\frac1{1+Cr^\alpha}\text{\rm Id}\le A_r(x)\le (1+Cr^\alpha)\text{\rm Id}\qquad\text{for every}\qquad x\in\overline B_2\setminus B_1.
		\end{equation}
		Thus, for $r$ small enough, $A_r$ satisfies the hypotheses of \cref{t:app-schauder}, so there are constants $K$ and $\sigma$ (depending only on the dimension and the constants involved in $(\mathcal H1)$), for which
		$$\frac1{1+Kr^\sigma}\frac{d-2}{|x|^{d-1}}\le |\nabla w(x)|\le (1+Kr^\sigma)\frac{d-2}{|x|^{d-1}}\quad\text{for every}\quad x\in\overline B_2\setminus B_1,$$
		where $\frac{d-2}{|x|^{d-1}}$ is the norm of the gradient of the harmonic function in $\overline B_2\setminus B_1$  with the same boundary datum as $w$ on $\partial B_2\cup\partial B_1$. Rescaling everything back to scale $r$, we get
		\begin{equation}\label{e:lip-estimate-gradient-h}
		\frac1{1+Kr^\sigma}\frac{d-2}{|y|^{d-1}}\le |\nabla h(x_0+y)|\le (1+Kr^\sigma)\frac{d-2}{|y|^{d-1}}\quad\text{for every}\quad y\in\overline B_{2r}\setminus B_r\,.
		\end{equation}
		We set $t:=(2r)^{2-d}$ and $T:=r^{2-d}$, and we notice that the above estimate on $|\nabla h|$ implies that the level sets $\{h=s\}$ are $C^{1,\alpha}$ manifolds for every $s\in(t,T)$. We will use the notation
		$$\Omega_s:=\{h>s\}\cap\{x_d>0\}\,,\quad \Gamma_s:=\{h=s\}\cap\{x_d>0\} \quad\text{and}\quad\Omega_s':=\{h>s\}\cap\{x_d=0\}\,,$$
		for every $s\in[t,T]$. Notice that by construction
		$$\Omega_t=B_{2r}^+(x_0)\qquad\text{and}\qquad \Omega_T=B_{r}^+(x_0)\,.$$
		Moreover, for any $s\in(t,T)$, the normal to $\Gamma_s$ pointing outwards $\Omega_s$ is precisely $\frac{-\nabla h}{|\nabla h|}$, while the normal to $\Omega_s'$ pointing outwards $\Omega_s$ is $-e_d$.
		By integrating by parts, applying the co-area formula and integrating by parts again, we get that
		\begin{align*}
		\int_t^T\bigg(\int_{\Omega_s^+}\text{\rm div}(A\nabla u)\,dx\bigg)\,ds&=\int_t^T\bigg(\int_{\Gamma_s}\nabla u\cdot A\frac{-\nabla h}{|\nabla h|}\,d\mathcal{H}^{d-1}+\int_{\Omega_s'}\nabla u\cdot A(-e_d)\,dx'\bigg)\,ds\\
		&=\int_{\Omega_t\setminus\Omega_T}\nabla u\cdot A(-\nabla h) \,dx+\int_t^T\bigg(\int_{\Omega_s'}\nabla u\cdot A(-e_d)\,dx'\bigg)\,ds\\
		&=\int_{\Gamma_T} u\frac{\nabla h}{|\nabla h|}\cdot A(-\nabla h)\,d\mathcal{H}^{d-1}+\int_{\Gamma_t} u\frac{-\nabla h}{|\nabla h|}\cdot A(-\nabla h)\\
		&\qquad+\int_{\Omega_t'\setminus\Omega_T'} u(-e_d)\cdot A(-\nabla h)\,dx'+\int_t^T\bigg(\int_{\Omega_s'}\nabla u\cdot A(-e_d)\,dx'\bigg)\,ds\,.
		\end{align*}	
		Now, in $B_R(x_0)$, $A$ is close to the identity matrix and (by \cref{t:app-schauder}) $h$ is close to the harmonic function in the annulus $B_{2r}(x_0)\setminus B_r(x_0)$, we get that $(-e_d)\cdot A\nabla h>0$ on $\{x_d=0\}$. Thus
		\begin{align*}
		\int_{\partial B_{2r}(x_0)\cap\{x_d>0\}} u\frac{\nabla h\cdot A \nabla h}{|\nabla h|}\,d\mathcal{H}^{d-1}&\le\int_{\partial B_{r}(x_0)\cap\{x_d>0\}} u\frac{\nabla h\cdot A \nabla h}{|\nabla h|}\,d\mathcal{H}^{d-1}\\
		&\qquad +\int_t^T\bigg(\int_{\Omega_s^+}\text{\rm div}(A\nabla u)\,dx+\int_{\Omega_s'}\nabla u\cdot A e_d\,dx'\bigg)\,ds\\
		&=\int_{\partial B_{r}(x_0)\cap\{x_d>0\}} u\frac{\nabla h\cdot A \nabla h}{|\nabla h|}\,d\mathcal{H}^{d-1}\\
		&\qquad +\int_t^T\bigg(\int_{\Omega_s^+}\text{\rm div}(A\nabla u)\,dx+\int_{\Omega_s'}\beta\ind_{\{u>0\}}\,dx'\bigg)\,ds\,,
		\end{align*}	
		Notice that $\Omega_s\subset B_{2r}(x_0)$, for every $s$. Thus, by \eqref{laplacian.est} and the choice $T=r^{2-d}$, we get that
		\begin{align*}
		\int_{\partial B_{2r}(x_0)\cap\{x_d>0\}} u\frac{\nabla h\cdot A \nabla h}{|\nabla h|}\,d\mathcal{H}^{d-1}\le \int_{\partial B_{r}(x_0)\cap\{x_d>0\}} u\frac{\nabla h\cdot A \nabla h}{|\nabla h|}\,d\mathcal{H}^{d-1}+Cr,
		\end{align*}
		for some $C>0$.	Finally, using the estimates \eqref{e:lip-estimate-gradient-h} and \eqref{e:lip-estimate-A-r}, we get
		\begin{equation*}
		\frac{1}{(2r)^{d-1}}\int_{\partial B_{2r}(x_0)\cap\{x_d>0\}}{u}\,d\mathcal{H}^{d-1}\le \Big(1+Cr^\sigma\Big)\frac1{r^{d-1}}\int_{\partial B_{r}(x_0)\cap\{x_d>0\}}{u}\,d\mathcal{H}^{d-1}+Cr,
		\end{equation*}
		which by the area formula gives the claim.
	\end{proof}

	\begin{lemma}[Growth estimate for the mean over balls centered on $B_1'$]\label{l:iteration-boundary}
		Let $u:B_1\cap\{x_d\ge0\}\to\R$ be a local minimizer of $\mathcal{F}$ in $B_1$, where $\mathcal F$ is as in \eqref{e:def-F} and where $A$, $Q$ and $\beta$ satisfy the conditions from \cref{sub:intro-assumptions}. Suppose that $x_0 \in B_{\sfrac12}'$ is such that
		$$u(x_0)=0\qquad\text{and}\qquad A(x_0)=\text{\rm Id}\,.$$ Then, there are constants $C$ and $R\in(0,1)$ such that
		\begin{equation}\label{e:lip-main-iter-est-boundary}
		\frac{1}{(2r)^{d}}\int_{B_{2r}^+(x_0)}{u}\,dx\le \Big(1+Cr^\sigma\Big)\frac1{r^{d}}\int_{B_{r}^+(x_0)}{u}\,dx+Cr,
		\end{equation}
		for every radius $r<R$.
	\end{lemma}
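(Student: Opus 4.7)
The plan is to adapt the proof of \cref{l:iteration-interior} by replacing the $A$-capacitary function on the full annulus with an $A$-capacitary function on the half-annulus equipped with a homogeneous Neumann condition on $\{x_d=0\}$. This Neumann condition is the device that automatically kills the boundary contribution on the hyperplane which, in the interior lemma, required the geometric sign check $(-e_d)\cdot A\nabla h>0$; since now $x_0\in B_1'$, no such sign is available, and a Neumann barrier is the natural substitute.

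Set $t:=(2r)^{2-d}$ and $T:=r^{2-d}$, and let $h$ be the unique solution of
\begin{equation*}
\begin{cases}
\mathrm{div}(A\nabla h)=0 & \text{in }\ \big(B_{2r}(x_0)\setminus\overline{B_r(x_0)}\big)\cap\{x_d>0\},\medskip\\
h=T\ \text{on}\ \partial B_r(x_0)\cap\{x_d\ge 0\}, & h=t\ \text{on}\ \partial B_{2r}(x_0)\cap\{x_d\ge 0\},\medskip\\
A\nabla h\cdot e_d=0 & \text{on }\ \{x_d=0\}\cap \big(B_{2r}(x_0)\setminus\overline{B_r(x_0)}\big).
\end{cases}
\end{equation*}
The rescaling $w(y):=r^{d-2}h(x_0+ry)$ solves the same mixed problem on the fixed half-annulus $(B_2\setminus\overline B_1)\cap\{y_d\ge 0\}$ with matrix $A_r(y):=A(x_0+ry)$, which is $Cr^{\delta_A}$-close to $A(x_0)=\mathrm{Id}$ by $(\mathcal H1)$. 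The corresponding constant-coefficient reference is the radial half-space capacitary function $|y|^{2-d}$, which is $\mathrm{Id}$-harmonic and automatically satisfies $\partial_d=0$ on $\{y_d=0\}$. Consequently the Schauder comparison result \cref{t:app-schauder}, applied to this mixed Dirichlet--Neumann problem (if necessary after even reflection across $\{x_d=0\}$, which preserves the Neumann condition since $A(x_0)=\mathrm{Id}$), yields some $\sigma>0$ and $K$ for which
\begin{equation*}
\frac{1}{1+Kr^\sigma}\frac{d-2}{|y|^{d-1}}\le |\nabla h(x_0+y)|\le (1+Kr^\sigma)\frac{d-2}{|y|^{d-1}},
\end{equation*}
on the half-annulus; in particular $\{h=s\}\cap\{x_d\ge 0\}$ is smooth for every $s\in(t,T)$.

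Set $\Omega_s:=\{h>s\}\cap\{x_d>0\}$, $\Gamma_s:=\{h=s\}\cap\{x_d>0\}$, and $\Omega_s':=\{h>s\}\cap\{x_d=0\}$; by construction $\Omega_t=B_{2r}^+(x_0)$ and $\Omega_T=B_r^+(x_0)$. Following exactly the layer-cake/coarea/double-integration-by-parts chain of the interior proof, integrating the Radon measure $\mathrm{div}(A\nabla u)$ (in the sense of \cref{l:radon-measure-divergence-A}) against $h$ and using $\mathrm{div}(A\nabla h)=0$ one obtains
\begin{equation*}
\int_{\Gamma_t}\!u\,\frac{\nabla h\cdot A\nabla h}{|\nabla h|}\,d\HH^{d-1}=\int_{\Gamma_T}\!u\,\frac{\nabla h\cdot A\nabla h}{|\nabla h|}\,d\HH^{d-1}+\int_t^T\!\Big(\mathrm{div}(A\nabla u)(\Omega_s^+)-\!\int_{\Omega_s'}\!\beta\ind_{\{u>0\}}\,dx'\Big)ds,
\end{equation*}
where the potential boundary term $\int_{\Omega_t'\setminus\Omega_T'}u\,A\nabla h\cdot(-e_d)\,dx'$ produced by the double integration by parts vanishes by the Neumann condition on $h$, so no sign analysis is needed.

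By \cref{l:radon-measure-divergence-A}, $\mathrm{div}(A\nabla u)-\beta\ind_{\{u>0\}}\ind_{\{x_d=0\}}\HH^{d-1}$ is a non-negative measure, and by \cref{l:lap.est1} its total variation on $B_{2r}^+(x_0)$ is at most $Cr^{d-1}$; since $T-t\le r^{2-d}$, the $\int_t^T(\dots)ds$ term is bounded by $Cr$. Substituting the two-sided gradient estimate for $h$ on $\Gamma_t$ and $\Gamma_T$ into the displayed identity gives
\begin{equation*}
\frac{1}{(2r)^{d-1}}\int_{\partial B_{2r}(x_0)\cap\{x_d>0\}}\!\! u\,d\HH^{d-1}\le (1+Cr^\sigma)\frac{1}{r^{d-1}}\int_{\partial B_{r}(x_0)\cap\{x_d>0\}}\!\! u\,d\HH^{d-1}+Cr,
\end{equation*}
and \eqref{e:lip-main-iter-est-boundary} follows by integrating this spherical estimate from $0$ to the relevant radius via the coarea formula. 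The only genuine technical point is the Schauder estimate for the mixed Dirichlet--Neumann problem on the half-annulus; the even reflection trick, which is clean because $A(x_0)=\mathrm{Id}$ and the perturbation $A-\mathrm{Id}$ is of size $r^{\delta_A}$, reduces this to the standard elliptic regularity invoked in the interior case.
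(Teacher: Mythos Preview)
Your proposal is correct and follows essentially the same approach as the paper: introduce the $A$-capacitary function on the half-annulus with the homogeneous conormal condition $e_d\cdot A\nabla h=0$ on $\{x_d=0\}$, so that the hyperplane boundary term in the double integration by parts vanishes identically, then use the Schauder comparison with $|x|^{2-d}$ and the generalized Laplacian estimate \cref{l:lap.est1} to control the remainder. The only cosmetic difference is that the paper invokes the half-annulus Schauder result \cref{t:app-schauder-half-ball} directly, whereas you suggest recovering it by even reflection and \cref{t:app-schauder}; note that for the reflection route one should reflect the coefficients as well (defining $\tilde A$ on $\{x_d<0\}$ by conjugating with the reflection), since $A$ itself need not have the right parity---this still yields a H\"older matrix $Cr^{\delta_A}$-close to $\mathrm{Id}$, so the argument goes through.
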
	
	\begin{proof}
		Suppose for simplicity that $x_0=0$. Fixed a radius $r>0$, we set
		$$h:\overline B_{2r}^+\setminus B_r^+\to\R,$$
		to be the solution to the problem
		$$\begin{cases}
		\begin{aligned}
		\text{\rm div}(A\nabla h)=0&\quad\text{in}\quad B_{2r}^+\setminus B_r^+\,,\\
		h=r^{2-d}&\quad\text{on}\quad \partial B_{r}\cap\{x_d>0\}\,,\\
		h=(2r)^{2-d}&\quad\text{on}\quad \partial B_{2r}\cap\{x_d>0\}\,,\\
		e_d\cdot A\nabla h=0&\quad\text{on}\quad (B_{2r}\setminus B_r)\cap\{x_d=0\}\,.
		\end{aligned}\end{cases}$$
		Reasoning as in \cref{l:iteration-interior} and using \cref{t:app-schauder-half-ball}, we can choose $R>0$ small enough such that
		\begin{equation}\label{e:lip-estimate-gradient-h2}
		\frac1{1+Kr^\sigma}\frac{d-2}{|y|^{d-1}}\le |\nabla h(x_0+y)|\le (1+Kr^\sigma)\frac{d-2}{|y|^{d-1}}\quad\text{for every}\quad y\in\overline B_{2r}\setminus B_r\,,
		\end{equation}
		for every $r\le R$, where $K$ and $\sigma$ are universal constants depending on the constants from \cref{sub:intro-assumptions} and \cref{t:app-schauder-half-ball}.
		Setting $t:=(2r)^{2-d}$, $T:=r^{2-d}$ and
		$$\Omega_s:=\{h>s\}\cap\{x_d>0\}\,,\quad \Gamma_s:=\{h=s\}\cap\{x_d>0\} \quad\text{and}\quad\Omega_s':=\{h>s\}\cap\{x_d=0\}\,,$$
		for $s\in[t,T]$, we get that
		\begin{align*}
		\int_t^T\bigg(\int_{\Omega_s}\text{\rm div}(A\nabla u)\,dx+\int_{\Omega_s'}\nabla u\cdot Ae_d\,dx'\bigg)\,ds&=\int_t^T\bigg(\int_{\Gamma_s}\nabla u\cdot A\frac{-\nabla h}{|\nabla h|}\,d\mathcal{H}^{d-1}\bigg)\,ds\\
		&=\int_{\Omega_t\setminus\Omega_T}\nabla u\cdot A(-\nabla h) \,dx\\
		&=\int_{\Gamma_T} u\frac{\nabla h}{|\nabla h|}\cdot A(-\nabla h)\,d\mathcal{H}^{d-1}+\int_{\Gamma_t} u\frac{-\nabla h}{|\nabla h|}\cdot A(-\nabla h)\,d\mathcal{H}^{d-1},
		\end{align*}	
		which by \eqref{laplacian.est} gives
		\begin{align*}
		\int_{\partial B_{2r}\cap\{x_d>0\}} u\frac{\nabla h\cdot A \nabla h}{|\nabla h|}\,d\mathcal{H}^{d-1}\le \int_{\partial B_{r}\cap\{x_d>0\}} u\frac{\nabla h\cdot A \nabla h}{|\nabla h|}\,d\mathcal{H}^{d-1}+Cr.
		\end{align*}
		and implies that
		\begin{equation*}
		\frac{1}{(2r)^{d-1}}\int_{\partial B_{2r}\cap\{x_d>0\}}{u}\,d\mathcal{H}^{d-1}\le \Big(1+Cr^\sigma\Big)\frac1{r^{d-1}}\int_{\partial B_{r}\cap\{x_d>0\}}{u}\,d\mathcal{H}^{d-1}+Cr,
		\end{equation*}
		which by the area formula gives the claim.
	\end{proof}

	\begin{proposition}[Lipschitz estimate on the free boundary]\label{p:mean-value-estimate}
		Let the functional $\mathcal F$ be as in \eqref{e:def-F}, where $A$, $Q$ and $\beta$ satisfy the conditions from \cref{sub:intro-assumptions}. Then, there are constants $L$ and $R\in(0,1)$ such that for any function $u:B_1\cap\{x_d\ge0\}\to\R$ minimizing $\mathcal{F}$ in $B_1$ in the sense of \cref{d:minimizer}, we have
		$$\frac{1}{r^{d}}\int_{B_r^+(x_0)}u\,dx\le Lr\quad\text{for every}\quad r\in(0,R)\quad\text{and every}\quad x_0 \in \Big(B_{\sfrac12}^+\cup B_{\sfrac12}'\Big)\cap\{u=0\}\,.$$
	\end{proposition}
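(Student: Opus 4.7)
The plan is to iterate the dyadic growth estimates of \cref{l:iteration-interior} and \cref{l:iteration-boundary} upward from infinitesimal scale, pairing the telescoped recurrence with the continuity of $u$ at $x_0$ (where $u(x_0) = 0$) to extract the linear decay.

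First I would reduce to $A(x_0) = \text{Id}$ via the change of variables $T_{x_0}(\xi) = x_0 + A^{\sfrac12}(x_0)\xi$ from \cref{sub:notations}; the ellipsoid inclusions $B_{\Lambda_A^{-1}r}(x_0) \subset E_r(x_0) \subset B_{\Lambda_A r}(x_0)$ make $r^{-d}\int_{B_r^+(x_0)} u\,dx$ equivalent to its analogue for $u^{x_0} = u \circ T_{x_0}$ up to a constant depending only on $\Lambda_A$, and $u^{x_0}$ minimizes a normalized functional (as in \cref{l:freezing_0}) whose coefficient matrix is the identity at the origin.

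Setting $f(\rho) := \rho^{-d}\int_{B_\rho^+(x_0)} u\,dx$ and $\rho_k := r/2^k$, whichever of the two lemmas applies provides the dyadic recurrence
\[
f(\rho_{k-1}) \le (1+C\rho_k^\sigma) f(\rho_k) + C\rho_k.
\]
Iterating from $\rho_K$ up to $\rho_0 = r$ and telescoping,
\[
f(r) \le \prod_{k=1}^{K}(1+C\rho_k^\sigma) \cdot f(\rho_K) + C\sum_{k=1}^{K}\rho_k \prod_{j=1}^{k-1}(1+C\rho_j^\sigma).
\]
Since $\sum_k \rho_k^\sigma \le C' r^\sigma$ is summable (because $\sigma > 0$), the product stays bounded uniformly in $K$ by some universal $M$; since $\sum_k \rho_k \le 2r$, the weighted sum is at most $2MCr$. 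Sending $K \to \infty$, the continuity of $u$ furnished by \cref{l:holder0}, together with $u(x_0) = 0$, forces $f(\rho_K) \to 0$, whence $f(r) \le 2MCr$ as claimed.

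For $x_0 \in B_{\sfrac12}'$ the iteration reaches every $\rho_k < R$ via \cref{l:iteration-boundary}. For $x_0 \in B_{\sfrac12}^+$ the interior iteration of \cref{l:iteration-interior} is only admissible at scales $\rho_k < 4\,\mathrm{dist}(x_0, B_1')$; for larger $\rho_k$ I would pass to the hyperplane projection $y_0 = (x_0',0) \in B_{\sfrac12}'$ through the inclusion $B_r^+(x_0) \subset B_{2r}^+(y_0)$ and invoke the boundary case at $y_0$ (or at a nearby free-boundary zero of $u$ on $B_1'$ when $u(y_0)>0$, which is supplied by the continuity of $u$ together with the smallness of $u(y_0)$ coming from $u(x_0)=0$ at distance $\mathrm{dist}(x_0,B_1')$). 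The subtle point is guaranteeing the uniform-in-$K$ boundedness of the telescoping product and the linear-in-$r$ control of the error sum; both rest on the strict positivity of the Hölder exponent $\sigma$ inherited from the regularity of $A$, which makes $\sum \rho_k^\sigma$ geometric rather than logarithmically divergent, and on the fact that the additive error in the recurrence is exactly $C\rho$ (linear), so that the limiting bound preserves the linear scaling.
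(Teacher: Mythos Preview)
Your iteration scheme is correct and matches the paper's for $x_0 \in B_{\sfrac12}'$ and for the small-scale regime $\rho_k < 4\delta$ (with $\delta = \mathrm{dist}(x_0,\{x_d=0\})$) when $x_0 \in B_{\sfrac12}^+$. The gap is in the transition to large scales for interior points.

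You propose, for $\rho_k \ge 4\delta$, to ``invoke the boundary case at $y_0$''. Read as a black-box application of the proposition at the projection $y_0$, this requires $u(y_0)=0$, which need not hold. Your fallback --- locating a nearby zero of $u$ on $B_1'$ via continuity and smallness of $u(y_0)$ --- does not work: the only input is $u(x_0)=0$ with $|x_0-y_0|=\delta$, which through \cref{l:holder0} gives merely $u(y_0)\le C\delta^{1/2}$, and this does not force any zero of $u$ on the hyperplane near $y_0$ (the set $\{u=0\}\cap\{x_d=0\}$ can be tangentially far from $y_0$). Non-degeneracy cannot be invoked either, since it is established only after Lipschitz continuity.

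The paper closes this by \emph{chaining} rather than restarting. The interior iteration at $x_0$ (which legitimately uses $u(x_0)=0$ to send $f(\rho_K)\to 0$) yields $f(r_N)\lesssim r_N$ at the crossover scale $r_N\in[2\delta,4\delta)$. This bound is transferred to a boundary-centered average via the inclusion $B_{cr_N}^+(y_0)\subset B_{r_N}^+(x_0)$, producing a finite starting value $\widetilde M_{N+1}\lesssim r_{N+1}$. One then iterates the recurrence of \cref{l:iteration-boundary} \emph{outward} from scale $r_{N+1}$ to scale $r$; this step uses only the dyadic inequality itself (whose proof does not rely on $u$ vanishing at the center), not the limit at scale zero. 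Note also that a second change of variables is needed to normalize $A(y_0)=\text{Id}$ before applying the boundary recurrence --- you have only arranged $A(x_0)=\text{Id}$.
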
	
	\begin{proof}
		Suppose first that $x_0\in B_{\sfrac12}^+$.	
		Up to a change of variables, we may suppose that $A(x_0)=\text{\rm Id}$. We next take $r\in(0,1)$ and we set
		$$r_n:=2^{-n}r\qquad\text{and}\qquad M_n=\frac1{r_n^d}\int_{B_{r_n}^+}{u(x)}\,dx\,.$$
		Let $\delta$ be the distance from $x_0$ to the hyperplane $\{x_d=0\}$ and let $N$ be the smallest natural number for which $r_N< 4\delta$.
		Then, for every $n\ge N$, we can apply \cref{l:iteration-interior}; the estimate \eqref{e:lip-main-iter-est-inside} reads as
		\begin{equation}\label{e:lip-iteration-M_n}
		M_{n}\le \Big(1+C2^{-n\sigma}\Big)M_{n+1}+Cr2^{-n}.
		\end{equation}
		Setting
		$$P_{N}=1\qquad\text{and}\qquad P_n:=\prod_{k=N}^{n-1}\Big(1+C2^{-k\sigma}\Big)\quad\text{for}\quad n\ge N+1\,,$$
		it is immediate to prove that
		\begin{equation}\label{e:lip-induction}
		M_N\le P_nM_n+P_{n-1}Cr\sum_{k=N}^{n-1}2^{-k}\quad\text{for every}\quad n\ge N+1.
		\end{equation}
		In fact, for $n=N+1$ the estimate is an immediate consequence of \eqref{e:lip-iteration-M_n}, while if \eqref{e:lip-induction} holds for some $n\ge N+1$, then \eqref{e:lip-iteration-M_n} implies
		\begin{align*}
		M_N&\le P_nM_n+P_{n-1}Cr\sum_{k=N}^{n-1}2^{-k}\\
		&\le P_n\Big(\Big(1+C2^{-n\sigma}\Big)M_{n+1}+Cr2^{-n}\Big)+P_{n-1}Cr\sum_{k=N}^{n-1}2^{-k}\\
		&\le P_{n+1}M_{n+1}+P_nCr\sum_{k=N}^{n}2^{-k},
		\end{align*}
		which concludes the proof of \eqref{e:lip-induction}. Finally, using the continuity of $u$ (\cref{l:holder0}),
		$$\lim_{n\to\infty}M_n=0\qquad\text{and}\qquad\lim_{n\to\infty}P_n\le \exp\bigg(C\sum_{k=0}^{+\infty}2^{-k\sigma}\bigg)=\exp\bigg(\frac{C}{1-2^{-\sigma}}\bigg)\,,$$
		we get that
		\begin{equation}\label{e:lipschitz-prop-estimate-for-M-n}
		M_N\le \exp\bigg(\frac{C}{1-2^{-\sigma}}\bigg)Cr_N\,.
		\end{equation}
		Now, if $r<4\delta$, then $N=0$ and \eqref{e:lipschitz-prop-estimate-for-M-n} concludes the proof. Conversely, if $r\ge 4\delta$, then we cannot make further use of the estimate \eqref{e:lip-main-iter-est-inside}.
		Notice that, at this point, we have
		$$2\delta\le r_N<4\delta\,.$$
		We will continue with the boundary estimate \eqref{e:lip-main-iter-est-boundary}. In order to do so, we consider the projection $y_0$ of $x_0$ on $\{x_d=0\}$. Without loss of generality, we can suppose that $y_0=0$. By the choice of $N$, we have
		$$B_{\sfrac{\sqrt{3}r_N}2}^+\subset B_{r_N}^+(x_0).$$
		Now, we consider the function
		$$\widetilde u(x)=u\Big(A^{\sfrac12}(0)\mathcal R x\Big),$$
		where $\mathcal R$ is a suitably chosen rotation of the coordinate system. Then, $\widetilde u$ is still a minimizer of $\mathcal F$ this time with $\widetilde A$, $\widetilde Q$ and $\widetilde \beta$ that still satisfy the assumptions from \cref{sub:intro-assumptions}, but this time with $\widetilde A(0)=\text{\rm Id}$. Now, by choosing the radius $R$ small enough, we can suppose that $A^{\sfrac12}(0)$ is arbitrarily close to $A^{\sfrac12}(x_0)$. Then, for some choice of $R$, we will have that $A^{\sfrac12}(0)\mathcal R\big[B_{r_{N}/2}^+(x_0)\big]\subset B_{\sfrac{\sqrt{3}r_N}2}^+$. Thus, using \eqref{e:lipschitz-prop-estimate-for-M-n}, we can find a constant $\widetilde C$ depending only on the dimension and the constants from \cref{sub:intro-assumptions}, such that
		\begin{align*}
		\frac{1}{r_{N+1}^d}\int_{B_{r_{N+1}}^+}\widetilde u(x)\,dx\le \widetilde C r_{N+1}\,.
		\end{align*}
		We now repeat once more the iteration argument from above. We set
		$$\widetilde M_n=\frac1{r_n^d}\int_{B_{r_n}^+}{\widetilde u(x)}\,dx\,,\qquad\widetilde P_{0}=1\qquad\text{and}\qquad \widetilde P_n:=\prod_{k=0}^{n-1}\Big(1+C2^{-k\sigma}\Big)\quad\text{for}\quad n\ge 1\,.$$
		Reasoning as above, we get that
		\begin{align*}
		\widetilde M_0\le \widetilde P_{N+1}\widetilde M_{N+1}+\widetilde P_NCr\sum_{k=0}^N2^{-k}
		&\le \exp\bigg(\frac{C}{1-2^{-\sigma}}\bigg)\Big(\widetilde M_{N+1}+Cr\sum_{k=0}^N2^{-k}\Big)\\
		&\le  \exp\bigg(\frac{C}{1-2^{-\sigma}}\bigg)\Big(\widetilde C r_{N+1}+2Cr\Big)\,,
		\end{align*}
		which proves that (recall that the constant $C$ is changing from line to line)
		$$\frac1{r^d}\int_{B_{r}^+}{\widetilde u(x)}\,dx\le Cr\,,$$
		for some $r>0$. Now, by the estimate from \cref{l:iteration-boundary}, we get that
		$$\frac1{(4r)^d}\int_{B_{4r}^+}{\widetilde u(x)}\,dx\le Cr\,.$$
		and changing back the coordinates to the orginal ones,
		$$\frac1{(2r)^d}\int_{B_{2r}^+} u(x)\,dx\le Cr\,.$$
		Finally, we notice that by the choice of $\delta$, the ball $B_{2r}^+$ contains $B_r^+(x_0)$. This concludes the proof in the case $x_0\in B_{\sfrac12}^+$. The case $x_0\in B_{\sfrac12}'$ is analogous.
	\end{proof}

	\subsection{Proof of \cref{t:lipschitz-continuity}}
	We will use the following gradient estimates for $A$-harmonic functions. The results are classical, but we give here the precise statements for the sake of completeness.
	\begin{lemma}[Gradient estimate for $A$-harmonic functions in a ball]\label{l:gradest-ball}
		Suppose that $A$ is a matrix with variable coefficients in $B_1$ and that $A$ satisfies the estimates from \cref{sub:intro-assumptions}.
		Suppose that $B_r(x_0)\subset B_1$ and that $u:B_r(x_0)\to\R$ is such that
		$$\text{\rm div}(A\nabla u)=0\quad\text{in}\quad B_r(x_0)\ ;\qquad \frac1r\int_{B_r(x_0)}|u(x)|\,dx\le L\ ,$$
		for some constant $L>0$. Then, there is a constant $C$ depending on $L$, the dimension $d$ and the constants from \cref{sub:intro-assumptions} such that $\|\nabla u\|_{L^\infty(B_{r/2}(x_0))}\le C$.
	\end{lemma}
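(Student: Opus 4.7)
The plan is to reduce to a unit-ball problem by scaling and then combine two classical ingredients of linear elliptic regularity. First I would set $v(y) := \tfrac{1}{r}u(x_0 + ry)$ for $y \in B_1$; then $v$ solves $\mathrm{div}(A_r\nabla v) = 0$ in $B_1$ with $A_r(y) := A(x_0+ry)$, and $\nabla v(y) = \nabla u(x_0+ry)$, so it suffices to establish $\|\nabla v\|_{L^\infty(B_{1/2})} \le C$. The rescaled matrix $A_r$ inherits the ellipticity bounds of $A$ with the same constants, and its Hölder seminorm satisfies $[A_r]_{C^{0,\delta_A}(B_1)} = r^{\delta_A}[A]_{C^{0,\delta_A}(B_r(x_0))}$, which stays uniformly bounded for $r \le 1$. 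A change of variables converts the hypothesis on $u$ into an $L^1$ bound of the form $\|v\|_{L^1(B_1)} \le C(L,d)$.

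With these uniform bounds in hand, the Moser-type $L^\infty$--$L^1$ estimate for weak solutions of linear elliptic equations in divergence form with bounded measurable coefficients (a consequence of the De Giorgi--Nash--Moser theory, as found for instance in Gilbarg--Trudinger) gives
\begin{equation*}
\|v\|_{L^\infty(B_{3/4})} \le C(d,\Lambda_A)\,\|v\|_{L^1(B_1)}.
\end{equation*}

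Finally, exploiting the Hölder regularity of $A_r$, I would invoke the classical interior Schauder-type gradient estimate for weak solutions of divergence-form equations, which yields
\begin{equation*}
\|\nabla v\|_{L^\infty(B_{1/2})} \le C\,\|v\|_{L^\infty(B_{3/4})},
\end{equation*}
with $C$ depending only on $d$, $\Lambda_A$ and the Hölder data of $A$. Undoing the scaling then delivers the desired bound $\|\nabla u\|_{L^\infty(B_{r/2}(x_0))} \le C$. No serious obstacle arises: the entire argument is a rescaled application of two standard ingredients, and the only mild care needed is checking that the rescaling preserves (or improves) both the ellipticity and the Hölder data of the coefficient matrix, which it does for $r \le 1$.
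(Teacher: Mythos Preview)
The paper does not actually prove this lemma; it is stated (together with the companion half-ball version) with the remark that ``the results are classical, but we give here the precise statements for the sake of completeness.'' Your argument---rescale to the unit ball, apply the De Giorgi--Nash--Moser $L^\infty$--$L^1$ estimate, then the interior Schauder gradient bound---is exactly the standard proof the paper has in mind, and it is correct.

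One small point worth flagging: the hypothesis as printed, $\frac{1}{r}\int_{B_r(x_0)}|u|\,dx\le L$, is almost certainly a typo. Compare with the half-ball version (Lemma~4.8), whose hypothesis is $\frac{1}{r^d}\int_{B_r^+(x_0)}|u|\,dx\le Lr$, and with Proposition~4.6, which feeds both lemmas. Under the literal hypothesis your change of variables would give $\|v\|_{L^1(B_1)}=\frac{1}{r^{d+1}}\int_{B_r(x_0)}|u|\,dx\le L/r^d$, which is not uniformly bounded as $r\to 0$ (and indeed the literal statement is false: take $u(x)=r^{-1}x_1$). With the intended normalization $\frac{1}{r^{d+1}}\int_{B_r(x_0)}|u|\,dx\le L$ you get $\|v\|_{L^1(B_1)}\le L$ and everything you wrote goes through verbatim.
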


	\begin{lemma}[Gradient estimate for $A$-harmonic functions in a half-ball]\label{l:gradest-half-ball}
		Suppose that $A$ is a matrix with variable coefficients in $B_1$, that $\beta$ is a H\"older continuous function on $B_1'$, and that $A$ and $\beta$ satisfy the estimates from \cref{sub:intro-assumptions}. Suppose, moreover, that
		$$\|\beta\|_{L^\infty(B_1')}\le M\,,$$
		for some constant $M>0$.
		Consider a ball $B_r(x_0)\subset B_1$ with $x_0\in B_1'$ and a function
		$$u:B_r^+(x_0)\cup B_r'(x_0)\to\R\,$$
		satisfying
		$$\text{\rm div}(A\nabla u)=0\quad\text{in}\quad B_r^+(x_0)\ ; \qquad e_d\cdot A\nabla u=\beta\quad\text{on}\quad B_r'(x_0)\ ;\qquad \frac1{r^d}\int_{B_r^+(x_0)}|u(x)|\,dx\le Lr\ ,$$
		for some constant $L>0$. Then, there is a constant $C$ depending on $L$, $M$, the dimension $d$ and the constants from \cref{sub:intro-assumptions} such that $\|\nabla u\|_{L^\infty(B_{r/2}^+(x_0))}\le C$.
	\end{lemma}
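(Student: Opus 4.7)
The plan is to reduce the claim to the scale $r=1$ and then appeal to classical elliptic boundary regularity. Set $v(y):=u(x_0+ry)/r$ for $y\in B_1^+\cup B_1'$; a direct computation gives $\text{\rm div}(A_r\nabla v)=0$ in $B_1^+$ and $e_d\cdot A_r\nabla v=\beta_r$ on $B_1'$, where $A_r(y):=A(x_0+ry)$ and $\beta_r(y'):=\beta(x_0+ry')$ retain the ellipticity constants and Hölder continuity of $A$ and $\beta$ (the Hölder seminorms only decrease under this rescaling), with $\|\beta_r\|_{L^\infty(B_1')}\le M$ and $\|v\|_{L^1(B_1^+)}\le L$. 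Since $|\nabla v(y)|=|\nabla u(x_0+ry)|$, a universal bound on $\|\nabla v\|_{L^\infty(B_{1/2}^+)}$ is equivalent to the claim.

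The first substantive step is a Moser-type iteration that upgrades the $L^1$ bound on $v$ to an $L^\infty$ bound on $B_{3/4}^+$. For $k\ge 0$ and a smooth cutoff $\eta$ supported in $B_{7/8}$, testing the weak formulation against $\eta^2(|v|-k)^+\operatorname{sign}(v)$ yields
\[
\int_{B_1^+}\eta^2\,\nabla(|v|-k)^+\cdot A_r\nabla(|v|-k)^+\,\mathrm{d}y \le C\!\int_{B_1^+}\!|\nabla\eta|^2((|v|-k)^+)^2\,\mathrm{d}y + \int_{B_1'}|\beta_r|\,\eta^2(|v|-k)^+\,\mathrm{d}\HH^{d-1}.
\]
The boundary integral is absorbed via Young's inequality and the trace/Sobolev embedding, producing a geometric decay on superlevel sets in the De Giorgi/Moser style and yielding $\|v\|_{L^\infty(B_{3/4}^+)}\le C_0(L,M,d,\Lambda_A)$.

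With $v$ bounded, I would invoke the classical Schauder boundary regularity for the oblique (conormal) problem with Hölder coefficients and Hölder Neumann datum on a flat boundary: since $A_r\in C^{0,\delta_A}$ and $\beta_r\in C^{0,\delta_\beta}$, there exists $\alpha\in(0,\min(\delta_A,\delta_\beta))$ and a constant $C$, depending only on $\|v\|_{L^\infty(B_{3/4}^+)}$, $\|\beta_r\|_{C^{0,\delta_\beta}}$ and the ellipticity/Hölder constants of $A$, such that $\|v\|_{C^{1,\alpha}(\overline{B_{1/2}^+})}\le C$. In particular $\|\nabla v\|_{L^\infty(B_{1/2}^+)}\le C$, and unscaling gives the lemma.

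The principal technical point is the $L^1\to L^\infty$ passage in the presence of the nontrivial conormal datum $\beta_r$: both the trace inequality used to absorb the boundary term and the subsequent iteration must be set up carefully so that the constants depend only on $L$, $M$ and the structural constants of $A$. Once $v$ is in $L^\infty$, the Schauder step is textbook, as the boundary $\{x_d=0\}$ is flat and the conormal boundary operator $e_d\cdot A\nabla\,\cdot\,$ is uniformly oblique by the ellipticity of $A$.
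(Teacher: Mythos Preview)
Your argument is correct and is exactly the kind of classical proof the paper has in mind: the paper does not actually prove this lemma, stating only that ``the results are classical, but we give here the precise statements for the sake of completeness''. Your rescaling to $r=1$, the De Giorgi--Moser local boundedness step handling the conormal boundary term via the trace inequality, and the final boundary Schauder estimate for the uniformly oblique problem on a flat boundary constitute the standard route. One small remark: the test function $\eta^2(|v|-k)^+\operatorname{sign}(v)$ is not admissible as written (sign is discontinuous); the clean way is to treat $v$ and $-v$ separately, noting that $-v$ solves the same problem with $\beta_r$ replaced by $-\beta_r$, so the local boundedness from $L^1$ to $L^\infty$ applies to both with constants depending only on $M$.
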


	\begin{proof}[\bf Proof of \cref{t:lipschitz-continuity}]
		Let	$x_0\in B_{\sfrac18}^+\cup B_{\sfrac18}'$. Let $y_0$ be the projection of $x_0$ on the closed set
		$$\Big(B_{1}^+\cup B_{1}'\Big)\cap\{u=0\}\,,$$
		and let $r:=|x_0-y_0|\,.$ We notice that \cref{p:mean-value-estimate} implies that
		\begin{equation}\label{e:proof-lip-cont-theorem-1}
		\frac1{(2r)^d}\int_{B_{2r}(y_0)}u(x)\,dx\le Cr,
		\end{equation}
		for some constant $C$ depending on the dimension and the constants from \cref{sub:intro-assumptions}.
		
		Let $z_0\in B_{\sfrac18}'$ be the projection of $x_0$ on the hyperplane $\{x_d=0\}$ and let
		$$\delta:=|x_0-z_0|.$$
		We consider two cases. In the case $\delta\ge r/8$, we have that $B_{r/8}(x_0)$ is entirely contained in $B_{2r}(y_0)$, in the half-ball $B_1^+$, and in the positivity set $\{u>0\}$. Thus, \eqref{e:proof-lip-cont-theorem-1} together with \cref{l:gradest-ball} allow to estimate $|\nabla u(x_0)|$. Conversely, if $\delta< r/8$, then we have that $x_0\in B_{\sfrac{r}{8}}^+(z_0)$ and $B_{\sfrac{r}{4}}^+(z_0)\subset B_r^+(x_0)\subset B_{2r}^+(y_0)$. In particular, since $B_r^+(x_0)$ is contained in the positivity set $\{u>0\}$, $u$ solves the PDE
		$$\text{\rm div}(A\nabla u)=0\quad\text{in}\quad B_{\sfrac{r}{4}}^+(z_0)\ ; \qquad e_d\cdot A\nabla u=\beta\quad\text{on}\quad B_{\sfrac{r}{4}}'(z_0)\ .$$
		Thus, by \cref{l:gradest-half-ball}, we get that $|\nabla u(x_0)|$ can be estimated by a constant depending on the dimension, the constants from \cref{sub:intro-assumptions} and the norm $\|\beta\|_{L^\infty(B_1')}$.
	\end{proof}

	\subsection{Improved almost-minimality}
	In this subsection, we show that by exploiting the Lipschitz continuity of local minimizer of $\mathcal{F}$, we can simplify the almost-minimality condition introduced in Proposition \ref{l:freezing_0} and write it in a form which is easier to handle along blow-up sequences.
	\begin{lemma}\label{l:freezing}
		Let $A$, $Q$ and $\beta$ be as in $(\mathcal H1)$, $(\mathcal H2)$ and $(\mathcal H3)$. Given a minimizer $u$ of $\mathcal{F}$ in $B_1$ and a point $x_0 \in B_1$, we consider the function $u^{x_0}$ from \eqref{e:def-of-u_x_0} and the functionals  $\mathcal{G}_{x_0}^+$ and $\mathcal{G}_{x_0}'$ from \eqref{e:def-G-inside} and \eqref{e:def-G-boundary}. Then, for every $r\in (0,1)$ such that $B_{\Lambda_A r}(x_0)\subset B_1$, we have
		\begin{align}\label{almost}
		\begin{aligned}
		\mathcal{G}_{x_0}^+(u^{x_0},B_r)+\mathcal{G}_{x_0}'(u^{x_0},B_r) &\leq\,\, \mathcal{G}_{x_0}^+(\tilde{u},B_r)+\mathcal{G}_{x_0}'(\tilde{u},B_r)\\
		&\qquad+ {C}_\beta r^{\min\{\delta_\beta,\delta_A\}}\norm{u^{x_0}-\tilde{u}}{L^1(B_r\cap H_{x_0}')}\\
		&\qquad\qquad+{C}_Q r^{d+\delta_Q}+ {C}_A r^{d+\delta_A}\norm{\nabla u}{L^\infty}^2,
		\end{aligned}
		\end{align}
		for every $\tilde{u}\in H^1(B_r^+\cap H^+_{x_0})$ such that $\tilde{u}=u^{x_0}$ on $\partial B_r\cap H^+_{x_0}$, where
		$$C_A =C(d,A), C_Q=C(A,d,Q), C_\beta=C(A,d,\beta).$$
	\end{lemma}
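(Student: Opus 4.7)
\smallskip

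\noindent\textbf{Proof plan for \cref{l:freezing}.} The plan is to start from the almost-minimality \eqref{almost0} already established in \cref{l:freezing_0} and to upgrade it using the fact that, by \cref{t:lipschitz-continuity}, we know $\|\nabla u\|_{L^\infty}$ is finite. The point is that the multiplicative factors $(1+3C_A r^{\delta_A})$ and $(1+C_A r^{\delta_A})$ appearing on the right-hand side of \eqref{almost0} must be turned into purely additive error terms. First I would rearrange \eqref{almost0} in the form
\begin{align*}
\mathcal{G}_{x_0}^+(u^{x_0},B_r)+\mathcal{G}_{x_0}'(u^{x_0},B_r) &\le \mathcal{G}_{x_0}^+(\tilde{u},B_r)+\mathcal{G}_{x_0}'(\tilde{u},B_r)\\
&\quad + 3C_A r^{\delta_A}\,\mathcal{G}_{x_0}^+(\tilde{u},B_r)+C_A r^{\delta_A}\bigl[\mathcal{G}_{x_0}'(\tilde{u},B_r)-\mathcal{G}_{x_0}'(u^{x_0},B_r)\bigr]\\
&\quad + C_Q r^{d+\delta_Q}+C_\beta r^{\delta_\beta}\norm{u^{x_0}-\tilde{u}}{L^1(B_r\cap H_{x_0}')}.
\end{align*}
The two excess terms on the second line are exactly what I have to absorb into the three error terms in \eqref{almost}.

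\smallskip

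The boundary excess is easy. By the very definition \eqref{e:def-G-boundary} of $\mathcal{G}'_{x_0}$,
\[
\bigl|\mathcal{G}_{x_0}'(\tilde{u},B_r)-\mathcal{G}_{x_0}'(u^{x_0},B_r)\bigr|\le 2\,\frac{|\beta(x_0)|}{a(x_0)}\,\norm{u^{x_0}-\tilde{u}}{L^1(B_r\cap H_{x_0}')},
\]
so $C_A r^{\delta_A}$ times this quantity contributes an error of the form $C\,r^{\delta_A}\norm{u^{x_0}-\tilde{u}}{L^1}$, which together with the existing $C_\beta r^{\delta_\beta}\norm{u^{x_0}-\tilde{u}}{L^1}$ term produces exactly the $C_\beta r^{\min\{\delta_\beta,\delta_A\}}\norm{u^{x_0}-\tilde{u}}{L^1}$ contribution appearing in \eqref{almost}.

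\smallskip

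The interior excess $3C_A r^{\delta_A}\mathcal{G}^+_{x_0}(\tilde u,B_r)$ is the main point and I would treat it by a dichotomy. Either
\[
\mathcal{G}^+_{x_0}(\tilde u,B_r)\le 2\mathcal{G}^+_{x_0}(u^{x_0},B_r),
\]
in which case, using that by \cref{t:lipschitz-continuity} the function $u$ (hence $u^{x_0}$, up to a bounded linear change of variables) satisfies $|\nabla u^{x_0}|\le \Lambda_A\|\nabla u\|_{L^\infty}$ on $B_r\cap H_{x_0}^+$, one has
\[
\mathcal{G}^+_{x_0}(u^{x_0},B_r)\le \bigl(\Lambda_A^2\|\nabla u\|_{L^\infty}^2+Q(x_0)\bigr)\omega_d r^d,
\]
and so $3C_A r^{\delta_A}\mathcal{G}^+_{x_0}(\tilde u,B_r)$ is bounded by $C_A r^{d+\delta_A}\|\nabla u\|_{L^\infty}^2+C_Q r^{d+\delta_A}$, which fits into the error terms of \eqref{almost} (after possibly enlarging $C_A$ and $C_Q$, and using that $\delta_A\ge 0$). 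Or, in the opposite case $\mathcal{G}^+_{x_0}(\tilde u,B_r)>2\mathcal{G}^+_{x_0}(u^{x_0},B_r)$, we automatically have $\mathcal{G}^+_{x_0}(u^{x_0},B_r)< \mathcal{G}^+_{x_0}(\tilde u,B_r)-\mathcal{G}^+_{x_0}(u^{x_0},B_r)$, and in particular
\[
\mathcal{G}^+_{x_0}(u^{x_0},B_r)\le \mathcal{G}^+_{x_0}(\tilde u,B_r),
\]
so that in this branch the term $3C_A r^{\delta_A}\mathcal{G}^+_{x_0}(\tilde u,B_r)$ is not needed at all and the target inequality \eqref{almost} follows directly from the $\mathcal{G}'$ treatment above.

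\smallskip

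The main (and only) obstacle is thus the gradient excess: it cannot be absorbed by any $L^1$-type quantity on the right-hand side and it is exactly the dichotomy, combined with the a priori Lipschitz bound coming from \cref{t:lipschitz-continuity}, that turns it into the additive $C_A r^{d+\delta_A}\|\nabla u\|_{L^\infty}^2$ error appearing in \eqref{almost}. The $C_Q r^{d+\delta_Q}$ term is passed through unchanged from \eqref{almost0}.
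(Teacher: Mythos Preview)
Your rearrangement of \eqref{almost0} and your treatment of the boundary excess term $C_A r^{\delta_A}\bigl[\mathcal{G}_{x_0}'(\tilde{u},B_r)-\mathcal{G}_{x_0}'(u^{x_0},B_r)\bigr]$ are correct, and the first branch of your dichotomy is fine. The gap is in the second branch. There you abandon \eqref{almost0} and argue directly: from $\mathcal{G}^+_{x_0}(u^{x_0},B_r)\le \mathcal{G}^+_{x_0}(\tilde u,B_r)$ together with $|\mathcal{G}'_{x_0}(u^{x_0},B_r)-\mathcal{G}'_{x_0}(\tilde u,B_r)|\le C\|u^{x_0}-\tilde u\|_{L^1}$ you obtain
\[
\mathcal{G}^+_{x_0}(u^{x_0},B_r)+\mathcal{G}'_{x_0}(u^{x_0},B_r)\le \mathcal{G}^+_{x_0}(\tilde u,B_r)+\mathcal{G}'_{x_0}(\tilde u,B_r)+C\,\|u^{x_0}-\tilde u\|_{L^1(B_r\cap H'_{x_0})}.
\]
But the $L^1$ error here carries \emph{no} power of $r$, while the target \eqref{almost} demands $C_\beta r^{\min\{\delta_\beta,\delta_A\}}\|u^{x_0}-\tilde u\|_{L^1}$. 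For small $r$ the latter is strictly smaller, so your inequality does not imply \eqref{almost}. The ``$\mathcal G'$ treatment above'' only produced the factor $r^{\delta_A}$ because it was multiplying the excess $C_A r^{\delta_A}[\mathcal G'(\tilde u)-\mathcal G'(u^{x_0})]$; once you bypass \eqref{almost0} that prefactor is gone.

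The fix is to drop the dichotomy. Since all error terms on the right of \eqref{almost} are nonnegative, one may assume without loss of generality that $\mathcal{G}^+_{x_0}(\tilde u,B_r)+\mathcal{G}'_{x_0}(\tilde u,B_r)<\mathcal{G}^+_{x_0}(u^{x_0},B_r)+\mathcal{G}'_{x_0}(u^{x_0},B_r)$. Under this assumption,
\[
\mathcal{G}^+_{x_0}(\tilde u,B_r)\le \mathcal{G}^+_{x_0}(u^{x_0},B_r)+\bigl|\mathcal{G}'_{x_0}(u^{x_0},B_r)-\mathcal{G}'_{x_0}(\tilde u,B_r)\bigr|\le C r^d\bigl(1+\|\nabla u\|_{L^\infty}^2\bigr)+C\|u^{x_0}-\tilde u\|_{L^1},
\]
using the Lipschitz bound \eqref{half.caccio} for the first term. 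Multiplying by $3C_A r^{\delta_A}$ now gives exactly $Cr^{d+\delta_A}(1+\|\nabla u\|_{L^\infty}^2)+Cr^{\delta_A}\|u^{x_0}-\tilde u\|_{L^1}$, and since $r^{\delta_A}\le r^{\min\{\delta_A,\delta_\beta\}}$ both pieces fit into \eqref{almost}. This is the route the paper (tersely) takes: it records only the Lipschitz bound $\mathcal{G}^+_{x_0}(u^{x_0},B_r)\le \omega_d(\Lambda_A^2\|\nabla u\|_{L^\infty}^2+Q(x_0))r^d$ and leaves the remaining rearrangement implicit.
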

	\begin{proof}
		By the Lipschitz continuity of $u$, we have that
		\be\label{half.caccio}
		\mathcal{G}^+_{x_0}(u^{x_0},B_r) \leq \omega_d(\lambda^2\norm{\nabla u}{L^\infty}^2+Q(x_0))r^d\,,
		\ee
		so the claim follows from \cref{l:freezing_0}.
		\end{proof}

	\section{Non-degeneracy}\label{s:non-degeneracy}
	
	In this section we prove that the minimizers of the one-phase functional $\mathcal F$ are non-degenerate. In order to prove the main result of the section  (see \cref{p:non-degeneracy}), we will need the following lemma.
	
	\begin{lemma}\label{l:non-degeneracy}
		In any dimension $d\ge 2$, suppose that $\alpha>0$, $C>0$ and $\lambda>1$ are given constants and that $A=(a_{ij})_{i,j}$ is a real symmetric matrix with variable coefficients $a_{ij}: B_2\to\R$ satisfying
		\begin{enumerate}
			\item[($\mathcal A$1)] $\lambda^{-1}\text{\rm Id}\le A(x)\le \lambda\text{\rm Id}\ $ for every $\ x\in\overline D\,;$\medskip
			\item[($\mathcal A$2)]  for every couple of indices $1\le i,j\le d$, we have
			$$|a_{ij}(y)-a_{ij}(x)|\le C|x-y|^\alpha\qquad\text{for every}\qquad x,y\in\overline D\,,$$
		\end{enumerate}	
		Then, for every $\delta\in(0,1)$ there are $\rho\in(0,1)$ and $\eps>0$ such that the following holds.\\
		Suppose that $\varphi:\overline B_{1+\rho}\to\R$ is a solution to
		$$\text{\rm div}(A\nabla \varphi)=0\quad\text{in}\quad B_{1+\rho}\setminus \overline B_1\ ,\qquad \varphi=0\quad\text{on}\quad \partial  B_1\ ,\qquad \varphi=\rho\quad\text{on}\quad \partial  B_{1+\rho}\ ,$$
		and that
		$$\frac1{1+\eps}\text{\rm Id}\le A(x)\le (1+\eps)\text{\rm Id}\quad\text{for every}\quad x\in B_{2}\,.$$
		Then
		$$|\nabla \varphi|\le 1+\delta\quad\text{in}\quad B_{1+\rho}\qquad\text{and}\qquad |e_d\cdot A\nabla \varphi|\ge 1-\delta\quad\text{in}\quad B_{1+\rho}\cap\{x_d\le -1\}. $$	
	\end{lemma}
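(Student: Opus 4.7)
The strategy is a compactness/contradiction argument based on the heuristic that, as $\rho\to 0$, the capacitary function of the thin spherical shell $B_{1+\rho}\setminus\overline B_1$ is close, after rescaling by $\rho$, to the signed-distance function $|x|-1$, which has gradient of unit length and normal component $-1$ at the point $-e_d$. Once the perturbation $A\neq\mathrm{Id}$ is $C^{0,\alpha}$-small, the same should persist up to error $\delta$. Concretely, suppose the statement fails for some $\delta_0\in(0,1)$: there exist sequences $\rho_k\to 0$, $\eps_k\to 0$, matrices $A_k$ satisfying $(\mathcal A1)$--$(\mathcal A2)$ with $\|A_k-\mathrm{Id}\|_{L^\infty(B_2)}\le\eps_k$, and solutions $\varphi_k$ in $B_{1+\rho_k}\setminus\overline B_1$ so that for each $k$ either (a) there is $z_k\in\overline{B_{1+\rho_k}\setminus B_1}$ with $|\nabla\varphi_k(z_k)|>1+\delta_0$, or (b) there is $z_k\in B_{1+\rho_k}\cap\{x_d\le -1\}$ with $|e_d\cdot A_k\nabla\varphi_k(z_k)|<1-\delta_0$.

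Write $z_k=(1+t_k)\omega_k$ with $\omega_k\in S^{d-1}$ and $t_k\in[0,\rho_k]$, and introduce the rescaled variables
\[
\Phi_k(w):=\rho_k^{-1}\varphi_k(\omega_k+\rho_k w),\qquad \tilde A_k(w):=A_k(\omega_k+\rho_k w),
\]
defined on $\Omega_k:=\rho_k^{-1}\bigl(B_{1+\rho_k}\setminus\overline B_1-\omega_k\bigr)$. Then $\mathrm{div}(\tilde A_k\nabla\Phi_k)=0$ in $\Omega_k$, $\Phi_k=0$ on the image of $\partial B_1$, $\Phi_k=1$ on the image of $\partial B_{1+\rho_k}$, and $0\le\Phi_k\le 1$ by the maximum principle. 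Moreover $\|\tilde A_k-\mathrm{Id}\|_{L^\infty}\le\eps_k$ and $[\tilde A_k]_{C^{0,\alpha}}\le C\rho_k^\alpha$, so $\tilde A_k\to\mathrm{Id}$ uniformly with vanishing Hölder seminorm. Passing to a subsequence $\omega_k\to\omega_\infty\in S^{d-1}$; each boundary portion of $\Omega_k$ is a piece of a sphere of radius $\rho_k^{-1}$ or $\rho_k^{-1}(1+\rho_k)$, and on any fixed compact subset of $\mathbb R^d$ these converge in $C^2$ to the two hyperplanes bounding the slab $\Omega_\infty:=\{w\in\R^d:0<w\cdot\omega_\infty<1\}$.

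Since the boundary portions of $\Omega_k$ admit $C^2$ flattenings whose norms are uniformly bounded (indeed tending to those of a flat boundary) and the coefficients $\tilde A_k$ are uniformly elliptic with Hölder seminorms tending to zero, classical boundary Schauder estimates produce uniform $C^{1,\gamma}$ bounds for $\Phi_k$ on any fixed compact subset of $\overline{\Omega_\infty}$. Extracting a further subsequence, $\Phi_k\to\Phi_\infty$ in $C^{1,\gamma}_{\mathrm{loc}}(\overline{\Omega_\infty})$, where $\Phi_\infty$ is bounded, harmonic in the slab, and equals $0$, $1$ on the two bounding hyperplanes; by uniqueness, $\Phi_\infty(w)=w\cdot\omega_\infty$ and hence $\nabla\Phi_\infty\equiv\omega_\infty$. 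The point $w_k:=(t_k/\rho_k)\omega_k$ lies in the compact set $[0,1]\omega_\infty\cup[0,1]\omega_k\subset\overline{\Omega_\infty}$, and by construction $\nabla\varphi_k(z_k)=\nabla_w\Phi_k(w_k)$. Uniform convergence gives $\nabla\varphi_k(z_k)\to\omega_\infty$, so in case (a) $|\nabla\varphi_k(z_k)|\to 1$, contradicting $|\nabla\varphi_k(z_k)|>1+\delta_0$. In case (b), the constraints $z_{k,d}\le -1$ and $|z_k|\le 1+\rho_k$ force $z_k\to -e_d$ and $\omega_\infty=-e_d$; combined with $\tilde A_k\to\mathrm{Id}$ this yields $A_k\nabla\varphi_k(z_k)\to -e_d$, whence $|e_d\cdot A_k\nabla\varphi_k(z_k)|\to 1$, a contradiction.

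The main technical obstacle is the uniform boundary Schauder estimate used in Step~3: the domains $\Omega_k$ move with $k$ and we need the constants in the estimate to be independent of $k$. This is handled by noting that the two boundary components of $\Omega_k$ are pieces of spheres of radius $\rho_k^{-1}\to\infty$, so on any fixed compact the local flattening diffeomorphisms converge to the identity in $C^2$; under such a flattening the transformed operator has uniformly elliptic coefficients with Hölder seminorm tending to zero, allowing a direct application of the standard boundary $C^{1,\gamma}$ Schauder estimates (e.g.\ Gilbarg--Trudinger, Ch.~6) with constants bounded uniformly in $k$.
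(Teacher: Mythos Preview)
Your compactness/blow-up argument is correct, but it follows a genuinely different route from the paper. The paper proceeds by a direct two-step perturbation: first it writes down the explicit radial harmonic capacitor $h(x)=\frac{\rho}{1-(1+\rho)^{2-d}}(1-|x|^{2-d})$ on the annulus and checks by elementary calculus that $1-C_d\rho\le|\nabla h|\le 1+C_d\rho$ and that $-e_d\cdot\nabla h\ge\frac{1}{1+\rho}|\nabla h|$ on $\{x_d\le -1\}$; second, having fixed $\rho$ so that these bounds are within $\delta/2$ of the target, it invokes the appendix result (Theorem~\ref{t:app-schauder}) on the $C^1$ distance between $A$-harmonic and harmonic functions to choose $\eps$ small enough that $\|\nabla\varphi-\nabla h\|_{L^\infty}\le K\eps^\sigma<\delta/2$. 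Your approach instead rescales the thin shell to a slab and passes to the limit, identifying the limit function as the linear profile $w\mapsto w\cdot\omega_\infty$ via a Liouville-type uniqueness in the slab. The paper's argument is constructive and gives an explicit (in principle trackable) dependence of $\rho$ and $\eps$ on $\delta$, and it recycles the appendix comparison theorem already needed elsewhere in the paper; your argument is softer and self-contained (it relies only on standard boundary Schauder estimates and the maximum principle), at the price of losing quantitative control over $\rho,\eps$. One minor point worth making explicit in your write-up: the uniqueness of the bounded harmonic function in the slab with data $0$ and $1$ is not entirely trivial since the slab is unbounded---you should name the tool you use (e.g.\ odd reflection across both hyperplanes to produce a bounded periodic harmonic function on $\R^d$, hence constant by Liouville, which forces the difference to vanish).
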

	\begin{proof}
		For the sake of simplicity we suppose that $d\ge 3$ and we consider the solution $h$ to the problem
		$$\Delta h=0\quad\text{in}\quad B_{1+\rho}\setminus \overline B_1\ ,\qquad h=0\quad\text{on}\quad \partial  B_1\ ,\qquad h=\rho\quad\text{on}\quad \partial  B_{1+\rho}\ ,$$
		that is
		$$h(x)=\frac{\rho}{1-(1+\rho)^{2-d}}\Big(1-|x|^{2-d}\Big)\quad\text{and}\quad \nabla h(x)=\frac{(d-2)\rho |x|^{1-d}}{1-(1+\rho)^{2-d}}\frac{x}{|x|}\,.$$
		Then, there is a dimensional constant $C_d$ such that, for $\rho$ small enough,
		$$1-C_d\rho\le |\nabla h|\le 1+C_d\rho\qquad\text{in}\qquad B_{1+\rho}\setminus \overline B_1\,.$$
		Moreover, since $\nabla h$ is radial,
		$$\frac1{1+\rho}|\nabla h|\le -e_d\cdot \nabla h\le |\nabla h|\qquad\text{in}\qquad B_{1+\rho}\cap \{x_d\le -1\}\,.$$
		Finally, we first choose $\rho$ and then $\eps$ small enough, and we apply \cref{t:app-schauder} to $h$ and $\varphi$.
	\end{proof}

	\begin{proposition}\label{p:non-degeneracy}
		Let $u$ be a local minimizer of $\mathcal{F}$ in $B_1$ in the sense of \cref{d:minimizer}, where $\mathcal F$ is defined in \eqref{e:def-F} with $A$, $Q$ and $\beta$ satisfying the hypotheses $(\mathcal H1)$, $(\mathcal H2)$ and $(\mathcal H3)$ from \cref{sub:intro-assumptions}. Suppose moreover that there is a constant $\delta>0$ such that
		\begin{equation}\label{e:gap-non-degeneracy}
		\beta+a\sqrt{Q}\ge \delta\quad\text{in}\quad B_1\,,
		\end{equation}
		where $a$ is given by \eqref{e.a}. Then, there is a constant $c>0$ depending on the dimension $d$, and on $\Lambda_A$, $Q$, $\beta$ and $\delta$, such that if
		if $B_{2r}(x_0)\subset B_1$, then the following implication holds:
		\begin{equation}\label{e:non-degeneracy}
		\norm{u}{L^\infty(B_{2r}(x_0)\cap\{x_d\ge 0\})}\leq c\,r\qquad \Rightarrow\qquad u\equiv 0 \ \text{ in }\  B_{r}(x_0)\cap\{x_d\ge 0\}.
		\end{equation}
	\end{proposition}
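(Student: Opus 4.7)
The plan is to follow the classical Alt-Caffarelli non-degeneracy scheme, split into an interior step and a boundary clean-up of viscosity sliding type, as announced in item (B1) of the introduction. In both steps the quantitative replacement of the explicit Euclidean capacitary potential is provided by \cref{l:non-degeneracy}.

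\textbf{Interior step.} I would first prove the statement when the ball sits at uniform distance from the hyperplane, i.e.\ $B_{2r}(x_0)\subset B_1\cap\{x_d>0\}$. After the change of coordinates $T_{x_0}$ of \cref{sub:notations} and invoking the almost-minimality of \cref{l:freezing}, the problem reduces to the one-phase Alt-Caffarelli functional with frozen matrix $A(x_0)=\mathrm{Id}$ and frozen density $Q(x_0)\ge\lambda_Q$, up to errors of order $r^{d+\delta}$ with $\delta=\min(\delta_A,\delta_Q,\delta_\beta)$. I then test minimality against the capping $v:=\min\{u^{x_0},\varphi\}$, where $\varphi$ is the $A(x_0)$-capacitary potential on $B_{2r}\setminus\overline{B}_r$ with $\varphi=0$ on $\partial B_r$ and $\varphi=\sup_{\partial B_{2r}}u^{x_0}$ on $\partial B_{2r}$. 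A rescaled application of \cref{l:non-degeneracy} bounds $|\nabla \varphi|$ by $(1+\delta_0)\sup u^{x_0}/r$, and after integration by parts (using $\mathrm{div}(A(x_0)\nabla\varphi)=0$) the minimality yields
\[
\lambda_Q\,\bigl|\{u=0\}\cap B_r^+(x_0)\bigr|\le C\,r^{d-2}\bigl(\sup\nolimits_{B_{2r}^+}u\bigr)^2+o(r^d).
\]
For $c$ sufficiently small this forces $|\{u=0\}\cap B_r^+(x_0)|=|B_r^+|$, and hence (by the continuity from \cref{l:holder0}) $u\equiv 0$ in $B_r^+(x_0)$.

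\textbf{Boundary clean-up.} When $B_{2r}(x_0)$ meets the hyperplane the previous competitor would produce a capillarity contribution $\int 2\beta(v-u)\,\mathrm dx'$ of possibly unfavorable sign. I would instead argue by sliding. First, the interior step rules out any free boundary point $y\in B_r^+(x_0)$ with $y_d\ge\eta r$, since such a point would yield $\sup_{B_{2r}^+}u\gtrsim\eta r$, contradicting $\sup u\le cr$ as soon as $c\ll\eta$. Hence, if $u\not\equiv 0$ in $B_r^+(x_0)$ the positive set is forced to be a thin layer inside $\{0\le x_d<\eta r\}$. To remove this layer I would slide from above the barrier $\tau\psi$, with $\psi$ produced by \cref{l:non-degeneracy} (rescaled and translated so that its zero set is a small sphere just touching the hyperplane from above), and $\tau>0$ chosen in the admissible window
\[
-\,\frac{\beta(x_0)}{a(x_0)^2}<\tau<\frac{\sqrt{Q(x_0)}}{a(x_0)}.
\]
This window is non-empty precisely because $\beta+a\sqrt Q\ge\delta>0$, equivalently $-\beta/a^2<\sqrt Q/a$. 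Starting from a translation of $\tau\psi$ lying above $u$ on $B_{2r}^+(x_0)$ (possible since $\sup u\le cr$ and $\psi$ grows linearly from its zero set), I slide toward first contact $\bar x$. The gradient bounds of \cref{l:non-degeneracy} forbid first contact in $\{u>0\}\cap\{x_d>0\}$ (strong maximum principle for $\mathrm{div}(A\nabla\cdot)=0$), at the interior free boundary $\{u=0\}\cap\{x_d>0\}$ (because $|A^{\sfrac12}\nabla(\tau\psi)|\le\tau a(1+\delta_0)<\sqrt Q$), and at $\{u>0\}\cap\{x_d=0\}$ (Hopf-type comparison: applied to $\tau\psi-u$, which is $A$-harmonic and attains its minimum at $\bar x$, Hopf's lemma forces $e_d\cdot A\nabla(\tau\psi)>e_d\cdot A\nabla u=\beta$ at $\bar x$, while by construction $e_d\cdot A\nabla(\tau\psi)\approx -\tau a^2<\beta$ thanks to $\tau>-\beta/a^2$, a contradiction). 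The remaining possibility $\bar x\in\{u=0\}\cap\{x_d=0\}$ is immediately excluded because $\tau\psi(\bar x)>0$ while $u(\bar x)=0$. Hence no first contact can take place, contradicting the thin positive layer and proving $u\equiv 0$ in $B_r^+(x_0)$.

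\textbf{Main obstacle.} The delicate step is the boundary clean-up: since the viscosity formulation \cref{t:viscosity} is only developed later in the paper, the Hopf-type sliding comparison on $\{x_d=0\}\cap\{u>0\}$ must be carried out relying solely on the weak Neumann condition $A\nabla u\cdot e_d=\beta$ recorded in \eqref{equation-in-u>0}. The hypothesis $\beta+a\sqrt Q\ge\delta$ is exactly what opens the admissible window $-\beta/a^2<\tau<\sqrt Q/a$ for the slope of the barrier, without which the three no-contact exclusions above could not be closed simultaneously. A secondary technical point is the translation/rescaling of the capacitary function of \cref{l:non-degeneracy} so that its normal-gradient information, stated there only in the lower half of the annulus, is transferred to the hyperplane $\{x_d=0\}$ at the actual touching point; this is achieved by centering the barrier slightly above the hyperplane and exploiting the radial symmetry of the Euclidean comparison harmonic function.
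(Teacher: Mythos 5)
Your overall architecture matches the paper's: interior Alt--Caffarelli non-degeneracy, then a boundary clean-up via sliding a capacitary barrier (\cref{l:non-degeneracy}) whose slope lives in the window opened precisely by $\beta+a\sqrt Q>0$. However, the interior step as written has a genuine gap. The displayed inequality should bound $\lambda_Q\,|\{u>0\}\cap B_r^+(x_0)|$, not $|\{u=0\}\cap B_r^+(x_0)|$; with the typo corrected, minimality against $u\wedge(\eta_a\varphi)$ gives a small upper bound on $|\{u>0\}\cap B_r^+|$, but a set of small measure is not empty, so this does not by itself yield $u\equiv 0$ in $B_r^+$, and "for $c$ small this forces $|\{u=0\}\cap B_r^+|=|B_r^+|$" is a non sequitur. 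The paper closes this exactly where you skip: it feeds the trace inequality $\int_{\partial B_r}u\lesssim\int_{B_r}|\nabla u|+\tfrac1r\int_{B_r}u$ back into the minimality bound $\int_{B_r}(\nabla u\cdot A\nabla u+Q\mathbf 1_{\{u>0\}})\le\eta_a C_A\int_{\partial B_r}u$, obtaining a self-improving inequality of the form $\int_{B_r}(\ldots)\lesssim\eta_a\int_{B_r}(\ldots)$ that forces the \emph{whole} Dirichlet-plus-area energy on $B_r$ — hence both $|\{u>0\}\cap B_r|$ and $\int_{B_r}|\nabla u|^2$ — to vanish once $\eta_a$ is small. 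This trace-inequality closure is the crux of (a) and must be restored.

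In the boundary clean-up, the spirit of the argument is the same but the sliding geometry differs in a way that creates a secondary issue. The paper first uses step (a) to obtain $u\equiv 0$ on $B_{r/20}(y_0)$ for every interior point $y_0\in Y$, and then slides the radial barrier $\varphi_t$ \emph{downward from inside}: it is centered at $y_t=y_0-te_d$, and at $t=0$ the whole annulus $B_{(1+\rho)r/40}(y_0)\setminus B_{r/40}(y_0)$ sits inside the cleaned region, so $\varphi_0\ge u$ is automatic. Your barrier instead hovers with its zero sphere "just touching the hyperplane from above", which may already intersect the thin positive layer $\{0\le x_d<\eta r\}$ you are trying to remove, so the admissibility of the initial position ($\tau\psi\ge u$) is not automatic and needs a separate argument. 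The rest of your exclusion analysis is in the same vein as the paper's: both choose the normalized gradient so that it falls strictly between $\sqrt{Q-\beta^2/a^2}$ and $\sqrt Q$ (equivalently your window $\tau\in(-\beta/a^2,\sqrt Q/a)$ after normalizing $a(x_0)=1$), the interval being non-empty because of \eqref{e:gap-non-degeneracy}, and the Hopf-type comparison on $\{x_d=0\}\cap\{u>0\}$ is carried out with the weak Neumann condition alone, exactly as you anticipate. So the "main obstacle" you identify is the right one; what is missing is the trace-inequality step in (a) and a verification of the initial admissibility of the barrier in (b).
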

	\begin{proof}
			We first notice that it is sufficient to consider the following two cases: when $B_r(2x_0)$ is entirely contained in $\{x_d>0\}$ and when $x_0$ lies on the hyperplane $\{x_d=0\}$. Precisely,  \cref{p:non-degeneracy} is a consequence of the statements (a) and (b) below.
		\begin{enumerate}[\rm(a)]
			\item There is a constant $\eta_a>0$ depending on $d$, $\Lambda_A$ and $Q$ such that:
			\begin{equation}\label{e:non-degeneracy-a}
			\text{If $B_{2r}(x_0)\subset \{x_d>0\}$ and $\norm{u}{L^\infty(B_{2r}(x_0))}\leq \eta_a r$, then $u\equiv 0$ in $B_{r}(x_0)$.}
			\end{equation}	
			\item There is a constant $\eta_b>0$ depending on $d$, $\Lambda_A$, $Q$, $\beta$ and $\delta$ such that:
			\begin{equation}\label{e:non-degeneracy-b}
			\text{If $x_0\in\{x_d=0\}$ and $\norm{u}{L^\infty(B_{2r}^+(x_0))}\leq \eta_b r$, then $u\equiv 0$ in $B_{r}^+(x_0)$.}
			\end{equation}	
		\end{enumerate}		
		\noindent{\bf Proof of (a).} We consider the solutions $\varphi:B_{2r}(x_0)\setminus B_{r}(x_0) \to\R$ and $\psi:B_{2}\setminus B_{1}\to \R$ to
		$$\begin{cases}
		\mathrm{div}(A\nabla \varphi)=0\quad\text{in}\quad B_{2r}(x_0)\setminus B_{r}(x_0) \\
		\varphi=0 \quad\text{on}\quad \overline{B_{r}}(x_0)\\
		\varphi=r \quad\text{on}\quad \partial B_{2r}(x_0)
		\end{cases}\qquad\text{and}\qquad \begin{cases}
		\mathrm{div}(A_{r,x_0}\nabla \psi)=0\quad\text{in}\quad B_{2}\setminus B_{1} \\
		\psi=0 \quad\text{on}\quad \overline{B_{1}}\\
		\psi=1\quad\text{on}\quad \partial B_{2}\,,
		\end{cases}$$
		where $A_r(x):=A(x_0+rx)$. Now, by \eqref{e:lambda-A-Lambda-A}, we have that the matrix $A_{r,x_0}$ has H\"older continuos coefficients and satisfies
		$$\Lambda_A^{-1}\text{\rm Id}\le A_{r,x_0}(x)\le \Lambda_A\text{\rm Id}\qquad\text{for every}\qquad x\in \overline B_2.$$
		In particular, $\psi$ is $C^{1,\alpha}$ and satisfies the Lipschitz bound
		$$\|\nabla \psi\|_{L^\infty(\overline B_2\setminus B_1)}\le L\,,$$
		where $L$ depends only on the dimension and the constants from $(\mathcal H1)$. We notice that by construction $\psi(x)=\frac1r\varphi(x_0+rx)$, so $\varphi$ satisfies the uniform (in $x_0$ and $r$) bound
		$$\|\nabla \varphi\|_{L^\infty(\overline B_{2r}(x_0)\setminus B_{r}(x_0))}\le L\,.$$
		Now we proceed following step-by-step the argument from \cite{altcaf}. Precisely, using the optimality of $u$ and the fact that $B_{2r}(x_0)\subset \{x_d>0\}$, we get that
		$$\int_{B_{2r}(x_0)}\Big(\nabla u\cdot A(x)\nabla u+Q\ind_{\{u>0\}}\Big)\,dx\le \int_{B_{2r}(x_0)}\Big(\nabla (u\wedge (\eta_a\varphi))\cdot A(x)\nabla (u\wedge (\eta_a \varphi))+Q\ind_{\{u\wedge (\eta_a\varphi)>0\}}\Big)\,dx\,,$$
		which, since $\varphi=u\wedge(\eta_a\varphi)=0$ in $B_{r}(x_0)$, can be written as
		$$\int_{B_{r}(x_0)}\Big(\nabla u\cdot A(x)\nabla u+Q\ind_{\{u>0\}}\Big)\,dx\le \int_{B_{2r}(x_0)\setminus B_r(x_0)}\Big(\nabla (u\wedge (\eta_a\varphi))\cdot A\nabla (u\wedge(\eta_a\varphi))-\nabla u\cdot A\nabla u\Big)\,dx\,.$$
		Now, using the pointwise identity
		\begin{align*}
		\nabla (u\wedge \varphi)\cdot A\nabla (u\wedge\varphi)&=\nabla \big(u-(u-\varphi)_+\big)\cdot A\nabla \big(u-(u-\varphi)_+\big)\\
		&=\nabla u\cdot A\nabla u-2\nabla u\cdot A\nabla(u-\varphi)_++\nabla (u-\varphi)\cdot A\nabla(u-\varphi)_+\\
		&=\nabla u\cdot A\nabla u-\nabla (u-\varphi)\cdot A\nabla(u-\varphi)_+-2\nabla \varphi\cdot A\nabla(u-\varphi)_+\,,
		\end{align*}
		we obtain
		\begin{align*}
		\int_{B_{r}(x_0)}\Big(\nabla u\cdot A(x)\nabla u+Q\ind_{\{u>0\}}\Big)\,dx\le -2\int_{B_{2r}(x_0)}\eta_a\nabla \varphi\cdot A\nabla(u-\eta_a\varphi)_+\,dx\,.
		\end{align*}
		From the facts that $\varphi$ is $A$-harmonic, that $(u-\eta_a\varphi)_+=0$ on $\partial B_{2r}(x_0)$ and $(u-\eta_a\varphi)_+=u$ on $\partial B_r(x_0)$, and using again the bounds on $\nabla\varphi$ from \cref{t:app-schauder} (with $\rho=1$) we obtain
		\begin{align*}
		-\int_{B_{2r}(x_0)}\eta_a\nabla \varphi\cdot A\nabla(u-\eta_a\varphi)_+\,dx=\eta_a\int_{\partial B_r(x_0)}u(x)\,\frac{x-x_0}{r}\cdot A\nabla\varphi\,d\sigma\le \eta_a\,C_A\int_{\partial B_r(x_0)}u\,d\sigma\,
		\end{align*}
		where $C_A$ is a constant depending on $A$. Finally, just as in \cite{altcaf}, we use the trace inequality
		\begin{align*}
		\int_{\partial B_r(x_0)}u&\le C_d\bigg(\int_{B_r(x_0)}|\nabla u|\,dx+\frac1r \int_{B_r(x_0)}u\,dx\bigg)\\
		&\le C_d\frac{1}2\int_{B_r(x_0)}|\nabla u|^2\,dx+C_d\bigg(\frac12+\frac1r\|u\|_{L^\infty(B_{r}(x_0))}\bigg)|\{u>0\}\cap B_r(x_0)|\\
		&\le C_d\Lambda_A^{-1}\frac{1}2\int_{B_r(x_0)}\nabla u\cdot A\nabla u\,dx+C_d\bigg(\frac12+\eta_a\bigg)\frac1{\lambda_Q}\int_{B_r(x_0)}\ind_{\{u>0\}}Q(x)\,dx\\
		&\le C_d\Big(\Lambda_A^{-1}+(1+\eta_a)\lambda_Q^{-1}\Big)\int_{B_{r}(x_0)}\Big(\nabla u\cdot A(x)\nabla u+Q\ind_{\{u>0\}}\Big)\,dx,
		\end{align*}
		which finally implies that
		$$C_d\Big(\Lambda_A^{-1}+(1+\eta_a)\lambda_Q^{-1}\Big)\le \eta_a\,C_A\,,$$
		which is not possible if $\eta_a$ is small enough. This concludes the proof of \eqref{e:non-degeneracy-a}. \bigskip
		
		\noindent{\bf Proof of (b).}  Up to translation and rotation, we can suppose that
		$$x_0=0\quad\text{and}\quad A(0)=\text{\rm Id}\,.$$
		Moreover, by choosing $r$ small enough, we can suppose that the oscillation of $A$, $Q$ and $\beta$ is small on $B_{2r}$. In particular, we can assume that there is a constant $C>0$ such that
		\begin{equation}\label{e:choice-of-C}
		\sqrt{Q(x)-\frac{\beta(x)^2}{a(x)^2}\,}\le C-\frac{\delta}{3}\le C+\frac{\delta}{3}\le \sqrt{Q(x)\,}\quad\text{for every}\quad x\in B_{2r}\,,
		\end{equation}
		and that
		$$\quad\frac{1}{1+\eps}\text{\rm Id}\le A(x)\le (1+\eps)\text{\rm Id}\quad\text{for every}\quad x\in B_{2r}\,,$$
		where the constant $\eps$ will be chosen later and will depend not only on $A$, $\beta$ and $Q$, but also on the lower bound $\delta$ from \eqref{e:gap-non-degeneracy}. We will complete the proof in several steps. \medskip
		
		\noindent{\it Step 1.\,\it Clean-up in $B_{2r}^+$.} We notice that by   \eqref{e:non-degeneracy-a} we can clean up some space (where $u$ is identically zero) in $B_{2r}^+$.
		In fact, by choosing the constant $\eta_b$ (from \eqref{e:non-degeneracy-b}) to be small enough, we have that
		$$u\le \eta_a\frac{r}{20}\quad\text{on}\quad B_{\sfrac{r}{10}}(y_0),$$
		for every ball $B_{\sfrac{r}{10}}(y_0)$ contained in $B_{2r}^+$. Thus, by \eqref{e:non-degeneracy-a}, we have that
		$$u\equiv 0\quad\text{in}\quad B_{\sfrac{r}{20}}(y_0).$$
		In other words,
		$$\text{for every}\quad y_0\in Y:=\Big\{y\in B_{2r}^+\ :\ \mathrm{dist}\big(y,\partial( B_{2r}^+)\big)\geq \frac{3}{20} r\Big\}\,,$$
		we have that
		$$u\equiv 0\quad\text{in}\quad B_{\sfrac{r}{20}}(y_0).$$
		\begin{figure}[h]\label{fig:non-degeneracy}
			\begin{tikzpicture}[rotate=0, scale= 1.3]
			\draw[very thick, name path=1] (-2,0) -- (2,0);
			\draw[very thick, name path=2] (2,0) arc[start angle=0, end angle=180,radius=2cm];
			\draw[dashed, thick, name path=A] (25:1.7cm) arc[start angle=25, end angle=155,radius=1.7cm];
			\draw[draw=none, name path=a] (-1.8,0) -- (1.8,0);
			\draw[draw=none, name path=b] (-1.8,0.3) -- (1.8,0.3);
			\tikzfillbetween[of=a and b] {color=white};
			\draw[very thick] (-2,0) -- (2,0);
			\draw[dashed, thick, name path=B] (-1.325,0.3) -- (1.325,0.3);
			\tikzfillbetween[of=1 and 2] {pattern=north west lines};
			\filldraw[color=white] (1.265,0.6) circle (0.3cm);
			\filldraw[color=white] (-1.265,0.6) circle (0.3cm);
			\draw[dashed] (1.265,0.6) circle [radius=0.3cm];
			\draw[dashed] (-1.265,0.6) circle [radius=0.3cm];
			\tikzfillbetween[of=A and B] {color=white};
			\draw[thick] (0.5,0.6) circle [radius=0.2cm];
			\draw[thick] (0.5,0.6) circle [radius=0.1cm];
			\draw node at (-0.8,0.6) {$u\equiv 0$};
			\draw node at (0.5,1) {$\Omega_t$};
			\draw[very thick,->] (0.5,0.6) -- (0.5,0.1);
			\end{tikzpicture}
			\caption{Clean-up in $B_{2r}^+$. On the picture $\Omega_t:=B_{(1+\rho)\frac{r}{40}}(y_t)\setminus B_{\frac{r}{40}}(y_t)$.
			}
		\end{figure}
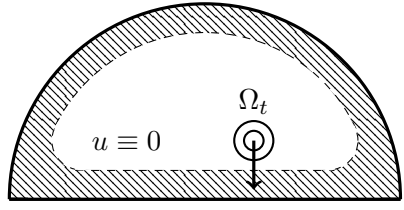
		Now, for every $y_0$ as above, and every $t>0$ we consider the point
		$$y_t:=y_0-te_d\,,$$
		and the solution $\varphi_t$ to
		$$\begin{cases}
		\mathrm{div}(A\nabla \varphi_t)=0\quad\text{in}\quad B_{(1+\rho)r/40}(y_t)\setminus B_{r/40}(y_t) \\
		\varphi_t=0 \quad\text{on}\quad \overline{B_{r/40}}(y_t)\\
		\varphi_t=C\rho r/40 \quad\text{on}\quad \partial B_{(1+\rho)r/40}(y_t),
		\end{cases}$$
		where $C$ is the constant from \eqref{e:choice-of-C}  and $\rho$ will be chosen below. By \cref{l:non-degeneracy}, if we choose  $\rho$ and $\eps$ to be small enough, then
		\begin{equation}\label{e:choice-of-C2}
		C-\frac{\delta}{4}\le |A(x)^{\sfrac12}\nabla \varphi_t|\le C+\frac{\delta}{4}\quad\text{for every}\quad x\in B_{2r}\,.
		\end{equation}
		Finally, we choose $\eta_b$ to be smaller than $C\rho/40$. Suppose now that $T$ is the largest possible value of the parameter $t$ for which $\varphi_t\ge u$. Then, there is a point $z_T$ at which $\varphi_T(z_T)=u(z_T)$. By \eqref{e:choice-of-C} and \eqref{e:choice-of-C2}, we can exclude the following possibilities:
		\begin{enumerate}
			\item[$\bullet$] $z_T\in\{x_d=0\}$ and $u(z_T)>0$\,;
			\item[$\bullet$] $z_T\in\{x_d>0\}$ and $z_T\in\partial\{u>0\}$\,.
		\end{enumerate}
		Moreover, by the maximum principle $z_T\notin\{x_d>0\}\cap\{u>0\}$. Thus, the only possibility is that $z_t\in \{x_d=0\}$ and $u(z_t)=0$. But then also $\varphi_T(z_T)=0$, so this means that $z_T$ is the projection of the starting point $x_0$ on the hyperplane $\{x_d=0\}$ and that $u\equiv0$ on the segment connecting $x_0$ to $z_T$. This concludes the proof.
	\end{proof}
	
	\subsection{Consequences of the non-degeneracy and the Lipschitz continuity}
	As in the classical one-phase case, \cref{t:lipschitz-continuity} and \cref{p:non-degeneracy} provide a lower density estimate for the positivity set.
	\begin{corollary}\label{cor:lower.H}
		Suppose that $A$, $Q$ and $\beta$ are as in \cref{sub:intro-assumptions}, and that there is a constant $\delta>0$ such that
		\begin{equation*}
		\beta+a\sqrt{Q}>0\quad\text{in}\quad B_1\,,
		\end{equation*}
		where $a$ is given by \eqref{e.a}.
		Let $u$ be a local minimizer of $\mathcal{F}$ in $B_1$ and assume $0\in \partial\{u>0\}$. Then, for every $r \in (0,\sfrac12)$, there exists $x_r \in B_r\cap \{x_d\geq 0\}$ such that
		$$
		B_{C_0r}^+(x_r) \subset B_r^+\cap \{u>0\}\quad\mbox{and}\quad B_{C_0r}'(x_r) \subset B_r'\cap \{u>0\}
		$$
		for some constant $C_0>0$ depending on $\delta,d$ and the constants from \cref{sub:intro-assumptions}. As a consequence,
		$$
		|B_r^+\cap \{u>0\}| \geq \eps_0 \omega_d r^d,
		$$
		where $\eps_0:=\frac12C_0^d$.
	\end{corollary}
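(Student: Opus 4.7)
The plan is to combine the non-degeneracy (\cref{p:non-degeneracy}) with the Lipschitz bound (\cref{t:lipschitz-continuity}) exactly as in the classical Alt--Caffarelli setting: non-degeneracy gives a point where $u$ is of order $r$, Lipschitz continuity propagates this positivity to a ball of radius of order $r$, and the half-ball has at least half the measure of a full ball, giving the volume lower bound.

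First I would apply \cref{p:non-degeneracy} at the origin with parameter $r/4$. Since $r < 1/2$ the inclusion $B_{r/2} \subset B_1$ holds, and since $0 \in \partial\{u>0\}$ the function $u$ does not vanish identically on $B_{r/4} \cap \{x_d \geq 0\}$. Applying the contrapositive of the non-degeneracy implication \eqref{e:non-degeneracy} yields
$$\|u\|_{L^\infty(B_{r/2}\cap\{x_d\ge 0\})} > c\,\frac{r}{4},$$
where $c$ is the non-degeneracy constant (which depends on $\delta$, $d$, and the assumptions of \cref{sub:intro-assumptions}). Hence there exists $x_r \in \overline{B_{r/2}} \cap \{x_d \geq 0\}$ with $u(x_r) \geq c r/4$. (A small additional slack, absorbed into the constant, lets us take $x_r \in B_{r/2}\cap\{x_d\ge 0\}$.)

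Next I would propagate via Lipschitz continuity. By \cref{t:lipschitz-continuity}, $u$ is $L$-Lipschitz on $B_{1/2}\cap\{x_d\ge 0\}$, so
$$u(y)\ge u(x_r)-L|y-x_r|\ge \frac{cr}{4}-L|y-x_r|\quad\text{for every } y\in B_{1/2}\cap\{x_d\ge 0\},$$
which is strictly positive whenever $|y-x_r|\le cr/(8L)$. Setting
$$C_0:=\min\!\Bigl(\frac{c}{8L},\,\frac14\Bigr),$$
the inclusion $|x_r|+C_0 r<r$ ensures $B_{C_0 r}(x_r)\subset B_r$, and the previous inequality gives $B_{C_0 r}(x_r)\cap\{x_d\ge 0\}\subset\{u>0\}$. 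Splitting the intersection into $\{x_d>0\}$ and $\{x_d=0\}$ yields the two containments
$$B_{C_0 r}^+(x_r)\subset B_r^+\cap\{u>0\}\qquad\text{and}\qquad B_{C_0 r}'(x_r)\subset B_r'\cap\{u>0\}.$$
Finally, since $x_r\in\{x_d\ge 0\}$, the set $B_{C_0 r}^+(x_r)$ contains at least a half-ball, so $|B_{C_0 r}^+(x_r)|\ge \tfrac12\omega_d(C_0 r)^d$, which gives the density estimate with $\eps_0=\tfrac12 C_0^d$.

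There is no real obstacle here: both ingredients are already in place, and the argument is essentially bookkeeping of constants. The only point requiring a moment of care is that $x_r$ may lie in the open upper half-space (giving a full ball of positivity, and trivially empty $B_{C_0 r}'(x_r)$) or on the hyperplane $\{x_d=0\}$ (giving a genuine half-ball); in both cases the measure bound $|B_{C_0 r}^+(x_r)|\ge \tfrac12\omega_d(C_0 r)^d$ holds, and both stated inclusions are either non-trivial or trivially vacuous.
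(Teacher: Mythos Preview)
Your proposal is correct and follows essentially the same approach as the paper: combine the non-degeneracy estimate (\cref{p:non-degeneracy}) to find a point $x_r$ where $u\gtrsim r$, then use the Lipschitz bound (\cref{t:lipschitz-continuity}) to propagate positivity to a ball of radius $\sim r$. Your bookkeeping is in fact slightly more careful than the paper's (working at scale $r/4$ to guarantee the inclusion $B_{C_0 r}(x_r)\subset B_r$, and noting explicitly that $B_{C_0 r}'(x_r)$ may be empty when $x_r\in\{x_d>0\}$), but the argument is the same.
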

	\begin{proof}
		By \cref{p:non-degeneracy}, for $r$ small enough there exists $x_r\in B_r\cap \{u>0\}\cap \{x_d\geq 0\}$ such that $u(x_r)\geq C r$. On the other one, since $u$ is Lipschitz continuous (\cref{t:lipschitz-continuity}), by setting
		$$
		C_0 = \min\left\{1,\frac{C}{\norm{\nabla u}{L^\infty}}\right\},
		$$
		we have that $u>0$ in $B_{C_0 r}(x_r)\cap \{x_d\geq 0\}$, which proves the claimed lower bound.
	\end{proof}
	
	\section{Blow-up sequences and blow-up limits}\label{s:blow-up}
	This section is dedicated to the convergence of the blow-up sequences and the analysis of the blow-up limits via a monotonicity formula.
	\subsection{Monotonicity formula}\label{s:weiss.sect}
	In this section we prove two Weiss-type monotonicity formulas for the boundary adjusted energy naturally associated to the functional $\mathcal F$. The first one is an almost-monotonicity formula for the function $u^{x_0}$, obtained from a minimizer $u$ of $\mathcal F$ by the change of coordinates from  \eqref{e:def-of-u_x_0}. The second one is a (proper) monotonicity formula that we will apply to (suitable rotations of) the blow-up limits of $u^{x_0}$. We will use both formulas in the classification of the blow-up limits of minimizers $u$ at free boundary points $x_0\in\partial\{u>0\}\cap \{x_d=0\}$.
	\begin{proposition}\label{p:weiss}
		Let $A$, $Q$ and $\beta$ be as in \cref{sub:intro-assumptions} and let $u$ be a local minimizer of $\mathcal{F}$ in $B_1$. Fixed a point $x_0 \in\partial\{u>0\}\cap \{x_d=0\}$, we consider the energy
		\be\label{weiss.expre}
		W_{x_0}(u^{x_0},r):= \frac{1}{r^d}\Big(\mathcal{G}_{x_0}^+(u^{x_0},B_r)+\mathcal{G}_{x_0}'(u^{x_0},B_r) \Big) - \frac{1}{r^{d+1}}\int_{\partial B_r\cap H^+_{x_0}} (u^{x_0})^2\mathrm{d}\HH^{d-1}\,,
		\ee
		where $u^{x_0}$ is the function from \eqref{e:def-of-u_x_0} and the functionals $\mathcal G_{x_0}^+$ and $\mathcal G_{x_0}'$ are given by \eqref{e:def-G-inside} and \eqref{e:def-G-boundary}. Then, there are constants $\tilde{C}$ and $\eps$, depending on $\lambda,A,d,Q$ and $\beta$, such that
		$$
		\frac{d}{d r}W_{x_0}(u^{x_0},r) \geq  \frac{1}{r^{d}}\int_{\partial B_r\cap H_{x_0}^+}{\left( \partial_r u^{x_0}- \frac{1}{r} u^{x_0} \right)^2\mathrm{d}\HH^{d-1}} -\tilde{C}r^{\eps-1}.
		$$
		In particular, the limit $\displaystyle \lim_{r\to 0^+}W_{x_0}(u^{x_0},r)$ exists and is finite.
	\end{proposition}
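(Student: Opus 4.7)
The plan is to combine the almost-minimality inequality \eqref{almost}, tested against the one-homogeneous extension of the trace of $u^{x_0}$ on $\partial B_r$, with the exact expression for $\phi'(r)$ coming from the coarea formula, where $\phi(r):=\mathcal{G}^+_{x_0}(u^{x_0},B_r)+\mathcal{G}'_{x_0}(u^{x_0},B_r)$. The three H\"older errors in \eqref{almost} will be absorbed into the remainder $\tilde{C}r^{\eps-1}$ with $\eps:=\min\{\delta_A,\delta_Q,\delta_\beta\}$.

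First I introduce the competitor $v_r(x):=\tfrac{|x|}{r}\,u^{x_0}\bigl(r x/|x|\bigr)$, which coincides with $u^{x_0}$ on $\partial B_r \cap H^+_{x_0}$ and is therefore admissible in \eqref{almost}. Writing $v_r$ in polar coordinates on $H^+_{x_0}$ and $H'_{x_0}$ (so that $v_r$ is linear in the radial variable, with angular profile $\omega\mapsto u^{x_0}(r\omega)/r$), a direct computation yields
$$\mathcal{G}^+_{x_0}(v_r,B_r)+\mathcal{G}'_{x_0}(v_r,B_r)=\frac{r}{d}\,\Theta(r),$$
where
$$\Theta(r):=\int_{\partial B_r\cap H^+_{x_0}}\!\!\left(|\nabla_\tau u^{x_0}|^2+\frac{(u^{x_0})^2}{r^2}\right)d\HH^{d-1}+Q(x_0)\,\HH^{d-1}\!\bigl(\partial B_r\cap H^+_{x_0}\cap\{u^{x_0}>0\}\bigr)+2\,\frac{\beta(x_0)}{a(x_0)}\!\int_{\partial B_r\cap H'_{x_0}}\!\!u^{x_0}\,d\HH^{d-2}.$$
By the Lipschitz continuity (\cref{t:lipschitz-continuity}) and $u^{x_0}(0)=0$, both $u^{x_0}$ and $v_r$ are $O(r)$ on $B_r$, so $\|u^{x_0}-v_r\|_{L^1(B_r\cap H'_{x_0})}\le C r^d$, and \eqref{almost} yields
$$\phi(r)\le \frac{r}{d}\,\Theta(r)+C\,r^{d+\eps}.$$

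Second, the coarea formula gives, for almost every $r$,
$$\phi'(r)=\int_{\partial B_r\cap H^+_{x_0}}\!\!\!\left(|\nabla u^{x_0}|^2+Q(x_0)\ind_{\{u^{x_0}>0\}}\right)\!d\HH^{d-1}+2\,\frac{\beta(x_0)}{a(x_0)}\!\!\int_{\partial B_r\cap H'_{x_0}}\!\!u^{x_0}\,d\HH^{d-2},$$
and the orthogonal decomposition $|\nabla u^{x_0}|^2=(\partial_r u^{x_0})^2+|\nabla_\tau u^{x_0}|^2$ on $\partial B_r$ implies
$$\phi'(r)-\Theta(r)=\int_{\partial B_r\cap H^+_{x_0}}\!\!\left((\partial_r u^{x_0})^2-\frac{(u^{x_0})^2}{r^2}\right)d\HH^{d-1}.$$

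Finally I differentiate $W_{x_0}(u^{x_0},r)=r^{-d}\phi(r)-I(r)$ with $I(r):=r^{-d-1}\!\int_{\partial B_r\cap H^+_{x_0}}(u^{x_0})^2\,d\HH^{d-1}$. A change of variables to the unit sphere yields $-I'(r)=2r^{-d-2}\!\!\int_{\partial B_r\cap H^+_{x_0}}(u^{x_0})^2-2r^{-d-1}\!\!\int_{\partial B_r\cap H^+_{x_0}}u^{x_0}\partial_r u^{x_0}$, and combining with the two steps above:
\begin{align*}
\frac{d}{dr}W_{x_0}(u^{x_0},r)&=-d\,r^{-d-1}\phi(r)+r^{-d}\phi'(r)-I'(r)\\
&\ge r^{-d}\bigl(\phi'(r)-\Theta(r)\bigr)-I'(r)-\tilde{C}\,r^{\eps-1}\\
&=\frac{1}{r^d}\!\int_{\partial B_r\cap H^+_{x_0}}\!\!\Bigl(\partial_r u^{x_0}-\frac{u^{x_0}}{r}\Bigr)^{\!2}\,d\HH^{d-1}-\tilde{C}\,r^{\eps-1},
\end{align*}
which is the claimed inequality. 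For the existence of the limit, the map $r\mapsto W_{x_0}(u^{x_0},r)+\tfrac{\tilde C}{\eps}r^{\eps}$ is non-decreasing by the above, while Lipschitz continuity gives a uniform two-sided bound on $W_{x_0}(u^{x_0},r)$; hence the limit as $r\to 0^+$ exists and is finite. The main delicate point is the bookkeeping in the first step: the three H\"older errors of \eqref{almost} — with their heterogeneous weights $r^{\delta_Q}$, $r^{\delta_A}$, and $r^{\min\{\delta_\beta,\delta_A\}}\|\cdot\|_{L^1(B_r\cap H'_{x_0})}$ — have to be packaged, after division by $r^d$, into a genuinely summable remainder of the form $\tilde{C}r^{\eps-1}$ near $r=0$.
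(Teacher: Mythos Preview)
Your proof is correct and is precisely the classical Weiss argument the paper invokes: test the almost-minimality \eqref{almost} with the one-homogeneous extension, compare with $\phi'(r)$ via the coarea formula, and combine with the derivative of the boundary $L^2$ term. The paper's own proof simply states that the claim follows from \cref{l:freezing} and the Lipschitz continuity of $u$ as in the classical one-phase case, so you have supplied exactly the details that were omitted.
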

	\begin{proof}
		The claim follows from \cref{l:freezing} and the Lipschitz continuity of $u$ as in the case of the classical one-phase functional. 	
	\end{proof}	
	
	\begin{proposition}\label{p:weiss2}
		Let $q$ and $m$ be given real constants and let $J$ be the functional from \eqref{e:def-J-intro}:
		\begin{equation*}
		J(\varphi,B_r):=\int_{B_r^+}|\nabla \varphi|^2\,\mathrm{d}x + q^2\big|\{\varphi>0\}\cap B_r^+\big|+
		2\,m\int_{B_r'}\,|\varphi| \,\mathrm{d}\HH^{d-1}\,,
		\end{equation*}
		defined for every ball $B_r$ and every function $\varphi\in H^1_{loc}(\{x_d\ge0\})$.
		Let
		$v:\{x_d\ge0\}\to\R$
		be a non-negative Lipschitz continuous global minimizer of $J$ (in the sense of \cref{sub:intro-one-homogeneous-global-minimizers}) and let $0\in\partial\{v>0\}$.
		Then, setting
		\begin{equation*}
		W(v,r):= \frac{1}{r^d}J(v,B_r) - \frac{1}{r^{d+1}}\int_{(\partial B_r)^+} v^2\mathrm{d}\HH^{d-1}\,,
		\end{equation*}
		we have:
		$$
		\frac{d}{d r}W(v,r) \geq  \frac{1}{r^{d}}\int_{(\partial B_r)^+}{\left( \partial_r v- \frac{1}{r} v \right)^2\mathrm{d}\HH^{d-1}}\qquad\text{for every}\qquad r>0.
		$$
		Moreover, if $r\mapsto W(v,r)$ is constant in $r$, then $v$ is one-homogeneous.
	\end{proposition}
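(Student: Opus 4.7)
The plan is to follow the classical Weiss monotonicity scheme (as in \cite{w}) adapted to the present half-space setting, with the radial one-homogeneous extension as a competitor. The only genuinely new point compared to the standard one-phase case is the bookkeeping for the capillarity term $2m\int_{B_r'}v\,d\HH^{d-1}$; since $v$ is Lipschitz continuous, every quantity appearing in $W(v,r)$ is differentiable in $r$ for a.e.\ $r$.

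For each $t>0$, I would introduce the competitor
$$
w_t(x):=\begin{cases}\dfrac{|x|}{t}\,v\!\left(t\,\dfrac{x}{|x|}\right),& |x|\le t,\\[6pt] v(x),& |x|>t,\end{cases}
$$
which is Lipschitz, non-negative, equal to $v$ on $(\partial B_t)^+$, and whose restriction to $B_t'$ is precisely the one-homogeneous extension of $v|_{\partial B_t'}$ inside the disk. This makes $w_t$ admissible in the definition of global minimizer, so $J(v,B_t)\le J(w_t,B_t)$. A polar-coordinate computation based on the radial/tangential decomposition $|\nabla w_t|^2(s\omega)=t^{-2}v(t\omega)^2+|\nabla_\tau v(t\omega)|^2$ (where $\nabla_\tau$ denotes the gradient tangent to $\partial B_t$), together with the analogous polar expressions for the volume term $q^2|\{w_t>0\}\cap B_t^+|$ and the capillarity term over $B_t'$, shows that $\frac{d}{t}J(w_t,B_t)$ equals $\frac{1}{t^2}\!\int_{(\partial B_t)^+}v^2\,d\HH^{d-1}$ plus exactly the three terms comprising $\frac{d}{dt}J(v,B_t)-\int_{(\partial B_t)^+}(\partial_r v)^2\,d\HH^{d-1}$ (after splitting $|\nabla v|^2=(\partial_r v)^2+|\nabla_\tau v|^2$ on $(\partial B_t)^+$). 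Combining with minimality, the rearranged inequality reads
$$
\frac{d}{dt}J(v,B_t)-\frac{d}{t}J(v,B_t)\ \ge\ \int_{(\partial B_t)^+}(\partial_r v)^2\,d\HH^{d-1}-\frac{1}{t^2}\int_{(\partial B_t)^+}v^2\,d\HH^{d-1}.
$$

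Differentiating $W(v,t)=t^{-d}J(v,B_t)-t^{-(d+1)}\!\int_{(\partial B_t)^+}v^2\,d\HH^{d-1}$ in $t$ and using the standard identity
$$
\frac{d}{dt}\int_{(\partial B_t)^+}v^2\,d\HH^{d-1}=\frac{d-1}{t}\int_{(\partial B_t)^+}v^2\,d\HH^{d-1}+2\int_{(\partial B_t)^+}v\,\partial_r v\,d\HH^{d-1},
$$
one gets $W'(v,t)=t^{-d}\big(J'(v,B_t)-\frac{d}{t}J(v,B_t)\big)+\frac{2}{t^{d+2}}\!\int_{(\partial B_t)^+}v^2-\frac{2}{t^{d+1}}\!\int_{(\partial B_t)^+}v\,\partial_r v$. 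Substituting the previous inequality produces an integrand of the form $(\partial_r v)^2-\tfrac{2}{t}v\,\partial_r v+\tfrac{v^2}{t^2}$, which is the perfect square $(\partial_r v-v/t)^2$, yielding the claimed lower bound. For the rigidity statement, if $W(v,\cdot)$ is constant then equality must hold for a.e.\ $t$, forcing $\partial_r v=v/t$ on $(\partial B_t)^+$ for a.e.\ $t$; integrating this ODE along each ray $s\mapsto v(s\omega)$ (using Lipschitz continuity to justify the pointwise ODE along rays through $\HH^{d-1}$-a.e.\ direction) gives $v(s\omega)=s\,c(\omega)$, i.e.\ $v$ is one-homogeneous.

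The only technical point that is not a carbon copy of the classical Weiss argument is the polar-coordinate bookkeeping of the capillarity contribution: one must verify that the polar integration of $2m\,w_t$ over $B_t'$ produces $\frac{2mt}{d}\!\int_{\partial B_t'}v\,d\HH^{d-2}$, which then matches precisely the $2m$-term appearing in $\frac{d}{dt}J(v,B_t)$. Once this match is checked, the rest of the computation is purely algebraic and mirrors the standard Weiss derivation.
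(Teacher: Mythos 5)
Your proof is correct and follows the same classical Weiss argument that the paper appeals to (the paper's own proof simply invokes the one-phase case and records the $d$-homogeneity of $J$). The one genuinely new computation — that the capillarity term of the one-homogeneous competitor over $B_t'$ integrates to $\frac{2mt}{d}\int_{\partial B_t'}v\,d\HH^{d-2}$, matching the derivative $\frac{d}{dt}\,2m\int_{B_t'}v\,d\HH^{d-1}$ after multiplying by $d/t$ — is carried out correctly, and the rest of your polar-coordinate bookkeeping, the perfect-square rearrangement, and the rigidity argument are all sound.
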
	
	\begin{proof}
		The proof is the same as in the case of the classical one-phase functional. We only point out that the energy $J$ is $d$-homogeneous. Precisely, if we set $v_\rho(x):=\frac1{\rho}v(x\rho)$, then
		$$J(v_{rs}, 1)=J(v_r,s)\quad\text{ for every }\quad r,s>0,$$
		and so, we also have $W(v_{rs}, 1)=W(v_r,s)$.
	\end{proof}

	\subsection{Compactness and convergence of blow-up sequences}\label{s:blow.analysis}
	We fix $x_0\in \partial\{u>0\}\cap B_1'$ and, for every $r>0$ small enough  such that $B_{r}(x_0)\subset B_1$, we define
	\be\label{blow.upseq1}
	u_{x_0,r}(x):=\frac1ru(x_0+rx).
	\ee
	Since our functional has variable coefficients, it is more convenient to work with blow-up sequences associated to the change of coordinate induced by the matrix $A$. We define
	\be\label{blow.upseq2}
	\widetilde{u}_{x_0,r}(x) =\frac1r u^{x_0}(rx)= u_{x_0,r}(A^{1/2}(x_0)(x)),
	\ee
	where $u^{x_0}$ is as in \eqref{e:def-of-u_x_0}. 
	In particular, by the Lipschitz continuity of $u$, if $\overline B_{\rho}(x_0)\subset B_1$ we get
	$$\|\nabla u_{x_0,r}\|_{L^\infty(B_{\rho/r})}=\|\nabla u\|_{L^\infty(B_\rho(x_0))}\le L\,,$$
	for some $L>0$, which leads to the compactness of the blow-up sequences. Thus, for every sequence $r_k\to0$, there are a subsequence $r_{n_k}\to0$ and a non-negative $L$-Lipschitz function $$u_0:\R^d\cap\{x_d\ge0\}\to\R\,$$
	such that $u_{x_0,r_{n_k}}$ converges to $u_0$ locally uniformly in $\R^d\cap\{x_d\ge0\}$. Ultimately, the same compactness result holds true for the sequence $\widetilde u_{x_0,r_k}$, where the limit $\widetilde u_0$ satisfies $\widetilde u_0(x) = u_0(A^{1/2}(x_0)(x))$.\\
	
	The main result of the section is the characterization of the blow-up limits in terms of $\widetilde{u}_0$, which turn out to be a one-homogeneous global minimizer of the functional $
	\mathcal{G}^+_{x_0}+\mathcal{G}_{x_0}'$ (see \eqref{e:def-G-inside} and \eqref{e:def-G-boundary}).
	Eventually, up to a rotation, will be not restrictive to consider $\widetilde{u}_0$ a global minimizer of $\mathcal{F}_{x_0}$.\\
	
	Since the strategy is a straightforward adaptation of well-known result for the Alt-Caffarelli functional, we simply sketch the main points of the proof.
	\begin{lemma}\label{l:scal}
		Let $A$, $Q$ and $\beta$ be as in $(\mathcal H1)$, $(\mathcal H2)$ and $(\mathcal H3)$. Given a minimizer $u$ of $\mathcal{F}$ in $B_1$ and a point $x_0 \in \partial\{u>0\}\cap  B_1'$, we consider the rescaled function $\widetilde{u}_{x_0,r}$ from \eqref{blow.upseq2} and the functional  and the functionals $\mathcal G_{x_0}^+$ and $\mathcal G_{x_0}'$ are given by \eqref{e:def-G-inside} and \eqref{e:def-G-boundary}. Then, for every $R>0$ such that $B_{\Lambda_A r R}(x_0)\subset B_1$, we have
		\begin{align}\label{almost.rescaled}
		\begin{aligned}
		\mathcal{G}_{x_0}^+(\widetilde{u}_{x_0,r},B_R)+\mathcal{G}_{x_0}'(\widetilde{u}_{x_0,r},B_R)
		&\leq\, \mathcal{G}_{x_0}^+(w,B_R)+\mathcal{G}_{x_0}'(w,B_R)\\
		&\qquad + C_Q R^{d+\delta_Q} r^{\delta_Q}+ C_A R^{d+\delta_A}\norm{\nabla u}{L^\infty}^2 r^{\delta_A}\\
		&\qquad+ C_\beta R^{\min\{\delta_\beta,\delta_A\}}\norm{w-u^{x_0}_{r}}{L^1(B_{R}\cap H'_{x_0})}r^{\min\{\delta_\beta,\delta_A\}},
		\end{aligned}
		\end{align}
		for every $w\in H^1(B_r^+\cap H^+_{x_0})$ such that $w=u^{x_0}_r$ on $\partial B_R\cap H^+_{x_0}$, where
		$$C_A =C(d,A), C_Q=C(A,d,Q), C_\beta=C(A,d,\beta).$$
	\end{lemma}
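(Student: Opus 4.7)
The plan is to obtain \eqref{almost.rescaled} as a rescaled version of the almost-minimality condition \eqref{almost} from \cref{l:freezing}, applied at the macroscopic scale $rR$. The essential point is that both the interior functional $\mathcal{G}^+_{x_0}$ and the boundary functional $\mathcal{G}'_{x_0}$ are $d$-homogeneous with respect to the rescaling $\varphi \mapsto \widetilde{\varphi}(x) := r^{-1}\varphi(rx)$, while each of the three error terms in \eqref{almost} carries a definite power of $rR$, which under division by $r^d$ produces the desired $r^{\delta_A}$, $r^{\delta_Q}$ and $r^{\min\{\delta_\beta,\delta_A\}}$ factors.

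Concretely, I would first record the scaling identities
\begin{equation*}
\mathcal{G}^+_{x_0}(\widetilde{u}_{x_0,r},B_R) = r^{-d}\,\mathcal{G}^+_{x_0}(u^{x_0},B_{rR}) \quad\text{and}\quad \mathcal{G}'_{x_0}(\widetilde{u}_{x_0,r},B_R) = r^{-d}\,\mathcal{G}'_{x_0}(u^{x_0},B_{rR}),
\end{equation*}
which follow directly from the definitions \eqref{e:def-G-inside}--\eqref{e:def-G-boundary} and the change of variable $y=rx$, using that $\nabla \widetilde{u}_{x_0,r}(x) = (\nabla u^{x_0})(rx)$ and that the Hausdorff measure on $H'_{x_0}$ rescales by $r^{d-1}$. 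Given any competitor $w \in H^1(B_R^+ \cap H^+_{x_0})$ with $w = \widetilde{u}_{x_0,r}$ on $\partial B_R \cap H^+_{x_0}$, I set $\tilde{u}(y) := r\, w(y/r)$, so that $\tilde{u} \in H^1(B_{rR}^+ \cap H^+_{x_0})$ satisfies $\tilde{u} = u^{x_0}$ on $\partial B_{rR} \cap H^+_{x_0}$ and may therefore be used as a competitor in \eqref{almost} at scale $rR$. The $L^1$ boundary norm transforms as
\begin{equation*}
\|u^{x_0}-\tilde{u}\|_{L^1(B_{rR} \cap H'_{x_0})} = r^{d}\,\|\widetilde{u}_{x_0,r}-w\|_{L^1(B_R \cap H'_{x_0})}.
\end{equation*}

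It then remains to divide both sides of \eqref{almost} by $r^d$, use the two scaling identities on the main terms on each side, and distribute the factor $(rR)^{\alpha} r^{-d}$ appearing in each error term: the $C_Q (rR)^{d+\delta_Q} r^{-d}$ yields $C_Q R^{d+\delta_Q} r^{\delta_Q}$; the $C_A (rR)^{d+\delta_A}\|\nabla u\|_\infty^2 r^{-d}$ yields $C_A R^{d+\delta_A}\|\nabla u\|_\infty^2 r^{\delta_A}$; and the $L^1$ error, after the scaling identity above, becomes $C_\beta R^{\min\{\delta_\beta,\delta_A\}}\|\widetilde{u}_{x_0,r}-w\|_{L^1(B_R \cap H'_{x_0})} r^{\min\{\delta_\beta,\delta_A\}}$, matching \eqref{almost.rescaled}. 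The hypothesis $B_{\Lambda_A rR}(x_0) \subset B_1$ is exactly the condition that makes \cref{l:freezing} applicable at scale $rR$, so there is no genuine analytic obstacle here -- the argument is a careful bookkeeping of scalings, and writing it carefully suffices.
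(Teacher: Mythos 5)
Your proposal matches the paper's proof. Both rescale the competitor $w$ back to scale $rR$ via $\tilde u(y) := rw(y/r)$, invoke the almost-minimality \eqref{almost} from \cref{l:freezing} at that scale, use the $d$-homogeneity of $\mathcal G^+_{x_0}$ and $\mathcal G'_{x_0}$ together with the identity $\|u^{x_0}-\tilde u\|_{L^1(B_{rR}\cap H'_{x_0})} = r^{d}\|\widetilde{u}_{x_0,r}-w\|_{L^1(B_R\cap H'_{x_0})}$, and divide by $r^d$.
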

	\begin{proof}
		Since the problem is translation invariant on the hyperplane $H'_{x_0}$, let us assume $x_0=0$. Let $w \in H^{1}(\R^{d})$ be such that $w=u^{0}_{r}$ on $(\partial B_R)^+$, for some $R>0$ such that $B_{\Lambda_A rR}\subset B_1$.\\
		Consider now the functions $v= w - u^{0}_{r} $ and $v_{1/r}(x)=r v(x/r), w_{1/r}(x)= r w(x/r)$. We notice that
		$$
		\norm{v_{1/r}}{L^1(B_{rR}\cap H'_{x_0})}= r^{d}\norm{v}{L^1(B_R\cap H'_{x_0})},
		$$
		which allows to exploit the almost-minimality condition \eqref{almost} of $u^0$ with respect to $\tilde{u}:=w_{1/r} = u^0 + v_{1/r}$ in the ball $B_{rR}$. Therefore, by recalling \eqref{e:FeG}, we get
		\begin{align*}
		\mathcal{G}_{x_0}^+(u^{0}_{r},B_R) + \mathcal{G}_{x_0}^+(u^{0}_{r},B_R) =& \,\frac{1}{r^d}\left( \mathcal{G}_{x_0}^+(u^{0},B_{rR}) + \mathcal{G}_{x_0}^+(u^{0},B_{rR})\right)\\
		\leq&\, \frac{1}{r^d}\left( \mathcal{G}_{x_0}^+(w_{1/r},B_{rR}) + \mathcal{G}_{x_0}^+(w_{1/r},B_{rR})\right)\\
		&\quad + C_\beta (rR)^{\min\{\delta_\beta,\delta_A\}}\frac{\norm{w_{1/r}-u^{0}}{L^1(B_{rR}\cap H'_{x_0})}}{r^d}\\
		&\qquad+C_Q R^{d+\delta_Q} r^{\delta_Q}+ C_A R^{d+\delta_A}\norm{\nabla u}{L^\infty}^2 r^{\delta_A}\\
		\leq&\,\mathcal{G}_{x_0}^+(w,B_{R}) + \mathcal{G}_{x_0}^+(w,B_{R})\\
		&\quad+ C_\beta R^{\min\{\delta_\beta,\delta_A\}}\norm{w-u^{0}_{r}}{L^1(B_{R}\cap H'_{x_0})}r^{\min\{\delta_\beta,\delta_A\}}\\
		&\qquad+C_Q R^{d+\delta_Q} r^{\delta_Q}+ C_A R^{d+\delta_A}\norm{\nabla u}{L^\infty}^2 r^{\delta_A},
		\end{align*}
		as we claimed.
	\end{proof}
	\begin{proposition}\label{p:compact}
		Let $A$, $Q$ and $\beta$ be as in $(\mathcal H1)$, $(\mathcal H2)$ and $(\mathcal H3)$. Let $u$ be a local minimizer of $\mathcal{F}$ in $B_1$ and let $x_0 \in \partial\{u>0\}\cap B_1'$ be such that
		\begin{equation}\label{e:hypo-p-compact}\beta(x_0)+a(x_0)\sqrt{Q(x_0)}>0\,.
		\end{equation}
		Then, for every $R > 0$ and for every sequence $r_k \to 0^+$ the following properties hold (up to extracting a subsequence $r_{k_n}$):
		\begin{enumerate}
			\item $\widetilde{u}_{x_0,r_k}$ converges to a blow-up limit $\widetilde{u}_{0} \in H^{1}_{\loc}(\overline{H_{x_0}^+})\cap C^{0,1}_\loc(\overline{H_{x_0}^+})$, uniformly on $B_R\cap H^+_{x_0}$ and strongly in $H^1(B_R^+)$;
			\item the sequence $\ind_{\{\widetilde{u}_{x_0,r_k}>0\}}\to \ind_{\{\widetilde{u}_{0}>0\}}$ strongly in $L^1(B_R^+)$;
			\item the sequence of the closed sets $\overline{B_R^+\cap \{\widetilde{u}_{x_0,r_k}>0\}}$ and its complement in $H_{x_0}^+$, converge in the Hausdorff sense respectively to $\overline{B_R^+\cap \{\widetilde{u}_0>0\}}$ and $H_{x_0}^+ \setminus \overline{B_R^+\cap \{\widetilde{u}_0>0\}}$;
			\item the blow-up limit $\widetilde{u}_0$ is non-degenerate at zero, namely there exists a dimensional constant $c_0 > 0$ such that
			$$
			\sup_{B_r^+ } \widetilde{u}_0 \geq c_0 r\quad\text{for every $r>0$}.
			$$
		\end{enumerate}
	\end{proposition}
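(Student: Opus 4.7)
The plan is to follow the standard blow-up compactness argument for one-phase free boundary problems, using as our main tools the Lipschitz regularity of \cref{t:lipschitz-continuity}, the non-degeneracy estimate of \cref{p:non-degeneracy}, and the scale-invariant almost-minimality condition from \cref{l:scal}. The hypothesis \eqref{e:hypo-p-compact}, together with the continuity of $\beta$, $a$ and $Q$, ensures that $\beta+a\sqrt{Q}>\delta>0$ in a neighborhood of $x_0$, so \cref{p:non-degeneracy} applies to the rescalings $u_{x_0,r_k}$ on compact subsets for all sufficiently small $r_k$.

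\textbf{Step 1: Uniform convergence.} By \cref{t:lipschitz-continuity}, $u$ is Lipschitz on $\overline B_{\rho}(x_0)\cap\{x_d\ge 0\}$ for some $\rho>0$, so $\widetilde u_{x_0,r_k}$ is $L$-Lipschitz on $B_{\rho/r_k}\cap H_{x_0}^+$ with $L$ independent of $k$. Since $\widetilde u_{x_0,r_k}(0)=0$, the sequence is equibounded on any fixed ball and Arzelà--Ascoli yields a subsequence converging locally uniformly on $\overline{H_{x_0}^+}$ to a non-negative Lipschitz function $\widetilde u_0$.

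\textbf{Step 2: Strong $H^1$ convergence.} The Lipschitz bound gives a uniform $H^1(B_R^+)$ bound, so (up to a further subsequence) we have weak $H^1$ convergence of $\widetilde u_{x_0,r_k}$ to $\widetilde u_0$, and in particular $\liminf_k\int_{B_R^+}|\nabla\widetilde u_{x_0,r_k}|^2\ge\int_{B_R^+}|\nabla\widetilde u_0|^2$. For the opposite $\limsup$ inequality, I would plug into the rescaled almost-minimality \eqref{almost.rescaled} the competitor $w_k$ obtained by gluing $\widetilde u_0$ on $B_{R-\sigma}^+$ with $\widetilde u_{x_0,r_k}$ on an annular region of width $\sigma$, using a linear interpolation through a cutoff. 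The error terms on the right-hand side of \eqref{almost.rescaled} are $O(r_k^{\min\{\delta_A,\delta_Q,\delta_\beta\}})\to 0$; the gluing contributes $O(\sigma)$ via the uniform $H^1$ bound and the uniform convergence. Letting first $k\to\infty$ and then $\sigma\to 0$, and using that the volume and trace terms are continuous under uniform convergence (once we verify Step 3 below for the volume part, or alternatively by separating them out), we obtain the matching $\limsup$ and conclude strong convergence of the gradients.

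\textbf{Step 3: Convergence of indicator functions and Hausdorff convergence.} This is where I expect the main work, and it is the point at which the non-degeneracy hypothesis \eqref{e:hypo-p-compact} is essential. From \cref{cor:lower.H} applied to the rescaled minimizers (which are themselves minimizers of a rescaled functional whose coefficients still satisfy the non-degeneracy gap for $k$ large), every free boundary point $y\in\partial\{\widetilde u_{x_0,r_k}>0\}$ in a fixed compact set satisfies
\[
|B_\rho(y)\cap\{\widetilde u_{x_0,r_k}>0\}|\ge \eps_0\omega_d\rho^d\quad\text{and}\quad |B_\rho(y)\cap\{\widetilde u_{x_0,r_k}=0\}|\ge \eps_0'\omega_d\rho^d,
\]
the second bound coming from the Lipschitz bound and the fact that $\widetilde u_{x_0,r_k}(y)=0$. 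A standard argument then shows that $\overline{\{\widetilde u_{x_0,r_k}>0\}}$ and its complement in $H_{x_0}^+$ converge in the Hausdorff distance (on compact sets) to $\overline{\{\widetilde u_0>0\}}$ and its complement: uniform convergence gives one inclusion directly, while non-degeneracy prevents the positivity set from collapsing and forces the second inclusion. Once Hausdorff convergence is established, the symmetric difference $\{\widetilde u_{x_0,r_k}>0\}\triangle\{\widetilde u_0>0\}$ is contained in a shrinking tubular neighborhood of $\partial\{\widetilde u_0>0\}$, whose Lebesgue measure tends to zero; this yields the $L^1$ convergence of the indicators.

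\textbf{Step 4: Non-degeneracy of $\widetilde u_0$ at $0$.} The contrapositive of \cref{p:non-degeneracy}, applied at scale $r$ and passed through the uniform convergence, gives $\sup_{B_r^+}\widetilde u_{x_0,r_k}\ge c_0 r$ for all $k$ large (depending on $r$) for a constant $c_0$ determined by the non-degeneracy gap at $x_0$. Passing to the uniform limit yields $\sup_{B_r^+}\widetilde u_0\ge c_0 r$ for every $r>0$.

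The most delicate step is Step 3, because a priori the free boundary could oscillate wildly and the straightforward uniform convergence in Step 1 does not by itself rule out that $\widetilde u_{x_0,r_k}$ vanishes on a set of positive measure contained in $\{\widetilde u_0>0\}$; it is exactly the two-sided density estimate (positivity density from non-degeneracy, zero-set density from Lipschitz continuity at free boundary points) that ensures the Hausdorff convergence and, consequently, the strong $L^1$ convergence of the characteristic functions used already in Step 2.
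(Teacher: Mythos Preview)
Your proposal is correct and follows precisely the standard compactness argument that the paper invokes; the paper gives no details beyond citing \cite[Proposition~6.2]{velectures} and \cite[Section~6]{det} and remarking that \eqref{e:hypo-p-compact} is used only for (3) and (4), which matches your organization.

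Two small points are worth tightening. First, in Step~3 the lower density of the zero set at free boundary points does not follow from the Lipschitz bound alone; it needs almost-minimality (if $|\{u_k=0\}\cap B_\rho(y)|$ were too small one could replace $u_k$ by its $A$-harmonic extension on $B_\rho(y)$ and strictly decrease the volume term). Second, the apparent circularity between Steps~2 and~3 dissolves if you extract both conclusions at once: the gluing competitor in \eqref{almost.rescaled} yields
\[
\limsup_{k}\Big(\int_{B_R^+}|\nabla \widetilde u_{x_0,r_k}|^2+Q(x_0)\big|\{\widetilde u_{x_0,r_k}>0\}\cap B_R^+\big|\Big)\le \int_{B_R^+}|\nabla \widetilde u_0|^2+Q(x_0)\big|\{\widetilde u_0>0\}\cap B_R^+\big|,
\]
while weak $H^1$ convergence and Fatou (via a.e.\ convergence of $\widetilde u_{x_0,r_k}$) give the matching $\liminf$ inequalities term by term; hence both the strong $H^1$ convergence and the $L^1$ convergence of the indicators follow simultaneously, without appealing to Hausdorff convergence first.
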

	\begin{proof}
		This compactness result for blow-up sequences relies on well known argument of the theory of almost-minimizer of Alt-Caffarelli type functionals (see \cite[Proposition 6.2]{velectures} and \cite[Section 6]{det}). We notice that the hypothesis \eqref{e:hypo-p-compact} is needed only in the proof of (3) and (4).
	\end{proof}

	Finally, by exploiting the H\"{o}lder regularity of $Q$ and $\beta$, we can conclude by showing that every blow-up limit is a global minimizer of a functional of the form \eqref{e:def-J-intro}.
	\begin{proposition}\label{p:limit}
		Let $A$, $Q$ and $\beta$ be as in $(\mathcal H1)$, $(\mathcal H2)$ and $(\mathcal H3)$. Let $u$ be a minimizer of $\mathcal{F}$ in $B_1$ and $x_0 \in \partial\{u>0\}\cap  B_1'$ be fixed. Then, up to a rotation, every blow-up limit $\widetilde{u}_0$ of $u$ at $x_0$ is a global minimizer of the functional $\mathcal{F}_{x_0}$. Indeed, for every $R>0$ we have 
		\begin{multline*}
		\int_{B_R^+}\left(|\nabla \widetilde{u}_0|^2  + {Q(x_0)}\ind_{\{\widetilde{u}_0>0\}}\right)\,\mathrm{d}x + \frac{\beta(x_0)}{a(x_0)}\int_{B_R'}\widetilde{u}_0\,\mathrm{d}x'\\
		\leq
		\int_{B_R^+}\left(|\nabla w|^2 +{Q(x_0)}\ind_{\{\widetilde{u}_0>0\}} \right)\,\mathrm{d}x + \frac{\beta(x_0)}{a(x_0)}\int_{B_R'}w\,\mathrm{d}x',
		\end{multline*}
		for every $w\in H^{1}_\loc(\R^{d}_+)$ such that $w=\widetilde{u}_0$ on $(\partial B_R)^+$.
	\end{proposition}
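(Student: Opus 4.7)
The strategy is standard: combine the rescaled almost-minimality from \cref{l:scal} with the strong $H^1$/$L^1$ compactness of \cref{p:compact}, and absorb the vanishing error terms as $r_k\to 0^+$. Up to applying an orthogonal transformation $\mathcal R$ that maps $H^+_{x_0}$ onto $\{x_d>0\}$ (such $\mathcal R$ exists because $H^+_{x_0}$ is a half-space bounded by a hyperplane through the origin), and using that the Dirichlet integral, the measure of $\{v>0\}$ and the integral on the bounding hyperplane are all rotation invariant, the identity
$$\mathcal F_{x_0}(v\circ\mathcal R^{-1},B_R)=\mathcal G^+_{x_0}(v,B_R)+\mathcal G'_{x_0}(v,B_R)$$
holds for every $v\in H^1(B_R\cap H^+_{x_0})$. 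Hence it suffices to prove that $\widetilde u_0$ itself is a global minimizer of $\mathcal G^+_{x_0}+\mathcal G'_{x_0}$ on the half-space $H^+_{x_0}$.

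Fix $R>0$ and a competitor $w\in H^1(B_R\cap H^+_{x_0})$, $w\ge 0$, with $w=\widetilde u_0$ on $\partial B_R\cap H^+_{x_0}$. For each small $\delta>0$ and every $k$, I would build an admissible competitor for $\widetilde u_{x_0,r_k}$ in $B_R$ by gluing $w$ with $\widetilde u_{x_0,r_k}$ in a thin annular layer: choose a cut-off $\eta_\delta\in C^\infty_c(B_R)$ with $\eta_\delta\equiv 1$ on $B_{R-\delta}$ and $|\nabla\eta_\delta|\le 2/\delta$, and set
$$w_{k,\delta}:=\eta_\delta\, w+(1-\eta_\delta)\,\widetilde u_{x_0,r_k}.$$
Then $w_{k,\delta}\ge 0$, $w_{k,\delta}=\widetilde u_{x_0,r_k}$ on $\partial B_R\cap H^+_{x_0}$, and by the uniform and strong $H^1$ convergence from \cref{p:compact}(1)--(2) one has $w_{k,\delta}\to w$ in $H^1(B_R\cap H^+_{x_0})$ and $\ind_{\{w_{k,\delta}>0\}}\to \ind_{\{w>0\}}$ in $L^1$, first letting $k\to\infty$ and then $\delta\to 0^+$ (the annular correction contributes an error that is controlled by $\|\widetilde u_{x_0,r_k}-\widetilde u_0\|_{L^\infty}/\delta$ times a fixed $L^2$ mass, which vanishes after the double limit).

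Plugging $w_{k,\delta}$ into \eqref{almost.rescaled} gives
\begin{align*}
\mathcal G^+_{x_0}(\widetilde u_{x_0,r_k},B_R)+\mathcal G'_{x_0}(\widetilde u_{x_0,r_k},B_R) &\le \mathcal G^+_{x_0}(w_{k,\delta},B_R)+\mathcal G'_{x_0}(w_{k,\delta},B_R)\\
&\quad + C_Q R^{d+\delta_Q}r_k^{\delta_Q}+C_A R^{d+\delta_A}\|\nabla u\|_{L^\infty}^2 r_k^{\delta_A}\\
&\quad + C_\beta R^{\min\{\delta_\beta,\delta_A\}}\|w_{k,\delta}-u^{x_0}_{r_k}\|_{L^1(B_R\cap H'_{x_0})}r_k^{\min\{\delta_\beta,\delta_A\}}.
\end{align*}
The $L^1$ error in the last line is uniformly bounded in $k$ and $\delta$ (since both $w$ and $\widetilde u_{x_0,r_k}$ are uniformly bounded on $B_R$ by the Lipschitz estimate \cref{t:lipschitz-continuity}), so all the additive errors vanish as $r_k\to 0$. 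On the left, the strong $H^1$ convergence of $\widetilde u_{x_0,r_k}\to\widetilde u_0$ passes the gradient term to $\int_{B_R\cap H^+_{x_0}}|\nabla\widetilde u_0|^2$; the $L^1$ convergence of indicators and the uniform convergence of traces on $H'_{x_0}$ handle the remaining two pieces. On the right, after letting $k\to\infty$ and then $\delta\to 0^+$, $w_{k,\delta}$ is replaced by $w$. This yields the desired minimality inequality for $\widetilde u_0$ against every admissible $w$, and the rotation by $\mathcal R$ completes the proof.

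The only technically delicate point is the boundary gluing: the lower semicontinuity of $|\{\,\cdot\,>0\}|$ is one-sided, so the $L^1$ convergence of the indicator $\ind_{\{w_{k,\delta}>0\}}$ to $\ind_{\{w>0\}}$ in $B_R\cap H^+_{x_0}$ (and likewise on $B_R\cap H'_{x_0}$) must be justified carefully, and requires that the gluing does not create or destroy positivity sets of positive measure. This is the reason for the ordering of the double limit $k\to\infty$, then $\delta\to 0^+$: for fixed $\delta$, uniform convergence makes the annular set where the two functions differ vanish, and then sending $\delta\to 0^+$ collapses the annulus itself.
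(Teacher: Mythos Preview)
Your proof follows essentially the same strategy as the paper's: apply the rescaled almost-minimality \eqref{almost.rescaled} to a competitor obtained by gluing $w$ with the blow-up sequence via a cutoff, pass to the limit using \cref{p:compact}, and rotate at the end. One technical difference: the paper takes $w_k = w + (1-\eta)(\widetilde u_k - \widetilde u_0)$ rather than your convex combination $\eta_\delta w + (1-\eta_\delta)\widetilde u_k$, which has the advantage that $w_k \to w$ strongly in $H^1$ as $k\to\infty$ for \emph{fixed} $\eta$; only the measure term then requires sending $|\{0<\eta<1\}|\to 0$. Your double limit for the gradient needs more care than you indicate: after $k\to\infty$ you land on $w_\delta = \eta_\delta w + (1-\eta_\delta)\widetilde u_0$, and the remaining annular term $(w-\widetilde u_0)\nabla\eta_\delta$ is \emph{not} controlled by $\|\widetilde u_k-\widetilde u_0\|_{L^\infty}/\delta$ (that bound only handles $w_{k,\delta}-w_\delta$). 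To conclude one must invoke a Poincar\'e inequality in the thin shell, using that $w-\widetilde u_0$ has zero trace on $(\partial B_R)^+$, to get $\delta^{-2}\int_{B_R\setminus B_{R-\delta}}|w-\widetilde u_0|^2 \le C\int_{B_R\setminus B_{R-\delta}}|\nabla(w-\widetilde u_0)|^2\to 0$ as $\delta\to 0$. With this addition your argument is complete; the paper's choice of competitor simply sidesteps the issue.
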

	\begin{proof}
		For the sake of simplicity, let us set $\widetilde{u}_k=\widetilde{u}_{x_0,r_k}$ as the blow-up sequence \eqref{blow.upseq2} centered at $x_0$ and $\widetilde{u}_0$ its limit. By \cref{l:scal}, for every $R>0$ it holds
		\begin{align}\label{portaqua}
		\begin{aligned}
		\mathcal{G}_{x_0}^+(\widetilde{u}_{k},B_R)+\mathcal{G}_{x_0}'(\widetilde{u}_{k},B_R)
		\leq& \, \mathcal{G}_{x_0}^+(\widetilde{u},B_R)+\mathcal{G}_{x_0}'(\widetilde{u},B_R)
		\\
		&\quad + \widetilde{C}_\beta R^{\min\{\delta_\beta,\delta_A\}}\norm{\widetilde{u}-\widetilde{u}_{k}}{L^1(B_{R}\cap H'_{x_0})}r_k^{\min\{\delta_\beta,\delta_A\}}\\
		&\qquad+\widetilde{C}_Q R^{d+\delta_Q} r_k^{\delta_Q}+ \widetilde{C}_A R^{d+\delta_A}\norm{\nabla u}{L^\infty}^2 r_k^{\delta_A},
		\end{aligned}
		\end{align}
		for every $\widetilde{u} \in H^{1}(H^+_{x_0})$ such that $\widetilde{u}= \widetilde{u}_k$ on $(\partial B_R)^+$.\\
		Let now $w\in H^{1}_\loc(\overline{H^+_{x_0}})\cap L^\infty_\loc(\overline{H^+_{x_0}})$ be such that $w=u^\infty$ on $(\partial B_R)^+$ and let $\eta \in C^\infty_c(B_R)$ be such that $0\leq \eta \leq 1$. Thus, consider the test function
		$$
		w_k = w + (1-\eta)(\widetilde{u}_{k}-\widetilde{u}_0).
		$$
		Since $w = \widetilde{u}_0$ outside of $B_R^+$, we get $w_{k}=\widetilde{u}_{k}$ outside of $B_R^+$. Moreover, since
		$$
		w_{k} - \widetilde{u}_{k}= w  - \widetilde{u}_0 -\eta(\widetilde{u}_{k}-\widetilde{u}_0).
		$$
		and $\widetilde{u}_{k}\to \widetilde{u}_0$ in $L^1(B_R^+)$, there exists $k_0=k_0(d)>0$ such that
		$$
		\frac{|\beta(x_0)|}{|a(x_0)|}\int_{B_R}|\widetilde{u}_k - w_k|\ind_{H'_{x_0}}\mathrm{d}x \leq
		C \norm{\beta}{L^\infty}\norm{\widetilde{u}_0-w}{L^1(B_R)},
		$$
		for $k\geq k_0$ and $C$ depending on the quantities in $(\mathcal H1)$. Hence, by testing \eqref{portaqua} with respect to $w_{k}$ we deduce
		\begin{align*}
		\int_{B_R^+}|\nabla \widetilde{u}_{k}|^2 \,\mathrm{d}x\, +\, & \, Q(x_0)|\{\widetilde{u}_{k}>0\}\cap B_R^+| +  \frac{\beta(x_0)}{a(x_0)}\int_{B_R\cap H'_{x_0}}\widetilde{u}_{k}\,\mathrm{d}x'\\
		\leq&\, \int_{B_R^+}|\nabla w_k|^2 \,\mathrm{d}x + Q(x_0)|\{\widetilde{u}_0>0\}\cap \{\eta =1\}^+| + \frac{\beta(x_0)}{a(x_0)}\int_{B_R\cap H'_{x_0}}w_{k}\,\mathrm{d}x'\\
		&\quad +Q(x_0)|\{0<\eta<1\}^+|+ \widetilde{C}_\beta R^{\min\{\delta_\beta,\delta_A\}}\norm{\widetilde{u}_{k}-w_{k}}{L^1(B_{R}\cap H^{x_k})}r_k^{\min\{\delta_\beta,\delta_A\}}\\
		&\qquad+\widetilde{C}_Q R^{d+\delta_Q} r_k^{\delta_Q}+ \widetilde{C}_A R^{d+\delta_A}\norm{\nabla u}{L^\infty}^2 r_k^{\delta_A}
		\end{align*}
		Finally, since $w_{k}\to w, \widetilde{u}_{k}\to \widetilde{u}_0$ strongly in $H^{1}(B_R^+)$ we get
		\begin{align*}
		\int_{B_R^+}|\nabla \widetilde{u}_0|^2 \,\mathrm{d}x + \,&\, Q(x_0)|\{\widetilde{u}_0>0\}\cap B_R^+| + \frac{\beta(x_0)}{a(x_0)}\int_{B_R\cap H'_{x_0}}\widetilde{u}_0\,\mathrm{d}x'\\
		\leq &
		\int_{B_R^+}|\nabla w|^2 \,\mathrm{d}x + Q(x_0)|\{w>0\}\cap B_R^+|\\
		&\quad + \frac{\beta(x_0)}{a(x_0)}\int_{B_R\cap H'_{x_0}}w\,\mathrm{d}x'+Q(x_0)|\{0<\eta<1\}^+|.
		\end{align*}
		The result follows by choosing $\eta$ such that $|\{\eta =1\}|$ is arbitrarily close to $|B_R|$ and by rotating the coordinate in order to replace $H'_{x_0}$ with $\{x_d=0\}$.
	\end{proof}

	The following is a straightforward application of the Weiss monotonicity formula to the previous characterization of the blow-up limits.
	\begin{corollary}\label{cor:weiss.cor}
		Let $A$, $Q$ and $\beta$ be as in $(\mathcal H1)$, $(\mathcal H2)$ and $(\mathcal H3)$. Let $u$ be a local minimizer of $\mathcal{F}$ in $B_1$ and let $x_0 \in \partial\{u>0\}\cap B_1'$ be such that
		\begin{equation*}
		\beta(x_0)+a(x_0)\sqrt{Q(x_0)}>0\,.
		\end{equation*}
		Then, every blow-up limit  of $u$ at $x_0$ is one-homogeneous.
	\end{corollary}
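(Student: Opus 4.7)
The plan is to combine the almost-monotonicity formula from \cref{p:weiss} with the scaling invariance of the boundary-adjusted energy, and then pass to the limit along a blow-up sequence, reducing matters to the rigidity statement contained in \cref{p:weiss2}.

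First, from \cref{p:weiss}, the function $r\mapsto W_{x_0}(u^{x_0},r)$ is the sum of a non-decreasing function and a bounded remainder of the form $O(r^{\varepsilon})$, so it admits a finite limit
$$
L:=\lim_{r\to 0^+} W_{x_0}(u^{x_0},r).
$$
Next, I would compute the scaling behaviour of $W_{x_0}$ under the blow-up \eqref{blow.upseq2}. A direct change of variables shows that for every $\rho>0$ and every admissible $r$,
$$
W_{\mathcal F_{x_0}}(\widetilde u_{x_0,r},\rho):=\frac{1}{\rho^d}\mathcal F_{x_0}(\widetilde u_{x_0,r},B_\rho)-\frac{1}{\rho^{d+1}}\int_{(\partial B_\rho)^+}\widetilde u_{x_0,r}^{\,2}\,\mathrm{d}\HH^{d-1}=W_{x_0}(u^{x_0},r\rho),
$$
where $\mathcal F_{x_0}$ is the constant-coefficient functional \eqref{e:def-F-x-0-intro} (which, up to the rotation implicit in \cref{p:limit}, is exactly of the form $J$ with $q=\sqrt{Q(x_0)}$ and $m=\beta(x_0)/a(x_0)$).

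Now fix a sequence $r_k\to 0^+$ such that $\widetilde u_{x_0,r_k}\to\widetilde u_0$ in the sense of \cref{p:compact}. For every fixed $\rho>0$, the scaling identity above gives
$$
W_{\mathcal F_{x_0}}(\widetilde u_{x_0,r_k},\rho)=W_{x_0}(u^{x_0},r_k\rho)\xrightarrow[k\to\infty]{} L.
$$
On the other hand, the strong $H^1(B_\rho^+)$ convergence of $\widetilde u_{x_0,r_k}$ to $\widetilde u_0$, the $L^1(B_\rho^+)$ convergence of the indicator functions $\ind_{\{\widetilde u_{x_0,r_k}>0\}}$, and the $L^1(B_\rho')$ convergence of the traces (all provided by \cref{p:compact}) ensure, for a.e.~$\rho>0$, that
$$
W_{\mathcal F_{x_0}}(\widetilde u_{x_0,r_k},\rho)\xrightarrow[k\to\infty]{} W_{\mathcal F_{x_0}}(\widetilde u_0,\rho).
$$
(The convergence of the $(\partial B_\rho)^+$ integral holds for a.e.~$\rho$, which is sufficient.) Combining the two displays, $W_{\mathcal F_{x_0}}(\widetilde u_0,\rho)=L$ for a.e.~$\rho>0$, and by continuity in $\rho$ the equality extends to all $\rho>0$. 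Thus $\rho\mapsto W_{\mathcal F_{x_0}}(\widetilde u_0,\rho)$ is constant.

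By \cref{p:limit}, $\widetilde u_0$ is a global minimizer of $\mathcal F_{x_0}=J$, and by \cref{p:compact}(4) the point $0$ still belongs to $\partial\{\widetilde u_0>0\}$. The constancy of the Weiss energy and the equality case in \cref{p:weiss2} then force $\widetilde u_0$ to be one-homogeneous. Since $u_0(x)=\widetilde u_0(A^{-1/2}(x_0)x)$ and one-homogeneity is preserved by the linear change of variables $A^{-1/2}(x_0)$, the original blow-up $u_0$ is itself one-homogeneous, concluding the proof. The only delicate point is the passage to the limit of the spherical term $\rho^{-(d+1)}\int_{(\partial B_\rho)^+}\widetilde u_{x_0,r_k}^2$, which is handled by restricting to a.e.~$\rho$ and using continuity in $\rho$; all the remaining ingredients are direct consequences of \cref{p:weiss}, \cref{p:compact}, and \cref{p:limit}.
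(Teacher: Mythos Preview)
Your proof is correct and follows exactly the same approach as the paper: use the almost-monotonicity from \cref{p:weiss} to obtain the limit Weiss energy, exploit the scaling identity to relate $W$ along the blow-up sequence to $W_{x_0}(u^{x_0},\cdot)$, pass to the limit via the convergences from \cref{p:compact} and \cref{p:limit}, and conclude one-homogeneity from the rigidity in \cref{p:weiss2}. The paper's own proof is just a brief sketch of precisely this strategy; you have filled in the details (including the a.e.~$\rho$ passage for the spherical term and the transfer of homogeneity through the linear map $A^{-1/2}(x_0)$) in a way fully consistent with the outline given there.
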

	\begin{proof}
		By \cref{p:compact} and \cref{p:limit}  we already know that, up to a linear change of coordinates, the blow-up limit $u_0$ is a global minimizer of the functional
		$$
		J(\varphi,B_r):=\int_{B_r^+}|\nabla \varphi|^2\,\mathrm{d}x + q^2\big|\{\varphi>0\}\cap B_r^+\big|+
		2\,m\int_{B_r'}\,|\varphi| \,\mathrm{d}\HH^{d-1}\,,
		$$
		with $q=\sqrt{Q(x_0)}$ and $m=\beta(x_0)/a(x_0)$.
		Therefore, the result is a classical consequence of the monotonicity result \cref{p:weiss}. Indeed, by exploiting the almost-monotonicity of the Weiss type formula and the rescaling of the functional along the blow-up sequence, we can relate the Weiss formula of \cref{p:weiss2} with the one of \cref{p:weiss}, which implies the one-homogeneity of the blow-up limit.
	\end{proof}

	\section{Global homogeneous minimizers in dimension two}\label{s:global2}
	In this section we give a complete classification of one-homogeneous minimizers of the functional
	$$
	J(v,E):=\int_{E^+}|\nabla v|^2\,\mathrm{d}x + q^2\big|\{v>0\}\cap E^+\big|+
	2\,m\int_{E'}\,|v| \,\mathrm{d}\HH^{d-1}\,,q>0\quad\text{and}\quad m\in(-q,q),
	$$
	in the two-dimensional case. We recall that the blow-up limits arising from Section \ref{s:blow.analysis} are global minimizers of the functional
	$$
	J=\mathcal F_{x_0}\quad\text{with}\quad q=\sqrt{Q(x_0)}\quad\text{and}\quad m=\frac{\beta(x_0)}{a(x_0)}\ ,
	$$
	defined in \eqref{e:def-J-intro}. The main result of this section is the following. 
	\begin{proposition}\label{p:classification2D}
		Let $q \in \R, m \in (-q,q)$. Then, the only one-homogeneous global minimizers of the functional $J$ in $\R^2$ are the half-plane solutions 
		$$
		v(x) = \left(\sqrt{q^2-m^2}\,(x\cdot\nu) + m(x\cdot e_2)\right)^+,
		$$
		associated to $\nu=e_1$ and $\nu=-e_1$.
	\end{proposition}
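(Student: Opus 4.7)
The proof reduces the two-dimensional classification to a one-dimensional ODE problem via polar coordinates. Since $v$ is Lipschitz and one-homogeneous, I write $v(r\cos\theta, r\sin\theta) = r\,g(\theta)$ for a Lipschitz non-negative profile $g : [0,\pi] \to [0,\infty)$. Harmonicity of $v$ in $\{v > 0\} \cap \{x_2 > 0\}$ (the interior Euler--Lagrange equation for minimizers of $J$), combined with one-homogeneity, yields the ODE $g'' + g = 0$ on each connected component of $\{g > 0\} \cap (0,\pi)$, so on each such positivity interval $I$ one has $g|_I(\theta) = R_I \sin(\theta - \alpha_I)$, and non-negativity forces $|I| \leq \pi$.

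\textbf{Boundary conditions and enumeration.} Using \cref{t:viscosity} (minimizers are viscosity solutions) together with the polar identities $|\nabla v|^2 = g^2 + (g')^2$ and $\partial_{x_2}v\big|_{\theta=0} = g'(0)$, $\partial_{x_2}v\big|_{\theta=\pi} = -g'(\pi)$, I translate the boundary conditions as follows: at an interior endpoint $\theta_\ast \in (0,\pi)$ of a positivity interval, the Bernoulli condition $|\nabla v| = q$ reduces to $|g'(\theta_\ast)| = q$, so $R_I = q$; at an endpoint $\theta = 0$ or $\theta = \pi$ where $g > 0$, the capillarity condition $\partial_{x_2} v = m$ gives $g'(0) = m$ or $g'(\pi) = -m$. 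A positivity interval with two strictly interior endpoints would force $R = q$ at both together with length $\pi$ (the distance between consecutive zeros of $q\sin(\theta - \alpha_I)$), an impossibility in $(0,\pi)$. Hence every positivity component touches $\{0,\pi\}$, leaving exactly four candidate configurations for $\{g > 0\}$: (A) the single interval $[0,\arccos(-m/q))$, producing $v = h_{q,m,e_1}$; (B) its mirror image, producing $h_{q,m,-e_1}$; (C) a double configuration $[0,\arccos(-m/q)) \cup (\arccos(m/q),\pi]$, possible only for $m \leq 0$; (D) the full interval $(0,\pi)$, giving $v = R\,x_2$.

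\textbf{Ruling out the spurious configurations.} The main obstacle is eliminating (C) and (D) via explicit competitors on some $B_R$. For (C), the Weiss monotonicity formula (\cref{p:weiss2}) together with a direct calculation gives $W(v,1) = 2\,W(h_{q,m,e_1},1)$, strictly larger than the single-component value; I expect to build a competitor $w$ merging the two components through a thin transition near the origin (replacing $v$ by an interpolation with a single half-plane solution on a small ball at the origin), and to conclude $J(w,B_R) < J(v,B_R)$ via the elementary inequality $\arccos \eta > \eta\sqrt{1-\eta^2}$ on $(0,1)$, which quantifies the inefficiency of the disconnected structure. For (D), $v = R\,x_2$, the viscosity normal condition at the wetting boundary forces $R = m$ with $m > 0$; the candidate competitor is a radial cutoff $w = v(1 - \chi)$, with $\chi$ concentrated in a thin strip near $\{x_2 = 0\}$. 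After integration by parts the first-order terms cancel and one is left with
\[
J(w,B_R) - J(v,B_R) = m^2\int_{B_R^+} x_2^2 |\nabla\chi|^2\,dx - q^2\,\bigl|\{\chi = 1\} \cap B_R^+\bigr|,
\]
which is strictly negative for a well-chosen profile, since $x_2$ is small on the support of $\nabla\chi$. The hardest part will be to make this last competitor work uniformly as $|m|/q$ approaches $1$, where a naive radial cutoff no longer yields a definite gain; a construction modeled on the actual half-plane solution geometry should close this gap, and only cases (A) and (B) survive.
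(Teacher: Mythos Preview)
Your reduction to the ODE $g''+g=0$ and the enumeration of candidates (A)--(D) are correct and match the paper's \cref{l:class2}. There are, however, two problems, one minor and one fatal.

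\emph{Circularity.} You invoke \cref{t:viscosity} to extract the conditions $|g'(\theta_\ast)|=q$ and $g'(0)=m$, $g'(\pi)=-m$, but in the paper \cref{t:viscosity} is proved \emph{after} and \emph{using} \cref{p:classification2D} (via dimension reduction to $d=2$). This is easily repaired: in two dimensions the free boundary of a one-homogeneous minimizer is a finite union of rays, hence smooth away from the origin, and the overdetermined conditions follow directly from the first variation without the viscosity machinery.

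\emph{Case (D) fails.} For $v=C\,x_2^+$ one has $\{v>0\}\cap\{x_2=0\}=\emptyset$, so the Neumann condition is vacuous; moreover no test function $\psi$ with $|\nabla_{x'}\psi|\neq 0$ can touch $v$ from below or above at a point of $\{x_2=0\}$, so condition (VS3) is vacuously satisfied as well. Thus $C$ is \emph{undetermined} by the PDE system---this is exactly case (3) of \cref{l:class2}, stated ``for every $C>0$''---and your claim ``$R=m$'' is false. Your cutoff competitor $w=v(1-\chi)$ then cannot work: both $v$ and $w$ vanish on $\{x_2=0\}$, so the capillary term contributes nothing, your displayed identity with coefficient $m^2$ is wrong, and for $C\ge q$ there is no reason such a competitor should decrease $J$. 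The paper's remedy (\cref{l:global.deg.instable}) is to \emph{add} rather than cut off: take $w=v+\varphi$ with $\varphi>0$ harmonic in $B_R^+$, $\varphi=0$ on $(\partial B_R)^+$, and $\partial_2\varphi=\gamma$ on $B_R'$ with $m-C<\gamma/2<0$; then $\{w>0\}=B_R^+$ (measure term unchanged) but the capillary term is activated, and an integration by parts gives $J(w,B_R)-J(v,B_R)=2(m-C-\gamma/2)\int_{B_R'}\varphi<0$. For $C<q$ an inward perturbation suffices. For case (C) your ``merging'' competitor is left unspecified; the paper (\cref{l:global.deg2}) instead builds an explicit one-parameter family $v_t$ on a rectangle and shows that $t\mapsto J(v_t,\mathcal R)$ has vanishing first and strictly negative second derivative at $t=0$.
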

	In order to prove this proposition, in \cref{l:class2}, we start by classifying one-homogeneous two-dimensional solutions to the Euler-Lagrange equations \eqref{e:bernoulli-J} associated to $J$. Below, we will exclude from the list of \cref{l:class2} the solutions which are not minimizing.
	
	\begin{lemma}\label{l:class2}
		Let $v:\R^2\to\R$ be a non-trivial non-negative  continuous and one-homogeneous function such that
		\be\label{el.blow}
		\begin{cases}
			\Delta v=0\quad\text{in}\quad \{x_2>0\}\cap \{v>0\},\medskip\\
			|\nabla v|=q\quad\text{on}\quad \{x_2>0\}\cap \partial\{v>0\},\medskip\\
			e_2\cdot\nabla v=m\quad\text{on}\quad \{x_2=0\}\cap \{v>0\},
		\end{cases}
		\ee
		with $q>0,m \in \R$. Then, up to a reflection with respect to $e_1$, $v$ is one of the functions from following list:
		\begin{enumerate}
			\item \label{1} for $m \in (-q,q)$ the function
			$$
			v(x)=\left(\sqrt{q^2 - m^2 }(x\cdot e_1) + m (x\cdot e_2)\right)^+
			$$
			is a solution such that $\{v=0\}\cap \{x_2=0\}= \{x_1\leq 0, x_2=0\}$;
			\item for $m \in (-q,0)$ the function
			$$
			v(x)=\left(\sqrt{q^2-m^2}|x\cdot e_1| + m(x\cdot e_2)\right)^+,
			$$
			is a solution such that $\{v = 0\} \cap \{x_2=0\} =(0,0)$;
			\item for every $m$, every $q$, and every $C>0$,  the function
			$$
			v(x)=C(x\cdot e_2)^+,
			$$
			is a solution such that $\{v=0\}\equiv \{x_2=0\}$.
		\end{enumerate}
	\end{lemma}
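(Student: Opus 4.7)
The key reduction is one-homogeneity, which turns the problem into an ODE on the upper half-circle. Writing $v(r,\theta)=r\,g(\theta)$ in polar coordinates in $\{x_2>0\}$, the Laplace equation $\Delta v=v_{rr}+r^{-1}v_r+r^{-2}v_{\theta\theta}=0$ on $\{v>0\}$ becomes $g''+g=0$ on each connected component of $\{g>0\}\subset(0,\pi)$. Hence on each such component $g(\theta)=a\cos\theta+b\sin\theta$, i.e.\ $v$ is the restriction of an affine function $ax_1+bx_2$ on the corresponding open sector. By one-homogeneity and continuity of $v$, the positivity set $\{v>0\}\cap\{x_2>0\}$ is exactly a disjoint union of open sectors with vertex at the origin.

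The second step is to bound the number and shape of the sectors using the free boundary condition $|\nabla v|=q$. On an interior ray $\{\theta=\theta_i\}\cap\{x_2>0\}$, $\theta_i\in(0,\pi)$, bounding a positivity sector, one has $g(\theta_i)=0$ and $|\nabla v|^2=a^2+b^2=q^2$. If a sector had both its edges strictly inside the upper half-plane, the homogeneous system $g(\theta_1)=g(\theta_2)=0$ would force $\sin(\theta_2-\theta_1)=0$, i.e.\ $\theta_2-\theta_1=\pi$, which is incompatible with $0<\theta_1<\theta_2<\pi$. Therefore every positivity sector has at least one edge on $\{x_2=0\}$, and in particular the configuration contains \emph{at most two} sectors.

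It then remains to analyze the three resulting configurations. \emph{(a)} If the unique positivity sector is the whole upper half-plane, then $v=ax_1+bx_2$ with $v(\cdot,0)\equiv0$, forcing $a=0$ and $v=Cx_2$ with $C>0$; the capillary condition $e_2\cdot\nabla v=m$ is vacuous since $\{v>0\}\cap\{x_2=0\}=\emptyset$. This is case~(3). \emph{(b)} If the sole sector is $\{0<\theta<\theta_2\}$ with $\theta_2\in(0,\pi)$, the edge condition and positivity of $v$ in the sector give $v=qr\sin(\theta_2-\theta)$, hence $\partial_{x_2}v=-q\cos\theta_2$ on $\{x_1>0,x_2=0\}$; imposing $-q\cos\theta_2=m$ uniquely determines $\theta_2$ provided $|m|<q$, and rewriting in Cartesian coordinates gives $v=(\sqrt{q^2-m^2}\,x_1+mx_2)^+$, which is case~(1). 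The mirror configuration $\{\theta_1<\theta<\pi\}$ yields the reflected half-plane solution. \emph{(c)} If there are two disjoint sectors $\{0<\theta<\theta_a\}$ and $\{\theta_b<\theta<\pi\}$ with $\theta_a\le\theta_b$, the same computation on each sector gives $\cos\theta_a=-m/q$ and $\cos\theta_b=m/q$; the compatibility $\theta_a\le\theta_b$ is equivalent to $-m/q\ge m/q$, i.e.\ $m\le 0$, and for $m\in(-q,0)$ the combined expression is $v=(\sqrt{q^2-m^2}\,|x_1|+mx_2)^+$, which is case~(2).

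The routine part is the ODE classification on each sector; the only point that requires attention is the \emph{sign bookkeeping}---choosing the correct branch in $|\nabla v|=q$ so that $v>0$ inside each sector, and lining up the one-sided capillary condition $\partial_{x_2}v=m$ on the appropriate half-line of $\{x_2=0\}$. Once these signs are fixed consistently, the list in \eqref{1}--(3) exhausts all possibilities up to the reflection $x_1\mapsto-x_1$ mentioned in the statement.
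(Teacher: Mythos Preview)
Your proof is correct and follows essentially the same approach as the paper: both reduce to the ODE $g''+g=0$ in polar coordinates and classify the possible positivity sectors on $[0,\pi]$. The paper organizes the case analysis by whether $w(0)>0$ and/or $w(\pi)>0$, while you organize it by the number of sectors; your explicit argument that no sector can have both edges in $(0,\pi)$ (via $\sin(\theta_2-\theta_1)=0$) is a point the paper leaves implicit, but otherwise the two arguments are the same.
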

	\begin{proof}
		Since $v$ is one-homogeneous in $\R^2$, there exists a $0$-homogeneous function $w \colon S^{1}\to \R$ such that
		$$
		v(x)=|x|w\left(\frac{x}{|x|}\right)\quad\mbox{in }\R^2.
		$$
		Furthermore, if we set $(r,\theta)$ as the polar coordinates in $\R^2$, with $r>0, \theta \in S^1$, by the first two conditions in \eqref{el.blow} we get $-w''= w$ in $\{w>0\}^+$ and
		$$
		w'(0) = m\quad\mbox{if }\, 0 \in \{w>0\},\qquad\qquad-w'(\pi) = m  \quad\mbox{if }\, \pi \in \{w>0\}.
		$$
		Now, let us split the classifications in three cases:\\\\
		{{Case 1. }}Suppose that $w(0)>0$, then it must be of the form
		$$
		w(\theta) = \left(w(0)\cos\theta + m\sin \theta\right)^+.
		$$
		Let $\theta^*_0 \in (0,2\pi)$ be the first zero-point of $w$. Necessary, we have $\theta^*_0 \in (0,\pi)$ and so, by the last condition in \eqref{el.blow}, we get
		\be \label{theta*}
		\begin{cases}
			w(\theta^*_0)=0\\
			w'(\theta^*_0)= q
		\end{cases}
		\longrightarrow\quad
		w(0)=\sqrt{q^2-m^2},\,\, \theta^*_0=\begin{cases}
			\arctan\left(-\frac{1}{m}\sqrt{q^2-m^2}\right)
			, & \mbox{if } m<0 \\
			\pi+\arctan\left(-\frac{1}{m}\sqrt{q^2-m^2}\right)
			&  \mbox{if } m>0.
		\end{cases}
		\ee
		and so
		$$
		v(x_1,x_2)=\left(\sqrt{q^2-m^2}(x_1) + m(x_2)\right)^+
		$$
		is a solution. By symmetry, we get that if $w(\pi)>0$ then
		\be\label{theta*2}
		v(x_1,x_2)=\left(-\sqrt{q^2-m^2}x_1 + m x_2\right)^+\!,\quad\mbox{with }\theta^*_\pi=
		\begin{cases}
			\pi + \arctan\left(\frac{1}{m}\sqrt{q^2-m^2}\right)
			, & \mbox{if } m<0 \\
			\arctan\left(\frac{1}{m}\sqrt{q^2-m^2}\right)
			&  \mbox{if } m>0.
		\end{cases}
		\ee
		is a solution too.\\\\
		{{Case 2. }}Suppose that both $w(0)>0$ and  $\phi(\pi)>0$. Then, by \eqref{theta*} and \eqref{theta*2} in Case 1, we get
		$$
		\theta^*_\pi > \theta^*_0 \quad\mbox{if and only if}\quad \beta <0
		$$
		and consequently, only for $m<0$ the function
		$$
		v(x_1,x_2)=\left(\sqrt{q^2-m^2}(x_1) + m(x_2)\right)^+ + \left(-\sqrt{q^2-m^2}(x_1) + m(x_2)\right)^+
		$$
		is solution to \eqref{el.blow}.\\\\
		{{Case 3. }}Suppose that $w(0)=0=w(\pi)$, then  $w(\theta)=B\sin \theta$ and by the last condition in \eqref{el.blow} we get $v(x_1,x_2)=q(x_2)^+$ in $\{x_2\geq 0\}$.
	\end{proof}
	For the sake of completeness the following results are already stated in $\R^d$, for $d\geq 2$, and they will be crucial for the analysis in higher dimensions.
	\begin{lemma}\label{l:global.deg.instable}
		Assume that $m\in(-q,q)$ and $C>0$. Then $v(x) = C(x \cdot e_d)^+,
		$ is not a global minimizer of $J$ in $\R^d$.
	\end{lemma}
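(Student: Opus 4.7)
I will prove non-minimality on an arbitrary ball $B_R$ by exhibiting, in each case, a nonnegative competitor $w$ with $w = v$ on $\partial B_R \cap \{x_d > 0\}$ and $J(w,B_R) < J(v,B_R)$. Since $v > 0$ throughout $B_R^+$ and $v \equiv 0$ on $B_R'$, the baseline energy is immediate:
\[
J(v,B_R) = (C^2 + q^2)\,|B_R^+|.
\]
The plan is to use two complementary one-parameter perturbations, chosen according to whether $C > m$ or $C \le m$.

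For the case $C > m$, I take the additive push-up $w = v + \eps \psi$ with $\psi \in C^\infty_c(B_R)$ nonnegative and not identically zero on $B_R'$. The pointwise bound $w \ge v \ge 0$ is automatic, and since $v > 0$ on $B_R^+$ we have $\{w>0\}\cap B_R^+ = B_R^+$, so the $q^2$-term contributes nothing to $J(w)-J(v)$. Integration by parts, using $\nabla v = Ce_d$ and $\psi|_{\partial B_R}=0$, collapses $\int_{B_R^+}\nabla v \cdot \nabla \psi$ to a boundary integral on $B_R'$, producing
\[
J(w,B_R) - J(v,B_R) = 2\eps(m-C)\int_{B_R'} \psi\,dx' + O(\eps^2),
\]
which is strictly negative for small $\eps>0$.

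For the case $C \le m$, the hypothesis $m < q$ forces $C < q$. In this regime I try the truncated push-down $w = (v - \eps \zeta)^+$ with $\zeta \in C^\infty_c(B_R)$ nonnegative and not identically zero on $B_R'$. Here $w \ge 0$ trivially, $w \equiv 0$ on $B_R'$ so the boundary term in $J$ is unchanged, and $w$ vanishes on the thin layer $\{Cx_d \le \eps\zeta\}$ near $\{x_d=0\}$. A Fubini computation exploiting that $v$ has constant transversal derivative $C$ gives
\[
|\{w=0\}\cap B_R^+| = \frac{\eps}{C}\int_{B_R'}\zeta\,dx' + O(\eps^2),
\]
and combining this with a second integration by parts for the gradient term yields
\[
J(w,B_R) - J(v,B_R) = \frac{C^2 - q^2}{C}\,\eps \int_{B_R'} \zeta\,dx' + O(\eps^2),
\]
which is negative for small $\eps > 0$ since $C < q$.

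Together the two cases exhaust every $C > 0$ consistent with $m \in (-q,q)$, so $v$ cannot be a global minimizer. The only step requiring care is the asymptotic expansion of the measure of the thin layer $\{Cx_d \le \eps\zeta\}$ in the push-down computation and the corresponding expansion of $\int_{\{w>0\}}|\nabla w|^2$; both are routine applications of the implicit function theorem once one notes that the transversal gradient of $v$ is a strictly positive constant, so no cancellation between the $O(\eps)$ and $O(\eps^2)$ terms can occur.
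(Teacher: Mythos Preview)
Your proof is correct, and it takes a genuinely different route from the paper's.

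The paper splits at $C=q$ rather than at $C=m$. For $C<q$ it dismisses the case in one sentence (``an interior perturbation with a vector field pointing inwards''), while for $C\ge q$ it constructs a single \emph{finite} perturbation: it lets $\varphi$ solve $\Delta\varphi=0$ in $B_R^+$ with $\varphi=0$ on $(\partial B_R)^+$ and $\partial_{x_d}\varphi=\gamma$ on $B_R'$, for a suitably chosen $\gamma\in(2(m-C),0)$. Because $\varphi$ is harmonic with this Neumann data, the quadratic term $\int|\nabla\varphi|^2$ collapses exactly to $-\gamma\int_{B_R'}\varphi$, and one gets the closed identity
\[
J(v+\varphi,B_R)-J(v,B_R)=2\Big(m-C-\tfrac{\gamma}{2}\Big)\int_{B_R'}\varphi\,dx'<0,
\]
with no error term at all.

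Your approach, by contrast, uses only compactly supported smooth bumps and a first-order expansion in $\eps$, which is more elementary (no auxiliary boundary value problem to solve) at the price of carrying the $O(\eps^2)$ bookkeeping. Your push-down computation in the case $C\le m$ is also a clean, explicit realization of the ``interior perturbation'' that the paper only gestures at. Both arguments are short; the paper's is slicker in the $C\ge q$ regime because the harmonic choice kills the remainder, while yours is arguably more transparent and entirely self-contained.
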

	\begin{proof}
		By an interior perturbation with a vector field pointing inwards, it is immediate to check that if $C<q$, then $v$ is not a minimizer. We focus on the case $C\ge q$. 
		Suppose by contradiction that $v$ is a global minimizer of $J$ in $\R^d$. Thus, consider $\gamma \in \R$ such that $$
		m-C<\frac{\gamma}{2} <0.
		$$
		Now, let $\varphi$ be the solution to
		$$
		\begin{cases}
		\Delta \varphi =0 & \mbox{in }\{x_d>0\}\cap B_R \\
		\varphi=0 & \mbox{on } \{x_d>0\}\cap \partial B_R\\
		\partial_{x_d} \varphi = \gamma & \mbox{on }\{x_d=0\}\cap B_R ,
		\end{cases}
		$$
		with $R>0$. Since $\gamma <0$, by maximum principle we have $\varphi>0$ in $B_R$ and so
		$$
		J(v + \varphi,B_R) = J(v,B_R) + 2\left(m-C-\frac{\gamma}{2}\right)\int_{B_R\cap H}\varphi \mathrm{d}x' < J(v,B_R) ,
		$$
		in contradiction with the hypothesi of minimality.
	\end{proof}

	\begin{lemma}\label{l:global.deg2}
		Assume that $m\in (-q,0)$. Then, the function
		$$
		v(x)=\left(\sqrt{q^2-m^2}|x \cdot \nu| + m(x \cdot e_d)\right)^+,
		$$
		with $\nu\in \R^d$ a unit vector orthogonal to $e_d$, is not a global minimizer of $J$ in $\R^d$.
	\end{lemma}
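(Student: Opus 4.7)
My plan is to argue by contradiction: assume $v$ is a global minimizer of $J$ in $\R^d$, then exhibit an admissible competitor on a large ball that strictly improves the energy. The starting observation is the decomposition $v = h_1 + h_2$ on $\{x_d\geq 0\}$, where $h_1 := h_{q,m,\nu}$ and $h_2 := h_{q,m,-\nu}$ are two half-plane solutions in the sense of \cref{def:half-plane}. Because $m<0$ forces $\{h_1>0\}\cap\{h_2>0\}\subset \{x_d<0\}$, the two positivity sets are disjoint in $\{x_d\geq 0\}$; consequently $\nabla h_1\cdot\nabla h_2\equiv 0$ a.e.\ and the functional splits: $J(v,B_R) = J(h_1,B_R) + J(h_2,B_R) = 2\,J(h_1,B_R)$ for every $R>0$.

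The key construction is a ``sheet-separation'' competitor. For small $t>0$ set $\tilde v_t(x) := h_1(x - t\nu) + h_2(x + t\nu)$, obtained from $v$ by translating its two sheets apart by $\pm t\nu$. Translates of $h_j$ are again half-plane solutions with the same unit gradient direction, and their supports remain disjoint on $\{x_d\geq 0\}$. Working in cylindrical coordinates $(\xi,x_d,z)$ aligned with $\nu,e_d$ and the transverse $\R^{d-2}$, an explicit first-order expansion yields
\[ J(\tilde v_t, B_R) - J(v, B_R) = -\,c_{q,m,d}\,t\,R^{d-1} + O(t^2), \]
with a strictly positive constant $c_{q,m,d}$. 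The strict sign is the crux: the bulk-plus-gradient saving in each shifted sheet, of order $2q^2\cdot(\text{strip volume})$, strictly outweighs the boundary-trace cost incurred when the sheets retreat from $\{x_d=0\}$, of order $2|m|\sqrt{q^2-m^2}\cdot(\text{strip trace})$. A comparison of the two relevant dimensional constants --- the ``angular bulk'' factor and the ``boundary trace'' factor, both equal to $\nu_{d-2}/d$ --- combined with the strict inequality $q>|m|$, produces $c_{q,m,d}>0$.

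Since $\tilde v_t$ does not match $v$ on $(\partial B_R)^+$, the final step is to glue. Pick a smooth radial cutoff $\eta_R$ equal to $1$ on $B_{R-1}$ and $0$ on $\partial B_R$, with $|\eta_R'|\leq 2$, and set $w_t := \eta_R\,\tilde v_t + (1-\eta_R)\,v$. Then $w_t\geq 0$, $w_t\in H^1$, and $w_t = v$ on $(\partial B_R)^+$, so $w_t$ is admissible. Using $|\tilde v_t - v|\leq qt$ (from the Lipschitz continuity of the $h_j$) and the observation that $\nabla(\tilde v_t - v)$ is supported on two ``strips'' of thickness $t$ (the symmetric differences of the positivity sets before and after shifting), while $\tilde v_t - v$ is constant on the common part of each sheet, one bounds the annular transition contribution by $C_1\,t^2 R^{d-1} + C_2\, t R^{d-2}$. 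Hence
\[ J(w_t, B_R) - J(v, B_R) \leq -\,c_{q,m,d}\,t R^{d-1} + C_1\,t^2 R^{d-1} + C_2\,t R^{d-2}, \]
which is strictly negative for $t$ small and $R$ large, contradicting the global minimality of $v$.

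The main technical obstacle is sharpening the annular gluing cost, since the naive Cauchy--Schwarz bound for the cross term $\int 2\nabla v\cdot\nabla(\eta_R(\tilde v_t - v))$ produces an $O(tR^{d-1})$ contribution of a priori ambiguous sign, comparable to the leading gain. The resolution is to integrate by parts and invoke the PDEs satisfied by the half-plane solutions $h_j$ --- namely $\Delta h_j = 0$ on $\{h_j>0\}$ and $e_d\cdot\nabla h_j = m$ on $\{h_j>0\}\cap\{x_d=0\}$ --- which together show that this cross term contributes only at order $O(tR^{d-2})$, thereby preserving the strict negativity of the leading term.
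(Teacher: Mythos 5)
Your approach has a genuine gap at the critical step, and it stems from overlooking that $v$ is a critical point of $J$ (it appears in the list of solutions to the Euler--Lagrange system \eqref{el.blow} in \cref{l:class2}). Consequently, for \emph{any} boundary-matched competitor the first variation of $J$ at $v$ must vanish, so a strictly negative $O(tR^{d-1})$ decrease after gluing is impossible; the defect in your computation is precisely where you claim the cross term is reduced to $O(tR^{d-2})$.

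Concretely: a direct divergence-theorem computation on each sheet of $\{v>0\}\cap B_R^+$ gives
\[
J(\tilde v_t,B_R)-J(v,B_R)=-2(q+m)\sqrt{q^2-m^2}\,\omega_{d-1}\,t\,R^{d-1}+o(tR^{d-1}),
\]
and since $m\in(-q,0)$ one has $q+m>0$, so the ``sliding-apart'' gain is indeed negative at first order, confirming that part of your computation (your $c_{q,m,d}=2(q+m)\sqrt{q^2-m^2}\,\omega_{d-1}$). But when you estimate the gluing correction $J(w_t,B_R)-J(\tilde v_t,B_R)$ and integrate by parts the cross term $2\int\nabla\tilde v_t\cdot\nabla\varphi$ with $\varphi=(1-\eta_R)(v-\tilde v_t)$, the resulting boundary integral is not only over $\partial\{\tilde v_t>0\}$ and $\{x_d=0\}$ (where the PDEs help and everything is $O(tR^{d-2})$); it also has a piece over $\partial B_R\cap\{x_d>0\}$, where $\eta_R=0$ so $\varphi=v-\tilde v_t\approx t\sqrt{q^2-m^2}$ is \emph{not} small and is not localized on a thin strip. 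Evaluating that piece (again by the divergence theorem applied to $\nabla h_j$ on $\{h_j>0\}\cap B_R^+$, using $\Delta h_j=0$, $|\nabla h_j|=q$ on the free boundary, $e_d\cdot\nabla h_j=m$ on $\{x_d=0\}$) gives
\[
2\int_{\partial B_R^+\cap\{\tilde v_t>0\}}(v-\tilde v_t)\,(\hat r\cdot\nabla\tilde v_t)\,d\HH^{d-1}
=+2(q+m)\sqrt{q^2-m^2}\,\omega_{d-1}\,t\,R^{d-1}+o(tR^{d-1}),
\]
which exactly cancels your leading gain. After cancellation you are left with terms of order $O(t^2R^{d-1})$ and $O(tR^{d-2})$ of indeterminate sign (the $|\nabla\varphi|^2$ gluing cost alone is a nonnegative $O(tR^{d-2})$ that dominates any $O(t^2R^{d-1})$ second-order gain in the regime $t\to0$, $R$ fixed), so the argument does not close. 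The paper avoids this altogether: it works on a fixed rectangle $\mathcal R$ using an explicit boundary-matched family $v_t$ (no gluing is needed because the lateral sides carry the Dirichlet constraint and the bottom side $\{x_d=0\}$ is a free Neumann boundary), verifies $f'(0)=0$, and produces the contradiction from the \emph{second} variation $f''(0)<0$. To repair your approach you would have to carry out the second-order expansion and control the competing $O(tR^{d-2})$ gluing errors against the $O(t^2R^{d-1})$ second-order gain, which does not work in any simple $t,R$ scaling regime; switching to the paper's bounded-domain, boundary-matched competitor is the natural fix.
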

	\begin{proof}
		Since $v$ is constant with respect to the variables $(x_3,\dots, x_d)$, it is not restrictive to prove the main result in $\R^d$ with $d=2$ and $\nu=e_1$. Suppose by contradiction that $v$ is a global minimizer of $J$ in $\R^2$.
		Consider the rectangle
		$$
		\mathcal R := [-1,1] \times \left[0,-\frac{1}{m}\sqrt{q^2-m^2}\,\right],
		$$
		and we notice that $v: \mathcal R \to\R$ is symmetric with respect to  $x_1$, that is: 
		$$v(x_1,x_2)=v(-x_1,x_2).$$ 
		For any $t\ge 0$, we define the function $v_t: \mathcal R \to\R$ as
		$$
		v_t(x_1,x_2):=\left(\frac{1}{1+t}\sqrt{q^2-m^2}(|x_1|+t)  + mx_2\right)^+,\quad\mbox{for }t\geq 0.
		$$
		By construction, we have that $v_t\equiv v$ on $\partial  \mathcal R \cap\{x_2>0\}$. Indeed, 
		$$
		\begin{cases}
		v\equiv v_t \equiv 0  & \mbox{on }\  [-1,1]\times \left\{-\frac{1}{m}\sqrt{q^2-m^2}\right\}\,,\\
		v \equiv v_t & \mbox{on }\  \{\pm 1\}\times \left[0,-\frac{1}{m}\sqrt{q^2-m^2}\,\right]\,.
		\end{cases}
		$$
		Since $v_0 = v$, the minimality of $v$ implies that the function
		$$
		f(t):= J(v_t, \mathcal R )\,,
		$$
		has a local minimum at $t=0$. On the other hand, since 
		\begin{align*}
		|\nabla v_t|^2&=q^2-2t(q^2-m^2)+3t^2(q^2-m^2)+o(t^2),\\
		|\{v_t>0\}\cap \mathcal R|&=\frac{\sqrt{q^2-m^2}}{|m|}(1+t-t^2)+o(t^2),\\
		2m\int_{-1}^1v_t(x_1)\,dx_1&=-2|m|\sqrt{q^2-m^2}\Big(1+t-t^2\Big)+o(t^2),
		\end{align*}
		we get that 
		$$f'(0)=0\quad\text{and}\quad f''(0)=2\frac{\sqrt{q^2-m^2}}{|m|}(m^2-q^2)<0,$$
		in contradiction with the minimality of $v$.
	\end{proof}

	\section{Regularity of $\mathrm{Reg}(u)$}\label{s:visco}
	In this section we prove that in a neighborhood of any point $x_0\in{\rm Reg}(u)\subset\partial\{u>0\}\cap\{x_d=0\}$, the  free boundary $\partial\{u>0\}\cap\{x_d\ge 0\}$ is locally a the graph of a $C^{1,\alpha}$ smooth function.
	
	\subsection{Proof of \cref{t:viscosity}} In this section we prove that if $u$ is a minimizer, then it satisfies \eqref{interno} and \eqref{FB} in the sense of \cref{def:solutionnew}. As explained in \cref{sub:open}, the set $\{u>0\}$ is open and the validity of \eqref{interno} weakly in $H^1$, and so, in the classical sense.  
	Therefore, it remains to prove the validity of \eqref{FB}.\medskip

	Suppose now by contradiction that $u$ is touched from below by $\psi^+$ at $x_0 \in B_1'\cap \partial\{u>0\}$ and both the $(i), (ii)$ in \eqref{terza} are not satisfied. Without loss of generality, we can assume that $A(x_0)=\mathrm{Id}, a(x_0)=1$ and we denote $q:=\sqrt{Q(x_0)}, \beta=\beta(x_0)$ for notational convenience.
	Consider now the blow-up sequences of $u$ and $\psi$ centered at the contact point $x_0$, that is
	$$
	u_{x_0, r}(x) = \frac{1}{r}u(x_0 + r x) \quad\mbox{and}\quad \psi_{x_0, r}(x) = \frac{1}{r}\psi(x_0+r x),
	$$
	with $r>0$. Then, up to a subsequence, they converge respectively to some $u_0$ and $\psi_{0}$, uniformly on every
	compact subset of $\{x_d\ge 0\}$. By Proposition \ref{p:limit} and Corollary \ref{cor:weiss.cor} we know that $ u_0$ is a one-homogeneous global minimizer of the functional $\mathcal{F}_{x_0}$. Moreover, since $\psi \in C^2$, we get that $\psi_0(x) = \alpha (x \cdot \nu)$, where $\alpha:=|\nabla \psi|(x_0)$. Recall that since $\psi$ contradicts the condition \rm(VS3) of \cref{def:solutionnew}, we have that
	\be \label{absurd1}
	\alpha \nu \cdot e_d > \beta \qquad \text{ and } \qquad \alpha |1-\nu \cdot e_d| >\sqrt{q^2-\beta^2}.
	\ee
	Thanks to the homogeneity of the blow-ups, up to a dimension reduction argument, it is not restrictive to assume that
	\be\label{viscous:cond1}
	\partial \{ u_0 >0 \} \cap \partial \{ \psi_0 >0 \} \cap \Big(\mathbb{S}^{d-1} \cap \{x_d\ge 0\}\Big) = \emptyset.
	\ee
	Indeed, if there is a second contact point $y_0\in\{x_d=0\}\cap\partial\{u_0>0\}$, by taking the blow-up of $u_0$ at $y_0$, we obtain a function which is invariant in the direction of $y_0-x_0$, and so, it is a minimizer of $\mathcal F$ in dimension $d-1$ which still satisfies \eqref{absurd1}. If \eqref{viscous:cond1} does not hold, then we iterate this procedure up to dimension two. Finally, in dimension two the blow-up limits were classified in \cref{s:global2} and so, we know that \eqref{absurd1} cannot happen. This proves that we can assume \eqref{viscous:cond1}.

	Now, as a consequence of \eqref{viscous:cond1} and the continuity of $u_0$ and $\psi_0$, we get that
	\be\label{viscous:cond2}
	\psi_0 >  u_0 + \delta \quad \text{on } \{ x_d = 0\} \cap \mathbb{S}^{d-1},
	\ee
	for some small constant $\delta>0$.
	
	Thanks to conditions \eqref{viscous:cond1} and \eqref{viscous:cond2} there exists a small parameter $\eta>0$ such that
	\be\label{viscous:wrongPlaneCond1}
	(\alpha - \eta)( \nu \cdot e_d) > \beta, \qquad (\alpha - \eta) |1-\nu \cdot e_d| >\sqrt{q^2-\beta^2}
	\ee
	and
	\be\label{viscous:wrongPlaneCond2}
	h(x) := (\alpha - \eta) (x\cdot\nu + \eta) \leq  u_0\quad \text{ on } \mathbb{S}^{d-1} \cap \{x_d\geq 0\}.
	\ee
	Now we proceed to show that \eqref{viscous:wrongPlaneCond1}-\eqref{viscous:wrongPlaneCond2} lead to a contradiction.
	
	To begin with, let $\Lambda \in (q, \alpha - \eta)$, and notice that the positive part $h_+$ is the unique minimizer of the functional
	\[
	\widetilde{\mathcal{F}}(\phi):=\int_{B_1^+}{|\nabla \phi|^2 + \Lambda \ind_{\{\phi>0\}}\mathrm{d}x} + \int_{B_1'}2 \beta \phi\,\mathrm{d}x'
	\]
	over all $\phi \in H^1(B_1^+)$, with $\phi=h_+ \text{ on }\partial B_1 \cap \{ x_d > 0 \}$ and $\phi\le h_+ \text{ on } B_1^+$. This property follows by slicing $h_+$ with two dimensional planes containing its gradient, and then using a viscous sliding argument in two dimensions. Indeed, since $h$ is invariant in all the directions orthogonal to $\nu$ and $e_d$, by slicing $h_+$ with two dimensional planes spanned by $\nu$ and $e_d$, we only need to show this claim in dimension two. Now, when $d=2$, we can argue by contradiction as follows. Suppose that the minimizer $\phi$ of the above variational problem is different from $h_+$. Let now $t_0$ be the smallest $t>0$ such that $h_t(x):=h_+(x-t\nu)$ remains below $\phi$. If $t_0\neq0$, then $h_{t_0}$ touches $\phi$ from below at a free boundary point, which is impossible by the classification of the two-dimensional blow-ups from \cref{s:global2}.

	Now, let us consider the functions
	\[
	v_1 :=  u_0 \vee h_+ \qquad \text{and} \qquad v_2 := u_0 \wedge h_+,
	\]
	so that, by \eqref{viscous:wrongPlaneCond2}, $v_1 = u_0$ and $v_2 = h$ on $\partial B_1 \cap \{ x_d > 0 \}$, respectively. Hence, since $\widetilde u_0$ minimizes $\mathcal F$ and since $h_+$ minimizes $\widetilde{\mathcal F}$ (in the sense explained above), we get
	\be\label{viscous:MinimalCondition}
	\mathcal{F}_{x_0}\left( \tilde u_0 \right) \le \mathcal{F}_{x_0}\left( v_1 \right) \qquad \text{and} \qquad \widetilde{\mathcal{F}}(h_+) \le  \widetilde{\mathcal{F}}(v_2).
	\ee
	However, since
	\[
	\mathcal{F}_{x_0}\left( v_1 \right) + {\mathcal{F}_{x_0}}(v_2) = \mathcal{F}_{x_0}\left( \tilde u_0 \right) + {\mathcal{F}_{x_0}}(h),
	\]
	and using that
	$\Lambda \ge q$ and $|\{h>0\}|\ge |\{v_2>0\}|$, we get
	\[
	\mathcal{F}_{x_0}\left( v_1 \right) + \widetilde{\mathcal{F}}(v_2) \le \mathcal{F}_{x_0}\left( \tilde u_0 \right) + \widetilde{\mathcal{F}}(h),
	\]
	so that from \eqref{viscous:MinimalCondition} we obtain 
	\[
	\widetilde{\mathcal{F}}(h_+) =  \widetilde{\mathcal{F}}(v_2),
	\]
	which leads to a contradiction since $h_+$ is the unique minimizer of $\widetilde{\mathcal{F}}$.
	When $\psi_+$ touches $u$ from above, the proof is analogous. \qed
	
	\subsection{Harnack type inequality} 
	We start by proving a Harnack type inequality for viscosity solutions to problem \eqref{FB} in the case where the variable coefficients $A, Q$ and $\beta$ have small oscillation. Indeed, in view of assumptions $(\mathcal H1), (\mathcal H2), (\mathcal H3)$, up to rescaling the problem near a regular point of $B_1'\cap \partial\{u>0\}$, such requirement it is fully satisfied. More precisely, we assume that\vspace{0.2cm}
	\begin{enumerate}
		\item[$\qquad(\mathcal I1)$] $A, Q$ and $\beta$ satisfy the assumptions $(\mathcal H1), (\mathcal H2), (\mathcal H3)$ and $$A(0)=\text{\rm Id}\qquad\text{and}\qquad |\beta|<a\sqrt{Q}\quad\text{on}\quad B_1'\,;$$
		\item[$\qquad(\mathcal I2)$] for every couple of indices $1\le i,j\le d$, we have $
		\norm{a_{ij}-\delta_{ij}}{L^\infty(B_1\cap \{x_d\geq 0\})}\leq \eps^{\frac{2}{\sigma}}$, and
		$$ \norm{Q-Q(0)}{L^\infty(B_1\cap \{x_d\geq 0\})}\leq Q(0)\eps^{2/\sigma}\qquad\text{and}\qquad
		\norm{\beta - \beta(0)}{L^\infty(B_1')}\leq |\beta(0)|\eps^{2/\sigma},
		$$
		where $\sigma>0$ is the constant of \cref{c:schauder.easy}, depending only on $d$ and the constants in $(\mathcal{H}_1)$.
	\end{enumerate}
	\begin{proposition}\label{p:holder}
		There exists $\overline{\eps}>0$ such that the following holds.
		Let $0\in \partial\{u>0\}$ and $u$ be a viscosity solution of \eqref{e:interno} and \eqref{FB}, satisfying $(\mathcal I1), (\mathcal I2)$. Suppose that, for some point $x_0 \in (B_1^+\cup B_1')\cap \overline{\{u>0\}}$
		\be\label{flat_2}
		h_{q,m,e_1}(x+t_0 e_1) \leq u(x) \leq h_{q,m,e_1}(x+s_0 e_1)\quad \text{in $B_r^+(x_0)\cup B_r'(x_0)$,}
		\ee
		with $q:=\sqrt{Q(0)}, m:=\beta(0)$ and $s_0 -t_0 \leq  \bar\eps r$. Then, we get
		\be\label{flat_2_improved} h_{q,m,e_1}(x+t_1 e_1)\leq u(x)  \leq h_{q,m,e_1}(x+s_1 e_1) \quad \text{ in $B_{r/20}^+(x_0)\cup B_{r/20}'(x_0)$,}
		\ee
		with $$t_0 \leq t_1 \leq s_1 \leq s_0, \quad  s_1 - t_1= (1-c)(s_0-t_0) ,$$ and $c\in (0,1)$ a universal constant.
	\end{proposition}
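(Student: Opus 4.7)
The plan is to follow the viscosity barrier / sliding scheme of De Silva-type Harnack inequalities for one-phase problems, adapted to the present capillarity setting where the free boundary meets the fixed hyperplane $\{x_d=0\}$. The argument splits into a dichotomy step, a barrier construction, and a sliding step that uses the full viscosity framework of \cref{def:solutionnew}, in particular condition $\rm(VS3)$ at contact-line points.

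\textbf{Step 1: Dichotomy.} Fix a reference point $\bar x \in B_{r/2}^+(x_0)$ at distance of order $r$ from both the free boundary of $h_{q,m,e_1}(\,\cdot\,+t_0 e_1)$ and from the hyperplane $\{x_d=0\}$. On the open set where $h_{q,m,e_1}(\,\cdot\,+t_0 e_1)>0$ the function $w:=u-h_{q,m,e_1}(\,\cdot\,+t_0 e_1)$ is non-negative; moreover, by $\rm(VS1)$ together with the fact that $h_{q,m,e_1}$ is affine on its positivity set, $w$ is $A$-harmonic up to an $L^\infty$ forcing of size $O(\eps^{2/\sigma})$ coming from $\mathrm{div}(A(x)\nabla h_{q,m,e_1})$ (controlled via $(\mathcal I2)$). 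Classical interior Harnack for uniformly elliptic divergence-form operators then yields the dichotomy: either
\begin{equation*}
u(\bar x) - h_{q,m,e_1}(\bar x + t_0 e_1) \geq c_0(s_0-t_0),
\end{equation*}
or, by the same argument applied to $h_{q,m,e_1}(\,\cdot\,+s_0 e_1)-u$,
\begin{equation*}
h_{q,m,e_1}(\bar x + s_0 e_1)-u(\bar x) \geq c_0(s_0-t_0),
\end{equation*}
for some universal $c_0\in(0,1)$. I treat only the first alternative (``improvement from below''); the second is symmetric and gives an improvement from above.

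\textbf{Step 2: Barrier.} I construct a perturbed half-plane solution of the form
\begin{equation*}
\varphi_\tau(x) := h_{q,m,e_1}\bigl(x + (t_0 + \tau\,\Psi(x))\, e_1\bigr),
\end{equation*}
where $\Psi \geq 0$ is a smooth cutoff supported in $B_{3r/4}^+\setminus\overline{B_{r/40}^+(\bar x)}$, with $\Psi\ge 1$ on $\overline{B_{r/20}^+(x_0)}$ and $\Psi\equiv 0$ on $\partial B_{3r/4}$. The function $\Psi$ is obtained by gluing a radially symmetric $A(0)$-subharmonic bump centered at $\bar x$ (of truncated-power type) with a boundary-layer correction near $\{x_d=0\}$ so that $e_d\cdot A\nabla\Psi$ has a definite sign along the free boundary on $\{x_d=0\}$. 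Taylor-expanding in $\tau$ and using $(\mathcal I2)$ together with the Schauder-type estimates of the appendix, for $\tau$ of order $s_0-t_0$ and $\bar\eps$ small enough, the function $\varphi_\tau$ satisfies the strict inequalities
\begin{align*}
\mathrm{div}(A\nabla\varphi_\tau)&>0 \quad\text{in}\quad \{\varphi_\tau>0\}\cap\{\Psi>0\},\\
|A^{\sfrac12}\nabla\varphi_\tau|&>\sqrt{Q} \quad\text{on}\quad \partial\{\varphi_\tau>0\}\cap\{x_d>0\},\\
|A^{\sfrac12}\nabla_{x'}\varphi_\tau|>\sqrt{Q-\beta^2/a^2}\ &\text{and}\ A\nabla\varphi_\tau\cdot e_d>\beta \quad\text{on}\quad \partial\{\varphi_\tau>0\}\cap\{x_d=0\}.
\end{align*}
The capillarity hypothesis $|\beta|<a\sqrt{Q}$ from $(\mathcal I1)$ is precisely what allows both conditions in the last line to be made strict simultaneously.

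\textbf{Step 3: Sliding.} Starting from $\tau=0$, where $\varphi_0=h_{q,m,e_1}(\,\cdot\,+t_0 e_1)\leq u$ by hypothesis, I let $\tau$ grow up to the first value $\tau_*$ at which $\varphi_{\tau_*}$ touches $u$ from below at a point $x_*\in\overline{B_{3r/4}^+\cup B_{3r/4}'}$. The three strict inequalities of Step~2, combined with $\rm(VS1)$, $\rm(VS2)$, $\rm(VS3)$ of \cref{def:solutionnew}, rule out contact in the interior of $\{u>0\}$, at free-boundary points in $\{x_d>0\}$, and at contact-line points on $\{x_d=0\}$, respectively; hence $x_*\in\partial B_{3r/4}$. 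The pointwise lower bound at $\bar x$ provided by Step~1 then forces $\tau_*\gtrsim c_0(s_0-t_0)$, because otherwise the barrier would satisfy $\varphi_{\tau_*}(\bar x)<u(\bar x)-c_0(s_0-t_0)/2$ and a further comparison in a small interior ball around $\bar x$ would give contact before $\partial B_{3r/4}$ is reached. Restricting the resulting inequality $u\ge\varphi_{\tau_*}$ to $B_{r/20}(x_0)$, where $\Psi\ge 1$, produces \eqref{flat_2_improved} with $t_1:=t_0+c(s_0-t_0)$ and $s_1:=s_0$ for a universal $c\in(0,1)$.

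\textbf{Main obstacle.} The only genuinely new difficulty with respect to the classical Alt--Caffarelli Harnack inequality is the treatment of the barrier at contact-line points $\partial\{u>0\}\cap\{x_d=0\}$: the cutoff $\Psi$ must be engineered so that $\varphi_\tau$ violates both alternatives $\rm(i)$ and $\rm(ii)$ of $\rm(VS3)$ strictly and simultaneously along its free boundary on $\{x_d=0\}$. This is where the viscosity characterization of minimizers supplied by \cref{t:viscosity} enters in an essential way; once the strict simultaneous violation of $\rm(VS3)$ is secured, the sliding/comparison argument proceeds as in the classical De Silva scheme.
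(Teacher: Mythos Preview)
Your overall strategy---dichotomy via interior Harnack, barrier construction, then sliding and using the viscosity conditions (VS1)--(VS3) to exclude contact---is exactly the route the paper takes (the one-step improvement is packaged as \cref{l:fbharnack}, from which \cref{p:holder} follows). However, the barrier you describe and the sliding argument in Step~3 contain a concrete gap.

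You take $\Psi$ supported in the annulus $B_{3r/4}\setminus\overline{B_{r/40}(\bar x)}$, so $\Psi\equiv 0$ on $\overline{B_{r/40}(\bar x)}$. Then $\varphi_\tau(\bar x)=h_{q,m,e_1}(\bar x+t_0e_1)$ for every $\tau$: the barrier is frozen at the very point where you hold the Harnack lower bound. There is therefore no mechanism by which $u(\bar x)\ge h_{q,m,e_1}(\bar x+t_0e_1)+c_0(s_0-t_0)$ could force $\tau_*\gtrsim s_0-t_0$; on both the inner ball and $\partial B_{3r/4}$ the barrier equals $h_{q,m,e_1}(\cdot+t_0e_1)$ independently of $\tau$, so whether contact occurs there is decided already at $\tau=0$. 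Your sentence ``otherwise the barrier would satisfy $\varphi_{\tau_*}(\bar x)<u(\bar x)-c_0(s_0-t_0)/2$'' is vacuously true for all $\tau$ and does not constrain $\tau_*$.

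The fix---and this is precisely what the paper does---is to make the barrier grow on the inner ball as well. The paper uses the additive family $v_t=p_A-K\eps^2+c_0\eps(w_A-1)+c_0\eps t$, with $w$ radial, equal to $1$ on $\overline{B_{r/2}(\bar x)}$ and $0$ on $\partial B_R(\bar x)$; the sliding parameter $t$ is a constant shift. Touching on $\partial B_R(\bar x)$ is ruled out because $v_t<p\le u$ there for $t<1$; touching in the annulus is ruled out by the strict viscosity inequalities; hence touching is forced into the inner ball, where $v_t=p_A+c_0\eps t-K\eps^2$ and the Harnack bound $u\ge p_A+c_0\eps$ yields the contradiction for $\bar t<1$. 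In your multiplicative formulation the analogue is to take $\Psi$ constant (equal to its maximum) on the inner ball, not zero.

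A second point: your ``boundary-layer correction near $\{x_d=0\}$'' is unnecessary. The paper chooses $\bar x$ on the bisector of the wedge $\{h_{q,m,e_1}>0\}\cap\{x_d>0\}$, equidistant from $\{x_d=0\}$ and from $\partial\{h_{q,m,e_1}>0\}$. With this choice the purely radial $w$ has $\nabla w$ pointing toward $\bar x$, and on the thin strip where $\partial\{v_t>0\}$ lies one checks that the radial direction $\nu_x$ has uniformly positive component along each of $\xi_1=\nabla p/q$, $\xi_2=e_d$ and $\xi_3=e_1$. A single radial barrier therefore delivers simultaneously the strict inequalities for $|A^{1/2}\nabla v_t|$, for $e_d\cdot A\nabla v_t$, and for $|A^{1/2}\nabla_{x'}v_t|$, which is exactly what is needed to invoke (VS2) and both alternatives of (VS3).
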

We notice that by iterating \cref{p:holder} we obtain the main compactness argument used in the linearization procedure near regular points. We set
	$$\tilde u(x):= \frac{1}{\sqrt{q^2 - m^2 }}
	\frac{u(x)-h_{q,m,e_1}(x)}{\eps}\quad\mbox{in }(B_1^+\cup B_1') \cap \overline{\{u>0\}}.$$
	By iterating the previous result we get the following corollary
	\begin{corollary}\label{cor:AA} Let $0 \in \partial\{u>0\}$ and $u$ be a viscosity solution in $B_1$ (see \cref{def:solutionnew}) satisfying $(\mathcal I1), (\mathcal I2)$. Suppose that
		\be\label{flat*}
		h_{q,m,e_1}(x-\eps e_1) \leq u(x) \leq h_{q,m,e_1}(x+\eps e_1)\quad \text{in}\quad B_1^+\cup B_1',
		\ee
		for some $\eps>0$, with $q:=\sqrt{Q(0)}, m:=\beta(0)$. Then, there exists $\bar \eps>0$ small universal and depending on $q$ and $m$, such that if $\eps \leq \bar \eps$ then $\tilde u$  has a universal H\"older modulus of continuity at $x_0 \in B_{1/2}$ outside a ball of radius $r_\eps,$ with $r_\eps \to 0$ as $\eps \to 0.$
	\end{corollary}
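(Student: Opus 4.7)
The plan is to iterate Proposition~\ref{p:holder} at every point $x_0\in B_{\sfrac12}\cap \overline{\{u>0\}}$, using the global flatness hypothesis \eqref{flat*} as the base of the induction. The key point is the scale invariance of Proposition~\ref{p:holder}: flatness at scale $r_k$ with oscillation $s_k-t_k$ implies flatness at the smaller scale $r_{k+1}=r_k/20$ with oscillation $(1-c)(s_k-t_k)$. Since the rate $(1-c)$ and the radius reduction $1/20$ are both universal, a dyadic iteration will produce a H\"older decay with exponent $\alpha=\log(1/(1-c))/\log 20$.

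Concretely, I would fix $x_0\in B_{\sfrac12}\cap \overline{\{u>0\}}$ and set $r_0=\sfrac14$, $r_k=20^{-k}r_0$. Starting from the base flatness \eqref{flat*} at scale $r_0$ with $s_0=\eps$, $t_0=-\eps$, I apply Proposition~\ref{p:holder} recursively around $x_0$: as long as $2\eps(1-c)^k = s_k-t_k\le \bar\eps\, r_k$, the iteration yields a sequence $t_k\le t_{k+1}\le s_{k+1}\le s_k$ with
$$
h_{q,m,e_1}(x+t_k e_1)\le u(x)\le h_{q,m,e_1}(x+s_k e_1)\ \ \text{in}\ \ B_{r_k}(x_0)\cup B_{r_k}'(x_0),
$$
and $s_k-t_k=2\eps(1-c)^k$. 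Defining
$$
k_\eps :=\max\left\{k\in\N \, :\, 2\eps(1-c)^k\le \bar\eps\, 20^{-k}r_0 \right\}\sim \log(1/\eps),
$$
and $r_\eps := r_{k_\eps}$, one has $r_\eps\to 0$ as $\eps\to 0$. Dividing the chain of inequalities by $\eps\sqrt{q^2-m^2}$ and recalling the definition of $\tilde u$ gives
$$
\sup_{B_{r_k}(x_0)\cap \overline{\{u>0\}}} \tilde u \,-\, \inf_{B_{r_k}(x_0)\cap \overline{\{u>0\}}} \tilde u \,\le\, C(1-c)^k\qquad\text{for}\qquad 0\le k\le k_\eps,
$$
and a standard interpolation between consecutive dyadic annuli produces the universal H\"older estimate
$$
|\tilde u(x)-\tilde u(y)|\le C |x-y|^\alpha
$$
for all points $x,y\in \overline{\{u>0\}}$ lying in $B_{\sfrac12}\setminus B_{r_\eps}(x_0)$.

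The main obstacle is verifying that Proposition~\ref{p:holder} can actually be applied at every nested scale around an arbitrary base point $x_0\in B_{\sfrac12}$, not just at the origin. This is delicate because hypothesis $(\mathcal I1)$ requires $A(0)=\text{\rm Id}$, while after translating to a generic $x_0$ the relevant matrix at the new origin becomes $A(x_0)$, and the half-plane solution is built with $q=\sqrt{Q(0)}$ and $m=\beta(0)$ rather than $\sqrt{Q(x_0)}$ and $\beta(x_0)$. Both discrepancies are however controlled by $\eps^{2/\sigma}$ thanks to the H\"older continuity condition $(\mathcal I2)$, and since $\eps^{2/\sigma}\ll \eps$ they can be absorbed into the flatness parameters $s_k,t_k$ along the iteration by means of a preliminary change of variables of the type $T_{x_0}$ introduced in \eqref{T}, which normalizes the matrix at each step. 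Once this uniform applicability of Proposition~\ref{p:holder} is secured, the dyadic iteration can be carried out uniformly in $x_0\in B_{\sfrac12}$, which yields the desired universal H\"older modulus of continuity for $\tilde u$ outside the ball of radius $r_\eps$.
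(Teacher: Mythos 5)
Your proposal is correct and takes essentially the same approach as the paper: the paper itself states that the corollary follows ``by iterating the previous result'' (Proposition~\ref{p:holder}), and your dyadic iteration with termination index $k_\eps$ and radius $r_\eps=20^{-k_\eps}$ is the standard fleshing-out of that claim. Your added discussion about handling base points $x_0\ne 0$ is a reasonable clarification, although note that Proposition~\ref{p:holder} is already stated for an arbitrary point $x_0\in(B_1^+\cup B_1')\cap\overline{\{u>0\}}$ while keeping the normalization $A(0)=\mathrm{Id}$ and $q=\sqrt{Q(0)}$, $m=\beta(0)$ fixed at the origin, so no additional change of variables of the form $T_{x_0}$ is actually required at each step; the hypothesis $(\mathcal I2)$ controls the coefficient oscillation uniformly on $B_1$ and is stable under the shrinking of scales.
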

	Finally, we can state the main ingredient in the proof of \cref{p:holder}.
	\begin{lemma}\label{l:fbharnack}
		Let $0\in \partial\{u>0\}, \eps>0$ and $u$ be a viscosity solution in the sense of Definition \ref{def:solutionnew}, satisfying $(\mathcal I1), (\mathcal I2)$. Assume that,
		\be\label{flat_harnack2}  h_{q,m,e_1}(x+ s e_1)\leq u(x) \leq h_{q,m,e_1}(x+(s+\eps)e_1) \quad \text{in $B_1^+\cup B_1'$,}
		\ee
		with
		$$
		|s| < s_0, q=\sqrt{Q(0)}, m=\beta(0),
		$$
		with $s_0$ depending only on $q$ and $m$.\\
		Then, there exists $\bar \eps>0$, such that if $\eps \in (0,\bar \eps]$ then at least one of the following holds true:
		\be\label{flat_harnack3}  h_{q,m,e_1}(x+(s +C\eps)e_1) \leq u(x) \quad \text{in $B_{1/2}^+\cup B_{1/2}'$,}
		\ee
		or
		$$
		u(x) \leq h_{q,m,e_1}\left(x + (s +(1-C)\eps)e_1\right) \quad \text{in $B_{1/2}^+\cup B_{1/2}'$,}
		$$
		for some $C>0$ small universal.
	\end{lemma}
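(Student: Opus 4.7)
The plan is to follow the De Silva--Savin scheme for Harnack-type inequalities for flat viscosity solutions of one-phase Bernoulli problems, adapted to the mixed free boundary/Neumann setting of \eqref{e:interno}--\eqref{FB}. First, using $|s|\le s_0$ and the smallness $(\mathcal I 2)$, I translate by $-se_1$ to reduce (after absorbing $O(s\eps^{2/\sigma})$ perturbations into the error) to the case $s=0$, so
\[
h(x)\le u(x)\le h(x+\eps e_1)\quad\text{in}\quad B_1^+\cup B_1',
\]
with $h:=h_{q,m,e_1}$ and $q=\sqrt{Q(0)}$, $m=\beta(0)$. I then fix an interior reference point $\bar x\in B_{1/2}^+$ (say $\bar x=\tfrac1{5}e_1+c_0 e_d$ with $c_0=c_0(q,m)$) at which $h(\bar x)\sim 1$ and $\mathrm{dist}(\bar x,\partial\{h>0\}\cup B_1')\gtrsim 1$, and I split according to the dichotomy
\[
\text{(A)}\ \ u(\bar x)\ge h(\bar x)+\tfrac{\eps}{2}\sqrt{q^2-m^2},\qquad \text{(B)}\ \ u(\bar x)\le h(\bar x)+\tfrac{\eps}{2}\sqrt{q^2-m^2}.
\]

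In Case (A) I would prove \eqref{flat_harnack3} by constructing a strict subsolution of \eqref{e:interno}--\eqref{FB} of the form
\[
v(x):=h\bigl(x+c\eps\Phi(x)\,e_1\bigr),
\]
where $\Phi:B_1^+\cup B_1'\to[0,1]$ is a cutoff with the following properties: $\Phi\equiv 1$ on $B_\rho(\bar x)$ and $\Phi\equiv 0$ outside $B_{3/4}$; $\Delta\Phi\ge \eta_0>0$ in $\{\Phi>0\}^+$; $\partial_1\Phi\le -\eta_0$ in a neighborhood of $\partial\{h>0\}\cap B_{3/4}$; and, crucially, $\partial_d\Phi\equiv 0$ on $B_1'$. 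A concrete choice can be made by placing a radial profile centered at the reflection $\bar y:=\bar x-2(\bar x\cdot e_d)e_d$ of $\bar x$ across $\{x_d=0\}$ and truncating with a standard cutoff; the reflection automatically enforces the Neumann condition $\partial_d\Phi=0$ on $B_1'$, and the sign of $\partial_1\Phi$ on the relevant portion of $\partial\{h>0\}$ follows because that portion lies in a fixed angular cone from $\bar y$. A first-order expansion in $c\eps$ then yields $|\nabla v|^2=q^2+2(q^2-m^2)\partial_1\Phi+O(\eps^2)<q^2-\eta$ at $\partial\{v>0\}\cap\{x_d>0\}$, $e_d\cdot A\nabla v=m+O(\eps^{2/\sigma})<\beta$ on $\{v>0\}\cap B_1'$ (using $(\mathcal I2)$), and $\mathrm{div}(A\nabla v)\ge \eta/2>0$ inside $\{v>0\}^+$, making $v$ a \emph{strict} subsolution.

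To conclude in Case (A) I would apply interior Harnack to $u-h$ near $\bar x$ (where both are positive and $u-h$ is $A$-superharmonic up to a controlled error from the coefficient perturbation), upgrading the pointwise inequality at $\bar x$ to $u-h\ge c'\eps$ on $B_\rho(\bar x)$; choosing $c$ small gives $v\le u$ on $B_\rho(\bar x)$, while $v=h\le u$ on $\partial B_{3/4}^+$. A sliding argument along $t\mapsto v_t(x):=h(x+tc\eps\Phi(x)e_1)$ for $t\in[0,1]$ then rules out every candidate first contact point: interior points are excluded by the strict $\mathrm{div}(A\nabla v_t)>0$ inequality, points of $\partial\{v_t>0\}\cap\{x_d>0\}$ by the strict free boundary inequality, and points of $\{v_t>0\}\cap B_1'$ by the strict Neumann inequality. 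At a contact-line point $\partial\{v_t>0\}\cap B_1'$, Definition \ref{def:solutionnew}~(VS3) would force one of (i) or (ii) to fail for $v_t$, but by construction \emph{both} strict inequalities $|A^{\sfrac12}\nabla_{x'}v_t|<\sqrt{Q-\beta^2/a^2}$ and $A\nabla v_t\cdot e_d<\beta$ hold, giving a contradiction. Therefore $v\le u$ in $B_{3/4}^+\cup B_{3/4}'$, and restricting to $B_{1/2}$ and using $\Phi\ge c_\Phi>0$ on $B_{1/2}$ (which is what one gains by placing the center of the radial profile at the reflected point) gives \eqref{flat_harnack3}. Case (B) is symmetric, with supersolution $w(x)=h(x+(\eps-c\eps\Phi(x))e_1)$.

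The main obstacle is the construction and verification of the cutoff $\Phi$ with the homogeneous Neumann condition $\partial_d\Phi=0$ on $B_1'$ simultaneously with the correct sign of $\partial_1\Phi$ on the part of $\partial\{h>0\}$ lying inside $B_{3/4}$: without $\partial_d\Phi=0$ the barrier $v$ would violate the Neumann condition on $\{v>0\}\cap B_1'$ and the comparison would break down at the contact line, where (VS3) requires the \emph{two} strict inequalities to create the necessary subsolution gap. The reflection construction is exactly what reconciles these two requirements; once $\Phi$ is fixed, the smallness $(\mathcal I 2)$ absorbs all perturbations of order $\eps^{2/\sigma}$ into the strict $O(\eps)$ margin, provided $\bar\eps$ is chosen small depending only on $d$, the constants of $(\mathcal H1)$--$(\mathcal H3)$, and $(q,m)$.
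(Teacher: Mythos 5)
Your overall architecture matches the paper's proof of \cref{l:fbharnack}: fix an interior reference point $\bar x$ with $\operatorname{dist}(\bar x,\{x_d=0\})\sim\operatorname{dist}(\bar x,\partial\{h>0\})\sim 1$, split according to the value of $u(\bar x)$, apply interior Harnack on a ball around $\bar x$ where both $u$ and the half-plane profile are uniformly positive, then slide a barrier of the form (linear function)$\,+\,c_0\eps\,(\text{annular bump})$ and use the viscosity conditions (VS1)--(VS3) to rule out every possible first contact point. The Schauder replacements $p_A,w_A$ and the use of $(\mathcal I2)$ to absorb $O(\eps^{2/\sigma})$ errors are also the same as the paper. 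Your translated-profile formulation $v=h(x+c\eps\Phi\,e_1)$ is equivalent, on $\{v>0\}$, to $p+\sqrt{q^2-m^2}\,c\eps\,\Phi$, so structurally there is no difference.

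There are, however, two genuine problems with your execution. The first is that every one of your strict subsolution inequalities has the wrong sign. For a barrier that touches $u$ from below, exclusion at interior free boundary points via (VS2) requires $|A^{1/2}\nabla v|>\sqrt{Q}$, not $<\sqrt{Q}$, and hence the correct requirement on the bump is $\nabla h\cdot\nabla\Phi>0$ near $\partial\{h>0\}$ (roughly $\partial_1\Phi>0$), not $\partial_1\Phi\le -\eta_0$. Exclusion at points of $B_1'\cap\{u>0\}$ via the Hopf lemma requires $e_d\cdot A\nabla v>\beta$, not $<\beta$. And at contact-line points, to violate (VS3) one needs \emph{both} $|A^{1/2}\nabla_{x'}v|>\sqrt{Q-\beta^2/a^2}$ \emph{and} $A\nabla v\cdot e_d>\beta$; what you wrote are the conditions \emph{holding}, so there is no contradiction. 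Compare with the paper's display \eqref{>123}, where all three quantities are bounded strictly \emph{from below} by a margin of order $\eps$.

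The second problem is the reflection construction of $\Phi$ with $\partial_d\Phi\equiv 0$ on $B_1'$, which you present as the crucial device but which actually undermines the argument. If $\partial_d\Phi=0$ on the hyperplane then $e_d\cdot A\nabla v=m+O(\eps^{2/\sigma})$, whereas $\beta$ varies from $m=\beta(0)$ by the same $O(\eps^{2/\sigma})$; you therefore have no sign on $e_d\cdot A\nabla v-\beta$, and in particular no strict $O(\eps)$ gap, so the Hopf-type exclusion of touching at $B_1'\cap\{u>0\}$ is not available. The paper does the opposite: the radial bump $w_A$ is centered at $\bar x\in B_1^+$ \emph{above} the hyperplane, so its inward-pointing gradient has a uniformly positive $e_d$-component on $B_1'$, and this produces exactly the margin $e_d\cdot A\nabla v_{\bar t}>m+C\eps\ge\beta+\tfrac12 C\eps$. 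The Neumann condition on $B_1'$ is not something to be neutralized by a symmetry of the barrier; it is the source of the strict-subsolution gap there, and placing the bump center in $B_1^+$ gives it for free. If you correct all the signs and drop the reflection (keeping the center at $\bar x$), your argument becomes the paper's.
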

	\begin{proof}
		First, consider $\bar \eps < \eps_0$, with $\eps_0$ the constant of \cref{t:app-schauder} and \cref{c:schauder.easy}. Then, fix $\overline{x} \in B_1^+$ to be such that
		$$
		|\overline{x}|=\frac15\quad\text{and}\quad \mathrm{dist}(\overline{x}, \{x_d=0\})=
		\mathrm{dist}(\overline{x}, \partial\{h_{q,m,e_1}>0\}).
		$$
		Precisely, 
		$$\overline{x}= \frac15 \left(\frac{2q}{q-m}\right)^{-1/2}\left(e_1 + \frac{m+q}{\sqrt{q^2-m^2}}e_d\right).$$
		We also fix
		$$
		r:=\frac{1}{10}\min\left\{1,\frac{m+q}{\sqrt{q^2-m^2}}\right\},\quad R:=\frac34\quad\text{and}\quad
		s_0:= r\frac{\sqrt{q^2-m^2}}{2q}.
		$$
		Therefore, we have that $B_r(\overline{x})\subset \{h_{q,m,e_1}(x+s e_1)>0\}$ and
		\be\label{inclus}
		B_{1/2}^+\cup B_{1/2}'\subset\subset(B_R^+(\overline{x})\cup B_R'(\overline{x}))\subset\subset B_{1}^+\cup B_{1}'.
		\ee
		Finally, the proof is divided in two cases: assume that
		\be\label{bottom}u(\bar x) \leq h_{q,m,e_1}\left(\bar x+\left(s+\frac12\eps\right)e_1\right),\ee
		then, we show that
		\be\label{double} u(x)\leq h_{q,m,e_1}\left(x + (s +(1-C)\eps)e_1\right) \quad \text{in $B_{1/2}^+ \cup B_{1/2}'$ }\ee
		for some $C>0$ universal. On the other hand, if
		\be \label{secondcase} u(\bar x) \geq h_{q,m,e_1}\left(\bar x+\left(s+\frac12\eps\right)e_1\right),\ee we can prove that $$
		h_{q,m,e_1}(x+(s +C\eps)e_1) \leq u(x) \quad \text{in $B_{1/2}^+\cup B_{1/2}'$.}
		$$
		Before going into the details of the proof, let $U(\overline{x}) = B_{R}(\overline{x})\setminus \overline B_{r/2}(\overline{x})$ and consider $w$ defined as
		\be\label{w}
		w(x)=
		\ddfrac{\abs{x-\bar{x}}^{\gamma} - R^{\gamma}}{(r/2)^{\gamma} - R^{\gamma}}\,\mbox{ in }\, U(\overline{x}),\qquad w\equiv 1\,\mbox{ in }\,B_{r/2}(\overline{x}),
		\ee
		with $\gamma <0$ be such that $\Delta w >0$ in $U(\overline{x})$.\medskip
		
		\noindent{\it Case 1.} If \eqref{secondcase} holds true, set
		\be\label{def.p}
		p(x):=\sqrt{q^2-m^2}(x_1+s)+mx_d.
		\ee
		Since $|s|<s_0$ and  by the flatness assumption
		\be\label{flat} u(x) - p(x)\geq 0 \quad \text{in $B_1^+\cup B_1'$},\ee
		we immediately deduce that $B_{r}(\overline{x}) \subset B_1^+\cap \{u>0\}$. Now, consider the solutions $p_A$ and $w_A$ to
		$$
		\begin{cases}
		\text{\rm div}(A\nabla p_A)=0&\text{in}\quad B_1\,,\\
		p_A=  p&\text{on}\quad \partial B_1
		\end{cases}
		\quad\mbox{and}\quad
		\begin{cases}
		\text{\rm div}(A\nabla w_A)=\Delta w&\text{in}\quad U(\overline{x})\,,\\
		w_A= 1&\text{in}\quad B_{r/2}(\overline{x}),\\
		w_A= 0&\text{on}\quad \partial B_R(\overline{x}).
		\end{cases}
		$$
		Clearly, by combining assumption $(\mathcal I2)$ with \cref{t:app-schauder} and \cref{c:schauder.easy}, we get
		\begin{align}\label{e:replace}
		\begin{aligned}
		\norm{p_A-p}{L^\infty(B_1)}  +\norm{\nabla p_A-\nabla p}{L^\infty(B_1)}  &\leq K \eps^2,\\
		\norm{w_A-w}{L^\infty(B_1)}  +\norm{\nabla w_A-\nabla w}{L^\infty(B_1)}  &\leq K \eps^2,
		\end{aligned}
		\end{align}
		with $K>0$ depending only on the dimension and the constants involved in $(\mathcal H1)$. Hence, in view of \eqref{flat}, we have that
		$$
		u(x)-p_A(x)+K\eps^2\geq 0 \quad\mbox{in }B_1^+\cup B_1'
		$$
		and so, by applying the Harnack inequality in $B_{r}(\overline{x})$, we get that
		\begin{align*}\label{first_step}
		u(x) -p_A(x) +K\eps^2 &\geq c\Big(u(\overline{x}) - p_A(\overline{x}) + K\eps^2\Big)\\
		&\geq c(u(\overline{x}) - p(\overline{x})) + c K\eps^2\quad \text{in $B_{r/2}(\overline{x})$}.
		\end{align*}
		Finally, by using the assumption \eqref{secondcase}, we deduce that
		\be\label{first_step}
		u(x) -p_A(x) \geq c_0 \eps \quad \text{in $B_{r/2}(\overline{x})$},
		\ee
		with $c_0>0$ depending only on the dimension and the constants involved in $(\mathcal H1)$. Now, let us set
		\be\label{def.sub}
		v_t(x) := p_A(x) -K\eps^2 + c_0 \eps (w_A(x)-1) +c_0 \eps t  \quad \text{in $\overline{B_{R}(\bar{x})}$}
		\ee
		for $t\geq 0$. Since
		\be\label{>}\text{\rm div}(A\nabla v_t) =  c_0\eps \text{\rm div}(A\nabla w_A) > C\eps\quad\mbox{in }U(\overline{x}),
		\ee
		then, by \eqref{flat}, we deduce that $$v_0\leq p_A - K\eps^2 \leq p\leq u \quad\text{in $\overline{B_{R}^+(\overline{x})}\cap \{x_d\geq0\}$}.$$
		Thus, let $\overline{t}>0$ be the largest $t>0$ such that $v_t\leq u$ in $\overline{B_{R}^+(\overline{x})}\cap \{x_d\geq0\}$. We want to show that $\overline{t}\geq 1$. Indeed, by the definition of $v_t$, we will get
		$$
		u \geq v_{1} \geq p_A -K\eps^2+ c_0 \eps w_A \geq p -2K \eps^2 + c_0 \eps w_A \quad\text{in $\overline{B_{R}^+(\overline{x})}$}.
		$$
		In particular, by \eqref{inclus}, since $w_A \geq c_1$ on $\overline{B_{1/2}^+}$, it implies that
		$$
		u \geq p + C\eps  \quad\text{in $\overline{B_{1/2}^+}$},
		$$
		as we claimed.
		
		Suppose by contradiction that $\bar{t} < 1$ and let $\tilde{x} \in \overline{B_{R}^+(\overline{x})}$ be the touching point between $v_{\overline{t}}$ and $u$ and let us show that it can only occur on $\overline{B_{r/2}(\overline{x})}$. Since $w \equiv 0$ on $\partial B_{R}(\overline{x})$ and $\overline{t}< 1$ we get
		$$
		v_{\overline{t}} = p_A-K\eps^2 - c_0\eps +c_o \eps\overline{t} < p < u \quad \text{on $\partial B_{R}(\overline{x})$},
		$$
		thus it is left to exclude that $\tilde{x}$ belongs to the annulus $U(\overline{x}) = B_{R}(\overline{x})\setminus \overline B_{r/2}(\overline{x})$. In order to exclude this possibility, we list some of the main properties of $w$. 
		
		First, since $w$ is radially symmetric, we notice that for every unit vector $\xi \in \R^d$ we have $\nabla w \cdot \xi = \abs{\nabla w} (\nu_x \cdot \xi)$ in $U^+(\overline{x})$, where $\nu_x$ is the unit direction of $x-\overline{x}$. Thus, we get
		\begin{align*}
		|A^{1/2}\nabla v_{\overline{t}}|
		&\geq
		(1-\eps^2)|\nabla(p + c_0\eps \nabla w)\cdot \xi + \nabla(p-p_A)\cdot \xi + c_0\eps \nabla (w_A-w)\cdot \xi|\\
		&\geq
		|\nabla p\cdot \xi  + c_0\eps |\nabla w|(\nu_x\cdot \xi)| - K\eps^2.
		\end{align*}
		We consider the vectors
		\be\label{vec}
		\xi_1 := \frac{1}{q}\left(\sqrt{q^2-m^2}e_1 + m e_d\right),\quad \xi_2 := e_d\quad\mbox{or}\quad \xi_3:=e_1.
		\ee
		On one side, by the definition \eqref{def.p} of $p$, we get
		$$
		\nabla p\cdot \xi_1 = q,\quad \nabla p\cdot \xi_2 = m \quad\mbox{or}\quad \nabla p\cdot \xi_3=\sqrt{q^2-m^2}.
		$$
		On the other, from the definition of $w$, we get that $\abs{\nabla w} >c $ on $U(\overline{x})$ and $\nu_x \cdot \xi$ is bounded by below in the region $\{v_{\overline{t}} \leq 0\}\cap U(\overline{x})$ for the choices of vector in \eqref{vec}. Indeed, since
		$$\{v_{\overline{t}}\leq 0\} \subset \{p - c_0\eps \leq 0\}\quad\mbox{and}\quad
		\{v_{\overline{t}}> 0\} \subset \{p - c_0\eps > 0\},$$
		we get that
		\begin{align*}
		\{v_{\overline{t}}\leq 0\} \cap U(\overline{x})
		&\subset \left\{q(x\cdot \xi_1)\leq - \sqrt{q^2-m^2}s + c_0\eps \right\} \cap U(\overline{x})\\
		&\subset \left\{q(x\cdot \xi_1)\leq \frac32\sqrt{q^2-m^2}|s_0| \right\} \cap U(\overline{x}),\\
		\{v_{\overline{t}}\leq 0\} \cap U(\overline{x_0})\cap \{x_d=0\}
		&\subset \left\{\sqrt{q^2-m^2}x_1 \leq  - \sqrt{q^2-m^2}s +c_0\eps \right\} \cap U(\overline{x})\\
		&\subset \left\{x_1 \leq \frac32|s_0|\right\} \cap U(\overline{x})\,,
		\end{align*}
		for $\eps$ sufficiently small. Hence, in view of $(\mathcal I2)$, we infer that
		\begin{align}\label{>123}
		\begin{aligned}
		e_d \cdot A\nabla v_{\overline{t}}  > m + C\eps\geq \beta + \frac12C\eps &\quad\mbox{on }\{v_{\overline{t}}>0\}\cap U(\overline{x})\cap \{x_d=0\},\\
		\abs{A^{1/2}\nabla v_{\overline{t}}}  > q+C\eps\geq \sqrt{Q} + \frac12C\eps &\quad\mbox{on } \partial\{v_{\overline{t}}>0\}\cap U(\overline{x}),\\
		\abs{A^{1/2}\nabla_{x'} v_{\overline{t}}} > \sqrt{q^2-m^2}+C\eps \geq \sqrt{Q^2-\frac{\beta^2}{a^2}}+ \frac12C\eps &\quad\mbox{on }\partial\{v_{\overline{t}}>0\}\cap U(\overline{x})\cap \{x_d=0\},
		\end{aligned}
		\end{align}
		for $\eps$ sufficiently small.
		
		We are now in position to exclude the presence of contact points in the annulus $U(\bar x)$. First, in view of \cref{def:solutionnew}, the condition \eqref{>} implies that a touching point cannot occur inside the positivity set $U(\bar x) \cap (B_1^+\cup B_1')\cap \{u>0\}$. Then, by \eqref{>123} we have that it cannot occur on the $(d-1)$ dimensional free boundary $U(\overline{x}) \cap \partial\{u>0\}\cap \{x_d>0\}$ and in the positivity set lying inside the hyperplane $U(\overline{x})\cap \{u>0\}\cap \{x_d=0\}$. Finally, the last viscosity condition from \eqref{FB} implies that a touching point cannot appear on $U(\overline{x})\cap \partial\{u>0\}\cap \{x_d=0\}$. 
		
		Therefore, the touching point $\tilde{x}$ belongs to $\overline{B_{r/2}(\overline{x})}$ and
		$$
		u(\tilde{x}) = v_{\overline{t}}(\tilde{x}) = p(\tilde{x}) -K\eps^2 + c_0 \eps \overline{t} < p_A(\tilde{x}) + \frac12 c_0 \eps,
		$$
		in contradiction with \eqref{first_step}.\medskip

		\noindent{\it Case 2.} If \eqref{bottom} holds true, we proceed as above by considering the family
		$$
		v_t(x) = p_A(x)+K\eps^2 - c_0 \eps (w_A(x)-1) -c_0 \eps t
		$$
		such that $v_0 \geq u$ in $\overline{B_{R}^+(\bar x)} \cap \{p_A>-\frac32\eps\}$. The proof is analogous.
	\end{proof}
	
	\subsection{The improvement-of-flatness lemma}\label{sub:final}
	We can finally conclude the section with an improvement-of-flatness lemma, from which the $C^{1,\alpha}$ regularity result of \cref{t:epsilon-regularity} follows by standard arguments.\\
	
	First, we need to introduce the linearized problem arising from the improvement-of-flatness procedure.
	\begin{definition}\label{def:limitPbSolutionViscous}
		We say that a function $u \in C\left(\overline{\Sigma \cap B_{1/2}} \right)$ is a viscosity solution to the problem
		\be\begin{cases}\label{linearized.problem}
			\Delta u =0 & \text{in }B_{1/2}^+ \cap \Sigma,\\
			\nabla u \cdot \nabla h_{q,m,e_1} = 0 &\text{on }B_{1/2}^+ \cap \partial\Sigma,\\
			\nabla u  \cdot e_d = 0 &\text{on }B_{1/2}' \cap \Sigma,
		\end{cases}\ee
		if the following conditions hold:
		\begin{itemize}
			\item[(i)] suppose that a function $\phi \in C^2\left( B_r(x_0) \right)$ touches $u$ from below (resp. above) at $x_0 \in \Sigma \cap B_{1/2}^+ $. Then $\Delta \phi \le 0$ (resp. $\Delta \phi \ge 0$);
			
			\item[(ii)] suppose that a function $\phi \in C^1\left( B_r(x_0) \right)$ touches $u$ from below (resp. above) at $x_0 \in B_{1/2}^+ \cap \partial\Sigma$. Then $\nabla \phi \cdot \nabla h_{q,m,e_1} \le 0$ (resp. $\nabla \phi \cdot \nabla h_{q,m,e_1} \ge 0$);
			
			\item[(iii)] suppose that a function $\phi \in C^1\left( B_r(x_0) \right)$ touches $u$ from below (resp. above) at $x_0 \in B_{1/2}' \cap \Sigma$. Then $\nabla \phi \cdot e_d \le 0$ (resp. $\nabla \phi \cdot e_d \ge 0$);
			
			\item[(iv)] suppose that a function $\phi \in C^1\left( B_r(x_0) \right)$ touches $u$ from below (resp. above) at $x_0 \in B_{1/2}' \cap \partial\Sigma$. Then, at least one of the following two conditions holds:
			\begin{itemize}
				\item[(1)] $\nabla \phi \cdot \nabla h_{q,m,e_1} \le 0$ (resp. $\nabla \phi \cdot \nabla h_{q,m,e_1} \ge 0$),
				
				\item[(2)]$\nabla \phi \cdot e_d \le 0$ (resp. $\nabla \phi \cdot e_d \ge 0$).
			\end{itemize}
		\end{itemize}
	\end{definition}
	In order to establish a $C^{1, \alpha}$ decay property for solutions to \eqref{e:bernoulli-D}, we will use the regularity for the linearized problem contained in the following lemma.
	\begin{lemma}\label{l:linearized.regularity}
		Let $q\in \R, m\in (-q,q), \Sigma = \{h_{q,m,e_1}>0\}$ and let $u \in C^{0, \alpha}\left( \overline{\Sigma \cap B_1} \right)$ be a viscosity solution to the problem \eqref{linearized.problem} 
		such that 
		\[\|u\|_{L^{\infty}\left( \overline{\Sigma \cap B_1}  \right)} \le 1.
		\]
		Then, there exists $\eps \in (0,1)$ depending on $q$ and $m$ such that, for every $r \in (0,1/4)$, it holds
		\be\label{e:improved-estimate}
		x\cdot \eta - M r^{1+\eps} \leq u(x) \leq x\cdot \eta + M r^{1+\eps} \quad\mbox{in}\quad B_r\cap {\Sigma},
		\ee
		where $\eta\in \R^d$ is a vector (that does not depend on $r$) satisfying
		\be\label{e:condition-vector}
		\eta \cdot e_d = \eta \cdot e_1 = 0,\quad |\eta|\leq M
		\ee
		with $M>0$ a constant depending on $q$, $m$ and the dimension.
	\end{lemma}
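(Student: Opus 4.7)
The set $\Sigma\cap\{x_d>0\}$ is, up to a rotation in the $(x_1,x_d)$-plane, the product of a two-dimensional wedge $W$ of opening angle $\theta=\arccos(-m/q)\in(0,\pi)$ with $\R^{d-2}$ in the tangential coordinates $y=(x_2,\dots,x_{d-1})$, and the two boundary conditions in \eqref{linearized.problem} amount to classical homogeneous Neumann conditions on the two faces of this wedge (note that $\nabla h_{q,m,e_1}/q$ is the unit inward normal to $\partial\Sigma$). The whole system is translation-invariant along the edge $L:=\{x_1=x_d=0\}$, and the constraint $|m|<q$ guarantees $\theta<\pi$, hence $\pi/\theta>1$.

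The plan is to upgrade $u$ to a classical $C^\infty$ solution on $\overline{\Sigma\cap\{x_d>0\}}\setminus L$ via standard Schauder theory for the Laplacian with homogeneous Neumann data on a half-space, and then exploit tangential invariance: the derivatives $\partial_{x_i}u$ for $2\le i\le d-1$ solve the same linearized problem, so $a_0:=u(0)$ and $\eta:=\nabla_y u(0)\in\mathrm{span}(e_2,\dots,e_{d-1})$ are well defined with a universal bound $|\eta|\le M$ depending only on $q,m,d$. By construction $\eta\cdot e_1=\eta\cdot e_d=0$, and since $x\mapsto\eta\cdot x$ itself solves \eqref{linearized.problem}, the proof reduces to the decay estimate $|w(x)|\le M|x|^{1+\eps}$ on $w:=u-a_0-\eta\cdot x$, with $\eps:=\pi/\theta-1>0$.

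That estimate would be obtained by a standard blow-up/compactness argument: if it failed, rescaling and passing to the limit (using the tangential smoothness from the previous step together with the a priori H\"older bound) would produce a nonzero global viscosity solution $w_\infty$ of \eqref{linearized.problem} on the infinite cone $W\times\R^{d-2}$, growing strictly slower than $|x|^{\pi/\theta}$ and satisfying $w_\infty(0)=0$ and $\nabla_y w_\infty(0)=0$. Separation of variables in polar coordinates $(r,\phi)$ on $W$—with the Fourier basis $\cos(k\pi\phi/\theta)$ forced by the two Neumann conditions—shows that every homogeneous global solution has degree of the form $k\pi/\theta+n$ for integers $k,n\ge 0$; the only modes of degree in $(1,\pi/\theta)$ are the tangential linears, which are excluded by $\nabla_y w_\infty(0)=0$. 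Thus $w_\infty\equiv0$, a contradiction. The hardest point will be the rigorous classification of homogeneous global solutions on the product cone, including the viscosity-to-classical upgrade on the smooth boundary faces and the careful treatment of the singularity along $L$.
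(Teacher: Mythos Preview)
Your blow-up/Liouville strategy is sound and genuinely different from the paper's proof. The paper runs no compactness argument: it constructs the \emph{variational} solution $w\in H^1(\Sigma\cap B_1)$ with trace $u$ on $\partial B_1\cap\Sigma$, proves the $C^{1,\eps}$ estimate for $w$ by an explicit angle-straightening change of variables $(r,x'',\phi)\mapsto(r^\gamma,x'',\gamma\phi)$ that maps $\Sigma$ onto a wedge of opening $\pi/(k+1)$, reflects successively to reduce to interior elliptic regularity on a full ball, and finally identifies $u=w$ by viscous sliding. Your route would yield the sharp exponent $\min(\pi/\theta-1,1)$ and a cleaner picture; the paper's is more self-contained and avoids any spectral classification on the product cone.

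The gap in your execution is not where you flag it. The Liouville step (homogeneous Neumann solutions on $W\times\R^{d-2}$ have degrees in $\{0,1,2,\dots\}\cup\{k\pi/\theta+n:k\ge1,\ n\ge0\}$) is standard once set up. What is actually missing comes earlier: defining $\eta:=\nabla_y u(0)$ requires the tangential derivatives to extend boundedly to the edge $L$, and the blow-up compactness requires a \emph{uniform} a priori H\"older estimate for solutions of \eqref{linearized.problem} with $\|v\|_{L^\infty}\le1$. Neither follows from the qualitative hypothesis $u\in C^{0,\alpha}$ (the seminorm of the rescaling $w(r\,\cdot)/r^{1+\eps}$ blows up like $r^{\alpha-1-\eps}$), nor from ``$\partial_{x_i}u$ solves the same problem'' alone, since you have no $L^\infty$ bound on $\partial_{x_i}u$ to feed into any estimate. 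The paper's variational detour is exactly what supplies this: for $w\in H^1$, Caccioppoli plus translation invariance bounds $\partial_{y_i}w$ in $L^2$, De Giorgi--Nash--Moser after bi-Lipschitz flattening and even reflection gives uniform $C^{0,\beta}$, and sliding transfers everything to $u$. You could alternatively invoke Kondrat'ev--Grisvard regularity for the Neumann problem on a wedge of opening $\theta<\pi$, but that is a nontrivial black box you have not named.
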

	\begin{proof}
		Given a viscosity solution $u\in C(\Sigma \cap B_1)$, let $w \in H^1(B_1\cap \Sigma)$ be the unique minimizer of 
		\be\label{e:variational-solution-linearized}
		\min\Big\{\int_{B_1\cap\Sigma}|\nabla w|^2\,dx\ :\ w\in H^1(B_1\cap\Sigma),\ w=u \text{ on}\ \partial B_1\cap\Sigma\Big\}.
		\ee   
		We proceed by showing first that any variational solution $w$ to \eqref{e:variational-solution-linearized}  satisfies the regularity estimates \eqref{e:improved-estimate}-\eqref{e:condition-vector} and then that the viscosity solution coincides with the variational one in $\overline{B_1 \cap \Sigma}$, thus inheriting the differentiability properties.\\
		
		By exploiting a bi-Lipschitz flattening of the boundary, it is possible to deduce that the variational solution $w$ to \eqref{e:variational-solution-linearized} is continuous up to the boundary $\partial(\Sigma\cap B_1)$. 
		Indeed, by exploiting the bi-Lipschitz map 
		$$\Phi:\Sigma\to(\R^d)^+$$
		of the form $\Phi(x_1,x'',x_d)=(\phi_1(x_1,x_d),x'',\phi_d(x_1,x_d))$, we can rewrite problem \eqref{linearized.problem} into
		\begin{equation}\label{e:spianata}
		\textrm{div}(D\Phi(\Phi^{-1})\nabla \widetilde w)=0\quad\text{in}\quad\{y_d>0\} \cap B_1\,,\qquad e_d\cdot D\Phi(\Phi^{-1})\nabla \widetilde w=0\quad\text{on}\quad\{y_d=0\} \cap B_1,
		\end{equation}
		where $\widetilde w:=w\circ\Phi^{-1}$. Now, consider the even reflection $w_e$ of $\tilde{w}$ defined as
		$$
		w_e(y) = \begin{cases}
		w(y',y_d) &\mbox{if }y_d\geq0\\
		w(y',-y_d) &\mbox{if }y_d<0.
		\end{cases}
		$$
		Then, since
		\begin{equation}\label{e:spianata-riflessa}
		\textrm{div}(D\Phi(\Phi^{-1})\nabla \widetilde w)=0\quad\text{in}\quad\ B_1\,
		\end{equation}
		by applying \cite[Theorem 8.30]{GT} to equation \eqref{e:spianata-riflessa}, we deduce the existence of a variational solution $w_e\in H^1(B_1)$ with is continuous up to the boundary $\partial B_1$. Finally, going back to the original coordinates, it implies that the original variational solution $w$ achieves the boundary data on $\partial (\Sigma \cap B_1)$ continuously.\medskip

		Now let us deduce the regularity properties for the function $u$. Set $x=(x_1,x'',x_d)$ with $x'' \in \R^{d-2}$. We first exploit the translation invariance of the problem with respect to the directions orthogonal to $e_1$ and $e_d$, to show that 
		\begin{equation}\label{linearized:invariance}
		\text{$w$ and $\Delta_{x''}w$ are locally H\"{o}lder continuous in $\overline{\Sigma} \cap B_1$ with  $\|\Delta_{x''}w\|_{L^{\infty}\left( \overline{\Sigma} \cap B_{3/4} \right)} \le C$}
		\end{equation}
		for some constant $C>0$ depending on $d, m$ and $q$. By the De Giorgi-Nash-Moser theorem and the fact that $\|\widetilde w\|_{C^{0,\alpha}(\Phi(B_{1}))}\le 1$, we have that $\widetilde w$ is H\"older continuous in $\Phi(B_{3/4})$ and $\|\widetilde w\|_{C^{0,\alpha}(\Phi(B_{3/4}))}\le C$ for a universal constant $C$. Now, since $\Phi$ is invariant in any direction 
		$\nu$ orthogonal to $e_1$ and $e_d$, we have that 
		$$y\mapsto\widetilde w(y+h\nu)- \widetilde w(y)$$
		still solves \eqref{e:spianata}, for any $h\in\R$. Thus, arguing as in \cite[Corollary 5.7]{cc}, we get that the function
		$$D_{h,\nu}\widetilde w(y):=\frac{\widetilde w(y+\nu h)-\widetilde w(y)}{h}\,$$
		is uniformly bounded and $\alpha$-H\"{o}lder continuous in $\Phi(B_{1/2})$.

		Similarly, since $\Phi$ is invariant in the direction $x''$, we have that every derivative $v=\partial_{j_1}\dots\partial_{j_n}\widetilde w$ with $2\le j_1,j_2,\dots,j_n\le d-1$ satisfies 
		$$\textrm{div}(D\Phi(\Phi^{-1})\nabla v)=0\quad\text{in}\quad\{y_d>0\}\,,\qquad e_d\cdot D\Phi(\Phi^{-1})\nabla v=0\quad\text{on}\quad\{y_d=0\}.$$
		Thus, by the classical $H^2_{\loc}(B_1)$ regularity, all such derivatives are in $H^1_{\loc}(B_1)$. Hence, by using once again \cite[Corollary 5.7]{cc}, we deduce that
		\[
		\|\left( \partial_{j_1j_1} + \dots + \partial_{j_{d-1}j_{d-1}} \right) \widetilde{u}\|_{C^{0, \alpha}(K)} \le C_K.
		\]
		for all compact $K \Subset B_1$, where $C_K>0$ is depending only on $d, m, q$ and $K$. Now, the claim \eqref{linearized:invariance} follows by the directions of invariance of $\Phi$, so that $\left( \partial_{j_1j_1} + \dots + \partial_{j_{d-1}j_{d-1}} \right) \widetilde{w} = \Delta_x'' w$.
		
		To proceed, let us denote with $(r,\theta)\in \R^2_{x_1,x_d}$ the polar coordinates in the $2$-dimensional space generated by $e_1$ and $e_d$, so that $|x|^2 = |x''|^2 + r^2$ and
		$$
		0=\Delta w = \Delta_{x''} w + \frac{\partial^2}{\partial r^2} w + \frac{1}{r}\frac{\partial}{\partial r}w + \frac{1}{r^2}\frac{\partial^2}{\partial \theta^2} w \quad\mbox{ in }\,\Sigma.
		$$
		For any fixed angle $\Theta\in(0,\pi)$, we denote by $\Sigma_\Theta$ the subset of $\R^d_+$ which can be written in the above coordinates as 
		$$\Sigma_\Theta:=\Big\{(x_1,x'',x_d)\ :\ x''\in\R^{d-2},\ x_1=r\cos\theta,\ x_d=r\sin\theta\ \text{ with }\ r>0,\ \theta\in[0,\Theta]\Big\}.$$
		Let $\Theta \in (0, \pi)$ be the angle between the hyperplanes $\partial\Sigma\cap\{x_d>0\}$ and $\{x_d=0\}$; in the above notation, $\Sigma=\Sigma_\Theta$. Let $k \in\N$, $k\ge 1$, be such that $$ \frac{\pi}{k} < \Theta \le \frac{\pi}{k+1}\qquad\mbox{and let}\qquad \gamma := \Theta \frac{k+1}{\pi}. $$
		Let $T\colon \Sigma_{\frac{\pi}{k+1}} \to \Sigma_{\Theta}$ be the transformation defined by
		$$
		T:(r,x'',\theta) \mapsto (r^\gamma,x'',\gamma \theta),
		$$
		and consider the function $\overline{w}=w\circ T$ defined on $\Sigma_{\frac{\pi}{k+1}}$. By a direct computation, we get
		$$
		\begin{cases}
		\Delta_{x_1,x_d} \overline{w}= -\gamma^2 r^{2\gamma-2}\Delta_{x''} \overline{w}&\mbox{ in }\,\Sigma_{\frac{\pi}{k+1}}\cap B_1,\\
		\nu\cdot \nabla \overline{w} = 0&\mbox{ on }\partial\Sigma_{\frac{\pi}{k+1}}\cap \{x_d>0\}\cap B_1,\\
		e_{d}\cdot \nabla \overline{w} = 0&\mbox{ on }\Sigma_{\frac{\pi}{k+1}}\cap\{x_d=0\}\cap B_1,
		\end{cases}$$
		where $\nu \in \mathbb{S}^{d-1}$ is the unit normal vector to $\partial \Sigma_{\frac{\pi}{k+1}}\cap \{x_d>0\}$. Now, by reflecting symmetrically $k$ times $\Sigma_{\frac{\pi}{k+1}}$, we extend $\overline w$ to a function defined on $\{x_d\ge 0\}$. By reflecting evenly with respect to $\{x_d=0\}$, we obtain a function  $\widetilde w:B_1\to\R$ satisfying
		$$
		\Delta_{x_1,x_d} \widetilde{w}= -\gamma^2 r^{2\gamma-2}\widetilde{f}\quad\mbox{in}\quad B_1,
		$$
		where $\widetilde f$ is the corresponding extension of $\Delta_{x''}\overline{w}$ to $B_1$. Since 
		$$1\le \gamma<\frac{k+1}{k}\ ,\qquad \widetilde{w}\in L^\infty(B_{1})\qquad\text{and}\qquad\widetilde{f}\in L^\infty(B_{r}),$$
		for every $r<1$, by a standard bootstrap argument and the Sobolev embeddings, $\widetilde w\in C^{1,\alpha}(B_r)$ for every $r<1$ and every $\alpha\in(0,1)$.
		
		Next, we notice that we can find $R_0\in(0,1/2)$ such that 
		$$\|\widetilde{w}\|_{L^\infty(B_{2R_0})}\le 1\qquad\text{and}\qquad \gamma^2 (2R_0)^{2\gamma-2}\|\widetilde{f}\|_{L^\infty(B_{2R_0})}\leq 1.$$
		By classical elliptic regularity estimates, for every $\alpha\in(0,1)$, we can find a constant $M=M(d,R_0,\alpha)$ such that 
		$$
		|\nabla \widetilde{w}(0)|\leq M\qquad\text{and}\qquad\big|\widetilde{w}(x)-\widetilde{w}(0)-x\cdot \nabla \widetilde{w}(0)\big|\le M |x|^{1+\alpha} \quad\text{for}\quad x\in B_{R_0}\ ,
		$$
		which means 
		$$
		|\nabla \overline{w}(0)|\leq M\qquad\text{and}\qquad\big|\overline{w}(x)-\overline{w}(0)-x\cdot \nabla \overline{w}(0)\big|\le M |x|^{1+\alpha} \quad\text{for}\quad x\in \Sigma_{\frac{\pi}{k+1}}\cap B_{R_0}\ .
		$$ 
		Now, setting  
		$$\eta := \nabla \overline{w}(0),$$ 
		we have that $\eta$ is orthogonal to both $e_1$ and $e_d$ and satisfies $\abs{\eta}\leq M$.	Taking $x=(x_1,x'',x_d)$ and $r=\sqrt{x_1^2+x_d^2}$, we can rewrite the above $C^{1,\alpha}$ estimate in zero as
		$$
		x\cdot \eta - M (|x''|^2+ r^2)^{\frac{1+\alpha}{2}}\leq \overline{w}(x) \leq x\cdot \eta+ M(|x''|^2+ r^2)^{\frac{1+\alpha}{2}}\quad\text{for}\quad x\in \Sigma_{\frac{\pi}{k+1}}\cap B_{R_0}\ .
		$$
		We next change back the coordinates with $T$. Since $\eta$ is orthogonal to $e_1$ and $e_d$, we obtain
		$$
		x\cdot \eta - M (|x''|^2+ r^{2/\gamma})^{\frac{1+\alpha}{2}} \leq w(x) \leq x\cdot \eta+ M (|x''|^2+ r^{2/\gamma})^{\frac{1+\alpha}{2}}\quad \text{for}\ x\in {\Sigma}\cap B_{R_0^{1/\gamma}}\ .
		$$
		Finally, choosing $\alpha$ such that $\eps:=\frac{1+\alpha}{\gamma}-1>0$ (which is possible since 
		$\gamma<2$), we obtain 
		$$
		x\cdot \eta - M r^{1+\eps} \leq w(x) \leq x\cdot \eta+ M r^{1+\eps}\quad \text{in}\quad B_{r}\cap {\Sigma},
		$$
		for every $r\in (0,R_0^{1/\gamma})$.
		
		Up to this point, we have shown that the variational solution $w$ is differentiable with continuity in $\overline{\Sigma} \cap B_1$, together with the conditions \eqref{e:improved-estimate} and \eqref{e:condition-vector}. Thus, in order to conclude the proof we need to show that
		\[
		u = w \quad \text{on } \overline{\Sigma \cap B_1},
		\]
		which can be achieved by performing a classical viscous sliding argument.
	\end{proof}
	\begin{lemma}\label{l:IMPF}
		Let $0\in \partial\{u>0\}$ and $u$ be a viscosity solution in $B_1$ satisfying $(\mathcal I1), (\mathcal I2)$. Suppose that
		\be\label{flat1}
		h_{q,m,e_1}(x-\eps e_1)\leq u(x)\leq h_{q,m,e_1}(x+\eps e_1)\quad\text{in $B_1^+\cup B_1'$},
		\ee
		with
		$$
		q= \sqrt{Q(0)},\quad m=\beta(0).
		$$
		If $r\in (0, r_0), \eps \in (0,\eps_0)$, for some $r_0,\eps_0>0$  universal, then
		\be\label{flat_imp}
		h_{q,m,\nu}\left(x-\frac{\eps}{2}r\nu\right)\leq u(x)\leq
		h_{q,m,\nu}\left(x+\frac{\eps}{2}r\nu\right)
		\quad \text{in $B_r^+\cap B_r'$},
		\ee
		with $\nu$ a unit vector orthogonal to $e_d$, such that $|\nu -e_1|\leq C\eps,$ for some universal constant $C>0.$
	\end{lemma}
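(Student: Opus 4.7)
The plan is to follow the classical De Silva-type improvement-of-flatness scheme, which combines the partial Harnack inequality already established in \cref{p:holder} (and its iteration in \cref{cor:AA}) with a compactness-linearization argument against the linearized problem from \cref{def:limitPbSolutionViscous}, whose regularity is given by \cref{l:linearized.regularity}. I will argue by contradiction: assume the conclusion fails, so there exist $r_k\to 0$ (or a fixed small $r$) and sequences $\eps_k\to 0$ and solutions $u_k$ satisfying all hypotheses with parameter $\eps_k$ for which \eqref{flat_imp} does not hold at scale $r$. Passing to rescalings will produce a limit that violates the differentiability \eqref{e:improved-estimate}--\eqref{e:condition-vector} for the linearized solution, giving the desired contradiction.

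The first step is to introduce the linearized sequence
\[
\tilde u_k(x):=\frac{1}{\sqrt{q^2-m^2}}\,\frac{u_k(x)-h_{q,m,e_1}(x)}{\eps_k}
\]
on $(B_1^+\cup B_1')\cap\overline{\{u_k>0\}}$. By \cref{cor:AA}, the $\tilde u_k$ are equicontinuous with a uniform H\"older modulus on compact subsets of $\overline\Sigma\cap B_{1/2}$ outside an arbitrarily small neighborhood of $\partial\{h_{q,m,e_1}>0\}\cap\{x_d=0\}$, where $\Sigma=\{h_{q,m,e_1}>0\}$. After a diagonal extraction we obtain a limit $\tilde u\in C(\overline{\Sigma\cap B_{1/2}})$ with $\|\tilde u\|_\infty\le 1$.

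The key step is to prove that $\tilde u$ is a viscosity solution of the linearized problem \eqref{linearized.problem} in the sense of \cref{def:limitPbSolutionViscous}. For each of the four touching conditions, one argues in the standard way: a smooth test function $\phi$ touching $\tilde u$ from, say, below at an interior or boundary point yields, for each large $k$, a touching of $u_k$ from below (up to a vertical translation $t_k\to 0$) by the nonlinear profile
\[
\Phi_k(x):=h_{q,m,e_1}(x)+\eps_k\sqrt{q^2-m^2}\,\bigl(\phi(x)+t_k\bigr),
\]
or an analogous perturbation compatible with the geometry of $\Sigma$. Expanding the nonlinear viscosity conditions (VS2)--(VS3) of \cref{def:solutionnew} to first order in $\eps_k$ and using the assumptions $(\mathcal I1)$--$(\mathcal I2)$ that $A(0)=\mathrm{Id}$, $Q(0)=q^2$, $\beta(0)=m$ along with the Hölder smallness \eqref{e:def-of-a}, each of the viscosity inequalities for $u_k$ linearizes precisely to the inequalities in items (i)--(iv) of \cref{def:limitPbSolutionViscous} for $\phi$. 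At the corner $B_{1/2}'\cap\partial\Sigma$, the dichotomy in (VS3) passes to the limit and produces the dichotomy in (iv); this is the only delicate passage, since both alternatives must be preserved without collapsing.

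Once $\tilde u$ is identified as a viscosity solution of \eqref{linearized.problem}, \cref{l:linearized.regularity} furnishes a vector $\eta\perp e_1,e_d$ with $|\eta|\le M$ and, for each $r\in(0,1/4)$,
\[
x\cdot\eta-M r^{1+\eps}\le \tilde u(x)\le x\cdot\eta+M r^{1+\eps}\quad\text{in }B_r\cap\Sigma.
\]
Choose $r_0$ small so that $M r_0^{\eps}\le 1/8$ (say). For $k$ large, the uniform convergence of $\tilde u_k\to\tilde u$ on $\{h_{q,m,e_1}\ge c\,r_0\}\cap B_{r_0}$ translates into
\[
h_{q,m,e_1}(x)+\eps_k\sqrt{q^2-m^2}\bigl(x\cdot\eta\pm \tfrac14 r_0\bigr)
\]
being an upper/lower barrier for $u_k$ in that region, while inside the thin strip near $\partial\Sigma$ one uses the original $\eps_k$-flatness. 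Selecting the unit vector
\[
\nu_k:=\frac{e_1+\eps_k\eta}{|e_1+\eps_k\eta|}\perp e_d,\qquad |\nu_k-e_1|\le C\eps_k,
\]
and Taylor-expanding $h_{q,m,\nu_k}$ at $e_1$ converts these barriers into the flatness statement \eqref{flat_imp} at scale $r_0$ with parameter $\eps_k/2$, for all $k$ sufficiently large. This contradicts the assumed failure of \eqref{flat_imp} and closes the argument.

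The main technical obstacle is the linearization at the corner $\partial\Sigma\cap\{x_d=0\}$: one must verify that the two-sided viscosity boundary condition (VS3), with its disjunctive structure, passes to the limit in a way compatible with the dichotomy (iv) of \cref{def:limitPbSolutionViscous}, and that the perturbation $\Phi_k$ built from $\phi$ is an admissible touching surface in the sense of \cref{def:solutionnew} (in particular that its positivity set is close enough to $\Sigma$ that the relevant viscosity condition can be invoked). Everything else is an adaptation of the now-classical De Silva argument, using that under $(\mathcal I2)$ the coefficient oscillation is of order $\eps^{2/\sigma}\ll\eps$ and hence negligible at leading order.
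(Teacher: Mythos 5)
Your proposal follows the same three-step compactness--linearization--regularity-transfer scheme as the paper's proof: compactness of the linearized sequence $\tilde u_k$ via \cref{cor:AA}, identification of the limit $\tilde u$ as a viscosity solution of the linearized problem in the sense of \cref{def:limitPbSolutionViscous}, and transfer of the $C^{1,\alpha}$ decay from \cref{l:linearized.regularity} back to $u_k$ by choosing $\nu_k=(e_1+\eps_k\eta)/|e_1+\eps_k\eta|$. The one technical device you elide is that, to make the viscosity comparison rigorous under the variable coefficients $A_k$, the paper replaces the test polynomial $P$ and the linear profile $L$ by the solutions $\tilde P_k$ and $L_k$ of the associated $A_k$-equations on $B_r(\bar x)$ before assembling the nonlinear barrier $P_k=L_k+\eps_k\sqrt{q^2-m^2}(\tilde P_k+c_k)$; under $(\mathcal I2)$ this correction is $o(\eps_k)$, so the idea is the same and the argument goes through.
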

	\begin{proof}
		The proof follows the classical strategy introduced in \cite{desilva} for the one-phase problem.	
		Let 
		$$L(x):=\sqrt{q^2-m^2}\,(x\cdot e_1) + m\,(x\cdot e_d),$$
		so we have $h_{q,m,e_1}=L^+$, and set $\Sigma=\{h_{q,m,e_1}>0\}$.\medskip
		
		\noindent{\it Step 1 - Compactness.} Set $r \in (0,r_0)$, with $r_0>0$ to be made precise later. Assume by contradiction that there exist $\eps_k \to 0$ such that the following holds true: there exist sequences $A_k, Q_k, \beta_k$ satisfying $(\mathcal I1),(\mathcal I2)$, such that the viscosity solutions $u_k$ of \eqref{e:interno} and \eqref{FB} satisfy $0 \in \partial\{u_k>0\}$ and
		\be\label{contrad1}
		h_{q,m,e_1}(x - \eps_k e_1)\leq u_k(x)\leq h_{q,m,e_1}(x+ \eps_k e_1)\quad \text{in $B_1^+\cap B_1'$},
		\ee
		but the conclusions \eqref{flat_imp} does not hold. Therefore, consider 
		\be\label{tilde}
		\tilde{u}_k(x) := \frac{1}{\sqrt{q^2-m^2}}\frac{u_k(x) - L(x) }{\eps_k}, \qquad \tilde u_k:(B_1^+\cup B_1')\cap \overline{\{u_k>0\}}\to0.
		\ee
		By the flatness assumptions \eqref{contrad1} the sequence $u_k$ is uniformly bounded in $B_1$ and the free boundaries $\partial\{u_k>0\}$ converge to $\partial\Sigma$ in the Hausdorff distance. By \cref{cor:AA}, we have that, up to a subsequence, the graphs of $\tilde{u}_k$ over $(B_{\sfrac12}^+ \cup B_{\sfrac12}')\cap \overline{\{u_k>0\}})$ converge with respect to the Hausdorff distance to the graph of a H\"{o}lder continuous functions $\tilde u$ defined on $(B_{\sfrac12}^+\cup B_{\sfrac12}')\cap \overline{\Sigma}$.	\medskip
		
		\noindent{\it Step 2 - Linearized problem.} We will prove that $\tilde u$ is a viscosity solution (in the sense of \cref{def:limitPbSolutionViscous}) to
		\be
		\Delta \tilde u=0 \quad\text{in $B_{\sfrac12}^+ \cap \Sigma$},\,\quad
		\nabla \tilde u \cdot e_d= 0 \quad\text{on $B_{\sfrac12}'\cap \Sigma$},\,\quad
		\nabla \tilde{u}\cdot \nabla L = 0 \quad\text{on $B_{\sfrac12}^+ \cap \partial \Sigma $}.
		\ee
		Suppose that $P$ is a polynomial  touching $\tilde{u}$ at $\overline{x} \in (B_{1/2}^+\cup B_{1/2}') \cap \overline{\Sigma}$ strictly from below (the case when $P$ touches $\tilde u$ from above is analogous), that is, there is $r>0$ such that
		$$P(\bar x)=\tilde u(\bar x)\qquad\text{and}\qquad \min_{\partial B_r(\bar x)\cap \overline\Sigma}(\tilde u-P)\ge\delta,$$
		for some $\delta>0$.
		In view of \cref{def:limitPbSolutionViscous}, we need to show that:
		\begin{enumerate}[label=(\roman*)]
			\item if $\overline{x}\in B_{\sfrac12}^+\cap \Sigma$, then $\Delta P(\overline{x}) \leq 0$;
			\item if $\overline{x}\in B_{\sfrac12}^+\cap \partial \Sigma$, then $\nabla P(\overline{x})\cdot \nabla L\leq 0$;
			\item if $\overline{x}\in B_{\sfrac12}'\cap \Sigma$, then $\partial_{x_d}P(\overline{x}) \leq 0$;
			\item if $\overline{x}\in B_{\sfrac12}'\cap \partial\Sigma$, then $\partial_{x_d}P(\overline{x}) \leq 0$ or $\nabla P(\overline{x})\cdot \nabla L\leq 0$.
		\end{enumerate}
		For every $k\ge 1$, there exists $x_k \in (B_{\sfrac12}^+\cup B_{\sfrac12}') \cap \overline{\{u_k>0\}})$ and $c_k\in\R$, such that $P+c_k$ touches $\tilde{u}_k$ from below at $x_k$. Moreover, by the Hausdorff convergence of the graphs of $\tilde u_k$, we have that 
		$$P(x_k)+c_k=\frac{u_k(x_k)-L(x_k)}{\eps_k\sqrt{q^2-m^2}}\qquad\text{and}\qquad \min_{\partial B_r(\bar x)\cap \overline{\{u_k>0\}}}\bigg\{\frac{u_k-L}{\eps_k \sqrt{q^2-m^2}}-P-c_k\bigg\}\ge\frac{\delta}{2}.$$
		Consider the solutions $\tilde{P}_k$ and $L_k$ to 
		$$
		\begin{cases}
		\text{\rm div}\big(A_k\nabla L_k\big)=0 & \mbox{in } B_r(\bar x) \\
		L_k=L & \mbox{on }\partial B_r(\bar x),
		\end{cases}\qquad
		\begin{cases}
		\text{\rm div}\big(A_k\nabla \tilde P_k\big)=\Delta P & \mbox{in } B_r(\bar x) \\
		\tilde P_k=P & \mbox{on }\partial B_r(\bar x).
		\end{cases}
		$$
		Then, since by hypothesis $\|A_k - \text{\rm Id}\|_{C^{0, \alpha}\left(B_r\left( \overline{x}\right)\right)}<<\eps_k$, we get that 
		$$\frac\delta4+\tilde P_k(x_k)+c_k>\frac{u_k(x_k)-L_k(x_k)}{\eps_k \sqrt{q^2-m^2}}\qquad\text{and}\qquad \min_{\partial B_r(\bar x)\cap \overline{\{u_k>0\}}}\bigg\{\frac{u_k-L_k}{\eps_k \sqrt{q^2-m^2}}-\tilde P_k-c_k\bigg\}\ge\frac{\delta}{4}.$$
		This, in particular means that $\tilde P_k+c_k$ touches from below $\eps_k^{-1}(u_k-L_k)$ at a point $y_k\in \overline{\{u_k>0\}}\cap B_r(\bar x)$. 
		Therefore, if we set
		$$P_k(x):= L_k(x) +\eps_k \sqrt{q^2-m^2}(\tilde P_k(x)+c_k),$$
		we get that $P_k$ touches $u_k$ from below at $y_k$. \medskip
		
		We now consider the different cases (i), (ii), (iii) and (iv). The cases (i) and (ii) were already discussed in \cite{DeSilvaFerrariSalsa:2PhaseFreeBdDivergenceForm} and \cite{FerreriVelichkov2023:discontinuousBernoulli}. The case (iii) follows by the Schauder estimates in $B_r(\overline x)$ and by using again the assumption $\|A_k -\text{\rm Id}\|_{C^{0, \alpha}\left(B_r\left( \overline{x}\right)\right)}<<\eps_k$. It only remains to treat the case (iv). Indeed, suppose that $P$ touches $\tilde u$ from below at $\bar x\in\partial\Sigma\cap\{x_d=0\}$ and is such that 
		$$e_d\cdot\nabla P(\bar x)>0\qquad\text{and}\qquad \nabla L\cdot \nabla P(\bar x)>0.$$
		Now, for the point $x_k$, we have the following possibilities:
		\begin{itemize}
			\item $x_k\in B_{\sfrac12}^+\cap\{u_k>0\}$. This case leads to contradiction proceeding as in (i) (see \cite{FerreriVelichkov2023:discontinuousBernoulli} or \cite{DeSilvaFerrariSalsa:2PhaseFreeBdDivergenceForm}). 
			\item $x_k\in B_{\sfrac12}^+\cap\partial\{u_k>0\}$. This case follows as in (ii) (we refer to \cite{FerreriVelichkov2023:discontinuousBernoulli}).  
			\item $x_k\in B_{\sfrac12}'\cap\{u_k>0\}$. This case follows as (iii) by a straightforward adaptation of (ii).
			\item $x_k\in B_{\sfrac12}'\cap\partial\{u_k>0\}$. This is not possible since 
			$$e_d\cdot\nabla P_k(x_k)>0\qquad\text{and}\qquad \nabla L\cdot \nabla P_k(x_k)>0,$$
			and $u_k$ is a viscosity solution in the sense of \cref{def:solutionnew}. 
		\end{itemize}
		
		\noindent{\it Step 3 - Improvement-of-flatness.} We already know that $|\tilde{u}|\leq 1$ in $B_1^+\cup B_1'$ and satisfies $\tilde{u}(0)=0$. First, by \cref{l:linearized.regularity}, we know that there exists $\alpha \in (0,1)$
		$$
		(x\cdot \eta)-M r^{1+\alpha}\leq \tilde{u}(x)\leq (x\cdot \eta) + M r^{1+\alpha} \quad\mbox{in }(B_{r}^+\cup B_r')\cap \overline{\Sigma},
		$$
		for every $r \in (0,1/4)$, where $\eta\in \R^d\setminus \{0\}$ is a vector satisfying
		$$
		\eta \cdot e_d = \eta \cdot e_1 = 0,\quad |\eta|\leq M
		$$
		with $M>0$ a universal constant. Thus, for $k$ sufficiently large there exists $c_1>0$ such that
		$$
		(x\cdot \eta) - c_1 r^{1+\alpha} \leq \tilde{u}_k(x) \leq (x\cdot \eta) + c_1 r^{1+\alpha} \quad \text{in $(B_r^+\cup B'_r) \cap \overline{\{u_k > 0\}}$},
		$$
		and exploiting the definition of $\tilde u_k$, we read
		$$
		\eps_k \sqrt{q^2-m^2}\left((x\cdot \eta) -  c_1 r^{1+\alpha}\right) \leq u_k(x) - L(x) \leq \eps_k \sqrt{q^2-m^2}\left((x\cdot \eta) + c_1 r^{1+\alpha}\right)
		$$
		in $(B_r^+\cup B'_r) \cap \overline{\{u_k > 0\}}$. Since  
		$$1\leq \sqrt{1+\abs{\eta}^2\eps_k^2} \leq 1+ M^2{\eps_k^2}, $$
		by setting
		$$
		\nu = \frac{e_1+\eps_k \eta}{\sqrt{1+\eps_k^2\abs{\eta}^2}} \in \mathbb{S}^{d-1}\cap \{x_d=0\},
		$$
		we get
		\begin{align*}
		\left\{
		\begin{aligned}
		\sqrt{q^2-m^2}\left((x\cdot \nu) -c_1 \eps_k r^{1+\alpha} - M^2{\eps_k^2}r\right) + m(x\cdot e_d) &\leq u_k(x),\\
		\sqrt{q^2-m^2}\left((x\cdot \nu) +c_1 \eps_k r^{1+\alpha} + M^2{\eps_k^2}r\right)  + m(x\cdot e_d)  &\geq u_k(x).
		\end{aligned}
		\right.
		\end{align*}
		Finally, by taking $r \leq r_0$ such that $c_1 r_0 \leq 1/4$ and $k$ large enough so that $M^2\eps_k \leq 1/4$, we obtain
		\be\label{improv.1}
		h_{q,m,\nu}\left(x-\frac{\eps_k}{2}r\nu\right)
		\leq u_k(x) \leq h_{q,m,\nu}\left(x+\frac{\eps_k}{2}r\nu\right)\quad\text{in $(B_r^+\cup B_r')\cap \overline{\{u_k>0\}}$},
		\ee
		that is $u_k$ satisfies the conclusion of the lemma, and we reached the contradiction.
	\end{proof}
	\section{Stable homogeneous solutions}\label{s:stability.sect}
	In this section we prove that there are no stable singular cones in dimensions $d \in \{2,3,4\}$. We prove the Caffarelli-Jerison-Kenig-type stability inequality  from \cref{t:stability-inequality-CJK-type} for one-homogeneous non-negative global minimizers of the functional
	$$
	J(v,E):=\int_{E^+}|\nabla v|^2\,\mathrm{d}x + q^2\big|\{v>0\}\cap E^+\big|+
	2\,m\int_{E'}\,|v| \,\mathrm{d}\HH^{d-1},
	$$
	with $q \in \R$ and $m\in (-q,q)$. Then, by combining the analysis carried out in \cite{js} with the stability inequality of \cref{t:stability-inequality-CJK-type}, we provide the main result concerning existence of global minimizer with isolated singularity (\cref{t:stability}).
	\subsection{Taylor expansion of solutions to PDEs with non-homogeneous Neumann conditions}\label{sub:first-and-second-variation-of-EL}$ $\\
	We start by deriving a second order expansion of solutions to the Euler-Lagrange equations associated to the functional $J$, along smooth internal variations. We follow step-by-step the proof from \cite{BMMTV}, the only difference is that here in the functional $J$ we also have the boundary term $\int_{E'}|v|\,d\HH^{d-1}$. 
	
	Through the section, we denote by $\R^{d\times d}$ the space of $d\times d$ square matrices with real coefficients, and for any real matrix $B=(b_{ij})_{ij}\in \R^{d\times d}$, we define the norm
	$$\|B\|_{\R^{d\times d}}:=\bigg(\sum_{i=1}^d\sum_{j=1}^db_{ij}^2\bigg)^{\sfrac12}.$$
	Given a matrix $B:\O\to\R^{d\times d}$ with coefficients $b_{ij}:\O\to\R$, defined on a measurable set $\Omega\subset\R^d$, we say that $B\in L^\infty(\O;\R^{d\times d}),$
	if $b_{ij}\in L^\infty(\O)$ for every $1\le i,j\le d$. We also define the norm 
	$$\|B\|_{L^\infty(\O;\R^{d\times d})}:=\big\|\|B\|_{\R^{d\times d}}\big\|_{L^\infty(\O)}.$$
	\begin{lemma}[Second order expansion of solutions]\label{l:abstact-second-order-expansion}
		Let $\O$ be a bounded open set in $\R^d$ and let
		$$m:\R\to L^\infty(\O')\quad\text{and}\quad B:\R\to L^\infty(\O;\R^{d\times d}),$$
		where as usual $\Omega'=\Omega\cap\{x_d=0\}$, be such that:
		\begin{enumerate}[\quad\rm(a)]
			\item $B_t(x)$ is a symmetric matrix for every $(t,x)\in\R\times\O$ and there are symmetric matrices\\ $\delta B\in L^\infty(\O;\R^{d\times d})$ and $\delta^2B\in L^\infty(\O;\R^{d\times d})$ such that
			$$B_t=\text{\rm Id}+t\,\delta B+t^2\delta^2B+o(t^2)\quad\text{in }L^\infty(\O;\R^{d\times d})\,;$$
			\item there are functions $\delta m\in L^\infty(\O')$ and $\delta^2 m\in L^\infty(\O')$ such that
			$$m_t=m_0+t\,\delta m+t^2\delta^2m+o(t^2)\quad\text{in }L^\infty(\O').$$
		\end{enumerate}
		Then, given $u_0 \in H^1(\O^+)$, for every $t$ small enough there is a unique solution $u_t$ to the problem
		\be\label{e:abstract-equation-u-t}
		\dive(B_t\nabla u_t)=0\quad\text{in }\O^+,\qquad
		e_d \cdot B_t\nabla u_t = m_t \quad\text{on }\O',\qquad
		u_t=u_0\quad\mbox{on }(\partial \O)^+.
		\ee
		Moreover, $$
		u_t=u_0+t\,\delta u+t^2\delta^2u+o(t^2)\quad\text{in }H^1(\O),
		$$
		where $\delta u, \delta^2 u \in H^1(\O^+)$ are the unique weak solutions to
		\be\label{e:abstract-equations-delta-u}
		\begin{cases}
			-\Delta(\delta u)=\dive((\delta B)\nabla u_0) &\text{in }\O^+,\\
			e_d \cdot \nabla (\delta u) + e_d \cdot \delta B\nabla u_0 =\delta m &\text{on }\O',\\
			\delta u = 0 &\text{on }(\partial\O)^+\,,
		\end{cases}
		\ee
		and
		\be\label{e:abstract-equations-delta-2-u}
		\begin{cases}
			-\Delta (\delta^2 u)=\dive((\delta B)\nabla (\delta u))+\dive((\delta^2 B)\nabla u_0)&\text{in }\O\,,\\
			e_d\cdot \nabla(\delta^2 u) + e_d \cdot\left(\delta B \nabla(\delta u) + \delta^2B\nabla u_0\right) = \delta^2 m  &\text{in }\O'\,,\\
			\delta^2 u = 0 &\text{on }(\partial\O)^+.
		\end{cases}
		\ee
	\end{lemma}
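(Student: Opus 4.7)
The plan is to treat this as a standard implicit function theorem argument for an elliptic PDE whose coefficients and Neumann data depend smoothly on the parameter $t$. First, I would establish existence and uniqueness of $u_t$: since $B_t\to\text{\rm Id}$ in $L^\infty(\O;\R^{d\times d})$, there is $t_0>0$ such that for $|t|\le t_0$ the matrix $B_t$ is uniformly elliptic (say $\tfrac{1}{2}\text{\rm Id}\le B_t\le 2\,\text{\rm Id}$ on $\O$). Hence the bilinear form $a_t(v,w):=\int_{\O^+}\nabla w\cdot B_t\nabla v\,dx$ is continuous and coercive on $V:=\{v\in H^1(\O^+):v=0\ \text{on}\ (\partial\O)^+\}$, and $w\mapsto\int_{\O'}m_t\,w\,d\HH^{d-1}-a_t(\ell,w)$ (where $\ell\in H^1(\O^+)$ lifts the Dirichlet datum $u_0$) is a continuous linear functional on $V$ by the trace theorem. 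The Lax--Milgram lemma then yields a unique weak solution $u_t\in H^1(\O^+)$ of \eqref{e:abstract-equation-u-t}. The same argument with $B_t\equiv \text{\rm Id}$ gives the unique existence of $\delta u,\delta^2 u\in V$ solving \eqref{e:abstract-equations-delta-u}--\eqref{e:abstract-equations-delta-2-u}.

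The core step is to define the remainder
\[
R_t:=u_t-u_0-t\,\delta u-t^2\,\delta^2 u\in V,
\]
and show $\|R_t\|_{H^1(\O^+)}=o(t^2)$. Substituting the expansions $B_t=\text{\rm Id}+t\,\delta B+t^2\delta^2 B+o_{L^\infty}(t^2)$ and $m_t=m_0+t\,\delta m+t^2\delta^2 m+o_{L^\infty}(t^2)$ into the weak formulation of \eqref{e:abstract-equation-u-t}, and subtracting the weak formulations of \eqref{e:abstract-equations-delta-u} and \eqref{e:abstract-equations-delta-2-u} multiplied by $t$ and $t^2$ respectively, the $O(1)$, $O(t)$, and $O(t^2)$ terms cancel precisely by the defining PDEs of $\delta u$ and $\delta^2 u$. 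What survives is
\[
\int_{\O^+}\nabla w\cdot B_t\nabla R_t\,dx=\int_{\O^+}\nabla w\cdot F_t\,dx+\int_{\O'}g_t\,w\,d\HH^{d-1}\qquad\text{for all }w\in V,
\]
where $F_t\in L^2(\O^+;\R^d)$ and $g_t\in L^\infty(\O')$ collect the higher-order residuals. Using $\nabla u_0,\nabla\delta u,\nabla\delta^2 u\in L^2(\O^+)$ together with the $o(t^2)$ bounds on the expansions of $B_t$ and $m_t$, one gets $\|F_t\|_{L^2(\O^+)}+\|g_t\|_{L^\infty(\O')}=o(t^2)$.

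Finally, test with $w=R_t$ and use the uniform ellipticity of $B_t$, the Cauchy--Schwarz inequality, the trace theorem, and the Poincar\'e inequality on $V$ (available since $R_t$ vanishes on $(\partial\O)^+$) to obtain
\[
\|\nabla R_t\|_{L^2(\O^+)}^2\lesssim \|F_t\|_{L^2}\,\|\nabla R_t\|_{L^2}+\|g_t\|_{L^\infty}\,\|\nabla R_t\|_{L^2},
\]
which yields $\|R_t\|_{H^1(\O^+)}=o(t^2)$, as required. The main obstacle is the careful bookkeeping of the Neumann boundary term: one has to track how the capillarity datum $m_t$ interacts with the expansion of $B_t\nabla u_t\cdot e_d$ on $\O'$, verify that the cancellations identifying $\delta u$ and $\delta^2 u$ are consistent on both the interior source and the boundary trace, and confirm that the trace of $w$ on $\O'$ appears with the correct sign and weight. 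This is essentially algebraic once the weak formulation is set up, but is the step most prone to sign errors.
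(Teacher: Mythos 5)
Your proposal is correct, but it takes a genuinely different route from the paper. The paper proceeds in two steps: it first sets $w_t:=\tfrac1t(u_t-u_0)$, establishes a uniform $H^1$-bound on $w_t$, extracts a weakly convergent subsequence, identifies the limit with $\delta u$ via the PDE and uniqueness, and upgrades to strong convergence by testing the quotient equation with $w_t$; it then repeats the same scheme for $v_t:=\tfrac1t(w_t-\delta u)\to\delta^2 u$. You instead form the second-order remainder $R_t=u_t-u_0-t\,\delta u-t^2\delta^2 u$ directly, verify by plugging the expansions of $B_t$ and $m_t$ into the weak formulation that all $O(1)$, $O(t)$, $O(t^2)$ terms cancel exactly by the defining equations of $\delta u$ and $\delta^2 u$ (I checked: both the bulk terms $(\delta B)\nabla u_0$, $(\delta^2 B)\nabla u_0$, $(\delta B)\nabla(\delta u)$ and the boundary contributions from $\delta m$, $\delta^2 m$ do cancel), and then close with a single energy estimate: testing with $R_t$, using coercivity of $B_t$ for $|t|$ small, Cauchy--Schwarz, the trace inequality, and Poincar\'e on $V$ gives $\|\nabla R_t\|_{L^2}\lesssim \|F_t\|_{L^2}+\|g_t\|_{L^\infty}=o(t^2)$, where $F_t$ collects the $o(t^2)$ coefficient error $E_t:=B_t-\mathrm{Id}-t\,\delta B-t^2\delta^2 B$ hitting $\nabla u_0$, $\nabla(\delta u)$, $\nabla(\delta^2 u)$ plus genuine $O(t^3)$ cross terms, and $g_t$ is the $o(t^2)$ remainder in the expansion of $m_t$. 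Your one-shot a priori estimate is arguably cleaner and avoids the compactness/subsequence extraction entirely; the paper's two-pass version has the minor advantage of isolating the first- and second-order quotient limits as separate statements, which can be reused, but for the stated conclusion both are equally rigorous. One small bookkeeping remark: with $\O^+\subset\{x_d>0\}$ the outer normal on $\O'$ is $-e_d$, so the weak form reads $\int_{\O^+}\nabla w\cdot B_t\nabla u_t\,dx+\int_{\O'}m_t\,w\,d\HH^{d-1}=0$; your Lax--Milgram functional therefore carries a minus sign in front of the boundary integral. This is exactly the kind of sign you flagged as error-prone, and it does not affect the validity of the scheme.
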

	\begin{proof}
		For the sake of simplicity, we will use the following notation: by  $\mathcal H_0^+(\Omega)$  we will denote the completion of $C^\infty_c(\Omega)$ with respect to the norm $H^1(\Omega^+)$; that is, the functions in $\mathcal H_0^+(\Omega)$ are in $H^1(\Omega^+)$ and are zero on $\partial\Omega\cap\{x_d>0\}$. 
		We split the proof in two parts.
		
		\noindent \it Part 1. \rm Given $w_t:=\frac1t(u_t-u_0)\in \mathcal H_0^+(\Omega)$, we prove that $w_t$ converges to $\delta u$ strongly in $\mathcal H_0^+(\Omega)$. We notice that \eqref{e:abstract-equation-u-t} can be written as
		$$
		\begin{cases}
		\dive((\text{\rm Id}+(B_t-\text{\rm Id}))\nabla (u_0+tw_t))=0&\text{in }\O^+\\
		e_d \cdot (\text{\rm Id} + (B_t-\text{\rm Id}))\nabla (u_0+t w_t) = (m_0+(m_t - m_0)) &\text{on }\O'.
		\end{cases}
		$$
		So, using the equation for $u_0$ and dividing by $t$, we get
		\be\label{e:s3-abstract-equation-w-t}
		\begin{cases}
			-\Delta w_t-\dive\Big(\frac1t(B_t-\text{\rm Id})\nabla u_0\Big)-\dive\Big((B_t-\text{\rm Id})\nabla w_t\Big)=0&\text{in }\O^+\\
			e_d \cdot \nabla w_t + e_d \cdot \frac1t(B_t-\text{\rm Id})\nabla u_0 + e_d \cdot (B_t - \text{\rm Id})\nabla w_t= \frac1t(m_t-m_0) &\text{on }\O'.
		\end{cases}
		\ee
		If we fix $\eps>0$, we can choose $t$ small enough such that
		$$\|B_t-\text{\rm Id}\|_{L^\infty(\O;\R^{d\times d})} + \left\|\frac1{t}(B_t-\text{\rm Id})-\delta B\right\|_{L^\infty(\O;\R^{d\times d})} +
		\left\|\frac1{t}(m_t-m_0)-\delta m\right\|_{L^\infty(\O')}\le \eps.$$
		By testing \eqref{e:s3-abstract-equation-w-t} with $w_t$, we obtain
		\begin{align*}
		\int_{\O^+}|\nabla w_t|^2\,dx=&-\int_{\O^+}\nabla w_t\cdot \frac1t(B_t-\text{\rm Id})\nabla u_0\,dx-\int_{\O^+}\nabla w_t\cdot (B_t-\text{\rm Id})\nabla w_t\,dx- \int_{\O'}w_t \frac{m_t-m_0}{t}\,dx' \\
		\le&\Big(\eps+\|\delta B\|_{L^\infty(\O;\R^{d\times d})}\Big)\|\nabla w_t\|_{L^2 (\O)}\|\nabla u_0\|_{L^2 (\O)}+\eps\|\nabla w_t\|_{L^2(\O^+)}^2\\
		&\qquad + \left(\eps + \norm{\delta m}{L^\infty(\O')}\right)\Big(\HH^{d-1}(\Omega')\Big)^{1/2}\norm{w_t}{L^2(\O')}.
		\end{align*}
		We notice that the following trace inequality holds on $\mathcal H_0^+(\Omega)$
		$$\|\varphi\|_{L^2(\O')}^2\le C_d\|\nabla \varphi\|_{L^2(\O^+)}^2\quad\text{for every}\quad \varphi\in \mathcal H_0^+(\Omega)\,,$$
		where $C_d>0$ is a dimensional constant. Then, 
		$$
		\|\nabla w_t\|_{L^2(\O^+)}\le  \frac{\eps+\|\delta B\|_{L^\infty(\O;\R^{d\times d})}}{1-\eps}\|\nabla u_0\|_{L^2(\O^+)} + C_d\frac{\eps+\norm{\delta m}{L^\infty(\O')}}{1-\eps}\Big(\HH^{d-1}(\Omega')\Big)^{1/2}.
		$$
		Thus, for every sequence $t_n\to0$, there is a subsequence for which $w_{t_n}$ converges as $n\to \infty$, strongly in $L^2(\O^+)$ and weakly in $H^1(\Omega^+)$, to some function $w_\infty$. Passing to the limit the equation \eqref{e:s3-abstract-equation-w-t} we get that $w_\infty$ is also a solution to \eqref{e:abstract-equations-delta-u}. Thus $w_\infty=\delta u$. In particular, this implies that $w_t$ converges, as $t\to0$, strongly in $L^2(\O^+)$ and $L^2(\Omega')$, and weakly in $H^1(\Omega^+)$ to $\delta u$. Finally, in order to prove that the convergence is strong, we test again \eqref{e:s3-abstract-equation-w-t} with $w_t$:
		$$
		\begin{aligned}
		\limsup_{t\to\infty}\int_{\O^+}|\nabla w_t|^2\,dx
		=& -\lim_{t\to0}\int_{\O^+}\nabla w_t\cdot \frac1t(B_t-\text{\rm Id})\nabla u_0\,dx- \int_{\O'}\frac1t(m_t-m_0)w_t\,dx' \\
		=&-\int_{\O^+}\nabla (\delta u)\cdot \delta B\nabla u_0\,dx
		-\int_{\O'} (\delta u)(\delta m)\,dx' \\
		=&\int_{\O^+}|\nabla (\delta u)|^2\,dx\,.
		\end{aligned}
		$$
		Combining this estimate with the lower semi-continuity of the $H^1$ norm, we get
		$$\lim_{t\to\infty}\int_{\O^+}|\nabla w_t|^2\,dx=\int_{\O^+}|\nabla (\delta u)|^2\,dx\,$$
		which implies that $w_t$ converges to $\delta u\in \mathcal H_0^+(\Omega)$ strongly also with respect to the $H^1(\Omega^+)$ norm.\medskip
		
		\noindent \it Part 2. \rm Given $v_t:=\frac1t(w_t-\delta u)\in H^1_0(\O^+),$ we prove that $v_t$ converges strongly in $H^1_0(\O^+)$ to $\delta^2u$. First, from equation \eqref{e:s3-abstract-equation-w-t} for $w_t$, we have
		$$
		\begin{cases}
		-\Delta (\delta u+tv_t) -\dive\Big(\frac1t(B_t-\text{\rm Id})\nabla u_0\Big)-\dive\Big((B_t-\text{\rm Id})\nabla (\delta u+tv_t)\Big)=0&\text{in }\O^+\\
		e_d \cdot \nabla (\delta u+tv_t) + e_d \cdot \frac1t(B_t-\text{\rm Id})\nabla u_0 + e_d \cdot (B_t - \text{\rm Id})\nabla (\delta u+tv_t)= \frac1t(m_t-m_0) &\text{on }\O'.
		\end{cases}
		$$
		Thus, using the equation \eqref{e:abstract-equations-delta-u} for $\delta u$, we get
		$$
		\begin{cases}
		-\Delta v_t-\dive\Big(\frac1{t^2}(B_t-\text{\rm Id}-t\delta B)\nabla u_0\Big)-\dive\Big(\frac1t(B_t-\text{\rm Id})\nabla(\delta u+tv_t)\Big)=0&\text{in }\O^+\\
		e_d \cdot \nabla v_t + e_d \cdot \frac{1}{t^2}(B_t-\text{\rm Id}-t\delta B)\nabla u_0 + e_d \cdot \frac1t(B_t - \text{\rm Id})\nabla (\delta u+tv_t)= \frac{1}{t^2}(m_t-m_0-t\delta m) &\text{on }\O'.
		\end{cases}
		$$
		Now, the proof follows by reasoning as in Part 1 and using that
		$$ \left\|\frac{1}{t^2}(B_t-\text{\rm Id}-t\delta B)-\delta^2 B\right\|_{L^\infty(\O;\R^{d\times d})} +
		\left\|\frac{1}{t^2}(m_t-m_0-t \delta m)-\delta^2 m\right\|_{L^\infty(\O')}\le \eps,$$
		for $t$ sufficiently small.
	\end{proof}
	We next apply the abstract expansion from \cref{l:abstact-second-order-expansion} to the specific case induced from the variation of the original domain $\Omega$ along vector fields.  
	\begin{proposition}\label{l:first-and-second-variation-along-a-flow}
		Let $D$ be a bounded open set in $\R^d$, $\O\subset D$ be an open set, and $\eta \in C^\infty_c(D;\R^d)$ be a compactly supported vector field satisfying the following assumption:
		\be\label{e:assumption-eta}
		\eta(x',0)\perp e_d\quad \mbox{for every}\quad (x',0) \in D':=D\cap\{x_d=0\}.
		\ee
		Let $\Phi:\R\times\R^d\to\R^d$ be the flow associated to $\eta$. Precisely, for every $x\in D$, $\Phi_t(x)$ solves the ODE
		\be\label{e:flow-of-eta}
		\begin{cases}
			\partial_t\Phi_t(x)=\eta\big(\Phi_t(x)\big)\quad\text{for every }t\in\R\\
			\Phi_0(x)=x.
		\end{cases}
		\ee
		We consider	the family of open sets $\O_t := \Phi_t(\O)$ and the associated state variable $u_t$ satisfying $u_t=u_0$ on $(\partial D)^+$ and
		$$
		\Delta u_t=0 \quad\mbox{in } \O_t\cap D^+,\qquad
		u_t=0 \quad\mbox{on } \partial \O_t \cap D^+,\qquad
		e_d \cdot \nabla u_t = m \quad\mbox{on }\O_t \cap D'
		$$
		for some $m \in \R$.
		We define $\delta u_\O$ and $\delta^2 u_\O$ to be the weak solutions to the PDEs
		\be\label{e:deltau}
		\begin{cases}
			-\Delta(\delta u)=\dive((\delta B)\nabla u_0) &\text{in }\O^+,\\
			e_d \cdot \nabla (\delta u) + e_d \cdot \delta B\nabla u_0 =\delta m &\text{on }\O',\\
			\delta u = 0 &\text{on }(\partial\O)^+\,,
		\end{cases}
		\ee
		and
		\be\label{e:delta2u}
		\begin{cases}
			-\Delta (\delta^2 u)=\dive((\delta B)\nabla (\delta u))+\dive((\delta^2 B)\nabla u_0)&\text{in }\O\,,\\
			e_d\cdot \nabla(\delta^2 u) + e_d \cdot\left(\delta B \nabla(\delta u) + \nabla(\delta^2 u)\right) = \delta^2 m  &\text{in }\O'\,,\\
			\delta^2 u = 0 &\text{on }(\partial\O)^+,
		\end{cases}
		\ee
		where the matrices $\delta B\in L^\infty(D;\R^{d\times d})$ and $\delta^2B\in L^\infty(D;\R^{d\times d})$ are given by
		\be\label{e:first-and-second-variation-B}
		\begin{aligned}
			\delta B &:=-D\eta-\nabla\eta+(\dive\eta)\text{\rm Id}\,,\\
			\delta^2 B &:= (D\eta)\,(\nabla\eta)+\frac12\big(\nabla\eta\big)^2+\frac12(D\eta)^2-\frac12(\eta\cdot\nabla)\big[\nabla\eta+D\eta\big]\\
			&\qquad-\big(\nabla\eta+D\eta\big)\dive\eta+\text{\rm Id}\frac{(\dive\eta)^2+\eta\cdot\nabla(\dive\eta)}{2}\,,
		\end{aligned}
		\ee
		while the variations $\delta m\in L^\infty(D)$ and $\delta^2 m\in L^\infty(D)$ of the Neumann condition $m$ are defined as
		\be\label{e:first-and-second-variation-m}
		\delta m :=m\dive \eta\ ,\qquad
		\delta^2 m := m\frac{(\dive\eta)^2+\eta\cdot\nabla[\dive\,\eta]}{2} \,.
		\ee
		Then,
		\[
		u_t\circ \Phi_t = u_0 + t (\delta u) + t^2 (\delta^2 u) + o(t^2)\quad\mbox{in}\quad H^1(\Omega^+)\,.
		\]
	\end{proposition}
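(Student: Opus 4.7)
The statement reduces the domain-variation problem to the abstract setting of Lemma \ref{l:abstact-second-order-expansion} by pulling back the PDE satisfied by $u_t$ on $\Omega_t\cap D^+$ to one on the fixed domain $\Omega\cap D^+$ via the diffeomorphism $\Phi_t$. The hypothesis \eqref{e:assumption-eta} guarantees that $\Phi_t$ preserves the hyperplane $\{x_d=0\}$ (since any integral curve of $\eta$ starting on it has vanishing $d$-th component by uniqueness), so $\Phi_t(\Omega')=\Omega_t'$ and $\Phi_t((\partial D)^+)=(\partial D)^+$ for $t$ small. Consequently, setting $v_t:=u_t\circ \Phi_t$, the boundary condition $v_t=u_0$ on $(\partial D)^+$ and the relevant weak formulations are preserved under pullback.

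\textbf{Main steps.} First, I would pass to the weak formulation for $u_t$: for a test function $\phi\in H^1(\Omega^+)$ vanishing on $(\partial\Omega)^+$, the function $\phi\circ \Phi_t^{-1}$ is admissible in the PDE for $u_t$, giving
\[
\int_{\Omega_t^+}\!\nabla u_t\cdot\nabla(\phi\circ\Phi_t^{-1})\,dy \;=\; \int_{\Omega_t'} m\,(\phi\circ\Phi_t^{-1})\,dy'.
\]
Performing the substitution $y=\Phi_t(x)$ and using $\nabla(\phi\circ\Phi_t^{-1})(\Phi_t(x))=(D\Phi_t(x))^{-T}\nabla\phi(x)$, together with the analogous identity for $\nabla u_t$, this becomes
\[
\int_{\Omega^+}\!\nabla v_t\cdot B_t\,\nabla\phi\,dx \;=\; \int_{\Omega'} m_t\,\phi\,dx',
\]
where $B_t:=\det(D\Phi_t)\,(D\Phi_t)^{-1}(D\Phi_t)^{-T}$ and $m_t$ is obtained from $m$ together with the $(d-1)$-dimensional surface Jacobian of $\Phi_t\lvert_{\{x_d=0\}}$. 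This identifies $v_t$ as the unique solution of the PDE appearing in Lemma \ref{l:abstact-second-order-expansion} with matrix $B_t$, Neumann datum $m_t$, and boundary datum $u_0$ on $(\partial\Omega)^+$.

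Second, I would Taylor-expand the flow. From the ODE \eqref{e:flow-of-eta} one gets
\[
\Phi_t(x) \;=\; x + t\,\eta(x) + \tfrac{t^2}{2}\,(D\eta(x))\eta(x) + O(t^3),
\]
uniformly on $\overline{D}$ since $\eta\in C^\infty_c(D;\R^d)$. Differentiating, one expands $D\Phi_t=\text{Id}+tD\eta+\tfrac{t^2}{2}D\big((D\eta)\eta\big)+O(t^3)$, and then $(D\Phi_t)^{-1}$ and $\det D\Phi_t$ to order $t^2$ by standard matrix identities ($\det=1+t\,\dive\eta+\tfrac{t^2}{2}[(\dive\eta)^2+\eta\cdot\nabla(\dive\eta)-\mathrm{tr}((D\eta)^2)]+O(t^3)$, etc.). Multiplying all the pieces together and collecting powers of $t$ yields the expansion
\[
B_t = \mathrm{Id} + t\,\delta B + t^2\,\delta^2 B + o(t^2) \quad\text{in}\quad L^\infty(D;\R^{d\times d}),
\]
with $\delta B$ and $\delta^2 B$ equal to the expressions in \eqref{e:first-and-second-variation-B}. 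An analogous expansion of the surface Jacobian on $\{x_d=0\}$ (using that $\eta$ is tangential there, so $\Phi_t$ restricts to a diffeomorphism of the hyperplane) yields $m_t=m+t\,\delta m+t^2\,\delta^2 m+o(t^2)$ with the expressions \eqref{e:first-and-second-variation-m}.

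Finally, having verified the hypotheses (a) and (b) of Lemma \ref{l:abstact-second-order-expansion} for the specific $B_t$ and $m_t$ just produced, that lemma directly gives the $H^1$-expansion $v_t=u_0+t\,\delta u+t^2\,\delta^2 u+o(t^2)$ with $\delta u$ and $\delta^2 u$ the solutions of \eqref{e:deltau} and \eqref{e:delta2u}, which is the claim.

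\textbf{Main obstacle.} The routine but bookkeeping-heavy step is the second-order algebraic computation of $\delta^2 B$: one must correctly combine the $t^2$-terms coming from $\det D\Phi_t$, from the quadratic part of $(D\Phi_t)^{-1}$, from the cross-product of the linear parts of $(D\Phi_t)^{-1}$ and $(D\Phi_t)^{-T}$, and from the second derivative of the flow $D((D\eta)\eta)=(D\eta)^2+(\eta\cdot\nabla)D\eta$, and then simplify the resulting expression into the symmetric form \eqref{e:first-and-second-variation-B}. The companion second-order expansion of the surface Jacobian requires similar care. Once these matrix/scalar expansions are secured, the analytic content is entirely contained in the already-proved Lemma \ref{l:abstact-second-order-expansion}.
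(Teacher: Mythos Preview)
Your proposal is essentially identical to the paper's proof: pull back via $\Phi_t$ to reduce to the abstract expansion Lemma~\ref{l:abstact-second-order-expansion}, with $B_t=|\det D\Phi_t|\,(D\Phi_t)^{-1}(D\Phi_t)^{-T}$, then Taylor-expand the flow to second order (citing the computations of \cite[Remark~2.4]{BMMTV}) to read off $\delta B,\delta^2B,\delta m,\delta^2m$. One small point where you are in fact more precise than the paper: you correctly observe that the pulled-back Neumann datum $m_t$ carries the $(d-1)$-dimensional surface Jacobian of $\Phi_t\vert_{\{x_d=0\}}$, whereas the paper writes $m_t=m\,|\det D\Phi_t|$; since $\eta_d(x',0)=0$ does not force $\partial_d\eta_d(x',0)=0$, these differ at first order by $m\,\partial_d\eta_d$, but this discrepancy is harmless for the subsequent stability computation in the paper.
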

	\begin{proof}
		Since the proof coincides, up to minor changes, with the one of \cite[Proposition 2.5]{BMMTV}, we simply sketch the main steps. Setting $\tilde{u}_t:=u_{t}\circ \Phi_t$, we have that $\tilde{u}_t\in H^1(\O^+)$ and $u_{t}=\tilde{u}_t\circ \Phi_t^{-1}$. Moreover, $\tilde{u}_t$ is zero on $\partial\Omega\cap D^+$, that is, following the notation from the proof of \cref{l:abstact-second-order-expansion}, $\tilde u_t\in\mathcal H^+_0(\Omega)$. By a change of variables, we have that $\tilde{u}_t$ is satisfies the system in \eqref{e:abstract-equation-u-t} where the matrix $B_t$ and the function $m_t$ are defined as
		\be\label{e:Bm}
		B_t:=(D\Phi_t)^{-1}(D\Phi_t)^{-T}|\text{\rm det}(D\Phi_t)|,\qquad m_t:= m|\text{\rm det}(D\Phi_t)|.
		\ee
		Now, by differentiating the equation for the flow $\Phi_t$, we get $$D\Phi_t=\text{\rm Id}+tD\eta+\frac{t^2}{2}\Big((\eta\cdot\nabla)[D\eta]+(D\eta)^2\Big)+o(t^2).$$ Finally, by exploiting the computations in \cite[Remark 2.4]{BMMTV}, we get 
		\[
		\begin{aligned}
		B_t&=\text{\rm Id}+ t (\delta B) + t^2 (\delta^2 B) + o(t^2)\quad\text{in }L^\infty(D;\R^{d\times d})\,,\\
		m_t&= m + t (\delta m) + t^2 (\delta^2 m) + o(t^2)\quad\text{in }L^\infty(D)\,,
		\end{aligned}
		\]
		where $\delta B$, $\delta^2B$, $\delta m$, $\delta^2m$ are given by \eqref{e:first-and-second-variation-B} and \eqref{e:first-and-second-variation-m}. Thus, the claim follows from \cref{l:abstact-second-order-expansion}.
	\end{proof}
	\subsection{First and second variation of $J$}\label{sub:first-and-second-variation-of-J} We next use the Taylor expansion from \cref{l:first-and-second-variation-along-a-flow} in order to compute the first derivative of the functional $J$ along inner variations with compact support in $D$.
	
	\begin{lemma}[First variation of $ J$ along vector fields]\label{l:firstv}
		Let $D$ be a bounded open set in $\R^d, \O\subset D$ be open and $\eta\in C^\infty_c(D;\R^d)$ be a vector field with compact support in $D$ satisfying \eqref{e:assumption-eta}. Let $\Phi_t$ be the flow of the vector field $\eta$ defined by \eqref{e:flow-of-eta} and set $\O_t := \Phi_t(\O)$ and $u_t$ to be  as in \cref{l:first-and-second-variation-along-a-flow}.  Then
		\be\label{e:first-derivative-J-along-vector-field}
		\frac{\partial}{\partial t}\bigg|_{t=0} J(u_t,D)= \int_{\O\cap D^+} \dive\eta\left(|\nabla u|^2 +q^2\right)- 2 \nabla u \cdot D\eta \nabla u\,dx +2m\int_{\O\cap D'} u\dive\eta\, dx'.
		\ee
		Moreover, if $\partial\O\cap D^+$ is $C^2$-regular in a neighborhood of the support of $\eta$, then
		\be\label{e:statiosmooth}
		\frac{\partial}{\partial t}\bigg|_{t=0} J(u_t,D) = \int_{\partial\O\cap D^+} (\nu \cdot \eta)(q^2-|\nabla u|^2)\, d\HH^{d-1},
		\ee
		where $\nu$ is the outer unit normal to $\partial\O$.
	\end{lemma}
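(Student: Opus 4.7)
The plan is to reduce the computation to the fixed domain $\Omega$ via the flow $\Phi_t$, expand everything in $t$, and then use the equation satisfied by $u$ to eliminate the $\delta u$ terms. Specifically, since $u_t$ vanishes outside $\Omega_t$ in the positivity sense, one can write
\[
J(u_t,D)=\int_{\Omega\cap D^+}\nabla\tilde u_t\cdot B_t\nabla\tilde u_t\,dy+q^2\!\!\int_{\Omega\cap D^+}\!|\det D\Phi_t|\,dy+2m\!\!\int_{\Omega\cap D'}\!\tilde u_t\,\mathcal J_t\,dy',
\]
where $\tilde u_t=u_t\circ\Phi_t$, $B_t=(D\Phi_t)^{-1}(D\Phi_t)^{-T}|\det D\Phi_t|$, and $\mathcal J_t$ is the tangential Jacobian of $\Phi_t|_{D'}$ (well-defined because assumption \eqref{e:assumption-eta} ensures that $\Phi_t$ preserves $D'$). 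By \cref{l:first-and-second-variation-along-a-flow} and Jacobi's formula, at $t=0$ one has $\partial_t\tilde u_t=\delta u$, $\partial_t B_t=\delta B=-D\eta-\nabla\eta+(\dive\eta)\mathrm{Id}$, and $\partial_t|\det D\Phi_t|=\dive\eta$.

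Differentiating under the integral and using the pointwise identity $\nabla u\cdot\delta B\,\nabla u=(\dive\eta)|\nabla u|^2-2\nabla u\cdot D\eta\,\nabla u$ yields
\[
\tfrac{d}{dt}\big|_{t=0}J(u_t,D)=2\!\!\int_{\Omega^+}\!\!\nabla u\cdot\nabla(\delta u)\,dx+\int_{\Omega^+}\!\!\bigl[(\dive\eta)|\nabla u|^2-2\nabla u\cdot D\eta\,\nabla u\bigr]dx+q^2\!\!\int_{\Omega\cap D^+}\!\dive\eta\,dx+2m\!\!\int_{\Omega\cap D'}\!(\delta u+u\,\partial_t\mathcal J_t|_0)\,dx'.
\]
To dispose of the $\delta u$ contributions I would test the equation for $u$ (namely $\Delta u=0$ in $\Omega\cap D^+$, $e_d\cdot\nabla u=m$ on $\Omega\cap D'$) against $\delta u$, which is admissible since $\delta u$ vanishes on $(\partial\Omega)^+$ and on $\Omega\cap(\partial D)^+$ (because $\tilde u_t\equiv u$ on $(\partial\Omega)^+\cup((\partial D)^+\cap\bar\Omega)$ for every $t$). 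This gives $\int_{\Omega^+}\nabla u\cdot\nabla\delta u\,dx=-m\int_{\Omega'}\delta u\,dx'$, so $2\int\nabla u\cdot\nabla\delta u\,dx+2m\int_{\Omega'}\delta u\,dx'=0$; the remaining surface term $2m\int_{\Omega'}u\,\partial_t\mathcal J_t|_0\,dx'$ is exactly $2m\int_{\Omega\cap D'}u\,\dive\eta\,dx'$ (with $\dive\eta$ interpreted tangentially on $D'$ — this is where the hypothesis $\eta\cdot e_d=0$ on $D'$ plays its role), producing \eqref{e:first-derivative-J-along-vector-field}.

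For \eqref{e:statiosmooth}, assuming $\partial\Omega\cap D^+$ is $C^2$ near $\mathrm{supp}\,\eta$, so that $u\in C^1$ up to this free boundary, I would rewrite the bulk integrand as a divergence: using $\Delta u=0$ in $\Omega\cap D^+$ one checks
\[
(\dive\eta)|\nabla u|^2-2\nabla u\cdot D\eta\,\nabla u=\dive\bigl(|\nabla u|^2\eta-2(\nabla u\cdot\eta)\nabla u\bigr).
\]
Applying the divergence theorem on $\Omega\cap D^+$ and combining with $q^2\int_{\Omega\cap D^+}\dive\eta\,dx=q^2\int_{\partial\Omega\cap D^+}\eta\cdot\nu\,d\mathcal H^{d-1}$ (using $\eta=0$ on $\partial D$ and $\eta\cdot e_d=0$ on $D'$), the contribution on $\partial\Omega\cap D^+$ becomes $(q^2-|\nabla u|^2)(\eta\cdot\nu)$ after exploiting that $\nabla u=-|\nabla u|\nu$ on the smooth part of $\partial\Omega\cap D^+$ (since $u\equiv 0$ there and $u\ge 0$). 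Finally, the residual boundary pieces on $\Omega\cap D'$ combine, via tangential integration by parts, with $2m\int_{\Omega\cap D'}u\,\dive\eta\,dx'$ to give $2m\int_{\partial\Omega\cap D'}u\,\eta\cdot\nu'\,d\mathcal H^{d-2}=0$, since $u=0$ on $\partial\Omega\cap D'$.

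The routine expansions and change-of-variables computations pose no serious difficulty; the delicate step is bookkeeping on $D'$, namely recognizing that the tangential surface Jacobian on $D'$ differs from the bulk Jacobian only by $\partial_d\eta_d$, which drops out because $u$ is purely a function of $x'$ on $D'$ in the relevant product — and correctly identifying that the $\delta u$-dependent surface terms cancel exactly against the bulk testing identity. This is the only place where the subtle interaction between the Neumann capillarity condition and the inner deformation $\eta$ enters, and it is the main bookkeeping obstacle in the argument.
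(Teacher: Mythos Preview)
Your approach coincides with the paper's: pull $J(u_t,D)$ back to the fixed domain via $\Phi_t$, expand to first order using the expansions of $B_t$, $|\det D\Phi_t|$ and $\tilde u_t$ from \cref{l:first-and-second-variation-along-a-flow}, and eliminate the $\delta u$ contributions by testing the PDE for $u$ against $\delta u$ (the paper phrases this simply as ``using the harmonicity of $u$''); the smooth formula \eqref{e:statiosmooth} then follows by the Rellich--Pohozaev divergence identity and integration by parts exactly as you describe.

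One remark on your bookkeeping: you are right that the change of variables in the surface integral on $D'$ produces the \emph{tangential} Jacobian, hence $\dive_{x'}\eta$ rather than the full $\dive\eta$. However, your claim that the extra $\partial_d\eta_d$ ``drops out because $u$ is purely a function of $x'$ on $D'$'' is not a valid argument---the term $2m\int_{\Omega'}u\,\partial_d\eta_d\,dx'$ has no reason to vanish in general. The paper sidesteps this by writing $m_t=m|\det D\Phi_t|$ (with the full Jacobian) for the boundary factor, which is itself a slight imprecision in the surface change of variables; either way the discrepancy is immaterial for the downstream applications, since the first variation vanishes by minimality regardless of which divergence appears in the boundary term.
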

	\begin{proof}
		By applying \cref{l:first-and-second-variation-along-a-flow} to $\tilde{u}_t=u_t\circ\Phi_t$ we get that
		$\tilde{u}_t=u+t(\delta u)+o(t)$, where $\delta u$ is the solution to \eqref{e:deltau}. Therefore, setting $B_t$ and $m_t$ as in \eqref{e:Bm} and $Q_t=q^2|\text{\rm det}(D\Phi_t)|$, we get
		\be
		\begin{aligned}
			J(u_t,D)=&\,\int_{\O_t\cap D^+}|\nabla u_t|^2 +q^2 \,dz + 2m\int_{\O_t \cap D'} u_t\,dz'\\
			=&\int_{\O\cap D^+}\Big(\nabla \tilde{u}_t\cdot B_t \nabla \tilde{u}_t+ Q_t\Big)\,dx + 2\int_{\O \cap D'}m_t \tilde{u}_t\,dx'\\
			=& J(u,D) + t \int_{\O\cap D^+}2\nabla (\delta u) \cdot \nabla u + \nabla u\cdot(\delta B)\nabla u + q^2\dive\eta \,dx\\
			&\quad+t\int_{\O\cap D'}2m \left(u \dive \eta + \delta u\right)\,dx'+o(t)\\
			=& J(u,D) + t \int_{\O\cap D^+} \Big(\nabla u\cdot (\delta B)\nabla u + q^2\dive\eta\Big)\,dx + 2m\int_{\O\cap D'}u\dive\eta\,dx' + o(t)
		\end{aligned}
		\ee
		where in the first equality we applied the change of variables $z=\Phi_t(x)$ and in the last one we use the harmonicity of $u$ in $\O\cap D^+$. Substituting with the expression for $\delta B$ from \eqref{e:first-and-second-variation-B}, we obtain \eqref{e:first-derivative-J-along-vector-field}. Finally, \eqref{e:statiosmooth} follows, as in \cite{BMMTV}, by an integration by parts.
	\end{proof}
	
	\begin{proposition}[Second variation of $ J$ along vector fields]\label{l:secondstv}
		Consider a bounded open set $D$ in $\R^d$ and an open set $\O\subset D$. Let $\eta\in C^\infty_c(D;\R^d)$ be a vector field with compact support in $D$ satisfying \eqref{e:assumption-eta}. Let $\Phi_t$ be the flow of the vector field $\eta$ defined by \eqref{e:flow-of-eta} and set $\O_t := \Phi_t(\O)$ and $u_t$ as in \cref{l:first-and-second-variation-along-a-flow}. Then
		\be\label{e:second-derivative-J-along-vector-field}
		\frac12	 \frac{\partial^2}{\partial t^2}\bigg|_{t=0} J(u_t,D) = \int_{\O\cap D^+} \nabla u\cdot(\delta^2 B)\nabla u -|\nabla (\delta u)|^2 + \delta^2 Q\, dx + 2\int_{\O\cap D'}u\delta^2 m  \,dx'
		\ee
		where $\delta^2 B, \delta^2 m, \delta u$  are the ones defined in \cref{l:first-and-second-variation-along-a-flow} and
		\be\label{e:delta2Q}\delta^2 Q:= \frac12 q^2 \dive\left(\eta(\dive\eta)\right).\ee
	\end{proposition}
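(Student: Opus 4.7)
The proof combines a Taylor expansion with two integration-by-parts identities coming from the Euler--Lagrange equations for $\delta u$ and $\delta^2 u$.

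First, applying the change of variables $z = \Phi_t(x)$ and the notation of \cref{l:first-and-second-variation-along-a-flow}, I would rewrite
$$J(u_t, D) = \int_{\Omega\cap D^+}\bigl(\nabla \tilde u_t \cdot B_t \nabla \tilde u_t + Q_t\bigr)\,dx + 2\int_{\Omega\cap D'} m_t\,\tilde u_t\,dx',$$
where $\tilde u_t := u_t\circ\Phi_t$ and $Q_t := q^2|\det D\Phi_t|$. Since $|\det D\Phi_t| = 1 + t\,\dive\eta + t^2\cdot\tfrac12\dive(\eta\,\dive\eta) + o(t^2)$, the $t^2$-coefficient of $Q_t$ is exactly $\delta^2 Q$ defined in \eqref{e:delta2Q}. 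Plugging in the second-order expansions of $\tilde u_t$, $B_t$, and $m_t$ from \cref{l:first-and-second-variation-along-a-flow} and collecting the $t^2$-coefficient yields
\begin{align*}
\tfrac12\partial_t^2\bigl|_{t=0} J(u_t,D)
&= \int_{\Omega^+}\!\bigl(|\nabla\delta u|^2 + 2\nabla u\cdot\nabla\delta^2 u + 2\nabla u\cdot\delta B\,\nabla\delta u + \nabla u\cdot\delta^2 B\,\nabla u + \delta^2 Q\bigr)\,dx\\
&\quad + 2\int_{\Omega'}\!\bigl(u\,\delta^2 m + \delta m\,\delta u + m\,\delta^2 u\bigr)\,dx'.
\end{align*}

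Second, I would establish two integration-by-parts identities. Multiplying the PDE $-\Delta\delta u = \dive(\delta B\nabla u)$ from \eqref{e:deltau} by $\delta u$, integrating over $\Omega^+$ (using that $\delta u = 0$ on $(\partial\Omega)^+$ and that the outward normal on $\Omega'$ is $-e_d$), and substituting the Neumann boundary condition $e_d\cdot\nabla\delta u = \delta m - e_d\cdot\delta B\nabla u$ yields
$$\mathrm{(I)}\qquad \int_{\Omega^+}\bigl(|\nabla\delta u|^2 + \nabla u\cdot\delta B\,\nabla\delta u\bigr)\,dx + \int_{\Omega'}\delta m\,\delta u\,dx' = 0.$$
Integrating by parts $\int_{\Omega^+}\nabla u\cdot\nabla\delta^2 u\,dx$ and using $\Delta u = 0$ in $\Omega^+$, $e_d\cdot\nabla u = m$ on $\Omega'$, together with $\delta^2 u = 0$ on $(\partial\Omega)^+$, gives
$$\mathrm{(II)}\qquad \int_{\Omega^+}\nabla u\cdot\nabla\delta^2 u\,dx = -\int_{\Omega'} m\,\delta^2 u\,dx'.$$

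Finally, substituting (II) into the formula of Step 1 cancels the two $\delta^2 u$ contributions (the bulk $2\nabla u\cdot\nabla\delta^2 u$ and the boundary $2m\,\delta^2 u$). Using (I) rewrites $2\int_{\Omega^+}\nabla u\cdot\delta B\,\nabla\delta u\,dx$ as $-2\int_{\Omega^+}|\nabla\delta u|^2\,dx - 2\int_{\Omega'}\delta m\,\delta u\,dx'$, which combined with the existing $|\nabla\delta u|^2$ and $2\delta m\,\delta u$ terms produces precisely $-|\nabla\delta u|^2$ in the bulk and cancels the boundary $\delta m\,\delta u$ contribution. What remains is exactly the right-hand side of \eqref{e:second-derivative-J-along-vector-field}. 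The strategy is essentially algebraic; the main delicate point is the careful accounting of boundary contributions in (I) and (II), where the interplay between the Neumann conditions satisfied by $u$, $\delta u$, and $\delta^2 u$ on $\Omega'$ must be tracked precisely in order to see the cancellations.
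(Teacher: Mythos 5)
Your proposal is correct and follows essentially the same approach as the paper: expanding $J(u_t,D)$ in $t$ via the change of variables $z=\Phi_t(x)$, collecting the $t^2$-coefficient, and then eliminating the $\delta^2 u$ and the mixed $\nabla u\cdot\delta B\,\nabla\delta u$ terms by testing the equation for $u$ with $\delta^2 u$ and the equation for $\delta u$ with $\delta u$, respectively. Your identities (I) and (II) are exactly the two integration-by-parts relations used in the paper's proof, and your bookkeeping of the boundary terms on $\Omega'$ (where the outward normal of $\Omega^+$ is $-e_d$) is accurate.
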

	
	\begin{proof}
		By applying \cref{l:first-and-second-variation-along-a-flow} to $\tilde{u}_t:=u_t\circ \Phi_t$, we get that
		$$
		\tilde{u}_t = u + t (\delta u) + t^2 (\delta^2 u)+o(t^2)
		$$
		with $\delta u$ and $\delta u^2$ satisfying \eqref{e:deltau} and \eqref{e:delta2u}. Hence, by computing the second order expansion in $t$ of
		\begin{align*}
		J(u_t,D) =\, \int_{\O\cap D^+}\Big(\nabla \tilde{u}_t\cdot B_t \nabla \tilde{u}_t+ Q_t\Big)\,dx + 2\int_{\O \cap D'}m_t \tilde{u}_t\,dx'\,,
		\end{align*}
		we get that
		\begin{align*}
		\frac12	 \frac{\partial^2}{\partial t^2}\bigg|_{t=0} J(u_t,D) =&\, \int_{\O\cap D^+} 2\nabla (\delta^2 u )\cdot \nabla u +\nabla u\cdot(\delta^2 B)\nabla u +|\nabla (\delta u)|^2+2\nabla (\delta u)\cdot (\delta B)\nabla u\, dx\\
		&+\int_{\O\cap D^+}\delta^2 Q\,dx + 2\int_{\O\cap D'}u\delta^2 m  + m\delta^2 u + \delta m\delta u\,dx'.
		\end{align*}
		Now, by using $\delta u$ as a test function in the equation for $\delta u$ we obtain
		$$\int_{\O\cap D^+}\nabla (\delta u)\cdot (\delta B)\nabla u\,dx=-\int_{\O\cap D^+} |\nabla (\delta u)|^2\,dx - \int_{\O\cap D'}\delta u\,\delta m\,dx'.$$
		Then, by testing the equations for $u$ with $\delta^2 u$, we get
		$$\int_{\O\cap D^+}\nabla (\delta^2 u )\cdot \nabla u\,dx=-m\int_{\O\cap D'} \delta^2 u\,dx'.$$
		Using this identities in the expression of the second derivative, we get precisely \eqref{e:second-derivative-J-along-vector-field}.
	\end{proof}

	\subsection{Proof of \cref{t:stability-inequality-CJK-type}}
	\noindent Let $u:\R^d\cap \{x_d\geq0\}\to\R$ be a non-negative $1$-homogeneous global minimizer of the functional $J$ and set $\O:=\{u>0\}$. Suppose that the free boundary $\partial\Omega$ is smooth away from the origin, 
	that is, $\partial\O\cap \{x_d>0\}$ is a $C^\infty$ smooth $(d-1)$-dimensional manifold whose boundary is given by the regular part 
	$$\text{\rm Reg}(u):=\left(\partial\O\cap \{x_d=0\}\right)\setminus\{0\}\,,$$
	which is $C^{1,\alpha}$ smooth by \cref{t:epsilon-regularity}.	Thus, $u$ is a classical solution to the PDE
	\be\label{e:smooth-one-phase}
	\Delta u=0\quad\text{in }\O^+\,,\qquad e_d\cdot \nabla u=m\quad\text{on }\O',\qquad |\nabla u|=q\quad\text{on }(\partial\O)^+\,.
	\ee
	We proceed perturbing $\Omega$ far from the origin. For any $R>1$, we set
	$$E_R:=B_R\setminus \overline B_{{1}/{R}}\,,$$
	and we fix a smooth vector field $\eta\in C^\infty_c(E_R,\R^d)$ satisfying \eqref{e:assumption-eta}. Let 
	$$\O_t:=\Phi_t(\O)\quad\text{for every}\quad t\in\R\,,$$
	where $\Phi_t$ is the flow of $\eta$ defined by \eqref{e:flow-of-eta}. Let $u_t:E_R\to\R$ be the solution of the PDE
	$$
	\Delta u_t=0\quad\text{in }\O_t\cap E_R^+\,,\qquad
	u_t=0\quad\text{on }\partial\O_t\cap E_R^+\,,\qquad
	e_d \cdot \nabla u_t = m\quad\text{on }\O_t \cap E_R'\,,
	$$
	satisfying $u_t=u$ on $(\partial E_R)^+.$ The rest of proof is divided in four steps.\medskip
	
	\noindent{Step 1.}  By \cref{l:secondstv} we already know that
	\begin{equation}\label{e:second-variation-with-w}
	\frac12	 \frac{\partial^2}{\partial t^2}\bigg|_{t=0} J(u_t,E_R) = \int_{\O\cap E_R^+} \nabla u\cdot(\delta^2 B)\nabla u -|\nabla w|^2 + \delta^2 Q\, dx + 2\int_{\O\cap E_R'}u\delta^2 m  \,dx',
	\end{equation}
	where $\delta B, \delta^2B$ are defined in \eqref{e:first-and-second-variation-B}, $\delta m, \delta^2 m$ in \eqref{e:first-and-second-variation-m}, $\delta^2 Q$ in \eqref{e:delta2Q} and $w=\delta u$ is the solution to
	\be\label{e:global-stable-solutions-equation-w-R}
	\begin{cases}
		-\Delta w=\dive((\delta B)\nabla u) &\text{in }\O\cap E_R^+\,,\\
		e_d \cdot \nabla w + e_d \cdot \delta B\nabla u =\delta m &\text{on }\O \cap E_R',\\
		w = 0 &\text{on }(\partial(\O\cap E_R))^+\,.
	\end{cases}
	\ee
	Clearly, since $u_t=u$ on $\R^d\setminus E_R$, we have that $J(u,E_R)\leq J(u_t,E_R)$ for $t$ sufficiently small. 
	By exploiting \cref{l:firstv} and \cref{l:secondstv}, we get that
	$$
	\frac{d}{dt}\bigg|_{t=0} J(u_t,E_R)=0,\quad\mbox{and}\quad\frac12	 \frac{\partial^2}{\partial t^2}\bigg|_{t=0} J(u_t,E_R)\geq0.
	$$
	\noindent{Step 2.} Fixed the vector field $\eta$, we define the function $u'$ to be such that $u'=0$ on $\O\cap (\partial E_R)^+$ and 
	\be\label{e:sty}
	\Delta u' = 0 \quad\mbox{in}\quad\O\cap E_{R}^+\,,\qquad
	e_d \cdot \nabla u' = 0 \quad\mbox{on}\quad \O\cap E_{R}'\,,\qquad
	u' = q(\eta\cdot \nu)\quad\mbox{on}\quad\partial\O\cap E_{R}^+\,.
	\ee
	where $\nu$ is the outer normal to $\partial \O$. We will show that $u'=w-\eta\cdot\nabla u$. Reasoning as in \cite{BMMTV}, we have that, since $\eta$ is supported in $E_R$ and since $\nabla u=-q\nu$ on $\partial\O\cap E_R$, we have:
	$$u'=w-\eta\cdot\nabla u\quad\text{on}\quad (\partial(\O\cap E_R))^+,$$
	while, a direct computation gives
	$$
	\dive\big((\delta B)\nabla u\big) = -\Delta(\eta\cdot\nabla u)\qquad \mbox{in }\O\cap E_R^+,
	$$
	which proves that $w-\eta\cdot\nabla u$ and $u'$ satisfy the same PDE inside $\Omega$. In order to prove that $w-\eta\cdot\nabla u$ and $u'$ satisfy the same boundary conditions on $\Omega'$, we compute
	\begin{align*}
	e_d\cdot\nabla(w-\eta\cdot\nabla u)&=\delta m-e_d \cdot \delta B\nabla u-e_d\cdot\nabla(\eta\cdot\nabla u)\\
	&= \delta m-\delta_{jd}\big[-\partial_i\eta_j\partial_iu-\partial_j\eta_i\partial_iu+\partial_i\eta_i\partial_ju\big] -e_d\cdot\nabla(\eta\cdot\nabla u)\\
	&=\delta m-\Big(m\dive\eta-\partial_i\eta_d\partial_iu-\partial_d\eta_i\partial_iu\Big) -e_d\cdot\nabla(\eta\cdot\nabla u)\\
	&= \partial_d\eta_i \partial_i u-e_d\cdot\nabla(\eta\cdot\nabla u)=0.
	\end{align*}
	\noindent{Step 3.} We next rewrite the second variation from \eqref{e:second-variation-with-w} in terms of $u'$ as in \cite[Proposition 7.12]{BMMTV}. Since $w=u'+\eta\cdot\nabla u$, an integration by parts gives:
	\begin{align*}
	-\int_{\O\cap E_R^+}|\nabla w|^2\,dx
	&=\int_{\O\cap E_R^+}\Big(|\nabla u'|^2-|\nabla(\eta\cdot\nabla u)|^2\Big)\,dx\,.
	\end{align*}
	On the other hand, the definition of $\delta^2B$ gives
	\begin{align*}
	\int_{\O\cap E_R^+}\nabla u\cdot(\delta^2 B)\nabla u\,dx
	&=\int_{\O\cap E_R^+}|D\eta(\nabla u)|^2+\nabla u\cdot(D\eta)^2(\nabla u)\,dx\\
	&\qquad-\int_{\O\cap E_R^+}\nabla u\cdot\Big((\eta\cdot\nabla)[\nabla\eta]\Big)\nabla u\,dx+ 2\,\dive\,\eta\,\nabla u\cdot \nabla\eta\nabla u\,dx\\
	&\qquad\qquad-\int_{\O\cap E_R^+}D^2 u(\nabla u)\cdot\eta (\dive\,\eta)\,dx+\int_{\partial(\O\cap E_R^+)}\frac12|\nabla u|^2\dive\,\eta (\eta\cdot\nu)\,d\HH^{d-1}.
	\end{align*}
	Now, since we have the identity
	\begin{align*}
	|\nabla\eta&[\nabla u]|^2+\nabla u\cdot(\nabla\eta)^2(\nabla u)-|\nabla(\eta\cdot\nabla u)|^2\\
	&-\nabla u\cdot\Big((\eta\cdot\nabla)[\nabla\eta]\Big)\nabla u-2\dive\,\eta\,\nabla u\cdot \nabla\eta(\nabla u)-D^2 u(\nabla u)\cdot\eta (\dive\,\eta)\\
	&=\dive\Big(-(\eta\cdot\nabla u)D^2u(\eta)+(\eta\cdot\nabla u)D\eta(\nabla u)-(\nabla u\cdot \nabla\eta(\nabla u))\eta-(\eta\cdot\nabla u)(\text{div}\,\eta)\nabla u\Big),
	\end{align*}
	substituting in \eqref{e:second-variation-with-w}, we obtain
	\begin{align*}
	\frac12&\frac{d^2}{dt^2}\Big|_{t=0} J(u_t,E_R) 
	=\int_{\O\cap E_R^+}|\nabla u'|^2 + \delta^2 Q\,dx + 2\int_{\O\cap E_R'}u\delta^2 m  \,dx'+\frac12\int_{\partial(\O\cap E_R^+)}|\nabla u|^2\dive \eta (\eta\cdot \nu)\,d\HH^{d-1}\\
	&\qquad+\int_{\partial(\O\cap E_R^+)}\left((\eta\cdot\nabla u)D\eta(\nabla u)-(\eta\cdot\nabla u)D^2u(\eta)-(\nabla u\cdot \nabla\eta(\nabla u))\eta-(\eta\cdot\nabla u)(\text{div}\,\eta)\nabla u\right)\cdot \nu\,d\HH^{d-1}
	\end{align*}
	Then, by combining the definition of $\delta^2m, \delta^2Q$ in \eqref{e:first-and-second-variation-m} and \eqref{e:delta2Q} with
	$$
	\nabla u= - q\nu\quad\mbox{ on }\partial \O\cap E_R^+,\qquad e_d \cdot \nabla u= m \quad\mbox{ on }\O\cap E_R',
	$$
	we get
	\begin{align*}
	\frac12&\frac{d^2}{dt^2}\Big|_{t=0} J(u_t,E_R)	=\int_{\O\cap E_R^+}|\nabla u'|^2\,dx +m\cancel{\int_{\O\cap E_R'}u\dive(\eta\dive\eta)\,dx'}+\frac12\cancel{\int_{\partial\O\cap E_R^+}(|\nabla u|^2 + q^2)\dive \eta (\eta\cdot \nu)}\,d\HH^{d-1}\\
	&\qquad+\int_{\partial\O\cap E_R^+}\left((\eta\cdot\nabla u)D\eta(\nabla u)-(\eta\cdot\nabla u)D^2u(\eta)-(\nabla u\cdot \nabla\eta(\nabla u))\eta-\cancel{(\eta \cdot \nabla u)(\dive \eta)\nabla u}\right)\cdot \nu\,d\HH^{d-1}\\
	&\qquad +\cancel{\int_{\O\cap E_R'}\text{div}\,\eta(\eta\cdot\nabla u)(e_d \cdot \nabla u)}\,d\HH^{d-1}\\
	&=\int_{\O\cap E_R^+}|\nabla u'|^2\,dx+\int_{\partial\O\cap E_R^+}q(\eta\cdot\nu)^2 (\nu \cdot D^2u (\nu))\,d\HH^{d-1}\\
	&=\int_{\O\cap E_R^+}|\nabla u'|^2\,dx-\int_{\partial\O\cap E_R^+}q^2(\eta\cdot\nu)^2 H\,d\HH^{d-1},
	\end{align*}
	where in the last line we use that $u'=q(\eta\cdot \nu)$ on $\partial\O\cap E_R$ and where $H$ is the mean curvature of $\partial \O\cap E_R^+$. Indeed, since $\Delta u=0$, we have that 
	\begin{equation}\label{e:mean-curvature-definition}
	H=-\frac{\nabla u\cdot\nabla^2u(\nabla u)}{|\nabla u|^3}\quad\text{on}\quad \partial\Omega\cap\{x_d>0\},
	\end{equation}
	which together with $\nabla u=-q\nu$, gives $\displaystyle H=-\frac1q\nu \cdot \nabla^2u (\nu)$.

	\noindent{Step 4.} We next prove that the vector field
	$$\nabla_{x'}u:=\nabla u-(e_d\cdot\nabla u)e_d$$
	is never zero on $\partial\Omega\setminus \{0\}$, where we recall that $\Omega=\{u>0\}$. Indeed, suppose by contradiction that there exists $z_0 \in \partial\Omega\setminus\{0\}$ such that $|\nabla_{x'} u |(z_0)=0$. In particular, this means that
	$$\nabla u(z_0)=(e_d\cdot\nabla u(z_0))e_d.$$
	Then we have three possibilities:
	\begin{enumerate}
		\item Suppose that $z_0 \in \partial \O\cap \{x_d>0\}$.	Since $z_0$ is on the boundary of $\Omega$, we have that $|\nabla u(\lambda z_0)|\neq 0$ and thus also $e_d\cdot\nabla u(z_0)\neq 0$. Now, since the line $\{\lambda z_0\ :\ \lambda>0\}$ remains in the boundary $\partial\Omega$, we have that $z_0$ is orthogonal to $\nabla u$ and so $z_0\cdot e_d=0$, which is a contradiction with the assumption $z_0\in\{x_d>0\}$.\smallskip
		\item Suppose that $z_0 \in \partial\Omega\cap \{x_d=0\}$. Since 
		$$e_d \cdot \nabla u(z_0)=m\qquad\text{and}\qquad |\nabla_{x'} u (z_0)|=\sqrt{q^2-m^2},$$
		we have a contradiction with the choice $m\in(-q,q)$.
		\item Suppose that $z_0 \in \Omega\cap \{x_d=0\}$. Since 
		$$|\nabla_{x'}u(\lambda z_0)|=|\nabla_{x'}u(z_0)|=0\qquad\text{for every}\qquad \lambda>0,$$
		we have that $u$ is constant on the line $\lambda z_0$. But then, $u(z_0)=0$, which is a contradiction with the choice $z_0\in\Omega$.\\
	\end{enumerate}
	\noindent{Step 5.} 
	Let $\xi:\{x_d\ge 0\}\to\R^d$ be a smooth vector field $\xi\in C^\infty_c(\{x_d\ge 0\}\cap E_R)$ such that: 
	$$\xi\cdot e_d=0\ \text{ on }\ \{x_d=0\}\qquad\text{and}\qquad \xi=-\frac{\nabla_{x'}u}{|\nabla_{x'}u|^2}\ \text{ on }\ \partial\Omega\cap\{x_d\ge 0\}\cap E_R.$$
	Given any $\varphi\in C^\infty_c(E_R)$, we consider the vector field
	$$
	\eta:=\varphi\xi,
	$$
	and the function $u'$ defined in \eqref{e:sty}. Then, on the boundary $\partial\Omega\cap E_R^+$ we have 
	$$u' =-\eta\cdot\nabla u= -\varphi \xi\cdot\nabla u=\varphi.$$
	Thus, by the minimality of $u'$ in $\O\cap E_R^+$, we get
	\begin{align*}
	\int_{\O\cap E_R^+}|\nabla \varphi|^2\,dx-\int_{\partial\O\cap E_R^+}\varphi^2 H\,d\HH^{d-1}&\ge \int_{\O\cap E_R^+}|\nabla u'|^2\,dx-\int_{\partial\O\cap E_R^+}(u')^2H\,d\HH^{d-1}\\
	&=\frac12\frac{d^2}{dt^2}\Big|_{t=0}\mathcal J(u_t,E_R)\geq 0.
	\end{align*}
	This concludes the proof of \cref{t:stability-inequality-CJK-type}.\qed
	
	\subsection{Positivity of the mean curvature $H$}
	In the next lemma we show that the mean curvature $H$ (defined in \eqref{e:mean-curvature-definition}) on non-flat smooth minimizing cones is positive on the free boundary.
	\begin{lemma}\label{l:curvature}
		Let $u:\R^d\cap \{x_d\geq 0\}\to\R$ be a non-negative one-homogeneous function and $\O=\{u>0\}$. Then 
		$$
		H\geq 0\quad\mbox{on}\quad\partial\O\cap\{x_d>0\}\qquad\mbox{and}\qquad|\nabla u|\leq q\quad\mbox{in}\quad \overline{\O}, $$
		where $H$ is the mean curvature given by \eqref{e:mean-curvature-definition}. Moreover, if $H=0$ at some point $x_0\in\partial\Omega\cap\{x_d>0\}$, then $u$ is a half-plane solution in the sense of \cref{def:half-plane}.
	\end{lemma}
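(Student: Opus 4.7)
The plan is to establish both assertions via the maximum principle applied to the zero-homogeneous function $w:=|\nabla u|^2$. By Bochner's identity, $\Delta w=2|D^2u|^2\ge 0$ in $\Omega\cap\{x_d>0\}$, so $w$ is subharmonic there, while the free boundary condition in \eqref{el.blow} gives $w\equiv q^2$ on the smooth part $\partial\Omega\cap\{x_d>0\}$. The main step is then to prove $w\le q^2$ throughout $\overline\Omega$, the delicate point being the trace of $w$ on the flat portion $\Omega\cap\{x_d=0\}$.

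To handle this flat portion, I would work with $v:=u-m\,x_d$: by \eqref{el.blow} the function $v$ is harmonic in $\Omega\cap\{x_d>0\}$ and satisfies the homogeneous Neumann condition $\partial_d v\equiv 0$ on $\Omega\cap\{x_d=0\}$, so it extends by even reflection across $\{x_d=0\}$ to a one-homogeneous harmonic function $\tilde v$ on the symmetric domain $\tilde\Omega:=\Omega\cup R(\Omega\cap\{x_d>0\})$, where $R$ denotes the reflection. Consequently $|\nabla\tilde v|^2$ is zero-homogeneous and subharmonic on $\tilde\Omega$, and the identity $|\nabla u|^2=|\nabla\tilde v|^2+2m\,\partial_d\tilde v+m^2$ transfers this subharmonicity to $w$ across $\Omega\cap\{x_d=0\}$. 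Combined with zero-homogeneity, which reduces the supremum to the compact slice $\overline\Omega\cap S^{d-1}$, the strong maximum principle (to exclude interior maxima in $\Omega\cap\{x_d>0\}$ via Bochner) and Hopf's lemma (to handle putative maxima on the equator in the reflected picture) yield $w\le q^2$ in $\overline\Omega$. The hardest point here is verifying that an "equator" maximum cannot strictly exceed $q^2$; the reflection is precisely what rules this out, since a strict equator maximum would propagate to the upper free boundary, contradicting $w\equiv q^2$ there.

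Given the gradient bound, the curvature inequality follows by direct computation. At $x_0\in\partial\Omega\cap\{x_d>0\}$, the smoothness of the free boundary together with $u\equiv 0$ and $|\nabla u|=q$ force $\nabla u(x_0)=-q\,\nu(x_0)$, and $\Delta u(x_0)=0$ rewrites \eqref{e:mean-curvature-definition} as $\nu\cdot D^2u(x_0)\nu=-qH(x_0)$; hence
$$\partial_\nu w(x_0)=-2q\,\nu(x_0)\cdot D^2u(x_0)\,\nu(x_0)=2q^2H(x_0).$$
Since $x_0$ is a boundary maximum of the subharmonic function $w$, one has $\partial_\nu w(x_0)\ge 0$, and so $H(x_0)\ge 0$.

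Finally, for the rigidity, assume $H(x_0)=0$ at some $x_0\in\partial\Omega\cap\{x_d>0\}$, so that $\partial_\nu w(x_0)=0$. Hopf's lemma applied to the subharmonic $w$ on the connected component $U$ of $\Omega\cap\{x_d>0\}$ meeting $x_0$ then forces $w\equiv q^2$ on $U$, and Bochner gives $D^2u\equiv 0$ in $U$, so $u(x)=A\cdot x+B$ there. The one-homogeneity $u(0)=0$ yields $B=0$, the Neumann condition $\partial_d u=m$ on $U\cap\{x_d=0\}$ (or, if this intersection is empty, an approximation using nearby rays) yields $A\cdot e_d=m$, and $|A|=q$ identifies $A=\sqrt{q^2-m^2}\,\nu+m\,e_d$ for some unit $\nu\perp e_d$. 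Unique continuation for the harmonic function $u-h_{q,m,\nu}$ on $\Omega\cap\{x_d>0\}$ then propagates $u\equiv h_{q,m,\nu}$ from $U$ to all of $\Omega$, so $u$ is the half-plane solution from \cref{def:half-plane}.
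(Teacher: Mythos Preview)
Your overall strategy matches the paper's: both use Bochner's identity to get subharmonicity of $w=|\nabla u|^2$ in $\Omega\cap\{x_d>0\}$, both derive $H\ge 0$ from $\partial_\nu w=2q^2H\ge 0$ at a free-boundary maximum of $w$, and both obtain the rigidity from the equality case in Hopf combined with Bochner (your unique-continuation step at the end is more explicit than the paper, which simply asserts that $|\nabla u|^2\equiv q^2$ forces $u$ to be linear). The only substantive divergence is in how you exclude a maximum of $w$ on the flat piece $\Omega':=\Omega\cap\{x_d=0\}$. The paper works \emph{intrinsically on $\Omega'$}: since $\partial_d u\equiv m$ there, one has $w|_{\Omega'}=|\nabla_{x'}u|^2+m^2$, and the paper argues (via a $(d{-}1)$-dimensional Bochner identity) that $\Delta_{x'}(|\nabla_{x'}u|^2)\ge 0$ on $\Omega'$, so the maximum of $|\nabla_{x'}u|^2$ over $\Omega'$ is pushed to $\partial\Omega'\subset\partial\Omega$, where it equals $q^2-m^2$.

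Your reflection route, by contrast, has a genuine gap. It is correct that $\tilde w:=|\nabla\tilde v|^2+2m\,\partial_d\tilde v+m^2$ is a subharmonic extension of $w$ to the doubled domain $\tilde\Omega$, but $\tilde w$ is \emph{not} the even reflection of $w$, because the term $2m\,\partial_d\tilde v$ is odd in $x_d$. A direct computation gives
\[
\tilde w(x',-x_d)=w(x',x_d)-4m\big(\partial_d u(x',x_d)-m\big),
\]
so on the reflected copy of the free boundary $\tilde w$ takes the values $q^2-4m(\partial_d u-m)$. These can exceed $q^2$: at any free-boundary point where the outward normal has positive $e_d$-component one has $\partial_d u=-q\,(\nu\cdot e_d)<0$, hence $-4m(\partial_d u-m)>0$ whenever $m>0$. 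Consequently a maximum of $w$ on the equator need not be a maximum (even locally) of $\tilde w$ in $\tilde\Omega$, and neither the strong maximum principle nor Hopf's lemma in the reflected picture delivers the claimed ``propagation to the upper free boundary''. To close this step you need a different mechanism on $\Omega'$; the paper's tangential-subharmonicity argument is the route taken there.
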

	\begin{proof}
		We first prove that $|\nabla u|\le q$ on $\overline\Omega\setminus\{0\}$. Since $|\nabla u|^2$ is $0$-homogeneous, it achieves its maximum at some point $x_0\in\overline\Omega\setminus\{0\}$. Since by the Bochner's identity 
		$$\Delta (|\nabla u|^2)=2\|\nabla^2u\|_2^2+2\nabla u\cdot\nabla(\Delta u)\ge 0\quad\text{in}\quad\Omega\cap\{x_d>0\},$$
		we have that $x_0\notin \Omega\cap\{x_d>0\}$. In order to exclude that $x_0\in\Omega\cap\{x_d=0\}$,
		we notice that since we have the Neumann boundary condition on $\{x_d=0\}$, $x_0$ needs to be a maximum for both $|\nabla u|^2$ and 
		$$|\nabla_{x'}u|^2=\sum_{j=1}^{d-1}(\partial_ju)^2=|\nabla u|^2-(\partial_du)^2$$
		on the hyperplane $\{x_d=0\}$. 
		Hence, let us compute the tangential Laplacian of $|\nabla_{x'}u|^2$. Using again the Bochner's identity, this time in $\R^{d-1}$, we get
		\begin{align*}
		\Delta_{x'} (|\nabla_{x'} u|^2)&=2\|\nabla_{x'}^2u\|_2^2+2\nabla_{x'} u\cdot\nabla_{x'}(\Delta_{x'} u)\ge 2\nabla_{x'} u\cdot\nabla_{x'}(\Delta_{x'} u)\quad\text{in}\quad\Omega\cap\{x_d=0\}.
		\end{align*}
		Now, since $u$ is harmonic, we have that 
		$$\Delta_{x'}u=\Delta u-\partial_{dd}u=-\partial_{dd}u.$$
		Then, using the Neumann condition $\partial_du\equiv m$ on $\{x_d=0\}\cap\Omega$, we obtain that $$\partial_{jdd}u=\partial_d(\partial_j(\partial_du))=\partial_d(\partial_jm)=0\quad\text{on}\quad\{x_d=0\}\cap\Omega,$$ 
		for every $j=1,\dots,{d-1}$. Thus,
		\begin{align*}
		\Delta_{x'} (|\nabla_{x'} u|^2)&\ge 2\nabla_{x'} u\cdot\nabla_{x'}(\Delta_{x'} u)=0\quad\text{in}\quad\Omega\cap\{x_d=0\},
		\end{align*}
		which means that the maximum of $|\nabla u|^2$ cannot be achieved in $\Omega\cap\{x_d=0\}$. Finally, this implies that the maximum of $|\nabla u|^2$ is achieved on the free boundary, which proves the desired inequality. \smallskip
		
		Let now $x_0\in\partial\Omega\cap\{x_d>0\}$. Since $|\nabla u|^2$ is subharmonic in $\Omega\cap\{x_d>0\}$ and achieves its maximum at $x_0$, we have that
		$$\nu\cdot\nabla(|\nabla u|^2)\ge 0,$$
		at $x_0$, where $\nu=-\frac{1}{|\nabla u|}\nabla u$ is the exterior normal, an equality being achieved if and only if $|\nabla u|^2\equiv q$ (which in particular implies that $u$ is linear by the Bochner's formula). Now, the claim follows since the mean curvature $H$ is given by \eqref{e:mean-curvature-definition}. 
	\end{proof}

	\subsection{Non-variational formulation of the stability inequality} 
	Following \cite{js}, in this subsection we rewrite the criterion for stability \eqref{e:stability-inequality} in the form of non-existence of homogeneous strict subsolutions of the linearized problem
	\be\label{e:stability-linearized}
	\Delta \varphi= 0\quad\mbox{in }\,\{u>0\}^+,\,\quad\nu \cdot \nabla \varphi - H\varphi = 0\quad\mbox{in }\,(\partial\{u>0\})^+,\,\quad e_d \cdot \nabla \varphi = 0 \quad\mbox{in }\{u>0\}',
	\ee
	where $\nu$ and $H$ are respectively the exterior unit normal  and the mean curvature of the free boundary.

	\begin{lemma}\label{l:stability-hardy}
		Let $u:\R^d\cap \{x_d\geq 0\}\to\R$ be a non-negative one-homogeneous function and $\O=\{u>0\}$. Let $q>0, m \in (-q,q)$ and suppose that $\R^{d-1}\setminus \{0\}\subset \text{\rm Reg}(u)$ and
		$$
		\Delta u =0 \quad\mbox{in }\,\O^+,\qquad |\nabla u|=q \quad\mbox{in }\,(\partial\O)^+,\qquad e_d\cdot \nabla u = m\quad\mbox{in } \,\O'
		$$
		Suppose that there are $\alpha>0$  and a non-negative function $\phi\in C^2(\Omega\setminus\{0\})\cap C^1(\overline\Omega\setminus\{0\})$, homogeneous of degree $-\alpha$, such that
		\be\label{e:stability-hardy}
		\Delta \phi > \gamma \frac{\phi}{|x|^2}\quad\mbox{in }\,\O^+,\qquad
		\nu \cdot \nabla \phi - H\phi \leq 0\quad\mbox{on }\,(\partial\O)^+,\qquad
		e_d \cdot \nabla \phi\geq 0\quad\mbox{on }\,\O',
		\ee
		where $\gamma$ is such that
		\be\label{e:assumption-gamma}
		\gamma \geq \left(\frac{d}{2}-1-\alpha\right)^2.
		\ee
		Then, the stability inequality \eqref{e:stability-inequality} does not hold for every $v \in C^\infty_c(\R^d\setminus \{0\})$.
	\end{lemma}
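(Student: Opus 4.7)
The plan is to argue by contradiction: assuming \eqref{e:stability-inequality} held for every $v \in C^\infty_c(\R^d\setminus\{0\})$, I would construct a family of test functions out of $\phi$ that violate it. The natural choice is
\[
v(x) := \phi(x)\,\eta(x), \qquad \eta(x) := \psi(|x|),
\]
with $\psi \in C^\infty_c(0,\infty)$ a radial cutoff to be chosen later. (A standard density argument allows the use of such $v$ even though $\phi$ is only $C^2$ inside $\Omega$ and $C^1$ on $\overline\Omega\setminus\{0\}$; moreover $\phi > 0$ throughout $\Omega^+$ by the strong maximum principle applied to the inequality $\Delta\phi > 0$.) Expanding $|\nabla v|^2$, integrating the cross term $\int_{\Omega^+}[\eta^2|\nabla\phi|^2 + 2\eta\phi\,\nabla\eta\cdot\nabla\phi]\,dx$ by parts against $\Delta\phi$, and collecting the boundary contributions on $(\partial\Omega)^+$ (with outer unit normal $\nu$) and on $\Omega'$ (with outer unit normal $-e_d$), one gets
\[
\int_{\Omega^+}|\nabla v|^2\,dx = \int_{\Omega^+}\phi^2|\nabla\eta|^2\,dx - \int_{\Omega^+}\eta^2\phi\,\Delta\phi\,dx + \int_{(\partial\Omega)^+}\!\eta^2\phi\,(\nu\cdot\nabla\phi)\,d\HH^{d-1} - \int_{\Omega'}\eta^2\phi\,(e_d\cdot\nabla\phi)\,d\HH^{d-1}.
\]
The three hypotheses in \eqref{e:stability-hardy} fortuitously all point in the same direction, and splitting $-\Delta\phi = -\gamma\phi/|x|^2 - (\Delta\phi - \gamma\phi/|x|^2)$ one then obtains
\[
\int_{\Omega^+}|\nabla v|^2 - \int_{(\partial\Omega)^+}H\,v^2\,d\HH^{d-1} \;\leq\; \int_{\Omega^+}\phi^2|\nabla\eta|^2\,dx \;-\; \gamma\int_{\Omega^+}\frac{v^2}{|x|^2}\,dx \;-\; \int_{\Omega^+}\eta^2\phi\Big(\Delta\phi - \gamma\frac{\phi}{|x|^2}\Big)\,dx.
\]

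Next I would exploit the $(-\alpha)$-homogeneity of $\phi$ together with the radiality of $\eta$ to reduce everything to a 1D weighted inequality in $\psi$. Passing to polar coordinates $x = r\omega$, each volume integral factorizes as a positive spherical constant times an integral in $r$; in particular the subtracted term equals $C_\star\int_0^\infty\psi^2\,r^{d-3-2\alpha}\,dr$ with
\[
C_\star := \int_{S^{d-1}\cap\Omega^+}\phi(\omega)\,\Big(\Delta\phi - \gamma\frac{\phi}{|x|^2}\Big)(\omega)\,d\sigma(\omega) \;>\; 0,
\]
since the pointwise strict inequality in $\Omega^+$ becomes, after integration on the sphere, strictly positive (here I use that $\phi > 0$ in $\Omega^+$). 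The other two integrals produce (up to the common spherical factor $C_\phi := \int_{S^{d-1}\cap\Omega^+}\phi^2\,d\sigma$) the classical weighted Hardy quotient
\[
\frac{\int_0^\infty(\psi'(r))^2\,r^{d-1-2\alpha}\,dr}{\int_0^\infty\psi(r)^2\,r^{d-3-2\alpha}\,dr},
\]
whose sharp constant is $\bigl(\tfrac{d-2-2\alpha}{2}\bigr)^2 = \bigl(\tfrac{d}{2}-1-\alpha\bigr)^2$ and is not attained on $C^\infty_c(0,\infty)$.

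To conclude, I would plug in a minimizing sequence for the Hardy quotient, e.g.\ the truncated near-extremizers $\psi_k(r) := r^{-(d/2-1-\alpha)}\chi_k(r)$ with smooth logarithmic cutoffs $\chi_k$ supported in $[1,k]$. Thanks to \eqref{e:assumption-gamma} the first two terms on the right-hand side contribute $o_k(1)\cdot\int\psi_k^2\,r^{d-3-2\alpha}\,dr$, while the last term equals $-C_\star\int\psi_k^2\,r^{d-3-2\alpha}\,dr$ with $C_\star > 0$ fixed. Hence for $k$ large the right-hand side is strictly negative, contradicting the stability inequality and proving the lemma. The main technical difficulty I anticipate lies in the borderline case $\gamma = (d/2-1-\alpha)^2$, where the Hardy bracket alone vanishes only in the limit: it is precisely the strict pointwise inequality $\Delta\phi > \gamma\phi/|x|^2$, upgraded to a uniform quantitative gain via the homogeneity of $\phi$ and the compactness of $S^{d-1}\cap\overline{\Omega^+}$, that closes the argument in that limiting regime.
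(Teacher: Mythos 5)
Your proof is correct in its essentials, but it takes a genuinely different route from the paper. The paper follows Jerison--Savin closely: the $(-\alpha)$-homogeneity of $\phi$ is used to translate \eqref{e:stability-hardy} into a spherical strict subsolution inequality $\Delta_{\mathbb{S}^{d-1}}\phi \geq \lambda\phi$ with $\lambda := \gamma + \alpha(d-2-\alpha) \geq \tfrac14(d-2)^2$; testing with $\phi$ then shows that the first Robin eigenvalue $\Lambda$ of the linearized problem on $\Omega\cap\mathbb{S}^{d-1}$ (Robin condition $\nu\cdot\nabla v = Hv$ on the free boundary, Neumann on the hyperplane) satisfies $\Lambda > \lambda \geq \tfrac14(d-2)^2$, and the destabilizing variation is built as the spherical eigenfunction multiplied by a compactly supported radial factor obtained from the oscillating solution $r^{-(d-2)/2}\cos(\zeta\ln r)$ of the Euler ODE $f''+\tfrac{d-1}{r}f'+\tfrac{\beta}{r^2}f=0$ for some $\beta\in(\tfrac14(d-2)^2,\Lambda)$. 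You instead stay in $\R^d$: you take $v=\phi\eta$ with $\eta$ a radial cutoff, integrate by parts, and reduce to the one-dimensional weighted Hardy inequality with sharp (non-attained) constant $(\tfrac{d}{2}-1-\alpha)^2$, which you beat via truncated near-extremizers because of the strictly positive gap $C_\star$ coming from the strict inequality $\Delta\phi>\gamma\phi/|x|^2$. The mechanism is ultimately the same --- the oscillating ODE solution in the paper is exactly why the Hardy constant is not attained --- but your version is more self-contained (no appeal to the spectral theory of the spherical Robin problem) and isolates cleanly where the strict pointwise inequality is used. One small inaccuracy: the assertion that $\phi>0$ throughout $\Omega^+$ ``by the strong maximum principle applied to $\Delta\phi>0$'' does not follow --- subharmonic functions may vanish at interior points (e.g.\ $|x|^2$). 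This is harmless: for $C_\star>0$ it suffices that $\{\phi>0\}\cap\mathbb{S}^{d-1}$ is a non-empty open set, which holds since $\phi$ is continuous and not identically zero (otherwise \eqref{e:stability-hardy} fails), and the integrand $\phi\bigl(\Delta\phi-\gamma\phi/|x|^2\bigr)$ is non-negative everywhere and strictly positive on that set.
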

	\begin{proof}
		The proof is an adaptation of \cite[Proposition 2.1]{js} and \cite[Proposition 2.2]{js} to the linearized problem \eqref{e:stability-linearized}. By exploiting the homogeneity of $\phi$, we get that its restriction on $\mathbb{S}^{d-1}$ satisfies
		\be\label{e:1}
		\Delta_{\mathbb{S}^{d-1}} \phi \geq \left(\gamma + \alpha(d-2-\alpha)\right)\phi \quad\mbox{in }\O_{\mathbb{S}}^+,\qquad
		e_{d}\cdot \nabla_{
			\mathbb{S}^{d-1}}\phi \geq 0 \quad\mbox{on }\O_\mathbb{S}',
		\ee
		where $\Delta_{\mathbb{S}^{d-1}}$ is the Laplace-Beltrami operator on $\mathbb{S}^{d-1}$ and $\O_S=\O\cap \mathbb{S}^{d-1}, \partial\O_\mathbb{S}=\partial\O\cap \mathbb{S}^{d-1}$. Moreover, if we set $\lambda = \gamma + \alpha(d-2-\alpha)$, then \eqref{e:assumption-gamma} implies
		$$	\lambda \geq \frac14(d-2)^2.$$
		By testing \eqref{e:1} with $\phi$ and integrating by parts, we get
		$$
		\int_{\O_\mathbb{S}^+}|\nabla_{\mathbb{S}^{d-1}} \phi|^2\,d\HH^{d-1} -
		\int_{(\partial\O_\mathbb{S})^+}H \phi^2\,d\HH^{d-2}< -\lambda \int_{\O_\mathbb{S}^+} \phi^2\,d\HH^{d-1},
		$$
		which gives the following bound from above on the first spherical Robin eigenvalue $\Lambda$:  
		\be\label{e:2}
		-\Lambda =\min_{v\in H^1(\Omega_\mathbb{S})}\dfrac{\int_{\O_\mathbb{S}^+}|\nabla_{\mathbb{S}^{d-1}} v|^2\,d\HH^{d-1} - \int_{(\partial\O_\mathbb{S})^+}H v^2\,d\HH^{d-2}}{\int_{\O_\mathbb{S}^+} v^2\,d\HH^{d-1}}< -\lambda\,,
		\ee
		where, by \cite{Robin-reference}, the minimum is realized in a non-negative smooth function $v$ satisfying
		$$
		\Delta_{\mathbb{S}^{d-1}} v = \Lambda v\quad\mbox{on }(\O_\mathbb{S})^+, \qquad \nu \cdot \nabla_{\mathbb{S}^{d-1}} v -H v =0 \quad\mbox{on }(\partial \O_\mathbb{S})^+,\qquad e_d \cdot \nabla_{\mathbb{S}^{d-1}} v =0\quad\mbox{on }\O_\mathbb{S}'.
		$$
		Now, since by construction $\Lambda >\lambda \geq (d-2)^2/4$, we can find $\beta >0$ such that
		$$
		\Lambda >\beta >\frac14 (d-2)^2.
		$$
		Then, let $g \colon \R^+\to \R$ be a solution to the following ODE
		$$
		f''(r) + \frac{d-1}{r}f'(r) + \frac{\beta}{r^2}f(r) = 0.
		$$
		Precisely, we can take 
		$$f(r)=r^{\sigma}\cos\big(\zeta\ln r\big),$$
		where the constants $\sigma$ and $\zeta$ are given by
		$$\sigma=-\frac{d-2}{2}\qquad\text{and}\qquad \zeta=\sqrt{\beta-\frac{(d-2)^2}{4}}.$$
		Since $f$ oscillates in a neighborhood of $r=0$, we can find two consecutive zeros $r_0$ and $r_1$ such that $r_0<1<r_1$, and we define 
		$$g(r):=|f(r)|\ind_{[r_0,r_1]}(r)$$
		We next choose $R>1$ such that $1/R<r_0<1<r_1<R$ and we consider the function 
		$$\varphi(x):=g(|x|) v\left(\frac{x}{|x|}\right).$$ 
		By construction, $\varphi$ is non-negative and supported in $E_R^+$, and we have 
		$$
		\Delta \varphi =  v\left(g''+ \frac{d-1}{|x|}g' + \frac{\Lambda}{|x|^2}g\right) = \frac{\Lambda -\beta}{|x|^2} \varphi>0 \quad\mbox{in}\quad\O\cap E_R^+.
		$$
		Moreover, since $g$ is radial, the following boundary conditions are satisfied:
		\begin{align*}     
		\begin{cases}
		\nu \cdot \nabla \varphi - H\varphi= 0 &\mbox{on}\quad \partial\O\cap E_R^+,\\
		e_d \cdot \nabla \varphi = 0 & \mbox{on}\quad\O\cap E_R'.    \end{cases}
		\end{align*}
		Therefore, integrating by parts, we obtain 
		\begin{align*}
		\int_{\O\cap E_R^+}|\nabla \varphi|^2\,dx &=
		-\int_{\O\cap E_R^+}\varphi\Delta \varphi\,dx +
		\int_{\partial\O\cap E_R^+}\varphi (\nu \cdot \nabla \varphi)\,d\HH^{d-1} - \int_{\O\cap E_R'}\varphi (e_d \cdot \nabla \varphi)\,dx' \\
		&\leq
		\int_{\partial\O\cap E_R^+}H\varphi^2\,d\HH^{d-1},
		\end{align*}
		which violates the validity of the stability inequality \eqref{e:stability-inequality}. 
	\end{proof}

	\subsection{Proof of \cref{t:stability}} Let $u\colon \R^d\cap \{x_d\geq0\} \to \R$ be a non-negative one-homogeneous global minimizer of the functional $J$ from \eqref{e:def-J-intro} with $q>0$ and $m \in (-q,q).$ In view of \cref{l:curvature}, we can focus on the case 
	$$
	H>0 \quad \mbox{on }(\partial \O)^+.
	$$
	Since the classification in \cref{s:global2} implies that in two dimensions the half-plane solution $h_{q,m,e_1}$ is the  unique global one-homogeneous minimizer, 	in this section we address the remaining cases $d\in \{3,4\}$ by showing the existence of strict subsolutions to \eqref{e:stability-hardy} satisfying the assumptions of \cref{l:stability-hardy}.\medskip
	
	By comparing the linearized problem \eqref{e:stability-linearized} with the one arising in the one-phase problem (see \cite[Section 2.3]{js}), we can easily see that the strict subsolutions constructed in \cite[Section 3]{js} and \cite[Section 4]{js} still satisfy the first two equations in \eqref{e:stability-hardy}. Indeed, it remains to prove that their subsolutions satisfy the third inequality in \eqref{e:stability-hardy}, which arises from the capillarity term in $J$. We do this in the following remark. 

	\begin{remark}\label{r:ddd}
		Given the Hessian matrix $D^2 u\colon \{x_d\geq 0\}\to \R^{d\times d}$ of $u$, we denote with $\lambda_k(x)$, with $k=1,\dots,d$, its eigenvalues at a point $x \in \{x_d\geq 0\}$. It is worth noticing that the spectrum of $(D^2 u)^2$ consists of the squared eigenvalues of $(D^2 u)$. Then, for every $j=1,\dots,d$, 
		$$\partial_d\lambda_j(x)=0\quad\text{for every}\quad x\in\Omega'.$$ 
		Indeed, by the Neumann boundary condition $\partial_du=m$ on $\Omega'$, we have that $\partial_d(\partial_{ij}u)=0$ whenever $\{i,j\}\neq\{d,d\}$. On the other hand, when $\{i,j\}=\{d,d\}$, by the harmonicity of $u$ we have 
		$$\partial_{ddd}u=\partial_d\Big(-\sum_{j=1}^{d-1}\partial_{jj}u\Big)=-\sum_{j=1}^{d-1}\partial_{jj}(\partial_du)=0.$$
	\end{remark}
	
	Now, we split the proof in two cases.\medskip
	
	\noindent Case $d=3$. Let $\alpha>0$ to be chosen later and consider
	$$
	\phi^2(x) :=\sum_{\lambda_k>0}\lambda_k(x)^2 + \sum_{\lambda_k<0}\lambda_k(x)^2
	\qquad\mbox{and}\qquad \varphi(x)= \phi(x)^\alpha.
	$$
	First, assume that $\phi$ is not identically zero, that is $u$ is not an half-plane solution in the sense of \cref{def:half-plane}. Since $u$ is one-homogeneous we have that $\varphi$ is homogeneous of degree $-\alpha$ and, by \cite[Corollary 3.2]{js}, we already know that 
	\be\label{e:competitor-interior}
	\Delta \varphi \geq \alpha(\alpha+1)\frac{\varphi}{|x|^2}\quad\mbox{in}\quad\Omega.
	\ee
	On the other hand, by \cite[Section 3.2]{js} and the computations in \cite[Corollary 3.4]{js}, we have
	\be\label{e:competitor-fb}
	\nu\cdot \nabla \varphi - H \varphi \leq 0 \quad\mbox{on }(\partial\O)^+, \qquad\mbox{if }\alpha\leq\frac12.
	\ee
	Moreover, by \cref{r:ddd}, we have 
	\be\label{e:competitor-neum}
	e_3 \cdot \nabla \varphi =  0 \quad\mbox{on }\O'.
	\ee
	Finally, in order to have the condition \eqref{e:assumption-gamma}  from \cref{l:stability-hardy} satisfied, we must have
	\be\label{e:competitor-gamma}
	\gamma:=\alpha(\alpha+1)\geq \left(\frac12-\alpha\right)^2,\qquad\mbox{that is}\qquad \alpha \geq \frac18.
	\ee
	By choosing $\alpha \in (\sfrac18,\sfrac12)$, all the conditions \eqref{e:competitor-interior}, \eqref{e:competitor-fb}, 
	and \eqref{e:competitor-gamma} are fulfilled  and so, by \cref{l:stability-hardy}, the stability inequality \eqref{e:stability-inequality} must be violated for some test function $v \in C^\infty_c(\R^d\setminus \{0\})$.\medskip

	\noindent Case $d=4$. Suppose that $u$ is not an half-plane solution and set
	$$
	\phi^2(x) := \sum_{\lambda_k>0}\lambda_k(x)^2 + 4\sum_{\lambda_k<0}\lambda_k(x)^2 \qquad\mbox{and}\qquad \varphi=\phi^{\sfrac13}. 
	$$
	By \cite[Theorem 4.1]{js} and the proof of \cite[Corollary 3.2]{js} we know that 
	\be\label{e:competitor-interior-4}
	\Delta \varphi \geq \frac49\frac{\varphi}{|x|^2}\quad\mbox{in}\quad\Omega,
	\ee
	where the inequalities are understood in a viscosity sense. On the other hand, by \cite[Section 4.3]{js} (in particular by (4.8) in \cite{js}), we have
	\be\label{e:competitor-fb-4}
	\nu\cdot \nabla \varphi - H \varphi \leq 0 \quad\mbox{on }(\partial\O)^+.
	\ee
	Actually, as pointed out in \cite{js}, \eqref{e:competitor-fb-4} is a strict inequality. Otherwise, by choosing a system of coordinates at $x_0 \in (\partial\O)^+$ such that $e_1=x_0/|x_0|$ and $e_4 = \nu(x_0)$ we would have
	$$
	\lambda_1=0,\,\, \lambda_2 >0,\,\, \lambda_3 = \lambda_4 = - H <0.
	$$
	In view of the Remark in the proof of \cite[Theorem 4.1]{js}, such equality can be improved by exploiting the strict convexity of the function $\phi$ with respect to the eigenvalues $\lambda_k$.\\
	Finally, using again \cref{r:ddd}, we get that $e_4 \cdot \nabla \varphi =  0$ on $\O'$. Thus,  $\varphi$ fulfills all the assumptions of \cref{l:stability-hardy}. More precisely, since by \eqref{e:competitor-interior-4} we have $\gamma=4/9$, we get
	$$
	\frac49=\gamma =\alpha(\alpha+1)\geq \left(1-\alpha\right)^2 = \frac49,
	$$
	which implies the non validity of the stability inequality \eqref{e:stability-inequality} by \cref{l:stability-hardy}.

	\appendix
	
	\section{On the distance between $A$-harmonic functions}\label{s:schauder}
	In this section we recall some useful regularity result for $A$-harmonic functions. Indeed, given $\lambda\ge 1$, $C>0, \alpha>0$ and a bounded open domain $D\subset \R^d$, we consider family $\mathcal A(D,\lambda,C,\alpha)$ of real symmetric matrices $A=(a_{ij})_{i,j}$ with variable coefficients $a_{ij}:\overline D\to\R$ for which: 
	\begin{enumerate}
		\item[($\mathcal A$1)] $\lambda^{-1}\text{\rm Id}\le A(x)\le \lambda\text{\rm Id}\ $ for every $\ x\in\overline D\,;$\medskip
		\item[($\mathcal A$2)]  for every couple of indices $1\le i,j\le d$, we have
		$$|a_{ij}(y)-a_{ij}(x)|\le C|x-y|^\alpha\qquad\text{for every}\qquad x,y\in\overline D\,.$$
	\end{enumerate}
	We recall that by the $C^{1,\alpha}$ Schauder estimates we have the following uniform bounds:
	\begin{enumerate}[(S1)]
		\item There is a constant $L$ (depending on $d$, $\rho$, $C$, $\lambda$, $\alpha$) such that
		$$\|\nabla u_A\|_{L^\infty(D)}\le L\qquad\text{for every matrix}\qquad A\in \mathcal A(D,\lambda,C,\alpha).$$
		\item There are a constant $M>0$, an exponent $\gamma>0$ and a radius $R>0$ (depending on $d$, $\rho$, $C$, $\lambda$, $\alpha$) such that for every $A\in \mathcal A(D,\lambda,C,\alpha)$ and every $x_0\in\overline D$ we have:
		$$\Big|u_A(x+x_0)-u_A(x_0)-x\cdot\nabla u_A(x_0)\Big|\le M |x|^{1+\gamma}\quad\text{whenever}\quad |x|\le R.$$
	\end{enumerate}	
	\begin{theorem}\label{t:app-schauder}
		Given $\rho \in (0,1]$ and $D=B_{1+\rho}\setminus \overline B_1$, we consider the solution $u_A:\overline D\to\R$
		to the problem
		$$
		\text{\rm div}(A\nabla u_A)=0\quad\text{in}\quad D\,,\qquad
		u_A=0\quad\text{on}\quad \partial B_1\,,\qquad
		u_A=\rho\quad\text{on}\quad \partial B_{1+\rho}\,.
		$$
		for some $A$ in $\mathcal A(D,\lambda,C,\alpha)$.
		Then there are constants $K$, $\eps_0>0$ and $\sigma\in(0,1)$, depending on $\rho$, $C$, $d$, $\alpha$ and $\lambda$, such that if $$A,B\in\mathcal A(D,\lambda,C,\alpha)$$ are two symmetric matrices satisfying the bounds
		\begin{equation}\label{e:app-bounds-A-B}
		\frac1{1+\eps}B(x)\le A(x)\le (1+\eps)B(x)\qquad\text{for every}\qquad x\in\overline D\,,
		\end{equation}
		for some $\eps\in(0,\eps_0)$, then
		$$\|u_A-u_B\|_{L^\infty(D)}  + \|\nabla u_A-\nabla u_B\|_{L^\infty(D)}\le K\eps^\sigma\,.$$
	\end{theorem}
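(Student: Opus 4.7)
The plan is to estimate the difference $w := u_A - u_B$, which vanishes on $\partial D$ and, by subtracting the equations $\mathrm{div}(A\nabla u_A) = 0$ and $\mathrm{div}(B\nabla u_B) = 0$, satisfies
$$\mathrm{div}(A\nabla w) = \mathrm{div}\bigl((B-A)\nabla u_B\bigr) \qquad \text{in } D.$$
I will show that the right-hand side is small in a suitable Hölder norm and then conclude via a classical boundary Schauder estimate for divergence-form Dirichlet problems on the smooth domain $D$.

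First, I would upgrade the Lipschitz bound (S1) to a uniform $C^{1,\alpha'}$ bound on $u_B$ (and $u_A$) for $B \in \mathcal{A}(D,\lambda,C,\alpha)$: by classical boundary Schauder theory for divergence-form elliptic equations with $C^{0,\alpha}$ coefficients on a smooth domain with constant boundary data, there exist $\alpha' \in (0,\alpha]$ and $L' > 0$, depending only on $d,\rho,\lambda,C,\alpha$, such that $\|u_B\|_{C^{1,\alpha'}(\overline D)} \leq L'$.

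Next, I would control the perturbation $A - B$. From $(1+\eps)^{-1} B \leq A \leq (1+\eps) B$ and $\|B\|_{L^\infty(\overline D)} \leq \lambda$, one obtains $\|A - B\|_{L^\infty(\overline D)} \leq C_1 \eps$ for some $C_1 = C_1(\lambda)$, while trivially $[A - B]_{C^{0,\alpha}} \leq 2C$. By the standard interpolation inequality
$$[f]_{C^{0,\beta}} \leq C_d\, [f]_{C^{0,\alpha}}^{\beta/\alpha}\, \|f\|_{L^\infty}^{1-\beta/\alpha}, \qquad 0 < \beta < \alpha,$$
applied to $f = A - B$ for a fixed small $\beta \in (0,\alpha)$, we deduce $\|A - B\|_{C^{0,\beta}(\overline D)} \leq C_2\, \eps^{\,1-\beta/\alpha}$. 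Setting $\mu := \min(\beta,\alpha')$ and $g := (B-A)\nabla u_B$, the Hölder product rule together with the preceding bounds gives $\|g\|_{C^{0,\mu}(\overline D)} \leq C_3\, \eps^{\,1-\beta/\alpha}$.

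Finally, the classical $C^{1,\mu}$ Schauder estimate applied to $w$—using uniform ellipticity of $A$, $A \in C^{0,\alpha}$, $\partial D$ smooth, $w = 0$ on $\partial D$, and $g \in C^{0,\mu}(\overline D)$—yields
$$\|w\|_{C^{1,\mu}(\overline D)} \leq K_0\, \|g\|_{C^{0,\mu}(\overline D)} \leq K\, \eps^{\sigma}, \qquad \sigma := 1 - \tfrac{\beta}{\alpha} > 0,$$
which immediately implies the claimed bounds on $\|u_A - u_B\|_{L^\infty}$ and $\|\nabla u_A - \nabla u_B\|_{L^\infty}$. The one technical point requiring care is that the constant $K_0$ depends only on $d,\rho,\lambda,C,\alpha$ and not on the specific matrices $A,B$; this is guaranteed because the class $\mathcal{A}(D,\lambda,C,\alpha)$ provides uniform control on the coefficients, and the standard proof of the divergence-form Schauder estimate (via freezing of coefficients and comparison with the constant-coefficient problem) gives constants depending only on these parameters.
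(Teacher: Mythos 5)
Your proof is correct, but it takes a genuinely different route from the paper's. The paper exploits the \emph{variational} structure: since $u_A$ minimizes the Dirichlet energy associated to $A$ among functions with the same boundary data, testing with $u_B$ and using \eqref{e:app-bounds-A-B} gives $\int_D|\nabla(u_A-u_B)|^2\lesssim\eps$; Poincar\'e then yields an $L^2$ bound, the uniform Lipschitz estimate (S1) upgrades it to $\|u_A-u_B\|_{L^\infty}\lesssim\eps^{1/(d+2)}$ by the standard non-degeneracy interpolation, and finally the pointwise $C^{1,\gamma}$ expansion (S2) at a point $x_0$ (where both solutions take the same value) is used to extract the gradient bound by optimizing the radius at which the Taylor polynomials are compared. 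You instead work directly with the \emph{difference equation} $\mathrm{div}(A\nabla w)=\mathrm{div}((B-A)\nabla u_B)$, bound the right-hand side in $C^{0,\mu}$ by H\"older interpolation between $\|A-B\|_{L^\infty}\lesssim\eps$ and $[A-B]_{C^{0,\alpha}}\lesssim 1$, and close with the full divergence-form Schauder estimate with zero Dirichlet data. Your route is shorter and gives a better exponent $\sigma$, but it invokes the full Schauder apparatus for divergence-form equations with H\"older right-hand side (including the absorption of $\|w\|_{L^\infty}$, for which one should cite De Giorgi--Nash--Moser or the energy estimate together with uniqueness for the Dirichlet problem), whereas the paper's argument only needs the two pointwise consequences (S1)--(S2) of Schauder theory and is therefore more self-contained. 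Both are valid; the exact value of $\sigma$ is immaterial for the applications in the paper.
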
	
	\begin{proof}
		Testing the optimality of $u_A$ with $u_B$ we have
		\begin{align*}
		\int_{D}\nabla u_A\cdot B(x)\nabla u_A\,dx&\le (1+\eps)\int_{D}\nabla u_A\cdot A(x)\nabla u_A\,dx\\
		&\le (1+\eps)\int_{D}\nabla u_B\cdot A(x)\nabla u_B\,dx\le (1+\eps)^2\int_{D}\nabla u_B\cdot B(x)\nabla u_B\,dx\,,
		\end{align*}	
		which implies
		\begin{align*}
		\int_{D}|\nabla (u_A-u_B)|^2\,dx&\le \lambda	\int_{D}\nabla (u_A-u_B)\cdot B(x)\nabla (u_A-u_B)\,dx\\
		&\le \lambda 3\eps\int_{D}\nabla u_B\cdot B(x)\nabla u_B\,dx\\
		&\le \lambda 3\eps\int_{D}\nabla h\cdot B(x)\nabla h\,dx\le \lambda^2 3\eps \int_{D}|\nabla h|^2\,dx\le C_d\lambda^2\eps\,,
		\end{align*}	
		where $h$ is the harmonic function 		
		$$
		\Delta h=0\quad\text{in}\quad D,\qquad
		h=0\quad\text{on}\quad \partial B_1,\qquad	h=\rho\quad\text{on}\quad \partial B_{1+\rho},
		$$
		and in particular there is a  dimensional constant $C_d>0$ such that
		$$\int_{D}|\nabla h|^2\,dx\le C_d\,.$$
		Now, since $u_A-u_B\in H^1_0(D)$, by the Poincaré inequality in $D$, we have
		$$\int_{D} |u_A-u_B|^2\,dx\le \lambda^2C_d\eps\,.$$
		Using (S1), we get
		\begin{equation}\label{e:app-infty-est-old}
		\frac{C_d}{L^{d}}	\|u_A-u_B\|_{L^\infty(D)}^{d+2}\le \int_{D} |u_A-u_B|^2\,dx\le \lambda^2C_d\eps\,,
		\end{equation}
		and so
		\begin{equation}\label{e:app-infty-est}
		\|u_A-u_B\|_{L^\infty(D)}\le C_dL^{\frac{d}{d+2}}\lambda^{\frac{2}{d+2}}\eps^{\frac1{d+2}}\,.
		\end{equation}
		We now pick a point
		$x_0\in \partial B_{1}$ and we aim to estimate $|\nabla u(x_0)-\nabla h(x_0)|$, the cases $x_0\in\partial B_{1+\rho}$ and $x_0\in B_{1+\rho}\setminus\overline B_1$ being analogous. Using (S2) we have that, for any radius $r\in(0,R)$,
		$$\Big|u_A(x+x_0)-x\cdot\nabla u_A(x_0)\Big|\le M r^{1+\gamma}\quad\text{and}\quad \Big|u_B(x+x_0)-x\cdot\nabla u_B(x_0)\Big|\le M r^{1+\gamma}\quad\text{for every}\quad |x|=r.$$
		Thus, by the triangular inequality and \eqref{e:app-infty-est}, we get
		$$\Big|x\cdot\nabla u_A(x_0)-x\cdot\nabla u_B(x_0)\Big|\le 2Mr^{1+\gamma}+C_dL^{\frac{d}{d+2}}\lambda^{\frac{2}{d+2}}\eps^{\frac1{d+2}}\,.$$	
		Taking $x$ to be the inwards normal times $r$, we obtain
		$$\Big|\nabla u_A(x_0)-\nabla u_B(x_0)\Big|\le 2Mr^{\gamma}+\frac1rC_dL^{\frac{d}{d+2}}\lambda^{\frac{2}{d+2}}\eps^{\frac1{d+2}}\,.$$	
		Choosing $r=\eps^{\frac1{(d+2)(1+\gamma)}}$ and $\eps_0=\rho^{(d+2)(1+\gamma)},$
		we finally get
		$$\Big|\nabla u_A(x_0)-\nabla u_B(x_0)\Big|\le \Big(2M+C_dL^{\frac{d}{d+2}}\lambda^{\frac{2}{d+2}}\Big)\eps^{\frac{\gamma}{(d+2)(1+\gamma)}}\,,$$	
		which concludes the proof.
	\end{proof}
	Similarly, we also have the following result in a half-ball.

	\begin{corollary}\label{t:app-schauder-half-ball}
		Given $\rho\in(0,1]$ we consider the annulus $D=B_{1+\rho}\setminus \overline B_1$ in $\R^d$. and, for every matrix $A\in \mathcal A(D,\lambda,C,\alpha)$, we denote by $v_A:\overline D\to\R$
		the solution to the problem
		$$\begin{cases}
		\text{\rm div}(A\nabla v_A)=0\quad\text{in}\quad D\cap\{x_d>0\}\,,\\
		v_A=0\quad\text{on}\quad \partial B_1\cap\{x_d>0\}\,,\\
		v_A=\rho\quad\text{on}\quad \partial B_{1+\rho}\cap\{x_d>0\}\,\\
		e_d\cdot A\nabla v_A=0\quad\text{on}\quad D\cap\{x_d=0\}\,.
		\end{cases}$$
		Then there are constants $K$, $\eps_0>0$ and $\sigma\in(0,1)$, depending on $\rho$, $C$, $d$, $\alpha$ and $\lambda$, such that if $A$ and $B$ are two symmetric matrices in $\mathcal A(D,\lambda,C,\alpha)$ satisfying the bounds \eqref{e:app-bounds-A-B} for some $\eps\in(0,\eps_0)$, then
		$$\| v_A-v_B\|_{L^\infty(D\cap \{x_d\ge 0\})} +\|\nabla v_A-\nabla v_B\|_{L^\infty(D\cap \{x_d\ge 0\})}\le K\eps^\sigma\,.$$
	\end{corollary}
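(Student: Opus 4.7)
The plan is to mimic step-by-step the proof of \cref{t:app-schauder}, the only novelty being that the spherical boundary pieces now carry a Dirichlet datum while the flat piece $D\cap\{x_d=0\}$ carries a homogeneous conormal condition. Throughout, I will work on the half-annulus $D^+:=D\cap\{x_d>0\}$ and use the variational characterization of $v_A$ as the minimizer of $\int_{D^+} \nabla v\cdot A(x)\nabla v\,dx$ among functions with trace $0$ on $\partial B_1\cap\{x_d>0\}$ and $\rho$ on $\partial B_{1+\rho}\cap\{x_d>0\}$; the natural boundary condition on the flat face is precisely $e_d\cdot A\nabla v=0$, so $v_A$ is the relevant minimizer.

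First I would establish the uniform analogues of (S1) and (S2) in this mixed setting. For (S1), the standard Schauder theory for elliptic equations with H\"older coefficients and mixed Dirichlet-conormal data (obtained, for instance, by even reflection across $\{x_d=0\}$ after replacing $A$ by its even reflection, which preserves the H\"older norm and the ellipticity constants) produces a bound $\|\nabla v_A\|_{L^\infty(D\cap\{x_d\ge0\})}\le L$, with $L$ depending only on $\rho$, $d$, $\lambda$, $C$ and $\alpha$. For (S2), the same reflection trick combined with interior $C^{1,\gamma}$ Schauder estimates yields a uniform modulus of differentiability at every $x_0\in\overline{D^+}$, in particular at points of $D\cap\{x_d=0\}$. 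One subtle point: at boundary points $x_0\in D\cap\{x_d=0\}$ one uses that the reflected function satisfies an equation with H\"older coefficients across the reflection plane (this is where the conormal condition, rather than the Neumann condition, is important to get a true equation after reflection). Once these bounds are in place, the rest of the argument is essentially formal.

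The energy comparison is identical to the one in \cref{t:app-schauder}: testing the minimality of $v_A$ against $v_B$ (both have the same Dirichlet data on the spherical parts of $\partial D^+$ and $v_A-v_B$ is an admissible test in the variational problem) gives
\[
\int_{D^+}|\nabla(v_A-v_B)|^2\,dx\le C_d\lambda^2\eps,
\]
using only \eqref{e:app-bounds-A-B} and the uniform bound $\int_{D^+}|\nabla h|^2\le C_d$ for the harmonic reference function $h$ with the same mixed data (which exists and satisfies the bound since the mixed boundary value problem is coercive). Since $v_A-v_B$ vanishes on the spherical parts of $\partial D^+$, the Poincar\'e inequality on $D^+$ with this partial Dirichlet condition yields $\|v_A-v_B\|_{L^2(D^+)}^2\le \lambda^2 C_d\eps$, and then the uniform Lipschitz bound of (S1) upgrades this via the interpolation $\|w\|_{L^\infty}^{d+2}\le C_d L^d\|w\|_{L^2}^2$ to
\[
\|v_A-v_B\|_{L^\infty(D\cap\{x_d\ge0\})}\le K_1\eps^{1/(d+2)}.
\]

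For the gradient estimate, at any point $x_0\in\overline{D^+}$ I compare the two affine approximations provided by (S2): for $|x|=r<R$,
\[
\bigl|v_A(x_0+x)-x\cdot\nabla v_A(x_0)\bigr|+\bigl|v_B(x_0+x)-x\cdot\nabla v_B(x_0)\bigr|\le 2Mr^{1+\gamma},
\]
so combined with the $L^\infty$ bound and choosing $x=r\zeta$ with $\zeta$ ranging over a basis of tangent directions (in particular, a direction transverse to $\{x_d=0\}$ pointing into $D^+$ when $x_0$ lies on the flat face), one obtains
\[
|\nabla v_A(x_0)-\nabla v_B(x_0)|\le 2Mr^\gamma+r^{-1}K_1\eps^{1/(d+2)}.
\]
Optimizing in $r$ (take $r=\eps^{1/((d+2)(1+\gamma))}$ and $\eps_0$ small enough that $r<R$) gives the claim with $\sigma=\gamma/\bigl((d+2)(1+\gamma)\bigr)$. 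The main obstacle, as hinted above, is really the derivation of the uniform $C^{1,\gamma}$ estimate up to the conormal boundary $D\cap\{x_d=0\}$; everything else is a transcription of the argument in \cref{t:app-schauder}.
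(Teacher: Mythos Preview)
Your proposal is correct and follows exactly the approach the paper intends: the paper gives no separate proof of this corollary, stating only that it holds ``similarly'' to \cref{t:app-schauder}, and your sketch carries out precisely that adaptation (variational characterization with the conormal condition as natural boundary condition, energy comparison, Poincar\'e, $L^\infty$ interpolation via the uniform Lipschitz bound, and the $C^{1,\gamma}$ expansion to recover the gradient estimate).

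One small caveat on your justification of (S1)--(S2): the even reflection of $A$ across $\{x_d=0\}$ that makes the reflected function a weak solution is $\widetilde A(x',x_d)=R A(x',-x_d)R$ with $R=\mathrm{diag}(1,\dots,1,-1)$, and this is H\"older across the interface only if the mixed entries $a_{id}(x',0)$ vanish for $i<d$, which is not assumed. The uniform $C^{1,\gamma}$ estimate up to the flat boundary is nonetheless true, but the cleaner route is to invoke Schauder theory for oblique/conormal boundary conditions directly (the condition $e_d\cdot A\nabla v=0$ is uniformly oblique since $e_d\cdot A e_d\ge\lambda^{-1}$). With that replacement your argument is complete.
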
	
	\begin{corollary}\label{c:schauder.easy}
		Given $r>0$, we consider the solution $u_A:\overline B_r\to\R$ to the problem
		$$
		\text{\rm div}(A\nabla p_A)=f\quad\text{in}\quad B_r\,,\qquad
		p_A=g\quad\text{on}\quad \partial B_r\,,
		$$
		for some $A \in \mathcal{A}(B_r,\lambda,C,\alpha), g \in H^1(B)$ and $f \in C^{0,\alpha}(\overline{B_r})$ such that
		$$|f(y)-f(x)|\le C|x-y|^\alpha\qquad\text{for every }\, x,y\in\overline B_r\,.
		$$
		Then there are constants $K$, $\eps_0>0$ and $\sigma\in(0,1)$, depending on $\rho$, $C$, $d$, $\alpha$ and $\lambda$, such that if $A$ and $B$ are two symmetric matrices in $A(B_r,\lambda,C,\alpha)$ satisfying the bounds \eqref{e:app-bounds-A-B} for some $\eps\in(0,\eps_0)$, then
		$$\| p_A-p_B\|_{L^\infty(B_r)} +\|\nabla p_A-\nabla p_B\|_{L^\infty(B_r)}\le K\eps^\sigma\,.$$
	\end{corollary}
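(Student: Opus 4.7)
The strategy is a direct adaptation of the proof of \cref{t:app-schauder}. The only genuine modifications arise from the presence of the source $f$ and the non-trivial Dirichlet datum $g$, both of which play an inessential role since they are shared by $p_A$ and $p_B$.

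First I would set $w:=p_A-p_B\in H^1_0(B_r)$. Subtracting the two equations $\mathrm{div}(A\nabla p_A)=f=\mathrm{div}(B\nabla p_B)$, the difference satisfies
$$\mathrm{div}(A\nabla w)=\mathrm{div}\bigl((B-A)\nabla p_B\bigr)\quad\text{in }B_r,\qquad w=0\quad\text{on }\partial B_r.$$
Testing against $w$ itself and using the ellipticity of $A$ together with the pointwise bound $|A(x)-B(x)|\le\lambda\,\eps$ coming from \eqref{e:app-bounds-A-B}, one obtains $\|\nabla w\|_{L^2(B_r)}\le C\eps\,\|\nabla p_B\|_{L^2(B_r)}$. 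A standard energy estimate bounds $\|\nabla p_B\|_{L^2(B_r)}$ in terms of $\|f\|_{L^\infty}$, $\|g\|_{H^1}$, $r$ and $\lambda$, so that $\|\nabla w\|_{L^2(B_r)}\le C\eps$; the Poincar\'e inequality in $H^1_0(B_r)$ then gives $\|w\|_{L^2(B_r)}\le C\eps$.

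To pass from $L^2$ to $L^\infty$ I would invoke uniform $C^{1,\gamma}$ Schauder estimates (up to the boundary), which hold for $p_A$ and $p_B$ with constants depending only on $d,\lambda,C,\alpha,r$ and on the H\"older/Schauder norms of $f$ and $g$; these are the analogues of properties (S1) and (S2) used in the proof of \cref{t:app-schauder}. In particular $\|\nabla w\|_{L^\infty(B_r)}\le 2L$ for a constant $L$, and the same Sobolev-type interpolation used in \eqref{e:app-infty-est-old}, namely
$$\frac{c_d}{L^d}\|w\|_{L^\infty(B_r)}^{d+2}\le \|w\|_{L^2(B_r)}^2\le C\eps^2,$$
yields $\|w\|_{L^\infty(B_r)}\le K\eps^{2/(d+2)}$. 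The gradient estimate is then obtained exactly as in the closing part of \cref{t:app-schauder}: fix $x_0\in\overline B_r$ and exploit the pointwise Taylor expansion (a consequence of the $C^{1,\gamma}$ regularity up to the boundary)
$$\bigl|p_A(x_0+y)-p_A(x_0)-y\cdot\nabla p_A(x_0)\bigr|\le M|y|^{1+\gamma},$$
together with the analogous inequality for $p_B$. Combining them through the triangle inequality and the previously obtained $L^\infty$ bound on $w$ gives
$$\bigl|y\cdot(\nabla p_A(x_0)-\nabla p_B(x_0))\bigr|\le 2M|y|^{1+\gamma}+2\|w\|_{L^\infty(B_r)}.$$
Taking $y$ in the direction of $\nabla p_A(x_0)-\nabla p_B(x_0)$ with $|y|=\eps^{\sigma_1}$ for a suitable small $\sigma_1>0$, and optimizing in $\sigma_1$, produces the desired estimate $|\nabla p_A(x_0)-\nabla p_B(x_0)|\le K\eps^{\sigma}$ with $\sigma=\sigma(d,\gamma)>0$.

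The only delicate point is the availability of $C^{1,\gamma}$ Schauder estimates \emph{up to $\partial B_r$}, which is what makes (S1) and (S2) applicable at boundary points in the argument for \cref{t:app-schauder}. These are available here provided $g$ has enough regularity; in the applications of the corollary (in particular inside \cref{l:fbharnack}) the datum $g$ is a linear function and $f$ is either identically zero or of class $C^{0,\alpha}$, so this is not an obstacle. Beyond this point, every step above is a literal transcription of the argument in \cref{t:app-schauder}, the only new ingredient being the bound on $\|\nabla p_B\|_{L^2}$ in the presence of $f$ and $g$, which is handled by a routine Caccioppoli estimate.
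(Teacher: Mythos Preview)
Your proposal is correct and matches what the paper intends: the corollary is stated without proof, as a direct adaptation of \cref{t:app-schauder}, and your argument is exactly that adaptation. The one minor deviation is in the $L^2$ step: the paper's proof of \cref{t:app-schauder} obtains $\|\nabla(u_A-u_B)\|_{L^2}\lesssim\eps^{1/2}$ by comparing Dirichlet energies (testing the minimality of $u_A$ against $u_B$), whereas you subtract the two PDEs and test with $w$, which is more natural here since the common source $f$ cancels and yields the slightly sharper bound $\|\nabla w\|_{L^2}\lesssim\eps$; the rest of the argument (Poincar\'e, the $L^\infty$ interpolation \eqref{e:app-infty-est-old}, and the Taylor-expansion trick for the gradient) is identical.
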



\begin{thebibliography}{999}
		\bibitem{Robin-reference}
		G.A.~Afrouzi, K. J.~Brown. {\it On Principal Eigenvalues for Boundary Value Problems with Indefinite Weight and Robin Boundary Conditions.} Proceedings of the American Mathematical Society, {\bf 127} 1 (1999),  125--130. 
		
		\bibitem{altcaf} H.W.~Alt, L.A.~Caffarelli. {\it Existence and regularity for a minimum problem with free boundary.} J. Reine Angew. Math. {\bf325} (1981), 105--144.	
		
		
		\bibitem{laminar} J.~D. Buckmaster, G.~S.~S. Ludford. {\it Theory of Laminar Flames}. Cambridge Monographs on Mechanics, Cambridge University Press, 1982.
		
		
		\bibitem{BMMTV} G.~Buttazzo, F.P.~Maiale, D.~Mazzoleni, G.~Tortone, B.~Velichkov. {\it Regularity of the optimal sets for a class of integral shape functionals.} Preprint arXiv:2212.09118 (2022).
		
		\bibitem{caf1} L.~Caffarelli. {\it A Harnack inequality approach to the regularity of free boundaries. {I}. Lipschitz free boundaries are $C^{1,\alpha}$.} {Rev. Mat. Iberoamericana} {\bf 3} (2) (1987), {139--162}.
		
		\bibitem{caf2} L.~Caffarelli. {\it A Harnack inequality approach to the regularity of free boundaries. {II}. Flat free boundaries are Lipschitz}. {Comm. Pure Appl. Math.} {\bf 42} (1) (1989), {55--78}.
		
		\bibitem{caf3} L.~Caffarelli. {\it A Harnack inequality approach to the regularity of free boundaries. Part {III}: existence theory, compactness, and dependence on X.} {Ann. Scuola Norm. Sup. Pisa} {\bf 15} (4) (1988), 583--602.
		
		\bibitem{cc} L.A.~Caffarelli, X.~Cabré. {\it Fully nonlinear elliptic equations.} {American Mathematical Society Colloquium Publications} {\bf 43} (1995).
		
		\bibitem{CFdrops}	L.~A. Caffarelli, A.~Friedman. {\it Regularity of the boundary of a capillary drop on an inhomogeneous
			plane and related variational problems.}  Revista Matem\'{a}tica Iberoamericana {\bf 1} (1) (1985), 61--84.
		
		\bibitem{cjk-dim3} L.~A.~Caffarelli, D.S.~Jerison, C.E.~Kenig. {\it Global energy minimizers for free boundary problems and full regularity in three dimensions.} Contemp. Math. {\bf 350} Amer. Math. Soc., Providence RI (2004), 83--97.
		
		\bibitem{CM2} L.~A. Caffarelli, A.~Mellet. {\it Capillary drops: contact angle hysteresis and sticking drops.} Calc. Var. PDE {\bf29} (2) (2007), 141--160.
		
		\bibitem{CM1} L.~A. Caffarelli, A.~Mellet. {\it Capillary drops on an inhomogeneous surface.} In "Perspectives in nonlinear partial differential equations", volume 446 of {Contemp. Math.},  pages 175--201. Amer. Math. Soc.,
		Providence, RI, 2007.
		
		\bibitem{ChangChangChen2017:StabilityPoiseuillePorous} T.-Y.~Chang,  F.~Chen and M.-H.~Chang. {\it Stability of plane Poiseuille-Couette flow in a fluid layer overlying a porous layer.} J. Fluid Mech. {\bf 826} (2017), 376--395.
		
		
		\bibitem{cls} H.~Chang-Lara, O.~Savin. {\it Boundary Regularity for the Free Boundary in the One-phase Problem.} In "New developments in the analysis of nonlocal operators",
		volume 723 of {\em Contemp. Math.}, pages 149--165. Amer. Math. Soc., Providence, RI, 2019.
		
		
		\bibitem{dt} G.~David, T.~Toro. {\it Regularity for almost minimizers with free boundary.} Calc. Var. PDE {\bf 54} (1) (2015), 455--524.
		
		\bibitem{det} G. David, M. Engelstein, T. Toro. {\it Free boundary regularity for almost-minimizers.} Adv. Math.
		{\bf350} (2019),  1109--1192.	
		
		\bibitem{dets} G. David, M. Engelstein, M. Smit Vega Garcia, T. Toro. {\it Regularity for almost-minimizers of variable coefficient Bernoulli-type functionals.} Math. Zeitschrift  {\bf 299} (2021), 2131 --2169.
		
		\bibitem{dephimaggi} G.~{De Philippis}, F.~{Maggi}. {\it Regularity of free boundaries in anisotropic capillarity problems
			and the validity of Young's law}. {Arch. Ration. Mech. Anal.} {\bf216} (2) (2015), 473--568.
		
		\bibitem{dsv} G. De Philippis, L. Spolaor,  B. Velichkov. {\it (Quasi-)conformal methods in two-dimensional free boundary problems.} Preprint arXiv:2110.14075 (2021).
		
		\bibitem{desilva}D.~De Silva. {\it Free boundary regularity for a problem with right hand side.} Interfaces and Free Boundaries {\bf13} (2) (2011), 223--238.
		
		\bibitem{DeSilvaFerrariSalsa:2PhaseFreeBdDivergenceForm} D.~De Silva, F.~Ferrari, S.~Salsa. {\it Regularity of the free boundary for two-phase problems governed by divergence form equations and applications.} Nonlinear Analysis {\bf13} (138) (2016), 3--30. Nonlinear Partial Differential Equations, in honor of Juan Luis V\'{a}zquez for his 70th birthday.
		
		\bibitem{dj} D.~De~Silva, D.~Jerison. {\it A singular energy minimizing free boundary.}  J. Reine Angew. Math. {\bf 635} (2009), 1--21.
		
		\bibitem{ds} D.~De~Silva, O.~Savin. {\it  Almost minimizers of the one-phase free boundary problem.} Comm. PDE {\bf 45} (8) (2020), 913--930.
		
		
		
		\bibitem{ee} N.~Edelen, M.~Engelstein.  {\it Quantitative stratification for some free boundary problems.} Trans. Amer. Math. Soc. {\bf 371} (2019), 2043--2072.
		
		
		\bibitem{esv} M.~Engelstein, L.~Spolaor, B.~Velichkov. {\it Uniqueness of the blow-up at isolated singularities for the Alt-Caffarelli functional.} Duke Math. J.  {\bf169} (8) (2020), 1541--1601.
		
		\bibitem{generalboundary}
		E.~{Feireisl}, A.~{Novotn\'y}. {\it Navier-Stokes-Fourier system with general boundary conditions.} Commun. Math. Phys. {\bf 386} (2) (2021), 975--1010.
		
		\bibitem{FerreriVelichkov2023:discontinuousBernoulli} L.~Ferreri, B.~Velichkov. {\it Regularity for one-phase Bernoulli problems with
			discontinuous weights and applications.} Preprint arXiv:2309.09283 (2023).
		
		\bibitem{friedman} A.~Friedman. {\it Variational principles and free-boundary problems.} Dover Books on Mathematics (2010); John Wiley, New York (1982).
		
		\bibitem{GT} D.~Gilbarg, N.~S. Trudinger. {\it Elliptic partial differential equations of second order}. Springer-Verlag, 2001.
		
		\bibitem{HillStraughan2008:PoiseuillePorousNumerical} A.~ A.~Hill and B.~Straughan. {\it Poiseuille flow in a fluid overlying a porous medium.} J. Fluid Mech. {\bf 603} (2008), 137--149.
		
		\bibitem{js} D.S.~Jerison, O.~Savin. {\it Some remarks on stability of cones for the one phase free boundary problem.} Geom. Funct. Anal. {\bf 25} (2015), 1240--1257.
		
		\bibitem{phygasdin} W.~G. {Vincenti}, C.~H. {Kruger}. {\it Introduction to physical gas dynamics}, 1965.
		
		\bibitem{morrey} C.~B. Morrey. {\it Multiple Integrals in the Calculus of Variations.} Springer Berlin Heidelberg, 1966.
		
		\bibitem{generalboundary2}	D.~E. {Norman}. {\it Chemically reacting fluid flows: Weak solutions and global
			attractors}. J. Differ. Equations {\bf 152} (1) (1999), 75--135.
		
		\bibitem{patne2023stability} R.~Patne. {\it Stability of an inviscid flow through a tube with porous wall.} Preprint (2023).
		
		\bibitem{rtv} {E.~Russ, B.~Trey, B.~Velichkov.} {\it Existence and regularity of optimal shapes for elliptic operator with drift.} Calc.\,Var.\,PDE {\bf58} (6) (2019), 199.
		
		\bibitem{stv} L.~Spolaor, B.~Trey, B.~Velichkov. {\it Free boundary regularity for a multiphase shape optimization problem.} Comm. PDE {\bf 45} (2) (2020), 77--108.
		
		
		\bibitem{trey1} B. Trey. {\it Lipschitz continuity of the eigenfunctions on optimal sets for functionals with variable coefficients.} ESAIM: COCV {\bf26} (2020), 89.
		
		\bibitem{trey2} B. Trey. {\it Regularity of optimal sets for some functional involving eigenvalues of an operator in divergence form.} Ann. Inst. H. Poincare\'e Analyse non lineaire {\bf38} (2021), 1337--1371.
		
		\bibitem{velectures} B.~Velichkov. {\it Regularity of the one-phase free boundaries.}  Lecture Notes of the Unione Matematica Italiana {\bf 28}, Springer, 2023.
		
		\bibitem{w} G.~S.~Weiss. {\it Partial regularity for a minimum problem with free boundary.} J. Geom. Anal. {\bf 9} (2) (1999), 317--326.
		
	\end{thebibliography}
\end{document}